\tikzset{snake it/.style={decorate, decoration=snake}}
\newsavebox{\pullback}
\sbox\pullback{
\begin{tikzpicture}%
\draw (0,0) -- (1ex,0ex);%
\draw (1ex,0ex) -- (1ex,1ex);%
\end{tikzpicture}}
\theoremstyle{plain}
\newtheorem{thm}{Theorem}[section]
\newtheorem{cor}[thm]{Corollary}
\newtheorem{lem}[thm]{Lemma}
\newtheorem{prop}[thm]{Proposition}
\newtheorem{conj}[thm]{Conjecture}
\newtheorem{question}[thm]{Question}
\theoremstyle{definition}
\newtheorem{defn}[thm]{Definition}
\theoremstyle{remark}
\newtheorem{rmk}[thm]{Remark}
\newcommand{\gr}{{\mathrm{gr}}}
\newcommand{\Sp}{{\mathrm{Sp}}}
\newcommand{\BC}{{\mathbb{C}}}
\newcommand{\BD}{{\mathbb{D}}}
\newcommand{\BG}{{\mathbb{G}}}
\newcommand{\BK}{{\mathbb{K}}}
\newcommand{\BL}{{\mathbb{L}}}
\newcommand{\BP}{{\mathbb{P}}}
\newcommand{\BQ}{{\mathbb{Q}}}
\newcommand{\BZ}{{\mathbb{Z}}}
\newcommand{\mr}{\mathsf{MR}}
\newcommand{\CB}{{\mathcal B}}
\newcommand{\CH}{{\mathcal H}}
\newcommand{\CL}{{\mathcal L}}
\newcommand{\CO}{{\mathcal O}}
\newcommand{\CU}{{\mathcal U}}
\newcommand{\CV}{{\mathcal V}}
\newcommand{\CZ}{{\mathcal Z}}
\newcommand{\Fe}{{\mathfrak{e}}}
\newcommand{\Ff}{{\mathfrak{f}}}
\newcommand{\Fh}{{\mathfrak{h}}}
\newcommand{\FM}{{\mathfrak{M}}}
\newcommand{\FN}{{\mathfrak{N}}}
\newcommand{\PPic}{{\mathfrak{P}\textup{ic}}}
\newcommand{\pt}{{\mathsf{pt}}}
\newcommand{\ch}{{\mathrm{ch}}}
\DeclareFontFamily{OT1}{rsfs}{}
\DeclareFontShape{OT1}{rsfs}{n}{it}{<-> rsfs10}{}
\DeclareMathAlphabet{\curly}{OT1}{rsfs}{n}{it}
\newcommand\Ext{\operatorname{Ext}}
\newcommand\Hom{\operatorname{Hom}}
\newcommand\End{\operatorname{End}}
\newcommand\Aut{\operatorname{Aut}}
\newcommand{\rk}{\mathrm{rk}}
\newcommand{\Pic}{\mathop{\rm Pic}\nolimits}
\begin{document}
\title[Chern filtration for moduli of bundles on curves]{On the Chern filtration for the moduli of bundles\\ on curves}
\date{\today}

\newcommand\blfootnote[1]{%
  \begingroup
  \renewcommand\thefootnote{}\footnote{#1}%
  \addtocounter{footnote}{-1}%
  \endgroup
}

\author[W. Lim]{Woonam Lim}
\address{Yonsei University, Department of Mathematics}
\email{woonamlim@yonsei.ac.kr}

\author[M. Moreira]{Miguel Moreira}
\address{Massachusetts Institute of Technology, Department of Mathematics}
\email{miguel@mit.edu}

\author[W. Pi]{Weite Pi}
\address{Yale University, Department of Mathematics}
\email{weite.pi@yale.edu}


\begin{abstract}

We introduce and study the Chern filtration on the cohomology of the moduli of bundles on curves. This can be viewed as a natural cohomological invariant defined via tautological classes that interpolates between additive Betti numbers and the multiplicative ring structure. In the rank two case, we fully compute the Chern filtration for moduli of stable bundles and all intermediate stacks in the Harder--Narasimhan stratification. We observe a curious symmetry of the Chern filtration on the moduli of rank two stable bundles, and construct $\mathfrak{sl}_2$-actions that categorify this symmetry. Our study of the Chern filtration is motivated by the $P=C$ phenomena in several related geometries.

\end{abstract}

\maketitle

\setcounter{tocdepth}{1} 

\tableofcontents
\setcounter{section}{-1}

\section{Introduction}

Let $\Sigma$ be a complex smooth projective curve of genus $g \geq 2$, and let $r$ and $d$ be two coprime integers. We denote by $N_{r,d} = N_{r,\Lambda}$ the moduli space of stable rank $r$ and degree $d$ vector bundles on $\Sigma$, with fixed determinant $\Lambda \in \mathrm{Pic}^d(\Sigma)$. It is a smooth projective variety of (complex) dimension $(r^2-1)(g-1)$, whose underlying real manifold structure depends on the curve only through its genus $g$ due to the Narasimhan--Seshadri theorem \cite{NS}.

The geometry and topology of the moduli space $N_{r,d}$ have been extensively studied for decades, dating back to Atiyah--Bott's celebrated work \cite{AB}. In particular, the cohomological aspects of $N_{r,d}$ are well-understood: it is known that the integral cohomology $H^*(N_{r,d}, \mathbb{Z})$ has no torsion and is tautologically generated \cite{AB, Beau}, closed formulas for the Betti numbers are obtained \cite{Zag_recursion} by solving the recursions in \cite{HN, AB}, and a set of geometric relations, originally due to Mumford and later generalized by Earl--Kirwan, are complete \cite{Kir, EK}. Much more work has been done on tautological relations and intersection numbers on $N_{r,d}$; see for example \cite{Tha1, Tha2, Zag_rank2, King-Newstead, EK-pontryagin, Teleman-Woodward}.

In the present paper, we study the cohomology of $N_{r,d}$ from a new perspective, by introducing a natural filtration called the \textit{Chern filtration}. This filtration defines two new invariants attached to $N_{r,d}$: the dimensions of the graded pieces of the filtration, and the associated graded ring. The Chern filtration is defined via tautological classes, so these are not purely topological invariants but depend on the structure of $N_{r,d}$ as a moduli space. They refine the Betti numbers and can be determined from full knowledge of the relations among tautological generators in the cohomology ring of $N_{r,d}$, so we have the following sequence of specializations:

\smallskip

\begin{center}
    \begin{tikzcd}[row sep=0.2cm, column sep=0.7cm]
 \textup{\small Betti numbers} &
 \textup{\small refined Betti numbers} &
 \textup{\small associated graded ring} &
 \textup{\small cohomology ring}\\
\dim H^i(N_{r,d})&
\dim C_j H^{i+j}(N_{r,d})\arrow[l, rightsquigarrow] &
\gr^C_\bullet H^\ast(N_{r,d}) \arrow[l, rightsquigarrow]  &
H^\ast(N_{r,d})\arrow[l, rightsquigarrow]
    \end{tikzcd}
\end{center}

\smallskip

Some of the main results in this paper are a complete determination of these new invariants in rank~two: the refined Betti numbers are given in Theorem \ref{thm: omegaM21}, and we obtain two explicit descriptions of the associated graded ring in Section \ref{subsec: relationsIgr} and in Corollary \ref{cor: f determines I gr}. 

The study of the Chern filtration is greatly motivated by analogous filtrations on moduli spaces of Higgs bundles or 1-dimensional sheaves on del Pezzo surfaces, where ``$P=C$ phenomena'' predicts strong properties for the Chern filtration in such geometries.


\smallskip

We work with cohomology groups with rational coefficients from now on.

\subsection{Tautological classes and Chern filtrations} 
Under the assumption $\mathrm{gcd}(r,d)=1$, there exists a universal rank $r$ vector bundle $\mathbb{U}$ over $N_{r,d} \times \Sigma$ which is  unique up to tensoring by line bundles from $N_{r,d}$.
Fix a basis $\{\mathbf{1}_\Sigma, e_1,\ldots, e_{2g}, \mathsf{pt}\}$ of $H^*(\Sigma)$, with the convention that $e_i\cdot e_{i+g} =\mathsf{pt}$ for $1\leq i \leq g$. Taking Künneth components of the Chern classes of a ``normalized'' $\mathbb{U}$ with respect to this basis gives tautological classes in $H^*(N_{r,d})$. More precisely, we write
\begin{equation}
\label{taut}
c_k(\mathbb{U}) = \alpha_k \otimes \mathsf{pt} + \sum_{i=1}^{2g} \psi_{k,i}\otimes e_i + \beta_k\otimes \mathbf{1}_\Sigma \in H^*(N_{r,d})\otimes H^*(\Sigma).
\end{equation}

The normalizing condition means that we impose $\beta_1 = 0$, which can be achieved by taking an arbitrary $\mathbb{U}$ and tensoring it by a uniquely determined rational line bundle over $N_{r,d}$. We refer to Section~\ref{subsec: descendent algebra} for an equivalent but more canonical 
definition of the classes $\{\alpha_k,\, \beta_k,\,\psi_{k,i}\}$ using the language of descendent algebra and stacks.\footnote{There are various conventions on defining the tautological classes $\{\alpha_k,\, \beta_k,\,\psi_{k,i}\}$. For example, our definition agrees with \cite{jeffreykirwan} but differs slightly from \cite{AB, Kir}; see also Remark \ref{rmk: taut class from End}.}

By \cite{AB, Beau}, the classes $\{\alpha_k,\, \beta_k,\,\psi_{k,i}\}_{2\leq k \leq r}$ generate the rational cohomology ring $H^*(N_{r,d})$. We say that the tautological classes $\alpha_k,\, \beta_k,\,\psi_{k,i}$, defined as the Kunneth components of the $k$-th Chern class of $\mathbb{U}$ in \eqref{taut}, have \textit{Chern degree $k$}. The total Chern degree of a monomial in tautological classes is the sum of the Chern degrees of the respective tautological classes.

\begin{defn}
\label{def: chern}
    The Chern filtration $C_\bullet H^*(N_{r,d})$ is the increasing filtration whose $k$-th step $C_k H^*(N_{r,d})$ is spanned by all monomials in $\{\alpha_s,\, \beta_s,\,\psi_{s,i}\}_{2\leq s \leq r}$ with total Chern degree $\leq k$.
\end{defn}

Since the top cohomology $H^\mathrm{top}(N_{r,d}) = H^{(r^2-1)(2g-2)}(N_{r,d})$ is one-dimensional, there exists a unique integer $\ell$, which we call the \textit{top Chern degree}, such that 
\[
0 = C_{\ell-1} H^{\mathrm{top}}(N_{r,d})\subset C_\ell H^{\mathrm{top}}(N_{r,d}) = H^{\mathrm{top}}(N_{r,d}).
\]

Our first result on the Chern filtration is the following.\footnote{This result is implicit in \cite{EK-pontryagin} although the authors did not introduce the Chern filtration; see Section \ref{sec: 1.1}.}

\begin{thm}
\label{thm: topcherndegree}
    The top Chern degree of $N_{r,d}$ equals $(r+2)(r-1)(g-1)$.
\end{thm}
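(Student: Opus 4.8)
The plan is to recast the statement as one about intersection numbers and then read off both directions from the Earl--Kirwan analysis of the Pontryagin ring. Since $H^{\mathrm{top}}(N_{r,d})$ is one-dimensional and the pairing $(P,Q)\mapsto\int_{N_{r,d}}PQ$ is perfect, a tautological monomial $M$ lying in $H^{\mathrm{top}}(N_{r,d})$ is nonzero exactly when $\int_{N_{r,d}}M\neq 0$, so the top Chern degree equals the least Chern degree of a tautological monomial with nonvanishing integral. Now $\alpha_k$, $\psi_{k,i}$, $\beta_k$ have cohomological degrees $2k-2$, $2k-1$, $2k$ and all have Chern degree $k$, so a monomial $M$ with $a$ factors among the $\alpha$'s and $p$ factors among the $\psi$'s satisfies $\deg_{\mathrm{coh}}M=2\deg_{\mathrm{Chern}}M-2a-p$; imposing $\deg_{\mathrm{coh}}M=2(r^2-1)(g-1)$ forces $p$ even and $\deg_{\mathrm{Chern}}M=(r^2-1)(g-1)+a+\tfrac p2$. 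Hence the theorem is equivalent to two assertions: \textup{(L)} $\int_{N_{r,d}}M=0$ for every tautological monomial $M$ of top cohomological degree with $a+\tfrac p2<(r-1)(g-1)$; and \textup{(U)} there is such an $M_0$ with $a+\tfrac p2=(r-1)(g-1)$ and $\int_{N_{r,d}}M_0\neq 0$.

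For \textup{(U)} I would take $M_0=\alpha_2^{(r-1)(g-1)}\cdot\sigma$, where $\sigma$ is a monomial in $\beta_2,\dots,\beta_r$ of cohomological degree $2r(r-1)(g-1)$ whose class in $H^*(N_{r,d})$ is nonzero; such $\sigma$ exists because, by \cite{EK-pontryagin}, the Pontryagin subring $\Pi_{r,d}=\langle\beta_2,\dots,\beta_r\rangle\subseteq H^*(N_{r,d})$ is Gorenstein with one-dimensional socle in cohomological degree $2r(r-1)(g-1)$. Then $\sigma$ has Chern degree $r(r-1)(g-1)$, so $M_0$ has cohomological degree $2r(r-1)(g-1)+2(r-1)(g-1)=2(r^2-1)(g-1)$ and Chern degree $r(r-1)(g-1)+2(r-1)(g-1)=(r+2)(r-1)(g-1)$, as required. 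It remains to check $\int_{N_{r,d}}M_0\neq 0$, which I would deduce from the closed intersection-number formulas (\cite{jeffreykirwan}, or equivalently the evaluation of the Gorenstein pairing of $\Pi_{r,d}$ in \cite{EK-pontryagin}), using that $H^2(N_{r,d})=\mathbb Q\,\alpha_2$ is one-dimensional so that $\alpha_2$ is, up to a nonzero scalar, the ample generator; in rank two this is the classical nonvanishing $\int_{N_{2,1}}\alpha^{g-1}\beta^{g-1}\neq 0$.

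The heart of the matter is \textup{(L)}. The input here is the complete list of Mumford-type relations in $H^*(N_{r,d})$ (\cite{Kir,EK}, with \cite{King-Newstead} in low rank), packaged as in the Earl--Kirwan computation of the Pontryagin ring. The first step is to pass to the $\Sp(2g)$-invariant part, which changes neither $\int_{N_{r,d}}M$ (integration being $\Sp(2g)$-invariant) nor $a+\tfrac p2$: the invariant projection replaces the $\psi$-factors of $M$ by combinations of the invariant quadratic classes $\gamma_{k,k'}=\sum_{i=1}^{g}\psi_{k,i}\psi_{k',i+g}$, the number of $\gamma$-factors equalling $\tfrac p2$. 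One then has to show that a monomial in the $\alpha_k$, $\beta_k$, $\gamma_{k,k'}$ of top cohomological degree with $a+(\text{number of }\gamma\text{-factors})<(r-1)(g-1)$ already vanishes in $H^*(N_{r,d})$ — equivalently, that the subring generated by the $\beta$'s and $\gamma$'s cannot reach $H^{\mathrm{top}}$ with fewer than $(r-1)(g-1)$ ``inefficient'' insertions. I expect this quantitative vanishing to be the main obstacle: the socle degree $2r(r-1)(g-1)$ of the Pontryagin ring by itself yields only a weaker bound, and one needs the sharper control of the $\gamma$-classes encoded in the Earl--Kirwan relations. An alternative would be a degree estimate inside the residue form of the Witten--Jeffrey--Kirwan integration formula, where a monomial of Chern degree below the threshold gives a residue of a rational function whose numerator has too low a degree against its $\dim N_{r,d}$-fold pole, hence vanishes term by term.
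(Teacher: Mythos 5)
Your reduction of the theorem to the two assertions \textup{(L)} and \textup{(U)}, via the identity $\deg(M)=2\deg^C(M)-2a-p$ and the resulting reformulation $a+\tfrac p2\geq(r-1)(g-1)$, is exactly the reduction the paper makes (its equation for $2\deg^C(D)$ in terms of $\deg(D)$, $\sum n_k$ and $\sum p_{k,j}$). For \textup{(U)} your class is essentially the paper's: it takes $D=\eta\cdot\alpha_2^{(r-1)(g-1)}$ with $\eta$ the discriminant $\prod_{i<j}(X_i-X_j)^{2(g-1)}$ written in the $\beta_k$'s, and cites \cite[Proposition 9]{EK-pontryagin} for $\int_{N_{r,d}}D\neq 0$. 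One caveat: your first-instinct justification --- that $\sigma\neq 0$ plus ampleness of $\alpha_2$ forces $\int\alpha_2^{(r-1)(g-1)}\sigma\neq 0$ --- is not valid, since $\sigma$ sits in cohomological degree $2r(r-1)(g-1)$, which is strictly above the middle degree $(r^2-1)(g-1)$, and multiplication by an ample class need not be injective there; the nonvanishing genuinely requires the Earl--Kirwan computation (though by linearity their nonvanishing for $\eta$ does give you some monomial $\sigma$ that works, so your formulation is salvageable).

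The genuine gap is \textup{(L)}, which you explicitly leave open, offering two candidate routes without completing either. The second route you sketch --- a degree/vanishing-order estimate in the Jeffrey--Kirwan residue formula --- is precisely the paper's argument, and the missing quantitative input is \cite[Proposition 8]{EK-pontryagin}: writing $G(\epsilon)=\int_{N_{r,d}}\exp(\sum_k\epsilon\delta_k\alpha_k)\prod_k(\beta_k^{m_k}\prod_j\psi_{k,j}^{p_{k,j}})$, the iterated residue expression shows $G(\epsilon)$ is divisible by $\epsilon^{M}$ with $M=(r-1)(g-1)-\tfrac12\sum_{k,j}p_{k,j}$. A nonzero integral of a monomial with $\alpha$-exponents $n_k$ contributes to the coefficient of $\epsilon^{\sum n_k}$, forcing $\sum_k n_k\geq M$, which is exactly your inequality $a+\tfrac p2\geq(r-1)(g-1)$. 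Without this (or an equivalent sharp vanishing statement), the proof is incomplete: as you yourself observe, the socle bound $2r(r-1)(g-1)$ for the Pontryagin ring and the passage to $\Sp(2g)$-invariants do not by themselves yield the threshold. So the proposal correctly identifies the strategy and the reference but does not actually establish the lower bound; supplying the citation to \cite[Proposition 8]{EK-pontryagin} and the short bookkeeping above would close it.
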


Note that the trivial bound for the top Chern degree is $(r^2-1)(2g-2)$. The fact that the actual value is around half of this bound is due to the presence of tautological relations in $H^*(N_{r,d})$
, which decrease the top Chern degree.

\subsection{
Main results and conjectures}\label{subsec: conjecturesresults}
We denote the associated graded of the Chern filtration by \[\gr^C_i H^*(N_{r,d})\coloneqq C_i H^*(N_{r,d})/C_{i-1} H^*(N_{r,d})\,\]
and let
\[\gr^C_\bullet H^*(N_{r,d})\coloneqq \bigoplus_{i\geq 0}\gr^C_i H^*(N_{r,d})\,;\]
note that $\gr^C_\bullet H^*(N_{r,d})$ is naturally a bigraded algebra, with a cohomological degree and a Chern degree. Define the \textit{refined Poincaré polynomial}
\[
\Omega(N_{r,d}, q,t) \coloneqq \sum_{i,j \geq 0} \dim \mathrm{gr}^C_i H^{i+j}(N_{r,d}) q^i t^j,
\]
as well as the shifted version (by half of top Chern and cohomological degrees)
\[
\overline{\Omega}(N_{r,d}, q, t) \coloneqq q^{-\frac{1}{2}(r+2)(r-1)(g-1)} t^{-\frac{1}{2}r(r-1)(g-1)}\cdot \Omega(N_{r,d}, q,t).
\]

A motivating question for us to study the Chern filtration is whether it has the ``curious'' symmetry
    \begin{equation}
    \label{eq: main}
        \overline{\Omega}(N_{r,d}, q,t) = \overline{\Omega}(N_{r,d}, q^{-1}, t^{-1})\,,
    \end{equation}
    which is originally inspired by the $P=C$ phenomenon; see Section \ref{sec: P=C}. The main result of this paper consists of a series of structural theorems on the Chern filtration in the rank two case; 
    in particular, we give a positive answer to this question when $r=2$.
    

\begin{thm}
\label{thm: main}
    The refined Poincaré polynomial for $N_{2,1}$ is
    \begin{equation*}
    \Omega(N_{2,1},q,t) = \frac{(1+q^2t)^{2g}-q^{2g}(1+t)^{2g}}{(1-q^2)(1-q^2t^2)}.
    \end{equation*}
    In particular, the symmetry \eqref{eq: main} holds when $r=2$.
\end{thm}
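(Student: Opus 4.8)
The plan is to pass to the associated graded ring and reduce the theorem to a bigraded Hilbert-series identity. Let $A := \BQ[\alpha_2,\beta_2]\otimes\Lambda^\bullet\langle\psi_{2,1},\dots,\psi_{2,2g}\rangle$ be the free graded-commutative $\BQ$-algebra on the tautological generators of $N_{2,1}$, with its Chern grading: $\alpha_2,\beta_2,\psi_{2,i}$ all have Chern degree $2$ and cohomological degrees $2,4,3$ respectively, so in the variables $(q,t)$ recording Chern degree and the excess of cohomological over Chern degree they contribute $q^2$, $q^2t^2$ and $q^2t$. By \cite{AB,Beau} the tautological classes generate, so there is a surjection $\pi\colon A\to H^*(N_{2,1})$ -- indeed $A$ is the rational cohomology of the moduli stack of rank-two bundles with fixed determinant (with the normalization $\beta_1=0$) -- and by construction $C_kH^*(N_{2,1})=\pi(A_{\leq k})$. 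Hence
\[
\gr^C_\bullet H^*(N_{2,1}) \;\cong\; A/\operatorname{in}(I),\qquad I:=\ker\pi,
\]
where $\operatorname{in}(I)$ is the (bihomogeneous) ideal of top-Chern-degree parts of elements of $I$, and its bigraded Hilbert series is $\Omega(N_{2,1},q,t)$. Since $A$ has Hilbert series $\frac{(1+q^2t)^{2g}}{(1-q^2)(1-q^2t^2)}$, the first of the two summands in the claimed formula, it remains to show $\operatorname{in}(I)$ has Hilbert series $\frac{q^{2g}(1+t)^{2g}}{(1-q^2)(1-q^2t^2)}$. One check is automatic: setting $q=t$ collapses $\Omega(N_{2,1},q,t)$ to $\frac{(1+t^3)^{2g}-t^{2g}(1+t)^{2g}}{(1-t^2)(1-t^4)}$, the Atiyah--Bott/Newstead Poincar\'e polynomial of $N_{2,1}$, and this is forced because $\sum_i\dim\gr^C_iH^n(N_{2,1})=\dim H^n(N_{2,1})$.

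To bound $\operatorname{in}(I)$ I would feed in the known complete set of tautological relations on $N_{2,1}$ -- the Kirwan and Earl--Kirwan relations in general, and in rank two the explicit presentations of Baranovsky, Zagier, King--Newstead and Thaddeus, together with their $\mathrm{Sp}(2g,\BZ)$-equivariant refinement via the primitive decomposition of $\Lambda^\bullet\langle\psi_{2,i}\rangle$ relative to the Lefschetz $\mathfrak{sl}_2$ attached to $\gamma\leftrightarrow\sum_{i=1}^{g}\psi_{2,i}\psi_{2,i+g}$. Concretely I would: (i) for each $\mathrm{Sp}$-isotypic block, compute the top-Chern-degree part of each generating relation and record its bidegree; (ii) verify that these leading relations, together with the equivariant primitivity relations cutting $\Lambda_0^k$ out of $\Lambda^k\langle\psi_{2,i}\rangle$, generate a bihomogeneous ideal $J\subseteq\operatorname{in}(I)$; and (iii) exhibit a monomial basis for $A/J$ -- a Gr\"obner basis in the bigraded sense -- and check its Hilbert series is $\leq\Omega(N_{2,1},q,t)$ coefficientwise. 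The theorem then follows by a squeeze: for every $(i,j)$,
\[
\dim\gr^C_iH^{i+j}(N_{2,1})\;=\;\dim\big(A/\operatorname{in}(I)\big)_{i,j}\;\leq\;\dim(A/J)_{i,j}\;\leq\;\big[q^it^j\big]\,\Omega(N_{2,1},q,t),
\]
while summing over $i$ at fixed $n=i+j$ gives $\dim H^n(N_{2,1})$ on the far left and, by the $q=t$ specialization, the same number on the far right; so all inequalities are equalities, $\operatorname{in}(I)=J$, and $\Omega(N_{2,1},q,t)$ is as stated.

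The main obstacle is (ii)--(iii): proving the Chern filtration is strict for the relation ideal -- that no relation surfaces in the associated graded beyond the leading terms of those written down -- and repackaging the resulting monomial count as the stated closed form. I expect this to require a genuine Gr\"obner-basis argument in the bigraded (Chern, cohomological) setting. A more structural alternative, presumably the one behind Theorem~\ref{thm: omegaM21}, is to lift the Atiyah--Bott/Harder--Narasimhan recursion $P(\mathfrak{Bun}_{2,\Lambda})=P(N_{2,1})+\sum_{d_1\geq1}t^{2\operatorname{codim}_{d_1}}P(\mathfrak{S}_{d_1})$ to Chern-filtered Hilbert series: one tracks the Chern degrees of the restrictions of $\alpha_2,\beta_2,\psi_{2,i}$ to, and of the normal-bundle Euler classes of, the unstable strata $\mathfrak{S}_{d_1}$, all of which are built from the Chern-degree-one K\"unneth components of $c_1$ of the Harder--Narasimhan line sub-bundle; each $\mathfrak{S}_{d_1}$ then contributes a clean refined series (a Jacobian factor $(1+q)^{2g}$, a $B\BG_m$ factor $\tfrac1{1-qt}$, and a monomial from the Euler class), and solving the recursion yields $\Omega(N_{2,1},q,t)$ directly. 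Either way, once the closed form is established the ``curious'' symmetry \eqref{eq: main} for $r=2$ is immediate: with $\overline\Omega(N_{2,1},q,t)=q^{-2(g-1)}t^{-(g-1)}\Omega(N_{2,1},q,t)$, clearing denominators shows $\Omega(N_{2,1},q^{-1},t^{-1})=q^{-4(g-1)}t^{-2(g-1)}\Omega(N_{2,1},q,t)$, hence $\overline\Omega(N_{2,1},q^{-1},t^{-1})=\overline\Omega(N_{2,1},q,t)$.
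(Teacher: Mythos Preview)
Your proposal is a plan rather than a proof: neither of your two strategies is actually carried out, and you yourself flag the Gr\"obner step (ii)--(iii) as ``the main obstacle''. More seriously, your guess about the paper's method is wrong, and the alternative you describe has a real gap.

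Your ``structural alternative''---lifting the Harder--Narasimhan recursion to Chern-filtered Hilbert series---is \emph{not} how the paper proves Theorem~\ref{thm: main}, and in fact the paper shows this route is circular. The difficulty is that the Gysin pushforward $\iota_*$ from a stratum does not interact with the Chern filtration in the naive way you suggest. The classes on a stratum $\CZ_d$ are built from degree-one K\"unneth components of line bundles, but after pushing forward (equivalently, after writing the Mumford relations in the rank-two generators $\alpha_2,\beta_2,\psi_{2,i}$) their Chern degrees are far from the ``Jacobian factor $(1+q)^{2g}$ times $B\BG_m$ factor'' you predict. Section~\ref{sec: intermediatestacks} of the paper is devoted exactly to this: one must replace the Mumford relations by carefully chosen linear combinations (the modified relations of Proposition~\ref{prop: boundCmodifiedmr}) to get a usable Chern-degree bound, and even then the bound only gives an \emph{inequality} \eqref{eq: differenceinequality}. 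To turn it into an equality the paper sums over all $d$ and compares with the already-known $\Omega(N_{2,1},q,t)$---i.e.\ Theorem~\ref{thm: main} is an \emph{input} to the stratification analysis, not an output.

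What the paper actually does is quite different and avoids any explicit Gr\"obner or initial-ideal computation. The key device is Lemma~\ref{lem: antidiagonalbasis}: if one has a basis $\CB$ of $H^*(N_{2,1})$ that is anti-diagonal for the Poincar\'e pairing, with each $v\in\CB$ lying in $C_{\ell_v}H^{k_v}$ and the dual element $\check v$ satisfying $\ell_v+\ell_{\check v}$ equal to the top Chern degree $4g-4$ (Theorem~\ref{thm: topcherndegree}), then Poincar\'e duality forces $\ell_v$ to be the \emph{exact} Chern degree of $v$, and the refined Poincar\'e polynomial is read off from the combinatorics of $\CB$. Zagier's basis $\{\xi_{r,s}\psi_A\psi_B^*\gamma_C\}$ from \cite{Zag_rank2} has precisely these properties, and the closed form drops out of his computation of the block sizes $|\CB_{r,s,A,B,c}|$. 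The point is that duality plus the top-Chern-degree theorem substitute for the strictness/Gr\"obner argument you were worried about: you never need to prove that no extra relations appear in $\operatorname{in}(I)$, because the anti-diagonal pairing pins down the filtration from both sides.
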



To state our next result, we denote by $\FN_{2,1}^{\leq d}$ the stack of vector bundles on $\Sigma$ with rank two and fixed determinant $\Lambda$ of degree $1$ whose maximal destablizing line bundle has degree $\leq d$. This is an open substack of the stack $\mathfrak{N}_{2,1}$ of rank two bundles with fixed determinant $\Lambda$; see Section \ref{subsec: stacks} for more discussions.\footnote{In particular, the stack $\FN_{2,1}^{\leq 0}$ is a ${\mu_2}$-gerbe over $N_{2,1}$ and their cohomology rings are isomorphic.} The Chern filtration and refined Poincaré series can be defined for $\mathfrak{N}_{2,1}^{\leq d}$ in a parallel way.

\begin{thm}\label{thm: intermediatestacks}
    We have
    \begin{align*}
    \Omega(\mathfrak{N}^{\leq d}_{2,1}, q, t) &=\frac{(1+q^2t)^{2g}-q^{2g+4d}(1+t)^{2g}}{(1-q^2)(1-q^2t^2)}\\
    &=\Omega(N_{2,1}, q, t) + \sum_{k=1}^d q^{2g+4k-4} \frac{(1+t)^{2g}(1+q^2)}{1-q^2t^2}\,.
    \end{align*}
\end{thm}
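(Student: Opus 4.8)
The plan is to induct on $d$ using the Harder--Narasimhan stratification of $\mathfrak{N}_{2,1}$ together with a Chern-filtration refinement of its Atiyah--Bott perfectness. Write $\mathfrak{N}^{=d}_{2,1}\coloneqq\mathfrak{N}^{\leq d}_{2,1}\setminus\mathfrak{N}^{\leq d-1}_{2,1}$ for the locally closed stratum of bundles whose destabilizing sub-line-bundle $L$ has degree exactly $d\geq 1$; it is closed in $\mathfrak{N}^{\leq d}_{2,1}$ of codimension $c_d=g+2d-2$ (computed as in \cite{AB}), and, $L^{\vee}\otimes(E/L)$ having negative degree $1-2d$ on fibers, its normal bundle is $\mathcal{N}_d=R^1\pi_*\big(\mathbb{L}^{\vee}\otimes(\mathbb{U}/\mathbb{L})\big)$, where $\mathbb{L}\subset\mathbb{U}$ is the universal destabilizing sub-bundle and $\pi\colon\mathfrak{N}^{=d}_{2,1}\times\Sigma\to\mathfrak{N}^{=d}_{2,1}$. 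Realizing $\mathfrak{N}^{=d}_{2,1}$ as the stack of extensions $0\to L\to E\to\Lambda\otimes L^{-1}\to 0$, the forgetful map $(E,L)\mapsto L$ to the classifying stack of degree-$d$ line bundles $\mathrm{Pic}^d(\Sigma)\times B\mathbb{G}_m$ has cohomologically trivial fibers, so $H^*(\mathfrak{N}^{=d}_{2,1})\cong H^*(\mathrm{Pic}^d(\Sigma))\otimes\mathbb{Q}[\gamma]=\Lambda(\xi_1,\dots,\xi_{2g})\otimes\mathbb{Q}[\gamma]$ with $\deg\xi_i=1$, $\deg\gamma=2$; concretely the $\xi_i$ and $\gamma$ appear in the Künneth decomposition $c_1(\mathbb{L})=d\,\mathsf{pt}+\gamma\otimes\mathbf{1}_\Sigma+\sum_i\xi_i\otimes e_i$.

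By perfectness \cite{AB} the Gysin sequence breaks into short exact sequences
\[
0\longrightarrow H^{*-2c_d}(\mathfrak{N}^{=d}_{2,1})\xrightarrow{\ \iota_!\ }H^*(\mathfrak{N}^{\leq d}_{2,1})\xrightarrow{\ j^*\ }H^*(\mathfrak{N}^{\leq d-1}_{2,1})\longrightarrow 0.
\]
Since $j^*$ carries tautological generators to tautological generators of the same Chern degree and is surjective, it is strict for the Chern filtration, hence $\gr^C$ of this sequence remains exact. Equipping $\ker j^*=\iota_!H^*(\mathfrak{N}^{=d}_{2,1})$ with the induced Chern filtration and using that $\iota_!$ raises cohomological degree by $2c_d$, we get
\[
\Omega(\mathfrak{N}^{\leq d}_{2,1},q,t)=\Omega(\mathfrak{N}^{\leq d-1}_{2,1},q,t)+q^{2c_d}\,\Omega(\mathfrak{N}^{=d}_{2,1},q,t),
\]
where $\Omega(\mathfrak{N}^{=d}_{2,1})$ uses this induced filtration. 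The theorem then follows once we show $\Omega(\mathfrak{N}^{=d}_{2,1},q,t)=\frac{(1+t)^{2g}(1+q^2)}{1-q^2t^2}$: summing over $k=1,\dots,d$ and using that $\mathfrak{N}^{\leq 0}_{2,1}$ is a $\mu_2$-gerbe over $N_{2,1}$ yields the second displayed expression of the theorem, and the first follows from the elementary identity $\sum_{k=1}^d q^{2g+4k-4}\frac{1+q^2}{1-q^2t^2}=\frac{q^{2g}(1-q^{4d})}{(1-q^2)(1-q^2t^2)}$ together with the formula for $\Omega(N_{2,1})$ in Theorem~\ref{thm: main} (equivalently, one may start from $\mathfrak{N}_{2,1}$, whose cohomology is freely generated by $\alpha_2,\beta_2,\psi_{2,i}$, and run the recursion downward, which reproves Theorem~\ref{thm: main} as the case $d=0$).

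It thus remains to compute $\Omega(\mathfrak{N}^{=d}_{2,1})$. The essential input is the restriction of the tautological generators: from $c_2(\mathbb{U})|_{\mathfrak{N}^{=d}_{2,1}}=c_1(\mathbb{L})\cdot c_1(\mathbb{U}/\mathbb{L})$ and $c_1(\mathbb{U})=\mathsf{pt}$ — the latter valid after the normalization $\beta_1=0$, since $H^1(\mathfrak{N}^{\leq d}_{2,1})=0$ forces $\psi_{1,i}=0$ — one obtains
\[
\beta_2\mapsto-\gamma^2,\qquad \psi_{2,i}\mapsto-2\gamma\xi_i,\qquad \alpha_2\mapsto(1-2d)\gamma-2\sigma,\qquad \sigma\coloneqq\sum_{i=1}^g\xi_i\xi_{i+g}.
\]
Combining the projection formula $\iota_!(\iota^*(y)\cdot x)=y\cdot\iota_!(x)$ with a Grothendieck--Riemann--Roch computation of the Euler class $e(\mathcal{N}_d)=\iota_!(1)$ (whose restriction to the stratum has Chern character $(c_d-4\sigma)e^{-2\gamma}$), one shows that the induced Chern filtration on $H^*(\mathfrak{N}^{=d}_{2,1})$ assigns Chern degree $0$ to each $\xi_i$ and Chern degree $2$ to $\gamma$, but with the crucial collapse $\gamma^2\in C_2$ — forced by $\iota^*\beta_2=-\gamma^2$ and $e(\mathcal{N}_d)\in C_{2c_d}$. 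Consequently, in the associated graded, $\gamma^{2m}$ has Chern degree $2m$ and $\gamma^{2m+1}$ has Chern degree $2m+2$, so $\mathbb{Q}[\gamma]$ contributes $\frac{1}{1-q^2t^2}+\frac{q^2}{1-q^2t^2}=\frac{1+q^2}{1-q^2t^2}$ while $\Lambda(\xi_i)$ contributes $(1+t)^{2g}$, giving the required refined Poincaré series.

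The step I expect to be the main obstacle is the determination of this induced filtration, specifically the \emph{lower} bounds on Chern degrees: the subalgebra of $H^*(\mathfrak{N}^{=d}_{2,1})$ generated by the restricted tautological classes is proper — it contains $\gamma^2$, $\gamma\xi_i$ and $(1-2d)\gamma-2\sigma$ but not $\gamma$, $\xi_i$ or $\sigma$ individually — so the filtration cannot be read off from a generator count, and one must analyze $H^*(\mathfrak{N}^{=d}_{2,1})$ as a filtered module over $H^*(\mathfrak{N}^{\leq d}_{2,1})$ and exploit the GRR formula for $e(\mathcal{N}_d)$ to verify, e.g., that $e(\mathcal{N}_d)$ expressed in the tautological generators of $\mathfrak{N}^{\leq d}_{2,1}$ has Chern degree exactly $2c_d$, not less. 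The matching upper bounds are comparatively easy, following from the observation that every degree-$n$ class on $\mathfrak{N}^{\leq d}_{2,1}$ lies in $C_n$ (among the tautological generators only $\alpha_2$ has Chern degree equal to its cohomological degree) together with the projection formula.
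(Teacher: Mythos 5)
Your setup (Harder--Narasimhan stratification, the split Gysin sequence, strictness of $j^*$, and the identification of the stratum cohomology) matches the paper, and your guessed answer for the induced filtration on $H^*(\mathfrak{N}^{=d}_{2,1})$ is consistent with the final formula. But the core of the argument is missing, and you have the difficulty located in the wrong place. You assert that the \emph{upper} bounds on the Chern degrees of $\iota_!(\xi_I\gamma^m)$ are ``comparatively easy,'' following from $\deg^C\leq\deg$ and the projection formula. The trivial bound is useless here: $\iota_!(\xi_I\gamma^{2m})$ has cohomological degree $2c_d+4m+|I|$ but must be placed in Chern degree $2c_d+2m$, far below its cohomological degree. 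The projection formula only controls $\iota_!(x)$ for $x$ in the $H^*(\mathfrak{N}^{\leq d}_{2,1})$-submodule generated by $\iota_!(1)$, i.e.\ for $x$ in the subalgebra generated by $\gamma^2$, $\gamma\xi_i$, $(1-2d)\gamma-2\sigma$ — which you yourself note is proper. For the base cases $\iota_!(\xi_I)$ and $\iota_!(\xi_I\gamma)$ (equivalently, the Mumford relations $\mathsf{MR}^d_{k+2g+2d-2,\epsilon_J}$ with $k\in\{0,1\}$), no bound is produced by your argument, and these bounds are exactly where the paper's work lies: one needs the explicit formula of Proposition \ref{prop: explicitmr} (via Zagier's and Kiem's computations), and for non-primitive classes $A=\theta^m\tilde\sigma_l$ the naive Mumford relation genuinely fails the desired bound, forcing the introduction of the \emph{modified} relations of Proposition \ref{prop: boundCmodifiedmr} together with the identity $\mathsf{MR}^{d+1}_{k,A}=\mathsf{MR}^d_{k,A}-\beta_1\mathsf{MR}^d_{k-1,A}+\beta_2\mathsf{MR}^d_{k-2,A}$ of Lemma \ref{lem: mrdifferentd} to reduce to the basis of Proposition \ref{prop: basismr3}. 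None of this is replaced by your GRR computation of $e(\mathcal{N}_d)$.

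The second gap is sharpness. You propose to prove the lower bounds stratum by stratum by analyzing $e(\mathcal{N}_d)$ directly, and you even suggest that running the recursion downward would reprove Theorem \ref{thm: main}. The paper does not know how to do this, and this should be a warning sign: it establishes only the inequality \eqref{eq: differenceinequality} for each $d$, and then deduces that all inequalities are equalities by summing over $d\geq 1$ and comparing with $\Omega(\mathfrak{N}_{2,1},q,t)-\Omega(N_{2,1},q,t)$, where the second term is supplied by Theorem \ref{thm: main} (itself proved by a completely different route, via Zagier's anti-diagonal basis). So the logical structure is: upper bounds for every stratum, plus one global identity, force sharpness everywhere — rather than a direct determination of the induced filtration on any single stratum. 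As written, your proposal establishes neither the upper bounds beyond the projection-formula submodule nor the lower bounds, so the computation of $\Omega(\mathfrak{N}^{=d}_{2,1},q,t)$ remains unproved.
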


Although Theorems \ref{thm: main} and \ref{thm: intermediatestacks} look similar in nature, our proof of Theorem~\ref{thm: intermediatestacks} requires establishing Theorem \ref{thm: main} first and analyzing the interaction between the Chern filtration and Harder--Narasimhan stratifications for $\mathfrak{N}_{2,1}$. The proofs of the two theorems will be given in Sections~\ref{sec:refinedpoincareN21} and \ref{sec: intermediatestacks}, respectively.

\smallskip

Furthermore, we construct explicit operators and $\mathfrak{sl}_2$-actions on $\gr^C_\bullet H^*(\FN^{\leq d}_{2,1})$ in Section~\ref{sec: sl2}; when $d=0$, the corresponding $\mathfrak{sl}_2$-triple categorifies the symmetry \eqref{eq: main} for $r=2$. 


\begin{thm}
\label{thm: sl2}
    There are explicit $\mathfrak{sl}_2$-triples $(\Fe^d, \Fh^d, \Ff^d)$ acting on $\gr^C_\bullet H^*(\FN_{2,1}^{\leq d})$ for each $d\geq 0$, such that $\mathfrak h^d$ is a shift (depending on $d$) of the Chern grading operator. Moreover, when $d=0$, $\Fe^0, \Ff^0$ are self-adjoint and $\Fh^0$ is anti-self-adjoint with respect to the associated graded Poincaré pairing \eqref{eq: gradedpairing}.
\end{thm}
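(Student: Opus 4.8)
The plan is to construct the triple $(\Fe^d,\Fh^d,\Ff^d)$ by hand on a convenient presentation of $\gr^C_\bullet H^\ast(\FN_{2,1}^{\leq d})$ and then verify the $\mathfrak{sl}_2$ relations together with the (anti-)self-adjointness in the case $d=0$. The natural starting point is the explicit description of $\gr^C_\bullet H^\ast(\FN_{2,1}^{\leq d})$ as a quotient of the free bigraded algebra on the rank-two tautological generators $\alpha_2,\beta_2,\psi_{2,i}$ by the ``graded'' ideal $I^{\gr}$ referenced in Section \ref{subsec: relationsIgr} and Corollary \ref{cor: f determines I gr}; since the refined Poincaré polynomials of Theorems \ref{thm: main} and \ref{thm: intermediatestacks} are known in closed form, one knows the bigraded Hilbert series of the module on which the triple must act. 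I would take $\Fe^d$ to be multiplication by a fixed degree-$(2,0)$ tautological class — the class $\alpha_2$ (or $\alpha_2$ plus a $\beta_2$-correction tuned to the shift depending on $d$) — which raises Chern degree by $2$ and cohomological degree by $2$; $\Fh^d$ to be the Chern-grading operator recentred by the scalar $\tfrac12(r+2)(r-1)(g-1)+2d = 2(g-1)+2d$ so that $[\Fh^d,\Fe^d]=2\Fe^d$ holds tautologically; and $\Ff^d$ to be the unique lowering operator of degree $(-2,-2)$ forced by these choices, whose existence and well-definedness on the associated graded is exactly what needs proof.

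The key steps, in order, are: (1) record the bigraded presentation of $\gr^C_\bullet H^\ast(\FN_{2,1}^{\leq d})$ and the explicit Hilbert series from Theorems \ref{thm: main}–\ref{thm: intermediatestacks}, and identify $\Fe^d$ as the $\alpha_2$-multiplication operator, checking it descends to the associated graded (this is immediate since $\alpha_2$ has pure Chern degree $2$, so multiplication is strictly filtered); (2) verify the Hard–Lefschetz-type statement that $(\Fe^d)^k\colon \gr^C_{c-k}\to \gr^C_{c+k}$ is an isomorphism for the appropriate centred $c$ — equivalently that $\gr^C_\bullet$ is a free module of the expected rank over the polynomial ring $\BQ[\alpha_2]$ in the relevant range — using the factored form of $\Omega$, whose denominator $(1-q^2)(1-q^2t^2)$ encodes precisely a free action of two commuting degree-$(2,0)$ and degree-$(2,2)$ operators; (3) once Hard Lefschetz for $\Fe^d$ is in hand, invoke the standard fact (e.g.\ the Jacobson–Morozov/Lefschetz package) that a grading operator $\Fh^d$ together with a raising operator $\Fe^d$ satisfying HL uniquely determines a lowering operator $\Ff^d$ completing an $\mathfrak{sl}_2$-triple; (4) for $d=0$, compute the adjoints of $\Fe^0,\Fh^0$ with respect to the graded Poincaré pairing \eqref{eq: gradedpairing}: self-adjointness of $\Fe^0=\alpha_2\cdot$ follows because the pairing is a Poincaré-duality pairing on $\gr^C_\bullet H^\ast(N_{2,1})$ (the top class sits in the top Chern degree, making cup product symmetric under the pairing), and anti-self-adjointness of $\Fh^0$ is formal from the centred grading, at which point $\Ff^0$ is automatically self-adjoint as the adjoint-compatible completion.

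The main obstacle I anticipate is step (2): proving the Hard-Lefschetz property for multiplication by $\alpha_2$ on $\gr^C_\bullet$. The factored shape of $\Omega(N_{2,1},q,t)$ is strongly suggestive — the denominator literally says the associated graded is a free module over $\BQ[\alpha_2,\theta]$ for a second operator $\theta$ of bidegree $(2,2)$ — but extracting an actual isomorphism $(\Fe^d)^k\colon \gr^C_{c-k}\xrightarrow{\sim}\gr^C_{c+k}$ from a Hilbert-series identity requires knowing that no tautological relation in $I^{\gr}$ kills a power of $\alpha_2$ prematurely, i.e.\ that $\alpha_2$ is a non-zero-divisor on $\gr^C_\bullet$ and that the complementary free generators can be chosen symmetrically about the centre. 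I would handle this by using the explicit generators of $I^{\gr}$ from Section \ref{subsec: relationsIgr}: the leading ($\beta_2$-free, or more precisely lowest Chern degree) parts of the Mumford/Earl–Kirwan relations should be shown to form a regular sequence transverse to $\alpha_2$, so that $\gr^C_\bullet$ is Cohen–Macaulay over $\BQ[\alpha_2]$ of the rank predicted by the Hilbert series; this reduces the HL statement to a dimension count that the closed formula for $\Omega$ supplies. The $d>0$ case then follows from the same argument applied to $\FN_{2,1}^{\leq d}$, using the second expression in Theorem \ref{thm: intermediatestacks} to track how the extra summands (each again a free $\BQ[\alpha_2]$-module, after the $d$-dependent shift) assemble into $\mathfrak{sl}_2$-isotypic pieces; the self-adjointness in step (4) is comparatively routine once the pairing is identified with Poincaré duality.
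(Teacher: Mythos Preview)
Your approach has a genuine gap at the very first step, and it propagates through the rest.

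\textbf{The wrong raising operator.} You propose $\Fe^d=\alpha_2$, an operator of bidegree $(2,0)$ in $(q,t)$, together with $\Fh^d$ equal to the shifted Chern grading. For this to complete to an $\mathfrak{sl}_2$-triple via Jacobson--Morozov you need Hard Lefschetz: $(\alpha_2)^k\colon \gr^C_{2g-2-k}\to \gr^C_{2g-2+k}$ an isomorphism preserving the $t$-degree. Numerically this would force $\Omega(N_{2,1},q,t)$ to be symmetric under $q\mapsto q^{-1}$ with $t$ fixed, which is \emph{false}: the actual symmetry \eqref{eq: main} is $(q,t)\mapsto (q^{-1},t^{-1})$. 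Already for $g=2$ one computes $\Omega=1+q^2+4q^2t+q^2t^2+q^4t^2$, so $\gr^C_0H^0$ is one-dimensional while $\gr^C_4H^4=0$; hence $\alpha_2^2$ vanishes in the associated graded and Hard Lefschetz fails. The denominator $(1-q^2)(1-q^2t^2)$ does not witness freeness over $\BQ[\alpha_2]$ in the sense you need. (There is also an internal inconsistency: you take $\Ff^d$ of bidegree $(-2,-2)$, but then $[\Fe^d,\Ff^d]$ would have bidegree $(0,-2)$, not $(0,0)$.)

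\textbf{What the paper does instead.} The raising operator for the diagonal triple is $\Fe=\alpha+\beta$, which is \emph{not} homogeneous in cohomological degree; this is forced, since only the combined $(q,t)\mapsto(q^{-1},t^{-1})$ symmetry holds. Rather than deducing $\Ff$ from a Hard Lefschetz statement, the paper writes down two commuting $\mathfrak{sl}_2$-triples $(\Fe_\alpha,\Fh_\alpha,\Ff_\alpha)$ and $(\Fe_\beta,\Fh_\beta,\Ff_\beta)$ \emph{explicitly} as second-order differential operators on $\BD$, and takes the diagonal. The substance of the proof is that these operators descend through $\BD\twoheadrightarrow \gr^C_\bullet H^\ast(\FN_{2,1}^{\leq d})$. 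For $d=0$ this is shown by proving that $\Ff_\alpha$ (and by the $\alpha\leftrightarrow\beta$ symmetry of $\langle-,-\rangle^{\gr}$, also $\Ff_\beta$) is self-adjoint for the graded pairing; the key inputs are the Virasoro proportionality \eqref{eq: Virasoro} and monodromy invariance of integrals, together with the perfectness of the pairing from Corollary~\ref{cor: 1.4}. For $d\geq 1$ there is no pairing and $\gr^C_\bullet H^\ast(\FN_{2,1}^{\leq d})$ is infinite-dimensional with unbounded Chern weights, so neither Hard Lefschetz nor Jacobson--Morozov is available; instead one checks by direct computation that the explicit $\Ff_\alpha^d,\Ff_\beta^d$ send the explicit generators \eqref{eq: relationsIgr} of $I^{\gr}_{\leq d}$ back into the ideal.

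Your step (4) is essentially correct in spirit once a triple exists, but steps (1)--(3) do not produce one.
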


The above $\mathfrak{sl}_2$-actions are first defined on the descendent algebra $\BD$, which is the free super-commutative algebra generated by symbols $\alpha_k, \beta_k, \psi_{k, i}$ (see Definition \ref{def: descendents}),
and then shown to descend via the surjection $\BD\twoheadrightarrow \gr^C_\bullet H^*(\FN^{\leq d}_{2,1})$. 


The $\mathfrak{sl}_2$-action with $d=0$ also gives a beautiful description of the associated graded ideal of tautological relations, defined as $I^\gr\coloneqq\ker(\BD\twoheadrightarrow \gr^C_\bullet H^*(N_{2,1}))$. Let $C_{>4g-4}\BD \subset \mathbb{D}$ be the subspace of descendents with Chern degree greater than $4g-4$. Note that Theorem \ref{thm: topcherndegree} implies that $C_{> 4g-4}\mathbb{D} \subset I^\mathrm{gr}$.


\begin{cor}[=\,Corollary \ref{cor: f determines I gr}]
   The associated graded ideal $I^\gr\subset \BD$ is the smallest $\mathfrak{sl}_2$-invariant ideal containing $C_{>4g-4}\BD$.
\end{cor}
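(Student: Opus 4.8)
The plan is to identify $I^{\gr}$ with the smallest $\mathfrak{sl}_2$-invariant ideal containing $C_{>4g-4}\BD$ by establishing two inclusions. First, I would show that $I^{\gr}$ is itself $\mathfrak{sl}_2$-invariant: since the $\mathfrak{sl}_2$-triple $(\Fe^0, \Fh^0, \Ff^0)$ acts on $\BD$ and descends to $\gr^C_\bullet H^*(N_{2,1}) = \BD/I^{\gr}$ (Theorem~\ref{thm: sl2}), the kernel $I^{\gr}$ is automatically preserved by all three operators, hence is an $\mathfrak{sl}_2$-invariant ideal. Combined with the observation (from Theorem~\ref{thm: topcherndegree}) that $C_{>4g-4}\BD \subset I^{\gr}$, this immediately gives that $I^{\gr}$ contains the smallest such ideal; call it $J$. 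So the content is the reverse inclusion $I^{\gr} \subseteq J$.

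For $I^{\gr} \subseteq J$, the strategy is to show that $\BD/J$ already has the ``correct size'' in each bidegree, i.e. that its bigraded dimensions agree with those of $\gr^C_\bullet H^*(N_{2,1})$ as recorded by $\Omega(N_{2,1}, q, t)$ in Theorem~\ref{thm: main}; since $\BD/J$ surjects onto $\BD/I^{\gr} = \gr^C_\bullet H^*(N_{2,1})$, a dimension count in each bidegree forces $J = I^{\gr}$. To compute the Hilbert series of $\BD/J$, I would use the $\mathfrak{sl}_2$-structure: the quotient $\BD/(C_{>4g-4}\BD)$ is the truncation of the free super-commutative algebra in Chern degrees $\le 4g-4$, whose bigraded dimensions are explicit, and passing to the further quotient by the $\mathfrak{sl}_2$-ideal generated by $C_{>4g-4}\BD$ amounts to killing exactly the ``non-primitive'' or ``above-middle'' part of each $\mathfrak{sl}_2$-string. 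Concretely, because $\Fh^0$ is (a shift of) the Chern grading operator, the $\mathfrak{sl}_2$-action organizes $\BD$ into strings symmetric about Chern degree $2g-2$; the smallest $\mathfrak{sl}_2$-ideal containing everything in Chern degree $>4g-4$ must contain, for each such string, not just its top but its entire lower half below the reflection of $4g-4$, which is Chern degree $<0$ — so naively nothing new is killed, and one must instead track how the ideal (closed under multiplication by all of $\BD$) propagates. The precise claim is that $J$ is spanned by $\Ff^0$-translates of products $x \cdot y$ with $x \in C_{>4g-4}\BD$, and a careful bookkeeping of $\mathfrak{sl}_2$-weight spaces shows the resulting quotient has Hilbert series exactly $\Omega(N_{2,1},q,t)$.

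The main obstacle will be this last dimension count: showing that the smallest $\mathfrak{sl}_2$-invariant ideal containing $C_{>4g-4}\BD$ is not \emph{too small}, i.e. that $\BD/J$ does not exceed the predicted dimensions. The surjection $\BD/J \twoheadrightarrow \gr^C_\bullet H^*(N_{2,1})$ gives one inequality for free; the reverse requires genuinely understanding the combinatorics of the free super-commutative algebra $\BD$ under the explicit $\mathfrak{sl}_2$-action, and in particular proving that the $\mathfrak{sl}_2$-submodule generated by $C_{>4g-4}\BD$ inside $\BD$ (as a module, before taking the ideal) together with its $\BD$-multiples exhausts everything outside a complement of the right size. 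A cleaner route, which I would pursue in parallel, is to avoid re-deriving the Hilbert series and instead argue structurally: having already proven in Section~\ref{subsec: relationsIgr} (or Corollary~\ref{cor: f determines I gr}'s analogue there) a presentation of $I^{\gr}$ by explicit generators, one checks directly that (a) those generators lie in $J$ — using that they are $\Ff^0$-images, or $\mathfrak{sl}_2$-descendants, of the ``obvious'' relations in top Chern degree $>4g-4$ — and (b) $J \subseteq I^{\gr}$ by the invariance argument above. This reduces the corollary to a bookkeeping check that the known generating relations of $I^{\gr}$ are $\mathfrak{sl}_2$-generated from $C_{>4g-4}\BD$, which is the form in which I expect the paper actually proves it.
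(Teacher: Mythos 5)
Your first inclusion ($J\subseteq I^{\gr}$, where $J$ is the smallest $\mathfrak{sl}_2$-invariant ideal containing $C_{>4g-4}\BD$) is exactly the paper's argument: $I^{\gr}$ contains $C_{>4g-4}\BD$ by Theorem \ref{thm: topcherndegree} and is preserved by the operators by Corollary \ref{cor: descending the operators}. The gap is in the reverse inclusion, which you correctly identify as ``the main obstacle'' but do not resolve. The paper does not compute the Hilbert series of $\BD/J$ from the combinatorics of $\mathfrak{sl}_2$-strings, and your attempt in that direction goes astray (the observation that reflecting Chern degree $>4g-4$ about $2g-2$ lands below $0$, so that ``naively nothing new is killed,'' signals that the module-theoretic bookkeeping you propose will not close). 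The actual mechanism is a sandwich-plus-symmetry argument using only the $q$-specialization: (1) since the diagonal $\Fh$ is the Chern grading shifted by $2g-2$ (Corollary \ref{cor: diagonal h is shifted Chern grading}) and both $\BD/J$ and $\BD/I^{\gr}$ carry the $\mathfrak{sl}_2$-action, the dimensions $\dim\gr^C_i$ of each quotient are symmetric under $i\mapsto 4g-4-i$, so it suffices to match them for $i\leq 2g-2$; (2) the explicit formula $\Omega(N,q,1)=\frac{(1+q^2)^{2g}}{(1-q^2)^2}-q^{2g}\frac{2^{2g}}{(1-q^2)^2}$ from Theorem \ref{thm: main} shows that $\BD\twoheadrightarrow\BD/I^{\gr}$ is an isomorphism in Chern degrees $\leq 2g-1$, and since $\BD/J$ sits between them, all three agree in that range; (3) the symmetry then forces $\dim\gr^C_i(\BD/J)=\dim\gr^C_i(\BD/I^{\gr})$ for all $i$, hence $J=I^{\gr}$. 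Without step (2) — the vanishing of $I^{\gr}$ below the middle Chern degree — no dimension count of the kind you describe gets off the ground.

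Your fallback route (b), checking that the explicit generators of $I^{\gr}$ from Section \ref{subsec: relationsIgr} are $\Ff$-descendants of elements of $C_{>4g-4}\BD$, is a genuinely viable alternative, though it is not how the paper proves this corollary: the needed computation is exactly the identity $\Ff_\alpha^d(R_{k,m,l}\sigma_l)=(2g+2d-k)R_{k-1,m,l}\sigma_l$ established in the proof of Theorem \ref{thm: sl2intermediatestacks}, which (for $d=0$) lets one descend from the relations of Chern degree $>4g-4$ (those with $k>3g-2$) all the way down to $k=2g$, since the scalar $2g-k$ is nonzero for $k\geq 2g+1$. But as written your proposal defers this to ``a bookkeeping check'' and incorrectly guesses that this is the paper's proof; to be complete you would need to carry out that descent explicitly (and handle the fact that the corollary is stated for the diagonal $\Ff=\Ff_\alpha+\Ff_\beta$ rather than $\Ff_\alpha$ alone, which mixes the $R_{k,m,l}$ for different $m$).
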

    

In the rank three case, we propose a closed formula for $\Omega(N_{3,1}, q, t)$ that satisfies the symmetry~\eqref{eq: main}; it is verified for $g=2,3$ by explicit computations of the cohomology rings. 

\begin{conj}
    The refined Poincaré polynomial for $N_{3,1}$ is
    \begin{align*}
    \Omega(N_{3,1}, q,t) = \frac{1}{(1 - q^2) (1 - 
     q^2 t^2) (1 - q^3 t) (1 - q^3 t^3)} \Big((1 + q^2 t)^{2 g}(1 + q^3 t^2)^{2 g} & \\- 
   q^{4 g - 2} \frac{(1 + q t) (1 - q^3 t)}{1 - t^2} (1 + t)^{2 g} (1 + q t^2)^{2 g} & \\+
   q^{4 g - 2} t^{2g}  \frac{(1 - q^2) (1 + 
       q t + q^2 t^2)}{1-t^2}(1 + q)^{2 g} (1 + t)^{2 g}\Big).&
    \end{align*}
\end{conj}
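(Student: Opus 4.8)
\emph{Overall strategy.} Since this is stated as a conjecture, I describe how one would attack it rather than give a complete argument. The natural plan is a Harder--Narasimhan recursion on the stack $\mathfrak{N}_{3,1}$ of all rank-three, fixed-determinant bundles, parallel to the way Theorem~\ref{thm: intermediatestacks} is bootstrapped from Theorem~\ref{thm: main} in rank two. The first input is that the Chern filtration on $H^*(\mathfrak{N}_{3,1})$ is as large as possible, $\gr^C_\bullet H^*(\mathfrak{N}_{3,1})\cong \BD$ (the free super-commutative algebra on $\alpha_2,\alpha_3,\beta_2,\beta_3,\psi_{2,i},\psi_{3,i}$) --- this is the classical freeness of the cohomology of the stack of bundles, upgraded to respect the Chern grading, and it gives
\[
\Omega(\mathfrak{N}_{3,1},q,t)=\frac{(1+q^2t)^{2g}(1+q^3t^2)^{2g}}{(1-q^2)(1-q^2t^2)(1-q^3t)(1-q^3t^3)},
\]
which one recognizes as the denominator together with the first numerator factor of the conjectured formula. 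The conjecture is thus equivalent to the claim that the other two terms are exactly the Harder--Narasimhan defect $\Omega(\mathfrak{N}_{3,1})-\Omega(N_{3,1})$.

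\emph{The recursion.} I would stratify $\mathfrak{N}_{3,1}$ by Harder--Narasimhan type: apart from the open semistable stratum, which has the cohomology, tautological classes and Chern filtration of $N_{3,1}$, the unstable types at slope $1/3$ are $2+1$, $1+2$ and $1+1+1$. Each stratum $\mathfrak{N}^\tau$ maps, through its canonical HN filtration, to a product of lower-rank moduli stacks --- copies of $\Pic$ (with exterior-algebra cohomology, contributing factors $(1+t)^{2g}$) and rank-two stacks $\mathfrak{N}^{\le d'}_{2,d''}$ --- up to gerbes and affine fibrations by the relevant $\Ext^1$-groups, so $\Omega(\mathfrak{N}^\tau,q,t)$ is computed inductively from the rank-one and rank-two cases already in hand. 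One then feeds these into the Gysin/excision long exact sequences of the stratification and argues that they are \emph{strictly compatible} with the Chern filtration, so that $\gr^C_\bullet$ keeps them exact; granting this, $\Omega(\mathfrak{N}_{3,1})$ equals $\Omega(N_{3,1})$ plus a sum over the HN types and over the degrees of the graded pieces of the corresponding $\Omega(\mathfrak{N}^\tau)$, each shifted by a monomial in $q,t$ recording the Chern and cohomological bidegree of the Gysin class of $\mathfrak{N}^\tau$ (built from the Chern classes of the normal bundle, i.e.\ of the relevant $\Ext$-bundles). The common prefactor $q^{4g-2}$ tracks the codimension $2g-1$ of the shallowest unstable stratum, and carrying out the (geometric-type) sums over the HN degree parameters produces the factor $\tfrac{1}{1-t^2}$, the re-weighted exterior factors $(1+qt^2)^{2g}$ and $(1+q)^{2g}$, and the polynomial corrections $(1+qt)(1-q^3t)$ and $(1-q^2)(1+qt+q^2t^2)$; solving for $\Omega(N_{3,1})$ returns the displayed expression.

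\emph{Consistency checks and low genus.} Independently of the recursion I would verify: that the apparent poles of the right-hand side (at $t^2=1$, $q^2=1$, $q^3t=1$, $q^3t^3=1$) cancel, so the formula is an honest polynomial with non-negative coefficients; that its specialization at $q=t$ recovers the classical Poincar\'e polynomial of $N_{3,1}$ computed from the Atiyah--Bott/Harder--Narasimhan recursion; and that $\overline{\Omega}(N_{3,1},q,t)=q^{-5(g-1)}t^{-3(g-1)}\Omega(N_{3,1},q,t)$ is invariant under $(q,t)\mapsto(q^{-1},t^{-1})$, as \eqref{eq: main} requires. For $g=2,3$ the stack recursion can be bypassed altogether: take the complete Earl--Kirwan relations in $H^*(N_{3,1})$, reduce them against tautological monomials degree by degree with linear algebra to read off $\dim\gr^C_i H^{i+j}(N_{3,1})$, and compare with the coefficients of the conjectured polynomial --- this is the verification the paper refers to.

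\emph{Main obstacle.} The genuine difficulty, and the reason the statement is only a conjecture, is the strict compatibility of the Harder--Narasimhan Gysin sequences with the Chern filtration in rank three. Unlike rank two, the strata now involve rank-two subquotients whose cohomology already carries nontrivial tautological relations, and one must control precisely how the rank-three descendents $\alpha_k,\beta_k,\psi_{k,i}$ restrict to a stratum in terms of the descendents of its graded pieces, keeping track of Chern degrees, and rule out any collapse of the filtration in the long exact sequence. This is tantamount to establishing the rank-three analogues of the structural results behind Theorems~\ref{thm: main} and \ref{thm: intermediatestacks}, which is exactly what is presently missing.
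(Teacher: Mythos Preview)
The statement is a \emph{conjecture}; the paper gives no proof and explicitly says only that the formula ``is verified for $g=2,3$ by explicit computations of the cohomology rings.'' You identify this correctly, and your description of that verification (Earl--Kirwan relations plus linear algebra on tautological monomials) matches what the paper has in mind. Your first observation, that the conjectured formula is $\Omega(\FN_{3,1},q,t)$ minus two correction terms, with $\Omega(\FN_{3,1},q,t)$ given by \eqref{eq: omegaNN}, is also exactly right and is the reason the formula has the shape it does.

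Where your outline diverges from the paper is in the direction of the argument. You propose to \emph{derive} $\Omega(N_{3,1})$ by running the Harder--Narasimhan recursion down from $\FN_{3,1}$, assuming strict compatibility of the Gysin sequences with the Chern filtration. But in rank two the paper does the opposite: Theorem~\ref{thm: main} (the formula for $\Omega(N_{2,1})$) is proved \emph{first}, via Zagier's explicit anti-diagonal basis (Lemma~\ref{lem: antidiagonalbasis}), with no appeal to HN strata. Only afterwards is the stratification used, and even then it yields only an inequality \eqref{eq: differenceinequality} in the partial order $\preceq$; equality (and hence strict compatibility) is forced by sandwiching, because both endpoints $\Omega(\FN_{2,1})$ and $\Omega(N_{2,1})$ are already known. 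So your recursion, taken at face value, is circular: the only method the paper has for establishing strict compatibility of the Gysin sequences with the Chern filtration presupposes the answer for the semistable locus.

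You do flag this in your ``Main obstacle'' paragraph, so the proposal is honest about where it breaks down. But it is worth being sharper: absent a rank-three analogue of Zagier's basis (or some other independent computation of $\Omega(N_{3,1})$), the HN approach can at best produce an inequality $\Omega(\FN_{3,1})-\Omega(N_{3,1})\preceq(\textup{sum over strata})$, not the conjectured equality. That is the genuine gap, and it is why the paper leaves the statement as a conjecture.
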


In general, we ask the following question:

\begin{question} \label{question: symmetry}
    Does the symmetry \eqref{eq: main} hold for $d = 1$ and arbitrary ranks?
\end{question}

This is equivalent to the statement that the intersection pairing
\begin{equation}
\label{eq: gradedpairing}
\mathrm{gr}^C_\bullet H^*(N_{r,1}) \otimes \mathrm{gr}^C_{\mathrm{top}-\bullet}H^*(N_{r,1}) \longrightarrow \mathbb{Q}
\end{equation}
on the associated graded ring is perfect, where $\mathrm{top}$ is the top Chern degree; 
see Corollary~\ref{cor: 1.4} and the discussion afterward. Note that we do \textit{not} expect the symmetry to hold for arbitrary degrees $d$ coprime to the rank: there is a clear topological obstruction to this, which we discuss in Remark \ref{rmk: nosymmetry}. To further investigate Question \ref{question: symmetry}, it would be useful to compute the cohomology rings $H^*(N_{5,d})$ for $d=1,2$ using the geometric relations in \cite{EK}. 


\smallskip

We also propose a conjectural formula for the {$t=-1$ specialization} 
of the refined Poincaré polynomial for general ranks; see Section \ref{subsec: chernandomega} for some motivation and discussions.

\begin{conj}\label{conj: t=-1}
    We have
    \[
    \Omega(N_{r,d}, q, -1)=\prod_{k=2}^r(1-(-q)^k)^{2g-2}.
    \]
\end{conj}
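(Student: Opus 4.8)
The plan is to deduce Conjecture~\ref{conj: t=-1} by comparing $N_{r,d}$ with the ambient moduli stack $\mathfrak{N}_{r,d}$ of all rank~$r$ bundles on $\Sigma$ with fixed determinant of degree $d$ (Section~\ref{subsec: stacks}), of which $N_{r,d}$ is, up to a gerbe, the semistable locus. By \cite{AB} the cohomology $H^*(\mathfrak{N}_{r,d})$ is freely generated by the tautological classes $\alpha_k,\beta_k,\psi_{k,i}$, $2\le k\le r$; equivalently the canonical surjection $\BD_r\twoheadrightarrow\gr^C_\bullet H^*(\mathfrak{N}_{r,d})$ from the descendent algebra with generators $\alpha_k,\beta_k,\psi_{k,i}$, $2\le k\le r$ (cf.\ Definition~\ref{def: descendents}), is an isomorphism. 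Keeping track of the Chern degree $k$ and the ``excess'' degree ($k-2$ for $\alpha_k$, $k$ for $\beta_k$, $k-1$ for $\psi_{k,i}$) gives
\[
\Omega(\mathfrak{N}_{r,d},q,t)=\prod_{k=2}^{r}\frac{(1+q^{k}t^{k-1})^{2g}}{(1-q^{k}t^{k-2})(1-q^{k}t^{k})},
\qquad\text{hence}\qquad
\Omega(\mathfrak{N}_{r,d},q,-1)=\prod_{k=2}^{r}\bigl(1-(-q)^{k}\bigr)^{2g-2}.
\]
So Conjecture~\ref{conj: t=-1} is equivalent to the clean statement that the $t=-1$ specialization does not distinguish $N_{r,d}$ from the full stack, i.e.\ $\Omega(N_{r,d},q,-1)=\Omega(\mathfrak{N}_{r,d},q,-1)$. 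As a consistency check, the $q$-degree of the right-hand side is $\sum_{k=2}^{r}k(2g-2)=(r+2)(r-1)(g-1)$, matching the top Chern degree of Theorem~\ref{thm: topcherndegree}; and for $r=2$ the identity already follows from Theorem~\ref{thm: main}, whose formula collapses to $(1-q^2)^{2g-2}$ at $t=-1$, providing the base case.

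To prove $\Omega(N_{r,d},q,-1)=\Omega(\mathfrak{N}_{r,d},q,-1)$ I would run the Atiyah--Bott/Harder--Narasimhan recursion for $\mathfrak{N}_{r,d}$ in the Chern-refined setting---exactly as in the proof of Theorem~\ref{thm: intermediatestacks} for $r=2$, but now with all intermediate Harder--Narasimhan strata rather than only the length-two ones---and induct on the rank. For an unstable Harder--Narasimhan type $\tau=((r_1,d_1),\dots,(r_\ell,d_\ell))$, $\ell\ge2$, $\sum r_i=r$, $\sum d_i=d$, $d_1/r_1>\dots>d_\ell/r_\ell$, the stratum $\mathfrak{N}^{\tau}\hookrightarrow\mathfrak{N}_{r,d}$ is an iterated affine fibration over the product of semistable stacks $\prod_i\mathfrak{N}^{ss}_{r_i,d_i}$ cut out by $\bigotimes_i\det\mathbb{U}_i=\Lambda$, of codimension $c_\tau=\sum_{i<j}\bigl(r_ir_j(g-1)+r_jd_i-r_id_j\bigr)$, and since the stratification is equivariantly perfect one obtains a recursion
\[
\Omega(\mathfrak{N}_{r,d},q,t)=\Omega(N_{r,d},q,t)+\sum_{\tau\ \mathrm{unstable}} Q_\tau(q,t)\cdot\prod_{i=1}^{\ell}\Omega\bigl(\mathfrak{N}^{ss}_{r_i,d_i},q,t\bigr),
\]
where $Q_\tau$ has cohomological ($qt$-)degree $2c_\tau$. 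Two ingredients must be supplied: how the Chern filtration behaves under restriction to $\mathfrak{N}^{\tau}$, governed by the Whitney splitting $c(\mathbb{U})|_{\mathfrak{N}^{\tau}}=\prod_i c(\mathbb{U}_i)$ along the filtration (so that a degree-$k$ tautological class of $\mathbb{U}_i$ inherits Chern degree $k$); and the Chern-degree shift carried by the Gysin pushforward $H^{*-2c_\tau}(\mathfrak{N}^{\tau})\to H^*(\mathfrak{N}_{r,d})$, recorded by the equivariant Euler class of the normal bundle $N_\tau$, a bundle of rank $c_\tau$ assembled from the Ext groups $\Ext^{\bullet}(\mathbb{U}_i,\mathbb{U}_j)$, $i<j$.

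The reason each unstable term should die at $t=-1$ is structural: since only the total determinant is fixed and $\ell\ge2$, the stratum carries $2g(\ell-1)$ ``extra'' odd classes built from the Künneth components of the $c_1(\mathbb{U}_i)$ (subject to the single relation $\sum_i c_1(\mathbb{U}_i)=c_1(\mathbb{U})$), and these contribute to $Q_\tau\cdot\prod_i\Omega(\mathfrak{N}^{ss}_{r_i,d_i})$ a factor of the shape $\prod_\nu(1+q^{a_\nu}t^{b_\nu})^{2g}$ with every $b_\nu$ odd---the higher-rank incarnation of the factor $(1+t)^{2g}$ in Theorem~\ref{thm: intermediatestacks}---which vanishes identically at $t=-1$. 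Granting this, summing the recursion at $t=-1$ and using the inductive hypothesis for the (possibly non-coprime) semistable stacks $\mathfrak{N}^{ss}_{r_i,d_i}$---with the Chern filtration defined stackily (Section~\ref{subsec: descendent algebra}), since there is no global universal bundle when $\gcd(r_i,d_i)>1$---yields the desired identity. So the induction must be phrased for $\Omega(\mathfrak{N}^{ss}_{r',d'},q,-1)=\prod_{k=2}^{r'}(1-(-q)^k)^{2g-2}$ for all $r',d'$.

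The main obstacle is making the previous paragraph precise, i.e.\ pinning down the exact bigraded form of each $Q_\tau$. Unlike the cohomological degree, the Chern degree is not merely shifted by $c_\tau$ under the Gysin map: the Euler class of $N_\tau$ is a polynomial in the tautological classes of the factors---by Grothendieck--Riemann--Roch, in the theta-type classes $\theta_i$ assembled from the Künneth components of $c_1(\mathbb{U}_i)$---and what one really needs is its image in $\gr^C_\bullet H^*(\mathfrak{N}^{\tau})$, which is entangled with the relations on the strata; controlling this for all $\tau$ simultaneously is precisely what does not follow from the $r=2$ analysis, where there is a single such class and the bookkeeping is explicit. One clean route would be to isolate the leading Chern-degree term of this Euler class and verify that the determinant/Jacobian factors always force $Q_\tau(q,-1)=0$; a more conceptual but harder route would be to bypass the recursion altogether by constructing an odd, anticommuting endomorphism of $\gr^C_\bullet H^*(N_{r,d})$---a fermionic counterpart of the operator $\Ff^d$ of Theorem~\ref{thm: sl2}---whose homology is the free algebra $\BD_r$, thereby computing the $t=-1$ Euler characteristic directly.
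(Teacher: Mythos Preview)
The statement you are attempting is labeled a \emph{conjecture} in the paper and is not proven there; there is no proof to compare against. What the paper offers (Section~\ref{subsec: chernandomega}) is precisely the reformulation you rediscover: from \eqref{eq: omegaNN} one has $\Omega(\mathfrak{N}_{r,d},q,-1)=\prod_{k=2}^{r}(1-(-q)^{k})^{2g-2}$ on the full stack, so the conjecture is equivalent to $\Omega(N_{r,d},q,-1)=\Omega(\mathfrak{N}_{r,d},q,-1)$; the paper then notes that the $r=2$ case follows from Theorem~\ref{thm: main} and that numerical evidence suggests the difference $\Omega(\mathfrak{N}_{r,d},q,t)-\Omega(N_{r,d},q,t)$ is always divisible by $(1+t)^{2g}$, which is exactly your heuristic for why each unstable HN stratum should vanish at $t=-1$. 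So your strategy is aligned with, and essentially restates, the paper's own motivation for the conjecture.

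Your proposal is not a proof, and you correctly locate the gap: the Chern filtration is not merely shifted under Gysin pushforward from a stratum, so one does not get a clean recursion of the form you write with a well-defined $Q_\tau(q,t)$. In the $r=2$ case (Section~\ref{sec: intermediatestacks}) the paper handles this not by a direct stratum-by-stratum computation but by (a) constructing \emph{modified} Mumford relations with controlled Chern degree, and (b) a squeeze argument using the inequality \eqref{eq: differenceinequality} together with the already-established formula for $\Omega(N_{2,1},q,t)$ from Zagier's explicit basis. Neither ingredient has an analogue in rank $r\geq 3$, which is why the statement remains open. There is a further subtlety in your inductive scheme: the semistable stacks $\mathfrak{N}^{\textup{ss}}_{r_i,d_i}$ on the strata need not have $\gcd(r_i,d_i)=1$, so the induction would require the stacky analogue of the conjecture in the strictly semistable regime, where the link to a moduli space and even the definition of $\Omega$ is more delicate.
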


Finally, we point out that the Chern filtration can be defined on the cohomology of moduli spaces of $G$-bundles for other groups $G$. A natural question is to determine the top Chern degree (cf. Theorem \ref{thm: topcherndegree}) for $G$-bundles.

\subsection{Relation to the $P=C$ phenomenon}
\label{sec: P=C} Our study of the Chern filtration for $N_{r,d}$ is largely motivated by the $P=C$ phenomenon in several related geometries, e.g., the moduli space of Higgs bundles on curves, and the moduli space of 1-dimensional sheaves on del Pezzo surfaces (cf. \cite{KPS, KLMP}). Roughly speaking, the $P=C$ phenomenon predicts a match of the Chern filtration on these moduli spaces with the \textit{perverse filtration} induced by certain abelian fibrations. We refer to \cite[Section 1.4]{PSSZ} for a recent account on the history and development of this phenomenon. The $P=C$ match for moduli of Higgs bundles is a key step in recent proofs of the $P=W$ conjecture \cite{dCHM, MS, HMMS, MSY}.

While the perverse filtration relies on the structure of a proper fibration, the Chern filtration is a more general structure that can be defined on many moduli spaces of sheaf-like objects. We believe that the Chern filtration is interesting to study on its own as a cohomological invariant. For example, one consequence of the $P=C$ match is the \textit{curious Hard Lefschetz symmetry} on the Chern filtration inherited from the relative Hard Lefschetz symmetry on the perverse filtration; see \cite{Mellit} and \cite[Remark 6.7 (iii)]{KLMP}. It is natural to ask if a similar symmetry holds for the Chern filtration where a nontrivial perverse filtration is not present, cf. Question \ref{question: symmetry}. In this paper, we present positive evidence to this question for $N_{r,d}$ in low ranks.

Finally, we note that the perverse counterpart of our new cohomological invariants have been systematically studied in the literature: the associated graded $\mathrm{gr}_\bullet^P H^*(-)$ of the perverse filtration on moduli of 1-dimensional sheaves are expected to be certain \textit{spaces of BPS states} in the Gopakumar--Vafa theory for (local) Calabi--Yau threefolds \cite{GV, HST, KL, MT}, and the refinement of Betti numbers by the perverse filtration gives \textit{refined BPS invariants} for such Calabi--Yau threefolds; see \cite[Section 6.2]{KLMP} for a detailed discussion and connections to other enumerative theories.

\subsection{Acknowledgements} We would like to thank Tamás Hausel, Andres Fernandez Herrero, Davesh Maulik, Dhruv Ranganathan, Junliang Shen, and Ravi Vakil for interesting and helpful discussions on relevant topics. We specially thank Anton Mellit for his lectures at Lisbon in 2022 which greatly helped us to construct the $\mathfrak{sl}_2$-triples. This research was supported by the Yonsei University Research Fund of 2024-22-0502.

\section{Preliminaries}

The purpose of this section is to collect some notations and results useful for the rest of the paper. In particular, we introduce the descendent algebra $\mathbb{D}$ and prove Theorem \ref{thm: topcherndegree}.

\subsection{Moduli stacks and moduli spaces}
\label{subsec: stacks}
Throughout, we fix a smooth projective curve $\Sigma$ of genus $g$ over the complex numbers. Let $r,d\in \BZ$ with $r>0$. We denote by $\FM_{r,d}$ the moduli stack of all vector bundles over $\Sigma$ of rank $r$ and degree $d$. For given $r>0$, the union of $\FM_{r,d}$ over all $d\in \BZ$ is the mapping stack 
\[\textup{Map}(\Sigma, B\textup{GL}_r)\,.\]
There is a universal bundle $\CV$ over the product $\FM_{r,d}\times \Sigma$. Recall that the slope of a bundle is defined as
\[\mu(V)=\frac{\deg(V)}{\rk(V)}\in \BQ\]
and that a bundle $V$ is said to be semistable if $\mu(W)\leq \mu(V)$ for every subbundle $W$; if the inequality is strict for any proper subbundle, then $V$ is said to be stable. We denote by $\FM_{r,d}^{\textup{ss}}\subseteq \FM_{r,d}$ the open moduli substack of semistable bundles. This stack admits a projective good moduli space $\FM_{r,d}^{\textup{ss}}\to M_{r,d}$. When $\gcd(r,d)=1$, every semistable bundle is stable and the good moduli map $\FM_{r,d}^{\textup{ss}}\to M_{r,d}$ is a trivial $\BG_m$-gerbe; in particular, $M_{r,d}$ is the rigidification of the stack $\FM_{r,d}^{\textup{ss}}$.

\begin{rmk}
In Section \ref{sec: intermediatestacks}, we will consider more generally the open substacks $\FM_{r,d}^{\leq \mu}$ wich parametrize bundles that do not admit subbundles with slope $>\mu$; this coincides with $\FM_{r,d}^{\textup{ss}}$ when we take $\mu=d/r$. 
\end{rmk}

We will mostly be interested in moduli spaces with fixed determinant. Let $\PPic^d=\FM_{1,d}=\FM_{1,d}^{\textup{ss}}$ be the Picard stack of $\Sigma$ and $\Pic^d=M_{1,d}$ be the Picard variety. Given a fixed line bundle $\Lambda$ of degree $d$, the stack of vector bundles with fixed determinant $\Lambda$ is the following fiber:
\begin{center}
    \begin{tikzcd}
\FN_{r,\Lambda}\arrow[r]\arrow[d]
& \FM_{r,d}\arrow[d, "\det"]\\
\{\Lambda\} \arrow[r]& {\PPic^d.}
    \end{tikzcd}
\end{center}
Since different choices of line bundles $\Lambda$ lead to isomorphic stacks, we will often just denote $\FN_{r,d}=\FN_{r,\Lambda}$. Similarly, we define $\FN_{r,d}^{\textup{ss}}$; the stack $\FN_{r,d}^{\textup{ss}}$ admits a projective good moduli space $N_{r,d}$. Indeed, the latter is the fiber of the determinant map $M_{r,d}\to \Pic^d$ over $[\Lambda]\in \Pic^d$. The good moduli map $\FN_{r,d}^{\textup{ss}}\to N_{r,d}$ is a $\mu_r$-gerbe when $\gcd(r,d)=1$. 


\subsection{Tautological classes and the descendent algebra}
\label{subsec: descendent algebra}

The universal bundle $\CV$ on $\FM_{r,d}\times \Sigma$ can be used to define tautological classes on the stack $\FM_{r,d}$. Recall from the introduction that we fix a basis $\{\mathbf{1}_\Sigma, e_1,\ldots, e_{2g}, \mathsf{pt}\}$ of $H^*(\Sigma)$, with the convention that $e_i\cdot e_{i+g} =\mathsf{pt}$ for $1\leq i \leq g$. Define classes 
\[\alpha_k\in H^{2k-2}(\FM_{r,d})\;,\;\psi_{k,i}\in H^{2k-1}(\FM_{r,d})\;,\;\beta_k\in H^{2k}(\FM_{r,d}) \]
as the Künneth components in
\begin{equation*}
c_k(\CV) = \alpha_k \otimes \mathsf{pt} + \sum_{i=1}^{2g} \psi_{k,i}\otimes e_i + \beta_k\otimes \mathbf{1}_\Sigma \in H^*(\FM_{r,d})\otimes H^*(\Sigma)\,.
\end{equation*}
for $1\leq k \leq r$ and $1\leq i\leq 2g$ (note that $\alpha_1=d\in H^0(\FM_{r,d})$).

The pullback of the universal bundle $\CV$ via the map $\FN_{r,d}\times \Sigma\to \FM_{r,d}\times \Sigma$ defines a universal bundle $\CU$ on $\FN_{r,d}\times \Sigma$ together with an isomorphism $\det(\CU)\xrightarrow{\sim} q^*\Lambda$, where $q\colon \FN_{r,d}\times \Sigma\to \Sigma$ is the projection. The Chern classes of $\CU$ are given by 
\begin{equation*}
c_k(\CU) = \alpha_k \otimes \mathsf{pt} + \sum_{i=1}^{2g} \psi_{k,i}\otimes e_i + \beta_k\otimes \mathbf{1}_\Sigma \in H^*(\FN_{r,d})\otimes H^*(\Sigma)\,.
\end{equation*}
for $k\geq 2$, where we omit the pullbacks of $\alpha_k,\, \psi_{k,i},\, \beta_k$ to $H^\ast(\FN_{r,d})$, and $c_1(\CU)=d(1\otimes \mathsf{pt})$. In particular, the classes $\psi_{1,i}$ and $\beta_1$ pull back to $0$.

\begin{defn}[$\textup{SL}_r$ descendent algebra] \label{def: descendents}
Let $\BD$ be the free super-commutative and unital $\BQ$-algebra generated by the formal symbols
\[\alpha_k,\, \beta_k,\,\psi_{k,i}, \quad 2\leq k\leq r,~ 1\leq i \leq 2g\,,\]
i.e.,
\[\BD=\BQ[\{\alpha_k, \beta_k\}_{2\leq k\leq r}]\otimes \Lambda_\BQ(\{\psi_{k, i}\}_{\substack{2\leq k\leq r,\,1\leq i\leq 2g}}).\]
\end{defn}

The descendent algebra admits two gradings: the usual \textit{cohomological grading}, which assigns
\[
\deg(\alpha_k) = 2k-2, \quad \deg(\beta_k) = 2k, \quad \deg(\psi_{k,i}) = 2k-1\,;
\]
and the \textit{Chern grading}, which we denote by $\deg^C(-)$ and assigns
\[
\deg^C(\alpha_k) = \deg^C(\beta_k) = \deg^C(\psi_{k,i}) =k\,.
\]
Note that we have the inequalities
\begin{equation}
\label{eq: deg_ineq}
\deg^C(D)\leq \deg(D)\leq 2\deg^C(D)
\end{equation}
for every $D\in \BD$. The first inequality is an equality if and only if $D$ is a power of $\alpha_2$, while the second is an equality if and only if $D$ is a product of $\beta_k$'s. 

There is a realization map $\BD\to H^\ast(\FN_{r,d})$ which sends the formal symbols $\alpha_k,\, \beta_k,\,\psi_{k,i}$ to the corresponding cohomology classes. By restricting to the semistable loci, we define a realization map $\BD\to H^\ast(\FN_{r,d}^{\textup{ss}})$ as well. 

\smallskip

The next proposition collects some facts about the cohomology rings of the moduli spaces and moduli stacks involved.

\begin{prop}\label{prop: cohomologiesofstacks}
We have the following relations between the (rational) cohomology of the different moduli stacks and moduli spaces: 
\begin{enumerate}
    \item The realization map $\BD\to H^\ast(\FN_{r,d})$ is a ring isomorphism. 
    \item The tensor product map $\PPic^0\times \FN_{r,d}\to \FM_{r,d}$ sending $(L, U)$ to $L\otimes U$ induces a ring isomorphism 
    \begin{align*}H^\ast(\FM_{r,d})&\simeq H^\ast(\FN_{r,d})\otimes H^\ast(\PPic^0)\\
    &\simeq \BQ[\{\alpha_k\}_{2\leq k\leq r}]\otimes \BQ[\{\beta_k\}_{1\leq k\leq r}] \otimes \Lambda_\BQ(\{\psi_{k, i}\}_{\substack{1\leq k\leq r,\,1\leq i\leq 2g}})\,.
    \end{align*}
    More generally, the same is true for $\PPic^0\times \FN_{r,d}^{\leq \mu}\to \FM_{r,d}^{\leq \mu}$ and $\PPic^0\times \FN_{r,d}^{\textup{ss}}\to \FM_{r,d}^{\textup{ss}}$.
    \item  Assume $\gcd(r,d)=1$. We have an isomorphism of stacks $\FM_{r,d}^{\textup{ss}}\simeq M_{r,d}\times B\BG_m$, hence
    \[H^\ast(\FM_{r,d}^{\textup{ss}})\simeq H^\ast(M_{r,d})\otimes H^\ast(B\BG_m)\simeq H^\ast(M_{r,d})[\beta_1]\,.\]
    \item Assume $\gcd(r,d)=1$. The good moduli map $\FN_{r,d}^{\textup{ss}}\to N_{r,d}$ induces a ring isomorphism 
    \[H^\ast(N_{r,d})\simeq H^\ast(\FN_{r,d}^{\textup{ss}})\,.\]
    \item  Assume $\gcd(r,d)=1$. The tensor product map $\Pic^d\times N_{r,d}\to M_{r,d}$ induces a ring isomorphism 
    \[H^\ast(M_{r,d})\simeq H^\ast(N_{r,d})\otimes H^\ast(\Pic^0)\simeq H^\ast(N_{r,d})\otimes \Lambda_\BQ(\psi_{1,1}, \ldots, \psi_{1, 2g})\]
\end{enumerate}
\end{prop}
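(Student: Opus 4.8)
The plan is to prove the five statements in order, with almost all of the content in part~(2). I take as input the Atiyah--Bott computation \cite{AB} (see also \cite{Beau}): the realization map identifies $H^\ast(\FM_{r,d})$ with the free super-commutative $\BQ$-algebra on $\{\alpha_k\}_{2\le k\le r}\cup\{\beta_k\}_{1\le k\le r}\cup\{\psi_{k,i}\}_{1\le k\le r,\,1\le i\le 2g}$ (with $\alpha_1=d$ a constant, but $\beta_1,\psi_{1,i}$ genuine generators), and for the $\textup{SL}_r$-gauge group identifies $H^\ast(\FN_{r,d})$ with the free super-commutative algebra on $\{\alpha_k,\beta_k,\psi_{k,i}\}_{2\le k\le r}$; the latter is exactly part~(1). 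I also use $\PPic^0\cong \Pic^0\times B\BG_m$, so that $H^\ast(\PPic^0)=\BQ[\beta_1]\otimes\Lambda_\BQ(\psi_{1,1},\dots,\psi_{1,2g})$, the generators being the Künneth components of $c_1$ of the universal line bundle on $\PPic^0\times\Sigma$.

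For part~(2) over the full stacks, write $m\colon\PPic^0\times\FN_{r,d}\to\FM_{r,d}$ for the tensor product map and $\CL$, $\CU$ for the universal bundles on $\PPic^0\times\Sigma$ and $\FN_{r,d}\times\Sigma$, so that $m^\ast\CV\cong\CL\boxtimes\CU$. Using $c_1(\CU)=d\otimes\pt$ (the normalization of $\CU$) and the Whitney formula $c_k(\CL\otimes\CU)=\sum_{j=0}^k\binom{r-j}{k-j}c_1(\CL)^{k-j}c_j(\CU)$, I read off $m^\ast\beta_1=r\beta_1$, $m^\ast\psi_{1,i}=r\psi_{1,i}$, and, for $k\ge 2$, that each of $m^\ast\alpha_k,m^\ast\beta_k,m^\ast\psi_{k,i}$ equals the corresponding generator of $H^\ast(\FN_{r,d})$ plus a polynomial in the $\PPic^0$-classes $\beta_1,\psi_{1,i}$ and in the generators of $H^\ast(\FN_{r,d})$ of Chern degree $<k$. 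Ordering the generators as ``$\PPic^0$-classes first, then by increasing Chern degree'', $m^\ast$ is thus triangular with nonzero diagonal, hence surjective; since source and target are free super-commutative algebras with the same Poincaré series (a quick count of generators, using part~(1)), $m^\ast$ is an isomorphism.

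For the open substacks $\FM^{\le\mu}_{r,d}$ and $\FM^{\textup{ss}}_{r,d}$, I would restrict along the commuting square of open inclusions. The restriction maps $H^\ast(\FM_{r,d})\to H^\ast(\FM^{\le\mu}_{r,d})$ and $H^\ast(\FN_{r,d})\to H^\ast(\FN^{\le\mu}_{r,d})$ are surjective because the Harder--Narasimhan stratifications are cohomologically perfect \cite{AB,HN}; combined with the square this makes the restricted map $m^\ast_{\le\mu}$ surjective, and to upgrade surjective to bijective it suffices to check that $m^\ast$ carries $\ker\big(H^\ast(\FM_{r,d})\to H^\ast(\FM^{\le\mu}_{r,d})\big)$ onto $\ker\big(H^\ast(\FN_{r,d})\to H^\ast(\FN^{\le\mu}_{r,d})\big)\otimes H^\ast(\PPic^0)$, i.e.\ that the Poincaré series of $\FM^{\le\mu}_{r,d}$ and of $\FN^{\le\mu}_{r,d}\times\PPic^0$ coincide. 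I expect this to be the main obstacle: it requires the compatibility of the Harder--Narasimhan stratification with both the determinant map and the $\PPic^0$-action, so that the strata (and their cohomology) on the two sides match. The semistable statement is then the case $\mu=d/r$.

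Parts~(3), (4), (5) are formal consequences. For (3), $\FM^{\textup{ss}}_{r,d}\to M_{r,d}$ is a $\BG_m$-gerbe (as $\gcd(r,d)=1$), and it is trivial since a Poincaré bundle exists on $M_{r,d}\times\Sigma$, so $\FM^{\textup{ss}}_{r,d}\cong M_{r,d}\times B\BG_m$ and $H^\ast(\FM^{\textup{ss}}_{r,d})\cong H^\ast(M_{r,d})\otimes H^\ast(B\BG_m)=H^\ast(M_{r,d})[\beta_1]$. For (4), $\FN^{\textup{ss}}_{r,d}\to N_{r,d}$ is a $\mu_r$-gerbe, and since $\mu_r$ is finite $H^\ast(B\mu_r,\BQ)=\BQ$, so it induces an isomorphism on rational cohomology. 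For (5), restrict the isomorphism of (2) to the semistable loci and substitute (3) on the left, (4) on the right, and $H^\ast(\PPic^0)=H^\ast(\Pic^0)[\beta_1]$: this reads $H^\ast(M_{r,d})[\beta_1]\cong H^\ast(N_{r,d})\otimes H^\ast(\Pic^0)[\beta_1]$ with $\beta_1\mapsto r\beta_1$, and quotienting by the ideal generated by this polynomial generator yields $H^\ast(M_{r,d})\cong H^\ast(N_{r,d})\otimes H^\ast(\Pic^0)\cong H^\ast(N_{r,d})\otimes\Lambda_\BQ(\psi_{1,1},\dots,\psi_{1,2g})$, induced by the tensor product map on coarse spaces.
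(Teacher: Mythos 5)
Your treatment of part (ii) for the full stacks is correct and is a genuinely different argument from the paper's: you compute $m^\ast$ on generators, observe it is triangular with invertible diagonal, and conclude by comparing Poincar\'e series of two free super-commutative algebras. The paper instead factors the tensor map as $\PPic^0\times\FN_{r,\Lambda}\to (\PPic^0\times\FN_{r,\Lambda})/ B\mu_r\to \FM_{r,d}$, a $\mu_r$-gerbe followed by a quotient by the finite group $\Pic^0[r]$; the first pullback is an isomorphism with $\BQ$-coefficients, and the second identifies $H^\ast(\FM_{r,d})$ with the $\Pic^0[r]$-invariants, which is everything because $\Pic^0[r]$ acts trivially on tautological classes and those generate. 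The payoff of the paper's route is that it only needs \emph{tautological generation} of the source, not freeness, so it applies verbatim to every open substack $\FN_{r,d}^{\leq\mu}$. Your route needs freeness (for the Poincar\'e-series count), which fails on the open substacks --- and that is exactly where your proposal breaks down: you reduce the $\FM_{r,d}^{\leq\mu}$ case to matching the Poincar\'e series of $\FM_{r,d}^{\leq\mu}$ and $\FN_{r,d}^{\leq\mu}\times\PPic^0$, explicitly flag this as ``the main obstacle,'' and do not resolve it. Since the semistable case $\mu=d/r$ is the one the rest of the paper actually uses (e.g.\ for \eqref{eq: omegaMN} and Section \ref{sec:refinedpoincareN21}), this is a genuine gap, not a deferrable technicality. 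The paper closes it by first proving (Lemma \ref{lem: split Gysin}) that the Gysin sequences of the Harder--Narasimhan stratification split for the \emph{fixed-determinant} stacks --- which requires checking that the Euler class of the normal bundle is a non-zero-divisor, via a positive-weight argument for the generic stabilizer of the splitting stack $\FN_{r,\Lambda}^{\textnormal{split-}t'}$ --- hence that $H^\ast(\FN_{r,d}^{\leq\mu})$ is tautologically generated, and then running the gerbe/quotient factorization.

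Two smaller points. First, part (i) is not simply a citation of \cite{AB}: for $\deg\Lambda=d\neq 0$ the stack $\FN_{r,\Lambda}$ is not the stack of $\textup{SL}_r$-bundles, and the paper reduces to the degree-zero case by exhibiting the Hecke stack as a $\BP^{r-1}$-bundle over both $\FN_{r,\Lambda}$ and $\FN_{r,\Lambda\otimes\CO_\Sigma(-x)}$, so that their Poincar\'e series agree; your proposal elides this. Second, the surjectivity of $H^\ast(\FN_{r,d})\to H^\ast(\FN_{r,d}^{\leq\mu})$ that you attribute to \cite{AB, HN} is again the fixed-determinant statement, i.e.\ precisely Lemma \ref{lem: split Gysin}, so it cannot be taken off the shelf in the form you need. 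Parts (iii)--(v) as you present them are fine and match what the paper treats as standard.
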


The parts (iii), (iv) and (v) are standard facts. Although we believe that parts (i) and (ii) are also well known to experts, we could not find the exact statements in the literature. We defer their proof to Appendix \ref{appendix}. 

Thanks to Proposition \ref{prop: cohomologiesofstacks} (iv), it makes sense to define a realization map to $N_{r,d}$ as
\[\BD\to H^\ast(\FN_{r,d}^{\textup{ss}})\simeq H^\ast(N_{r,d})\,.\]

\begin{rmk}\label{rmk: taut class from End}
In the literature, it is frequent to define tautological generators for $N_{2,1}$ using the endomorphism bundle. Indeed, the endomorphism bundle $\End \CU$ on $\FN_{2,1}^{\textup{ss}}\times \Sigma$ descends uniquely to $N_{2,1}\times \Sigma$ and it is traditional (cf. \cite[Remark 2]{New}) to define classes $\alpha, \beta, \psi_{i}\in H^\ast(N_{2,1})$ via
\[c_2(\End \CU)=2\alpha\otimes \pt+4\sum_{i=1}^{2g} \psi_{i}\otimes e_i -\beta\otimes {\bf 1}_\Sigma\in H^\ast(N_{2,1}\times \Sigma)\,. \]
These classes compare to the ones previously defined in the following way:
\[
\alpha = 2\alpha_2, \quad \psi_i = \psi_{2,i}, \quad \beta = -4 \beta_2.
\]
\end{rmk}

\subsection{Chern filtration and refined Poincaré series}\label{subsec: chernandomega}
Given any of the possible moduli spaces (with coprime $r,d$) or moduli stacks 
\[X=\FM_{r,d}, \,\FN_{r,d},\: \FM_{r,d}^{\textup{ss}},\: \FN_{r,d}^{\textup{ss}},\:\FM_{r,d}^{\leq \mu},\: \FN_{r,d}^{\leq \mu}, \:M_{r,d}, \: N_{r,d}\]
previously introduced, there is a Chern filtration $C_\bullet H^\ast(X)$ on its cohomology parallel to Definition~\ref{def: chern}. We define the refined Poincaré series $\Omega(X, q,t)$ for any of these spaces and stacks as in Section \ref{subsec: conjecturesresults}:
\[\Omega(X, q,t)\coloneqq\sum_{i, j\geq 0} \dim \gr^C_i H^{i+j}(X)q^it^j\,.\]
Note that stacks typically have infinitely many non-zero cohomology groups, so $\Omega(X,q,t)$ is a formal power series in the variables $q, t$. The following proposition compares the Chern filtrations on different stacks; while the statement is very natural, its proof is slightly technical and we spell it out for completeness.

\begin{prop}\label{cor: comparing the Chern filtration}
    Under the isomorphism in Proposition \ref{prop: cohomologiesofstacks} (ii), we have 
    $$C_\bullet H^\ast(\FM_{r,d}^{\leq \mu})\simeq C_\bullet H^\ast(\FN_{r,d}^{\leq \mu})\otimes C_\bullet H^\ast(\PPic^0),
    $$
    meaning that the filtration on the left-hand side matches the tensor product filtration on the right-hand side. 
\end{prop}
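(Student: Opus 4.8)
The plan is to track the Chern grading through the explicit ring isomorphism of Proposition~\ref{prop: cohomologiesofstacks}~(ii). Recall that this isomorphism is induced by the tensor product map $m\colon \PPic^0\times \FN_{r,d}^{\leq\mu}\to \FM_{r,d}^{\leq\mu}$, $(L,U)\mapsto L\otimes U$, and that under it $H^\ast(\FM_{r,d}^{\leq\mu})$ is identified with the polynomial/exterior algebra on $\{\alpha_k\}_{2\le k\le r}$, $\{\beta_k\}_{1\le k\le r}$, $\{\psi_{k,i}\}_{1\le k\le r}$, with $H^\ast(\FN_{r,d}^{\leq\mu})$ the subalgebra on the generators with $k\ge 2$ and $\beta_1$ removed, and $H^\ast(\PPic^0)=\Lambda_\BQ(\psi_{1,1},\dots,\psi_{1,2g})\otimes\BQ[\beta_1]$. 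The first step is to record how the tautological generators of the source compare to those of the two factors: the Chern classes $c_k(\CV)$ pull back under $m\times\mathrm{id}_\Sigma$ to $c_k\big((L\boxtimes\CO)\otimes(\CO\boxtimes\CU)\big)$, and expanding the Whitney formula / the Chern-character of a twist shows that each $\alpha_k,\beta_k,\psi_{k,i}$ on $\FM_{r,d}^{\leq\mu}$ is a polynomial in the $k'\le k$ tautological classes of $\FN_{r,d}^{\leq\mu}$ and the degree-one class $\beta_1$ (which is $c_1$ of the Picard-stack universal line bundle, of Chern degree $1$) together with the $\psi_{1,i}$ (Chern degree $1$). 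Crucially, the twisting formula is \emph{triangular} with respect to Chern degree: the component of $c_k$ of the twist that involves a factor from $H^\ast(\PPic^0)$ always trades a Chern-degree-$k$ generator for a product of lower-Chern-degree generators times Picard classes, so total Chern degree is preserved.

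The second step is to make the preceding ``triangularity'' precise as a statement about filtrations. Put a Chern filtration on the tensor product $H^\ast(\FN_{r,d}^{\leq\mu})\otimes H^\ast(\PPic^0)$ by declaring $\deg^C$ of the Picard generators $\psi_{1,i}$ and $\beta_1$ all equal to $1$ (so that the realization map from the full descendent algebra $\BD$ of rank $r$, including the $k=1$ symbols, is Chern-degree-respecting for $\FM_{r,d}^{\leq\mu}$); then the tensor product filtration is $C_k(A\otimes B)=\sum_{i+j=k}C_iA\otimes C_jB$. I would show the isomorphism of Proposition~\ref{prop: cohomologiesofstacks}~(ii) carries $C_\bullet H^\ast(\FM_{r,d}^{\leq\mu})$ into the tensor-product filtration by checking it on the tautological generators — a monomial of Chern degree $\le k$ in the $\alpha,\beta,\psi$ of the source maps, by the twisting formula, to a sum of monomials of Chern degree $\le k$ in the combined generators, hence lies in $C_k$ of the tensor product. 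For the reverse inclusion I would exhibit a right inverse: the projections $\FM_{r,d}^{\leq\mu}\to\PPic^0$ (the determinant, up to a fixed isomorphism) and the normalization procedure of Section~\ref{subsec: descendent algebra} both respect Chern degree, and using them one writes each generator of the two factors as a Chern-degree-preserving combination of pulled-back generators from $\FM_{r,d}^{\leq\mu}$; alternatively, one simply notes that both filtered vector spaces have the same associated graded dimensions (because the map is a bijection on the monomial bases respecting Chern degree) so the containment of filtrations is forced to be an equality.

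The main technical obstacle is the bookkeeping in the twisting formula: one must verify that no ``degree-lowering'' term appears, i.e., that $c_k$ of $L\otimes\CU$ expressed in $c_\bullet(\CU)$ and $c_1(L)=\beta_1$ never produces a summand of Chern degree strictly less than $k$ — this is exactly the statement that the relevant universal polynomial is isobaric of weight $k$ when $c_j(\CU)$ is given weight $j$ and $c_1(L)$ weight $1$, which is true but needs to be stated carefully, and one must also handle the interplay with the normalization $\beta_1=0$ used to define the classes on $\FN_{r,d}^{\leq\mu}$ (so that the identification of $H^\ast(\FN_{r,d}^{\leq\mu})$ with the rank-$r$ descendent algebra modulo the $k=1$ symbols is Chern-filtered). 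A secondary, more routine point is to make sure the Künneth decomposition over $\Sigma$ interacts correctly with the external tensor product — that the cross terms $\psi_{k',i}\otimes\psi_{1,i'}$ contributing to $\psi_{k,i}$ of the source carry the right Chern degree $k'+1\le k$, which again follows from the weight count. Once this is in place, the comparison of filtrations, and hence the factorization $\Omega(\FM_{r,d}^{\leq\mu},q,t)=\Omega(\FN_{r,d}^{\leq\mu},q,t)\cdot\Omega(\PPic^0,q,t)$, follows formally.
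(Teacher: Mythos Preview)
Your approach is essentially the paper's: track Chern degree through the tensor-product map using that the twisting formula is isobaric. The paper streamlines the bookkeeping by passing (via Newton's identity) to Chern characters, so the twist becomes the simple product $\ch(L\otimes U)=\ch(L)\cdot\ch(U)$ and the K\"unneth/diagonal expansion is explicit; for the reverse inclusion it runs an induction on $(2-\deg\gamma,k)$ rather than inverting the twist. Your ``right inverse'' alternative (twist back by $L^{-1}$, same isobaric count) is a valid and arguably cleaner substitute for that induction. One caution: your fallback dimension-counting argument (``bijection on the monomial bases respecting Chern degree'') is not justified as stated---on the open substacks $\FM_{r,d}^{\leq\mu}$ there are relations, so no canonical monomial basis exists, and you have not independently computed the graded dimensions on either side; drop this and rely on the right-inverse argument.
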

\begin{proof}
    Under the decomposition $\PPic^0\simeq B\BG_m\times \Pic^0$, it is clear that we have 
    $$C_\bullet H^*(\PPic^0) \simeq C_\bullet H^*(\Pic^0)\otimes C_\bullet H^*(B\BG_m)
$$
where $C_\bullet H^*(\Pic^0)$ and $C_\bullet H^*(B\BG_m)$ are defined by the normalized Poincaré line bundle $\BL$ and the universal line bundle $\CL$, respectively. 
In fact, the Chern filtrations on $H^k(\Pic^0)$ and $H^{2k}(B\BG_m)$ jump only at the $k$'th step.

Since the Chern filtrations on the both sides of the isomorphism
$$\Phi:H^*(\FM_{r,d}^{\leq \mu})\xrightarrow{\sim} H^*(\FN_{r,d}^{\leq \mu})\otimes H^*(\Pic^0)\otimes H^*(B\BG_m)
$$
are multiplicative, it suffices to consider the generators. We note that, by Newton's identity, the Chern filtration can equivalently be defined as the smallest multiplicative filtration such that
\begin{equation}\label{eq: chern character taut}
    p_*(\ch_k\CV\cdot q^*\gamma)\in C_k H^*(\FM_{r,d}^{\leq \mu}),\quad k\geq 0,\ \gamma\in H^*(\Sigma)\,.
\end{equation}
The isomorphism $\Phi$ sends \eqref{eq: chern character taut} to
$$\sum_{k_1+k_2+k_3=k}\sum_{i\in I}
\Big(p_*(\ch_{k_1} (\CU)\cdot q^*\gamma^{L}_i)\Big)\otimes
\Big(p_*(\ch_{k_2} (\BL)\cdot q^*\gamma^{R}_i)\Big)\otimes
\ch_{k_3}(\CL)
$$
where $\Delta_*\gamma=\sum_{i\in I}\gamma^{L}_i\otimes \gamma^{R}_i$, which clearly lies in the $k$'th step of the tensor product filtration.

Now we prove the other inclusion. Note that under the isomorphism $\Phi$, we have
$$\Phi\Big(p_*(\ch_1(\CV)\cdot q^*\pt)\Big) = r\cdot 1\otimes 1\otimes z,\quad \Phi\Big(p_*(\ch_1(\CV)\cdot q^*e_i)\Big)=p_*(\ch_1(\BL)\cdot q^*e_i),
$$
where $z\coloneqq c_1(\mathcal{L})$ and we used $\ch_1(\CU)=d(1\otimes \pt)$. This proves that the generators of $H^*(B\BG_m)$ and $H^*(\Pic^0)$ lie in the correct step of the Chern filtration on the left hand side. So we are left to show that
$$\Phi^{-1}\Big(p_*(\ch_k(\CU)\otimes q^*\gamma)\Big)\in C_k H^*(\FM^{\leq \mu}_{r,d}),\quad k\geq 1,\ \gamma\in H^*(\Sigma).
$$
We prove this by induction on $(2-\deg(\gamma), k)\in\{0,1,2\}\times \BZ_{\geq 1}$ with respect to the lexicographical ordering. The base case holds trivially. Suppose that the statement holds for any $(2-\deg(\gamma'),k')<_{\textnormal{lex}}(2-\deg(\gamma),k)$. Note that 
$$\Delta_*\gamma=\gamma\otimes \pt +\sum_{i\in I'}\gamma^L_i\otimes \gamma^R_i
$$
where $\deg(\gamma_i^L)>\deg(\gamma)$. Therefore, the image of $p_*(\ch_k(\CV)\cdot q^*\gamma)\in C_k H^*(\FM_{r,d}^{\leq \mu})$ via $\Phi$ is
\begin{multline}\label{eq: expression}
    p_*(\ch_k(\CU)\cdot q^*\gamma)+
\sum_{k_1+k_2+k_3=k,\: k_1<k}
\Big(p_*(\ch_{k_1} (\CU)\cdot q^*\gamma)\Big)\otimes
\Big(p_*(\ch_{k_2} (\BL)\cdot q^*\pt)\Big)\otimes
\ch_{k_3}(\CL)\\+
\sum_{k_1+k_2+k_3=k}\sum_{i\in I'}
\Big(p_*(\ch_{k_1} (\CU)\cdot q^*\gamma^{L}_i)\Big)\otimes
\Big(p_*(\ch_{k_2} (\BL)\cdot q^*\gamma^{R}_i)\Big)\otimes
\ch_{k_3}(\CL).
\end{multline}
By the induction hypothesis and the previous discussion about $H^*(B\BG_m)$ and $H^*(\Pic^0)$, every term in \eqref{eq: expression} except for the main term $p_*(\ch_k(\CU)\cdot q^*\gamma)$ is mapped to $C_k H^*(\FM_{r,d}^{\leq \mu})$ via $\Phi^{-1}$. This proves that $\Phi^{-1}\big(p_*(\ch_k(\CU)\cdot q^*\gamma)\big)\in C_k H^*(\FM_{r,d}^{\leq \mu})$, hence completing the proof.
\end{proof}

As a consequence of Propositions \ref{prop: cohomologiesofstacks} {and \ref{cor: comparing the Chern filtration}}, we have the equalities
\begin{align}\Omega(\FN_{r,d},q,t)&=\prod_{k=2}^r \frac{(1+q^k t^{k-1})^{2g}}{(1-q^{k}t^{k-2})(1-q^{k}t^{k})}, \label{eq: omegaNN}\\
\Omega(\FM_{r,d},q,t)&=\frac{(1+q)^{2g}}{1-q t}\Omega(\FN_{r,d},q,t), \label{eq: omegaMM}\\
\Omega(\FM_{r,d}^{\textup{ss}},q,t)&=\frac{1}{1-qt}\Omega(M_{r,d},q,t)=\frac{(1+q)^{2g}}{1-qt}\Omega(N_{r,d},q,t)\,. \label{eq: omegaMN}
\end{align}

    Observe that $\Omega(\FN_{r,d},q,t)$ is 
    a rational function 
    with the symmetry 
    \[q^{(r+2)(r-1)(g-1)}t^{r(r-1)(g-1)} \cdot \Omega(\FN_{r,d},q^{-1}, t^{-1})=\Omega(\FN_{r,d}, q,t)\,.\]
    This is precisely the same symmetry that we speculate (cf. Question \ref{question: symmetry}) for the refined Poincaré polynomial $\Omega(N_{r,1},q,t)$ of the stable loci with $d = 1$.

    Moreover, the $t=-1$ specialization of the rational function $\Omega(\FN_{r,d}, q, t)$ is given by
    \[\Omega(\FN_{r,d},q,-1)=\prod_{k=2}^r (1-(-q)^k)^{2g-2}\,.\]
    We conjecture (cf. Conjecture \ref{conj: t=-1}) that this agrees with $\Omega(N_{r,d}, q, -1)$, the $t=-1$ specialization for the semistable loci. Indeed, the $r=2$ case and numerical evidence in higher ranks suggest that the difference
    \[\Omega(\FN_{r,d}, q,t)-\Omega(N_{r,d}, q, t)\]
is always divisible by $(1+t)^{2g}$. In Section \ref{sec: intermediatestacks} we will study this difference in the $r=2$ case by considering the stratification $\{\FN_{2,1}^{\leq \mu}\}$. The factor of $(1+t)^{2g}$ will appear in the differences between refined Poincaré series of consecutive stacks thanks to the (quite intricate) modified Mumford relations, cf. 
the equalities following \eqref{eq: differenceinequality} and its proof.

\subsection{Proof of Theorem \ref{thm: topcherndegree}}\label{sec: 1.1}\,

Our first general result towards the understanding of the Chern filtration on moduli spaces of bundles is Theorem \ref{thm: topcherndegree}. The theorem is equivalent to the following two facts: 
\begin{enumerate}
    \item[(i)] For $D\in \BD$ with $\deg^C(D)<(r+2)(r-1)(g-1)$, we have $\int_{N_{r,d}}D=0$.
    \item[(ii)] There is $D\in \BD$ with $\deg^C(D)=(r+2)(r-1)(g-1)$ such that $\int_{N_{r,d}}D\neq 0$. 
\end{enumerate}
Both (i) and (ii) are essentially shown in the proofs of Propositions 8 and 9 in \cite{EK-pontryagin}; for completeness, we briefly explain the argument here.

Let 
\[D=\prod_{k=2}^r\left(\alpha_k^{n_k}\beta_k^{m_k}\prod_{j=1}^{2g}\psi_{k, j}^{p_{k,j}} \right)\in \BD\,,\]
where $n_k, m_k\in \BZ_{\geq 0}$ and $p_{k,j}\in \{0,1\}$ be a class in top cohomological degree, i.e., $\deg(D)=2(r^2-1)(g-1)$. We observe that
\begin{align}\label{eq: cdegearlkirwan} 2\deg^C(D)&=\deg(D)+2\sum_{k=2}^r n_k+\sum_{k=2}^r\sum_{j=1}^{2g}p_{k, j}\\
&=2(r^2-1)(g-1)+2\sum_{k=2}^r n_k+\sum_{k=2}^r\sum_{j=1}^{2g}p_{k, j}\,.\nonumber
\end{align}

The Jeffrey--Kirwan formula \cite{jeffreykirwan} expresses the integral
\[G(\epsilon)\coloneqq \int_{N_{r,d}} \exp\left(\sum_{k=2}^r \epsilon \delta_k \alpha_k\right) \prod_{k=2}^r\left(\beta_k^{m_k}\prod_{j=1}^{2g}\psi_{k, j}^{p_{k,j}} \right)\]
as a complicated iterated residue; here, $\delta_2, \ldots, \delta_k$ are formal variables and we regard $G(\epsilon)$ as a polynomial in $\epsilon$ with coefficients being polynomials in $\delta_2, \ldots, \delta_k$. Using this formula, it is shown in the proof of \cite[Proposition 8]{EK-pontryagin} that $G(\epsilon)$ is a polynomial divisible by $\epsilon^M$ where
\[M=\frac{1}{2}\left(2(r-1)(g-1)-\sum_{k=2}^r\sum_{j=1}^{2g}p_{k, j}\right)\,.\]
If $\int_{N_{r,d}}D\neq 0$, we would get a contribution to $\epsilon^{n_2+\cdots+n_k}$, hence
\[\sum_{k=2}^r n_k\geq M\,.\]
Using \eqref{eq: cdegearlkirwan}, this inequality is equivalent to $\deg^C(D)\geq (r+2)(r-1)(g-1)$, showing (i).

\smallskip

For (ii), Earl and Kirwan show \cite[Proposition 9]{EK-pontryagin} that $\int_{N_{r,d}}D\neq 0$ for
\[D=\eta \cdot \alpha_2^{(r-1)(g-1)}\,,\]
where $\eta$ is a polynomial in the classes $\beta_2, \ldots, \beta_r$ with cohomological degree $2r(r-1)(g-1)$. Explicitly, $\eta$ is given by writing the symmetric polynomial in formal variables $X_1, \ldots, X_r$
\[\eta=\prod_{i<j} (X_i-X_j)^{2(g-1)}\]
in terms of elementary symmetric polynomials, and setting the $k$-th elementary symmetric polynomial to $\beta_k$ (where $\beta_1$ is interpreted to be 0). Since the Chern degree of a polynomial in $\beta$ classes is half of its cohomological degree, we have
\[\deg^C(D)=r(r-1)(g-1)+2(r-1)(g-1)=(r+2)(r-1)(g-1)\,.\eqno\qed\]

The following corollary is a generalization of the known fact that the subring of $H^\ast(N_{r,d})$ generated by $\beta$ classes vanishes beyond cohomological degree $2r(r-1)(g-1)$, cf. \cite[Theorem 7]{EK-pontryagin}.

\begin{cor} 
Let $r(r-1)(g-1)\leq m\leq 2(r^2-1)(g-1)$. Then $C_k H^m(N_{r,d})=0$ for 
\[k<m-r(r-1)(g-1)\,.\]
\end{cor}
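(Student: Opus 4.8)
The statement says: if $m \geq r(r-1)(g-1)$ and $k < m - r(r-1)(g-1)$, then $C_k H^m(N_{r,d}) = 0$. Equivalently, every monomial $D \in \BD$ with cohomological degree $\deg(D) = m$ and Chern degree $\deg^C(D) = k < m - r(r-1)(g-1)$ realizes to zero in $H^m(N_{r,d})$. The plan is to reduce this to Theorem \ref{thm: topcherndegree} by a Poincaré-duality pairing argument. Since $H^*(N_{r,d})$ satisfies Poincaré duality in the top degree $2(r^2-1)(g-1)$, it suffices to show that for any such $D$ and any complementary class $D' \in \BD$ of cohomological degree $2(r^2-1)(g-1) - m$, we have $\int_{N_{r,d}} D \cdot D' = 0$.

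**Key steps.** First I would record the numerology: the product $DD'$ has cohomological degree $2(r^2-1)(g-1)$, the top degree, and Chern degree $\deg^C(D) + \deg^C(D') = k + \deg^C(D')$. By Theorem \ref{thm: topcherndegree}(i) — i.e., the statement that $\int_{N_{r,d}}$ kills any descendent of Chern degree strictly below $(r+2)(r-1)(g-1)$ — the integral $\int_{N_{r,d}} DD'$ vanishes unless
\[
k + \deg^C(D') \geq (r+2)(r-1)(g-1).
\]
Next I would bound $\deg^C(D')$ from above using the second inequality in \eqref{eq: deg_ineq}, namely $\deg(D') \leq 2\deg^C(D')$ is the wrong direction; instead I want $\deg^C(D') \leq \deg(D')$, which is the \emph{first} inequality in \eqref{eq: deg_ineq}. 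Wait — that gives $\deg^C(D') \leq \deg(D') = 2(r^2-1)(g-1) - m$, which combined with the displayed inequality yields
\[
k \geq (r+2)(r-1)(g-1) - \bigl(2(r^2-1)(g-1) - m\bigr) = m - r(r-1)(g-1),
\]
since $(r+2)(r-1) - 2(r^2-1) = (r-1)(r+2-2r-2) = -r(r-1)$. This contradicts the hypothesis $k < m - r(r-1)(g-1)$. Hence $\int_{N_{r,d}} DD' = 0$ for all complementary $D'$, and by non-degeneracy of the Poincaré pairing on $H^*(N_{r,d})$, the realization of $D$ in $H^m(N_{r,d})$ is zero. Since $C_k H^m(N_{r,d})$ is spanned by such realizations, it vanishes.

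**Main obstacle.** The argument is essentially a one-line consequence of Theorem \ref{thm: topcherndegree} together with the elementary degree inequality \eqref{eq: deg_ineq}; there is no real obstacle. The only point requiring a moment's care is that the Poincaré pairing argument needs every class in $H^m(N_{r,d})$ — in particular every complementary class $D'$ — to be realized by \emph{some} descendent in $\BD$, so that the pairing against descendents detects vanishing; this is exactly the tautological generation of $H^*(N_{r,d})$ by the $\{\alpha_k,\beta_k,\psi_{k,i}\}$ recalled from \cite{AB, Beau} in the introduction, and it holds without hypothesis on $m$. One should also note that the hypothesis $m \geq r(r-1)(g-1)$ is needed only to make the conclusion nonvacuous (for smaller $m$ the bound $k < m - r(r-1)(g-1)$ is negative and there is nothing to prove), and the hypothesis $m \leq 2(r^2-1)(g-1)$ is automatic since $H^m$ vanishes above the top degree.
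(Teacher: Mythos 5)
Your proof is correct and is essentially the paper's own argument: both use Poincaré duality to pair a putative nonzero class in $C_kH^m(N_{r,d})$ against a complementary class, bound the Chern degree of that complementary class by its cohomological degree via the first inequality in \eqref{eq: deg_ineq}, and then invoke Theorem \ref{thm: topcherndegree} to force $k\geq m-r(r-1)(g-1)$. The only cosmetic difference is that the paper phrases it as a contradiction starting from a nonzero element of $C_kH^m$, while you argue directly that each spanning monomial integrates to zero against everything.
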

\begin{proof}
Suppose that $D\in C_k H^m(N_{r,d})$ is non-zero. By Poincaré duality, there is 
\[E\in H^{2(r^2-1)(g-1)-m}(N_{r,d})\] 
such that $\int_{N_{r,d}}D\cdot E\neq 0$. Recall that the inequality \eqref{eq: deg_ineq} states that the Chern degree is bounded above by the cohomological degree, so 
\[E\in C_{2(r^2-1)(g-1)-m}H^{2(r^2-1)(g-1)-m}(N_{r,d})\,\]
and hence
\[D\cdot E\in C_{2(r^2-1)(g-1)-m+k}H^{2(r^2-1)(g-1)}(N_{r,d})\,.\]
By Theorem \ref{thm: topcherndegree} it follows that 
\[2(r^2-1)(g-1)-m+k\geq (r+2)(r-1)(g-1)\]
which is equivalent to
\[k\geq m-r(r-1)(g-1)\,.\qedhere\]
\end{proof}

Theorem \ref{thm: topcherndegree} together with \eqref{eq: omegaMN} gives a similar result for the moduli space $M_{r,d}$ of bundles without fixed determinant:
\begin{cor}
The top Chern degree of $M_{r,d}$ is $r(r+1)(g-1)+2$.
\end{cor}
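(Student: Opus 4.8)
The plan is to deduce the statement directly from Theorem~\ref{thm: topcherndegree} together with the product decomposition \eqref{eq: omegaMN}, which expresses $\Omega(M_{r,d},q,t)$ in terms of $\Omega(N_{r,d},q,t)$. First I would recall that the top Chern degree of a space $X$ is the exponent of $q$ appearing in the \emph{top} monomial (with respect to cohomological degree) of $\Omega(X,q,t)$; equivalently, it is read off from the leading behaviour of the refined Poincar\'e series as a rational function. Since $M_{r,d}$ is smooth projective of complex dimension $(r^2-1)(g-1)+g$ (the extra $g$ coming from $\Pic^0$, via Proposition~\ref{prop: cohomologiesofstacks} (v)), its top cohomology $H^{2(r^2-1)(g-1)+2g}(M_{r,d})$ is one-dimensional, so the top Chern degree is again a well-defined integer.

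The key computation is then the following. By \eqref{eq: omegaMN} we have
\[
\Omega(M_{r,d},q,t)=(1+q)^{2g}\,\Omega(N_{r,d},q,t).
\]
The top Chern degree of $N_{r,d}$ is $(r+2)(r-1)(g-1)$ by Theorem~\ref{thm: topcherndegree}, and by Poincar\'e duality on $N_{r,d}$ the pairing $\gr^C_i H^j \otimes \gr^C_{\mathrm{top}-i} H^{\mathrm{top}_{\mathrm{coh}}-j}\to \BQ$ is nondegenerate in the appropriate sense, so the monomial of $\Omega(N_{r,d},q,t)$ realizing the top cohomological degree is $c\, q^{(r+2)(r-1)(g-1)} t^{\,r(r-1)(g-1)}$ for some positive constant $c$ (this is the top-degree term controlled by $H^{\mathrm{top}}$, whose $j$-grading is $2(r^2-1)(g-1)-(r+2)(r-1)(g-1)=r(r-1)(g-1)$). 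Multiplying by $(1+q)^{2g}$ adds $2g$ to the $q$-exponent of the top term (the factor $(1+q)^{2g}$ contributes its $q^{2g}$ monomial, and the $t$-grading of $H^{\mathrm{top}}(\Pic^0)$ is $0$ since $H^j(\Pic^0)$ sits in Chern degree exactly $j$, so it contributes $q^{2g}t^0$ at the top). Hence the top Chern degree of $M_{r,d}$ is
\[
(r+2)(r-1)(g-1)+2g = r^2g - r^2 + g - 2 + 2g\,;
\]
a direct simplification gives $(r+2)(r-1)(g-1)+2g = r(r+1)(g-1)+2$, which is the claimed value.

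The only point requiring a little care — and the main (minor) obstacle — is to justify that no cancellation occurs at the top: that is, that the coefficient of the top-degree monomial in $\Omega(N_{r,d},q,t)$ really is nonzero and that multiplication by $(1+q)^{2g}$ does not move the top. For the first point I would invoke part (ii) in the proof of Theorem~\ref{thm: topcherndegree}: the explicit class $D=\eta\cdot\alpha_2^{(r-1)(g-1)}$ has nonzero integral and sits in Chern degree $(r+2)(r-1)(g-1)$ and cohomological degree $2(r^2-1)(g-1)$, so $\gr^C_{(r+2)(r-1)(g-1)}H^{2(r^2-1)(g-1)}(N_{r,d})\neq 0$. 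For the second point, since $H^*(\Pic^0)=\Lambda_\BQ(\psi_{1,1},\dots,\psi_{1,2g})$ with each generator of cohomological and Chern degree $1$, its top class $\psi_{1,1}\cdots\psi_{1,2g}$ lies in $\gr^C_{2g}H^{2g}(\Pic^0)$, and under the K\"unneth-type decomposition of Proposition~\ref{prop: cohomologiesofstacks} (v) (compatible with Chern filtrations, cf. Proposition~\ref{cor: comparing the Chern filtration}) the product of the two top classes is a nonzero element of $\gr^C_{(r+2)(r-1)(g-1)+2g}H^{\mathrm{top}}(M_{r,d})$, and no higher Chern degree is possible in top cohomological degree because $\deg^C\leq \deg$ by \eqref{eq: deg_ineq}. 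This pins down the top Chern degree of $M_{r,d}$ as $(r+2)(r-1)(g-1)+2g=r(r+1)(g-1)+2$. \qed
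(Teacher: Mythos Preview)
Your approach is correct and is exactly what the paper intends: read off the top Chern degree of $M_{r,d}$ from the identity $\Omega(M_{r,d},q,t)=(1+q)^{2g}\,\Omega(N_{r,d},q,t)$ in \eqref{eq: omegaMN} together with Theorem~\ref{thm: topcherndegree}. Two small corrections: the intermediate expansion ``$r^2g - r^2 + g - 2 + 2g$'' is mis-typed (though your final identity $(r+2)(r-1)(g-1)+2g=r(r+1)(g-1)+2$ is right), and you should drop the appeal to nondegeneracy of the graded pairing on $N_{r,d}$, which is \emph{not} known in general (it is precisely Question~\ref{question: symmetry}); the fact that the unique top-cohomological-degree monomial of $\Omega(N_{r,d},q,t)$ is $q^{(r+2)(r-1)(g-1)}t^{r(r-1)(g-1)}$ follows immediately from Theorem~\ref{thm: topcherndegree} and one-dimensionality of $H^{\mathrm{top}}(N_{r,d})$, and the rest is positivity of coefficients in $(1+q)^{2g}$.
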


The proof of Theorem \ref{thm: topcherndegree} also shows the following:

\begin{prop}\label{prop: dindependence}
    If $D\in \BD$ is such that $\deg^C(D)=(r+2)(r-1)(g-1)$, then $\int_{N_{r,d}}D$ does not depend on the degree $d$.
\end{prop}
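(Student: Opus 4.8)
The plan is to revisit the argument for Theorem \ref{thm: topcherndegree} and observe that it actually computes the relevant top intersection numbers in a $d$-independent way. Recall from \eqref{eq: cdegearlkirwan} that a monomial $D\in\BD$ of top cohomological degree $2(r^2-1)(g-1)$ with $\deg^C(D)=(r+2)(r-1)(g-1)$ must, by the divisibility-by-$\epsilon^M$ estimate, satisfy the inequality $\sum_k n_k\ge M$ with equality; equality forces $n_k$, $m_k$, and the $p_{k,j}$ to be pinned down so that the integral $\int_{N_{r,d}}D$ extracts precisely the coefficient of $\epsilon^M$ in the Jeffrey--Kirwan residue expression $G(\epsilon)$. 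So the first step is to isolate this coefficient-of-$\epsilon^M$ term in the Jeffrey--Kirwan formula and check that the dependence on $d$ in that formula enters only through higher powers of $\epsilon$, i.e. that the leading ($\epsilon^M$) term is independent of $d$.

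Concretely, in \cite{jeffreykirwan} the degree $d$ enters the residue formula through a factor of the form $e^{d\,x}$ (or $\mathrm{res}$ of such a factor against the roots), with $x$ a residue variable; after the substitution $\alpha_k\mapsto \epsilon\delta_k\alpha_k$ that produces $G(\epsilon)$, the $d$-dependence is concentrated in a term that is visibly $O(\epsilon)$ relative to the leading residue. Thus the second step is: expand $G(\epsilon)=G_0 + G_1\epsilon + \cdots$ in $\epsilon$, note that the proof of \cite[Proposition 8]{EK-pontryagin} shows the first nonvanishing coefficient occurs at order $\epsilon^M$, and verify that this coefficient $G_M$ (as a polynomial in $\delta_2,\dots,\delta_r$) is computed by a residue in which the $d$-dependent exponential contributes only its constant term $1$. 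Equivalently — and this is perhaps the cleanest phrasing — one can argue that the only monomials $D$ with $\deg(D)=2(r^2-1)(g-1)$ and $\deg^C(D)=(r+2)(r-1)(g-1)$ that can have nonzero integral are, up to scalar, of the shape $\eta\cdot\alpha_2^{(r-1)(g-1)}$ with $\eta$ a polynomial in the $\beta_k$'s of cohomological degree $2r(r-1)(g-1)$ (no $\psi$'s appear, since any $\psi_{k,j}$ raises the slack in the inequality). For such $D$ one then invokes the explicit $d$-independent evaluation: the integral $\int_{N_{r,d}}\eta\cdot\alpha_2^{(r-1)(g-1)}$ is computed in \cite[Proposition 9]{EK-pontryagin} (or via Witten/Thaddeus-type volume formulas) and the answer — a universal rational number depending only on $g$ and $r$ — does not see $d$.

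The cleanest route for the final step is therefore: (a) show that the space of $D\in\BD$ with these two degree constraints is spanned by monomials which, after accounting for the Mumford relations among $\beta$-classes in $H^*(N_{r,d})$, reduce inside the ring to multiples of a single class, so the pairing $D\mapsto\int_{N_{r,d}}D$ is determined by one number; and (b) invoke that this number is $d$-independent. Part (b) follows because the relations in $H^*(N_{r,d})$ in the relevant degree range — the Earl--Kirwan completeness of the Mumford-type relations — can be taken $d$-independent once one restricts to the $\beta$-subring in degrees $\le 2r(r-1)(g-1)$, and the single surviving top intersection number is fixed by the residue computation above.

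The main obstacle I expect is part (a): controlling \emph{all} monomials of Chern degree exactly $(r+2)(r-1)(g-1)$ in top cohomological degree, rather than just the one explicit witness $\eta\cdot\alpha_2^{(r-1)(g-1)}$ used in the proof of Theorem \ref{thm: topcherndegree}. One must rule out that some other monomial (involving $\beta_k$ with $k\ge 3$, or a more complicated mix) pairs to a $d$-dependent number. The way around this is to stay inside the Jeffrey--Kirwan framework: since \emph{every} such top-degree, minimal-Chern-degree monomial has its integral equal to the coefficient of $\epsilon^M$ in the corresponding $G(\epsilon)$, and since that coefficient is extracted by a residue whose $d$-dependent factor only contributes at higher order in $\epsilon$, the $d$-independence is uniform across all such monomials. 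Packaging this uniformity — i.e. checking that the "$d$ enters only at order $\epsilon^{\ge M+1}$" claim holds for the general monomial and not just the witness — is the one place where a careful reading of the residue formula in \cite{jeffreykirwan, EK-pontryagin} is genuinely needed.
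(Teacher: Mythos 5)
Your main route is essentially the paper's proof: the integral of any top-Chern-degree monomial is the coefficient of $\epsilon^M$ in the Jeffrey--Kirwan polynomial $G(\epsilon)$, and the only $d$-dependent ingredient of that formula sits inside an exponential factor of the form $\exp(\epsilon\cdot(\text{$d$-dependent}))=1+O(\epsilon)$, which therefore cannot affect the leading $\epsilon^M$ coefficient; this is exactly the argument given in the paper (where the $d$-dependence is isolated in the term $\exp(\epsilon\,\mathrm{d}q_X(w\tilde c))$).

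One side-claim in your ``cleanest phrasing'' is false and should be dropped: it is not true that nonvanishing top-Chern-degree monomials must be of the form $\eta\cdot\alpha_2^{(r-1)(g-1)}$ with no $\psi$'s. The identity \eqref{eq: cdegearlkirwan} shows that for a top-cohomological-degree monomial, $\deg^C(D)=(r+2)(r-1)(g-1)$ is equivalent to $2\sum_k n_k+\sum_{k,j}p_{k,j}=2(r-1)(g-1)$, i.e.\ $\sum_k n_k=M$ holds automatically; the $\psi$'s do not create extra slack, they just lower the required number of $\alpha$'s. Already for $r=2$ the monomial $\alpha^{g-2}\beta^{g-2}\psi_1\psi_{1+g}$ has top cohomological and top Chern degree and nonzero integral (by \eqref{eq: Virasoro} it is proportional to $\int\alpha^{g-1}\beta^{g-1}\neq 0$). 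Since you ultimately fall back on the uniform Jeffrey--Kirwan argument rather than this classification, the error does not sink the proof, but the parenthetical ``no $\psi$'s appear'' reasoning is wrong.
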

\begin{proof}
    Recall from the proof of Theorem \ref{thm: topcherndegree} that the integrals of $D$ with top Chern degree are encoded in the $\epsilon^M$ coefficient of $G(\epsilon)$. The only part dependent on $d$ in the Jeffrey--Kirwan \cite{jeffreykirwan} formula for $G(\epsilon)$ is $\tilde c$, which appears only in the term
    \[\exp\!\big(\epsilon \mathrm{d}q_X(w\tilde c)\big)\,.\]
    But this term does not contribute to the coefficient of $\epsilon^M$, so $\int_{N_{r,d}}D$ is independent of $d$ for $D$ of top Chern degree.
\end{proof}

\subsection{The associated graded and its (non-degenerate) pairing}\label{subsec: associatedgraded}

We explain in this subsection the relation between the numerical symmetry of a filtration and the perfectness of certain pairings on its associated graded; this is discussed briefly after Question \ref{question: symmetry}. We also explain the obstruction for the symmetry \eqref{eq: main} to hold for \textit{all} degrees $d$ coprime to a given rank.

\begin{lem}\label{lem: sym implies perfect}
    Let $V$ be a finite-dimensional vector space with an increasing filtration $C_\bullet V$ and a perfect pairing $\langle -,-\rangle: V\otimes V \to \mathbb{Q}$. Assume that there exists $N>0$ such that
    \begin{enumerate}
    
        \item[(i)] The pairing $C_i V \otimes C_j V \to \mathbb{Q}$ is zero for $i+j < N$.

        \item[(ii)] The numerical symmetry $\dim \mathrm{gr}^C_i V = \dim \mathrm{gr}^C_{N-i} V$ holds.
    \end{enumerate}
    Then there exists a well-defined {perfect} pairing
    \begin{equation}\label{eq: gr_pairing}
    \langle -, - \rangle^\mathrm{gr}:\mathrm{gr}^C_\bullet V \otimes \mathrm{gr}^C_{N-\bullet}V \longrightarrow \mathbb{Q}.
    \end{equation}
\end{lem}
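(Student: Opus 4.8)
The plan is to build the pairing on associated graded pieces directly from the given pairing on $V$, then verify well-definedness using hypothesis (i) and perfectness using (i) together with (ii).

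First I would define the pairing: given classes $\bar{x} \in \mathrm{gr}^C_i V$ and $\bar{y} \in \mathrm{gr}^C_{N-i} V$, represented by $x \in C_i V$ and $y \in C_{N-i} V$, set $\langle \bar{x}, \bar{y}\rangle^{\mathrm{gr}} \coloneqq \langle x, y\rangle$. To see this is well-defined, I must check independence of the choice of representatives. If $x' = x + u$ with $u \in C_{i-1} V$, then $\langle x', y\rangle - \langle x, y\rangle = \langle u, y\rangle$, and since $u \in C_{i-1}V$ and $y \in C_{N-i}V$ with $(i-1) + (N-i) = N-1 < N$, hypothesis (i) gives $\langle u, y\rangle = 0$. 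The symmetric argument handles changing the representative of $\bar y$. So the pairing descends to a well-defined bilinear form \eqref{eq: gr_pairing}.

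Next I would prove perfectness. Fix $i$ and suppose $\bar{x} \in \mathrm{gr}^C_i V$ pairs to zero with all of $\mathrm{gr}^C_{N-i} V$; I want $\bar{x} = 0$, i.e. that any lift $x \in C_i V$ already lies in $C_{i-1} V$. The hypothesis says $\langle x, y\rangle = 0$ for all $y \in C_{N-i} V$ (since every such $y$ represents a class in $\mathrm{gr}^C_{N-i}V$, and the pairing of $\bar x$ with that class is $\langle x, y\rangle$ regardless of representative — this uses well-definedness just established). Combined with hypothesis (i), which says $\langle x, y\rangle = 0$ automatically for $y \in C_{N-i-1}V \subseteq C_{N-i}V$ — wait, that inclusion is the wrong direction for what I want; rather, I should argue that $x$ is orthogonal to $C_{N-i}V$, and I want to leverage this to conclude $x \in C_{i-1}V$. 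This does \emph{not} follow from perfectness of $\langle-,-\rangle$ on $V$ alone; here is where (ii) enters, via a dimension count. Consider the pairing $C_i V \otimes C_{N-i}V \to \mathbb{Q}$; its left kernel contains $C_{i-1}V$ (by (i)) and its right kernel contains $C_{N-i-1}V$ (again by (i), since $(i) + (N-i-1) < N$). So it induces a pairing $\mathrm{gr}$-style but I'll instead argue as follows: the map $C_i V \to (C_{N-i}V)^\vee$ given by the pairing has kernel exactly the set I must understand. Since $\langle-,-\rangle$ is perfect on $V$, the map $C_i V \to (V/C_{N-i-1}V)^\vee \cong \mathrm{Ann}(C_{N-i-1}V)$ is... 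Let me instead use the clean approach: perfectness of $\langle-,-\rangle$ on $V$ identifies $C_j V$ with the annihilator of some subspace; specifically $C_j V^{\perp}$ is a subspace of dimension $\dim V - \dim C_j V$. Hypothesis (i) says $C_{N-1-j}V \subseteq C_j V^\perp$. Using (ii), $\dim C_j V + \dim C_{N-1-j} V = \sum_{a\le j}\dim \mathrm{gr}_a + \sum_{a \le N-1-j}\dim\mathrm{gr}_a$, and by the symmetry $\dim \mathrm{gr}_a = \dim\mathrm{gr}_{N-a}$ this sum telescopes to $\dim V$. Hence $C_{N-1-j}V = C_j V^\perp$ exactly, for every $j$. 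Applying this with $j = N-i$: $C_{i-1}V = (C_{N-i}V)^\perp$. Therefore $x \perp C_{N-i}V$ forces $x \in C_{i-1}V$, i.e. $\bar x = 0$, which is precisely nondegeneracy of \eqref{eq: gr_pairing} on the left; the right nondegeneracy is symmetric.

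The main obstacle is the perfectness step: well-definedness is a one-line consequence of (i), but to upgrade "nondegenerate on each $C_j V$ modulo the obvious kernel" to "perfect on associated graded" one genuinely needs the dimension count showing $C_{N-1-j}V = (C_j V)^\perp$, and this is exactly where the numerical symmetry (ii) is indispensable — without it one only gets an inclusion of subspaces of possibly different dimensions. Once that equality of subspaces is in hand, perfectness of the graded pairing (in the appropriate bidegree form, $\mathrm{gr}^C_i V$ paired with $\mathrm{gr}^C_{N-i}V$) follows formally, and one should also note that (ii) guarantees the two graded pieces being paired have equal dimension, so nondegeneracy on one side is equivalent to a perfect pairing.
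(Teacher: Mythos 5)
Your proof is correct and follows essentially the same route as the paper: well-definedness from hypothesis (i), and perfectness by showing via the telescoping dimension count from (ii) that $C_{N-1-j}V$ equals the full orthogonal complement $(C_jV)^\perp$ rather than merely being contained in it. Despite the mid-paragraph false start, the argument you settle on is exactly the paper's (which phrases the same subspace as $\widetilde{C}_iV$ and proves $C_iV=\widetilde{C}_iV$ by the identical dimension computation).
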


\begin{proof}
    By (i), it is clear that the pairing on $V$ descends to a well-defined pairing \eqref{eq: gr_pairing}. We define subspaces
    \[
    \widetilde{C}_i V \coloneqq \{w\in V\mid \langle v,w \rangle=0, ~ \textrm{for all } v\in C_{N-1-i}V\}.
    \]
    By (i), we have $C_i V \subset \widetilde{C}_i V$. By (ii) and the perfectness of the pairing on $V$, we have
    \begin{align*}
        \dim \widetilde{C}_i V &= \dim V - \dim C_{N-1-i} V = \sum_{j\leq i} \dim \mathrm{gr}^C_{N-j} V = \sum_{j\leq i} \dim \mathrm{gr}^C_{j} V = \dim C_i V.
    \end{align*}
    It follows that $C_i V = \widetilde{C}_i V$. If $\overline{w}\in \mathrm{gr}^C_{N-i} V$ is such that $\langle \overline{v}, \overline{w}\rangle^\mathrm{gr}=0$ for all $\overline{v} \in \mathrm{gr}^C_i V$, then by definition 
    \[
    w \in \widetilde{C}_{N-1-i} V = C_{N-1-i} V,
    \]
    and thus $\overline{w}=0$. This shows the perfectness of \eqref{eq: gr_pairing}.
\end{proof}

For our purpose, take $C_\bullet V$ to be $H^*(N_{r,d})$ equipped with the Chern filtration (Definition~\ref{def: chern}),  $\langle -, -\rangle$ to be the Poincaré pairing, and $N=(r+2)(r-1)(g-1)$ to be the top Chern degree (Theorem \ref{thm: topcherndegree}). The following corollary is then immediate from Lemma \ref{lem: sym implies perfect} and the fact that $\langle v, w\rangle \neq 0$ only if $v$ and $w$ have complementary cohomological degrees.

\begin{cor} \label{cor: 1.4}
    Assume that the numerical Chern symmetry holds:
    \[
    \overline{\Omega}(N_{r,d}, q, 1) = \overline{\Omega}(N_{r,d}, q^{-1}, 1).
    \]
    Then the refined symmetry $\overline{\Omega}(N_{r,d}, q,t) = \overline{\Omega}(N_{r,d}, q^{-1}, t^{-1})$ also holds, and the pairing 
    \begin{equation}\label{eq: grH_pairing}
    \langle -,-\rangle^\mathrm{gr}:\mathrm{gr}^C_\bullet H^*(N_{r,d}) \otimes \mathrm{gr}^C_{N-\bullet}H^*(N_{r,d}) \to \mathbb{Q}
    \end{equation}
    is perfect.
\end{cor}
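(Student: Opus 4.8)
The statement to prove is Corollary \ref{cor: 1.4}: assuming the numerical Chern symmetry $\overline{\Omega}(N_{r,d}, q, 1) = \overline{\Omega}(N_{r,d}, q^{-1}, 1)$, the refined symmetry holds and the graded Poincaré pairing is perfect.

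The plan: This is meant to be a quick consequence of Lemma \ref{lem: sym implies perfect}. First I would set $V = H^*(N_{r,d})$ with the Chern filtration $C_\bullet$, take $\langle-,-\rangle$ to be the Poincaré pairing (perfect by Poincaré duality on the smooth projective variety $N_{r,d}$), and set $N = (r+2)(r-1)(g-1)$, the top Chern degree from Theorem \ref{thm: topcherndegree}. I need to verify the two hypotheses of the lemma. Hypothesis (ii), $\dim \gr^C_i V = \dim \gr^C_{N-i} V$, is exactly the assumed numerical symmetry $\overline{\Omega}(N_{r,d},q,1) = \overline{\Omega}(N_{r,d},q^{-1},1)$ after unpacking the definition of $\overline{\Omega}$: setting $t=1$ collapses the cohomological grading and leaves $\sum_i \dim \gr^C_i V\, q^i$, and the shift by $q^{-N/2}$ turns the symmetry $q\mapsto q^{-1}$ into $i \mapsto N-i$. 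Hypothesis (i), that $C_i V \otimes C_j V \to \mathbb{Q}$ vanishes for $i+j < N$, follows from Theorem \ref{thm: topcherndegree}: if $v \in C_i V$ and $w \in C_j V$ then $v\cdot w \in C_{i+j} H^{\mathrm{top}}(N_{r,d})$ by multiplicativity of the Chern filtration, and $C_{i+j} H^{\mathrm{top}} = 0$ whenever $i+j < N = \ell$, the top Chern degree. Then Lemma \ref{lem: sym implies perfect} immediately produces the perfect pairing \eqref{eq: grH_pairing}.

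For the refined symmetry, I would observe that the perfect graded pairing \eqref{eq: grH_pairing} pairs $\gr^C_i H^*(N_{r,d})$ with $\gr^C_{N-i} H^*(N_{r,d})$, and since the Poincaré pairing on $H^*$ pairs $H^{i+j}$ with $H^{\mathrm{top}-(i+j)}$, the induced graded pairing pairs $\gr^C_i H^{i+j}$ with $\gr^C_{N-i} H^{\mathrm{top}-(i+j)}$. Writing $\mathrm{top} = r(r-1)(2g-2) + N$ (or rather using that $\mathrm{top}$ and $N$ are the total cohomological and Chern degrees), a bidegree $(i, i+j)$ piece is paired nondegenerately with the bidegree $(N-i, \mathrm{top}-i-j)$ piece, which in the $(q,t)$-bookkeeping of $\Omega$ — where the monomial is $q^i t^j$, i.e. the exponents record the Chern degree and the difference (cohomological minus Chern) degree — corresponds to $q^{N-i} t^{(\mathrm{top}-i-j)-(N-i)} = q^{N-i} t^{(\mathrm{top}-N)-j}$. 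Since $\mathrm{top} - N = r(r-1)(2g-2) = 2\cdot\frac12 r(r-1)(g-1)\cdot 2$, this is precisely the substitution $q \mapsto q^{-1}, t\mapsto t^{-1}$ after the shift defining $\overline\Omega$. Hence $\dim \gr^C_i H^{i+j} = \dim \gr^C_{N-i} H^{\mathrm{top}-i-j}$ for all $i,j$, which is the refined symmetry \eqref{eq: main}.

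The main (and really only) obstacle is bookkeeping: carefully matching the definition of $\overline\Omega$ — with its specific shifts $q^{-\frac12(r+2)(r-1)(g-1)} t^{-\frac12 r(r-1)(g-1)}$ — to the bidegrees paired by \eqref{eq: grH_pairing}, and checking that the $t=1$ specialization of the hypothesis is genuinely equivalent to hypothesis (ii) of Lemma \ref{lem: sym implies perfect}. There is no deep content beyond Theorem \ref{thm: topcherndegree}, the multiplicativity of the Chern filtration, Poincaré duality, and Lemma \ref{lem: sym implies perfect}; the corollary is essentially an unwinding of definitions, so I would keep the write-up to a few lines, perhaps relegating the index chase to a remark. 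One small point worth stating explicitly: the graded pairing respects both the Chern grading (shifting by $N$) and the cohomological grading (shifting by $\mathrm{top}$) separately, since the Poincaré pairing is already bigraded-compatible and passing to $\gr^C$ does not mix cohomological degrees — this is what upgrades the $t=1$ numerical symmetry to the full refined symmetry in $t$.
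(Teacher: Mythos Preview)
Your proposal is correct and follows exactly the paper's approach: apply Lemma~\ref{lem: sym implies perfect} with $V=H^\ast(N_{r,d})$, the Chern filtration, the Poincar\'e pairing, and $N=(r+2)(r-1)(g-1)$, then use that the Poincar\'e pairing respects cohomological degree to upgrade the numerical symmetry to the refined one. One small arithmetic slip: you write $\mathrm{top}-N=r(r-1)(2g-2)$, but in fact $\mathrm{top}-N=2(r^2-1)(g-1)-(r+2)(r-1)(g-1)=r(r-1)(g-1)$, which is exactly the $t$-shift in the definition of $\overline\Omega$; with this correction your index chase goes through verbatim.
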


When the symmetry does hold (for example for $N_{2,1}$), the perfectness of the pairing \eqref{eq: grH_pairing} implies that the associated graded ring $\mathrm{gr}^C_\bullet H^*(N_{r,d})$ is determined by integrals \textit{only in the top Chern degree}, instead of all integrals, which are necessary to determine the full cohomology ring $H^*(N_{r,d})$. To state this precisely, we introduce the ``graded integral'' notation
\[\int_{N_{r,d}}^{\gr}D\]
which is defined to be $\int_{N_{r,d}}D$ if $D\in \BD$ has the top Chern degree, and zero otherwise. We equip the descendent algebra with a symmetric pairing $\langle -,-\rangle^\gr:\BD\otimes \BD\rightarrow\BQ$ given by
$$\langle D,D'\rangle^\gr \coloneqq \int_{N_{r,d}}^{\gr} D\cdot D'\,.$$
Then the non-degeneracy of the pairing on $\mathrm{gr}^C_\bullet H^*(N_{r,d})$ means that the ideal
\[I^\gr= \ker\big(\BD\twoheadrightarrow \gr^C_\bullet H^\ast(N_{r,d})\big)\]
is precisely the kernel of the pairing $\langle -,-\rangle^\gr:\BD\otimes \BD\rightarrow\BQ$.

In general, we expect the integrals in the top Chern degree to be ``better behaved'' than all the integrals. The $r=2$ case illustrates this phenomenon well. A formula for integrals of general descendents can be found in \cite[(30)]{Tha1}, whose most complicated aspect is the appearance of the Bernoulli numbers $B_q$. However, considering only integrals in the top Chern degree is the same as setting $q=0$, which eliminates this complexity. Indeed, the integrals with top Chern degree in rank two are completely determined by the ``Virasoro proportionality'' \eqref{eq: Virasoro} and monodromy invariance. Another evidence for this philosophy can be seen in Proposition~\ref{prop: dindependence}, which states that integrals in the top Chern degree, unlike general integrals, do not depend on the degree $d$.

\begin{rmk}\label{rmk: nosymmetry}
    The above discussions implies that the Chern symmetry \eqref{eq: main} cannot hold for every degree $d$: if the symmetry holds for $N_{r,d}$, then $\mathrm{gr}^C_\bullet H^*(N_{r,d})$, and in particular the Betti numbers of $N_{r,d}$, are determined by the graded integral functional $\int_{N_{r,d}}^{\gr}$. But this functional does not depend on $d$, by Proposition \ref{prop: dindependence}, so the symmetry holding for every $d$ would imply that the Betti numbers of $N_{r,d}$ were the same for every $d$, which is not true (for example $N_{5,1}$ and $N_{5,2}$ have different Betti numbers). Numerical evidence suggests that the Betti numbers of $N_{r,\pm 1}$ are smaller than the Betti numbers of $N_{r,d}$ with $d\not\equiv \pm 1 \mod r$, which provides some support to Question \ref{question: symmetry}. This situation contrasts with the case of moduli spaces $M_{\beta, \chi}^H(S)$ of 1-dimensional sheaves  on del Pezzo surfaces $S$, where we do expect the analogous symmetry for every $\chi$ (indeed, it is a consequence of the $P=C$ conjecture, as discussed in \cite{KLMP}) and the Betti numbers are known to be independent of $\chi$ by \cite{MSchiind}; we expect that a result analogous to Proposition \ref{prop: dindependence} holds in that setting and hope to address this in the future.
\end{rmk}

\section{The refined Poincaré polynomial of \texorpdfstring{$N_{2,1}$}{N21}}
\label{sec:refinedpoincareN21}

In this section we will prove the following formula for the refined Poincaré polynomial of the moduli space of rank 2 vector bundles.

\begin{thm}[Theorem \ref{thm: main}]\label{thm: omegaM21}
    The refined Poincaré polynomial for $N_{2,1}$ is
    \begin{equation*}
    \Omega(N_{2,1},q,t) = \frac{(1+q^2t)^{2g}-q^{2g}(1+t)^{2g}}{(1-q^2)(1-q^2t^2)}.
    \end{equation*}
    In particular, the symmetry \eqref{eq: main} holds when $r=2$.
\end{thm}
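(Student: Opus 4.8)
The plan is to recast the statement as a Hilbert-series computation for the associated graded of the tautological relation ideal, and then carry it out from the known presentation of $H^*(N_{2,1})$. By Proposition \ref{prop: cohomologiesofstacks}(i),(iv) the realization map $\BD\to H^*(N_{2,1})$ is surjective; write $I$ for its kernel. Since the Chern filtration on $H^*(N_{2,1})$ is, by definition, the image of the (multiplicative) Chern grading on $\BD$, the associated graded $\gr^C_\bullet H^*(N_{2,1})$ is identified with $\BD/I^\gr$, where $I^\gr\subset\BD$ is the initial ideal of $I$ for the Chern grading (the span of the top-Chern-degree parts of elements of $I$). Hence
\[
\Omega(N_{2,1},q,t)=\Omega(\FN_{2,1},q,t)-\Omega_{I^\gr}(q,t)=\frac{(1+q^2t)^{2g}}{(1-q^2)(1-q^2t^2)}-\Omega_{I^\gr}(q,t)
\]
by \eqref{eq: omegaNN} with $r=2$, and it remains to prove $\Omega_{I^\gr}(q,t)=q^{2g}(1+t)^{2g}/\big((1-q^2)(1-q^2t^2)\big)$.

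Next I would use the classical complete description of $I$: in rank two $H^*(N_{2,1})=\BQ[\alpha,\beta]\otimes\Lambda(\psi_1,\dots,\psi_{2g})/I$ (with $\alpha=\alpha_2$, $\beta=\beta_2$, $\psi_i=\psi_{2,i}$) is cut out by the Mumford relations, complete in the coprime fixed-determinant case by work of Kirwan (and King--Newstead, Baranovsky, Siebert--Tian); via the $\mathrm{Sp}(2g)$-Lefschetz decomposition these amount to the three Zagier--Thaddeus relations in the invariants $\alpha,\beta,\gamma:=-2\sum_i\psi_i\psi_{i+g}$ (in cohomological degrees $2g,2g+2,2g+4$), together with the $\mathrm{Sp}$-relations enforcing $\Lambda^{>g}_0=0$. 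Recording Chern degrees ($\deg^C\alpha=\deg^C\beta=\deg^C\psi_i=2$, $\deg^C\gamma=4$), I would extract the Chern-leading terms of all these relations — e.g. the degree-$2g$ relation has leading term proportional to $\alpha^g$, while the $\mathrm{Sp}$-relations are already Chern-homogeneous — and show they generate $I^\gr$; this is a Gröbner-basis-type statement for the Mumford relations with respect to the Chern grading. Granting it, one presents $\BD/I^\gr$ by the leading-term relations, writes down an explicit bigraded monomial basis (with each $\psi_i$ carrying weight $q^2t$), sums up, and obtains the asserted rational function. The symmetry \eqref{eq: main} is then immediate from this formula: with $r=2$ one has $\overline\Omega=q^{-2(g-1)}t^{-(g-1)}\Omega$, and substituting $q\mapsto q^{-1}$, $t\mapsto t^{-1}$ while using $(1+q^{-2}t^{-1})^{2g}=q^{-4g}t^{-2g}(1+q^2t)^{2g}$ and $(1-q^{-2})(1-q^{-2}t^{-2})=q^{-4}t^{-2}(1-q^2)(1-q^2t^2)$ reproduces $\overline\Omega(q,t)$ after collecting powers.

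The hard part is the Gröbner-type control of $I^\gr$: one must exclude the possibility that $\BD$-combinations of the (genuinely intricate) Mumford relations create leading terms of unexpectedly small Chern degree. I would try to avoid proving this head-on via a two-sided estimate: the explicit relations certainly yield an upper bound $\dim\gr^C_iH^{i+j}(N_{2,1})\le a_{i,j}$, where $a_{i,j}$ is the $q^it^j$-coefficient of the claimed function; and since that function at $q=t=s$ equals the classical Poincaré polynomial $\big((1+s^3)^{2g}-s^{2g}(1+s)^{2g}\big)/\big((1-s^2)(1-s^4)\big)$ of $N_{2,1}$ (Newstead/Atiyah--Bott), one has $\sum_i a_{i,j}=\dim H^{i+j}(N_{2,1})=\sum_i\dim\gr^C_iH^{i+j}(N_{2,1})$, forcing the upper bounds to be equalities. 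A parallel route, technically heavier but presumably needed anyway for Theorem \ref{thm: intermediatestacks}, telescopes along the Harder--Narasimhan stratification $\{\FN_{2,1}^{\le\delta}\}$ from the known $\Omega(\FN_{2,1})$ down to $\Omega(\FN_{2,1}^{\textup{ss}})=\Omega(N_{2,1})$ — the strata being cohomologically $\PPic^\delta$ — where the difficulty instead becomes controlling the Chern filtration through the Gysin maps of the strata (the ``modified Mumford relations'').
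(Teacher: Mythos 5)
Your reduction of the theorem to the identity $\Omega_{I^\gr}(q,t)=q^{2g}(1+t)^{2g}/\big((1-q^2)(1-q^2t^2)\big)$ is correct, the squeeze logic (an upper bound $\dim\gr^C_iH^{i+j}\le a_{i,j}$ in each bidegree plus the known Poincar\'e polynomial forces equality) is valid, and the final symmetry verification is fine. The gap is the word ``certainly'' in the claim that the explicit relations yield the upper bound. In the Zagier/King--Newstead presentation the relation ideal is $\bigoplus_l \mathsf{Prim}_l\otimes I_{g-l}$ with $I_{n}$ generated by three elements of cohomological degrees $2n,2n+2,2n+4$; since for a class of cohomological degree $2n$ in $\BQ[\alpha,\beta,\gamma]$ the unique monomial of maximal Chern degree is $\alpha^{n}$ (Chern degree of $\alpha^a\beta^b\gamma^c$ is $2n-2b-2c$), the Chern-leading terms of these generators are proportional to $\alpha^{g-l},\alpha^{g-l+1},\alpha^{g-l+2}$. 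The ideal they generate is just $(\alpha^{g-l})\otimes\mathsf{Prim}_l$, whose quotient contains every power of $\beta$ and is infinite-dimensional, so the resulting upper bound is nowhere near $a_{i,j}$ and the squeeze cannot close. The phenomenon you propose to ``exclude'' --- combinations of relations whose leading terms have lower Chern degree than the product of the leading terms --- is not a pathology but the essential content: one must explicitly \emph{construct} such combinations (e.g.\ $\beta\cdot\xi_{g-l}$ minus a multiple of $\xi_{g-l+2}$, and so on) to cut the quotient down to finite dimension. This is precisely the paper's ``modified Mumford relations'' analysis in Section \ref{sec: intermediatestacks}, culminating in the description \eqref{eq: relationsIgr} of $I^\gr$ as a free $\BQ[\beta]$-module on relations genuinely involving $\beta$ and $\gamma$; it is substantial work, not a routine extraction of leading terms.

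Your fallback via the Harder--Narasimhan stratification also cannot stand alone for this theorem: the telescoped inequalities only give $\Omega(\FN_{2,1})-\Omega(N_{2,1})\preceq q^{2g}(1+t)^{2g}/\big((1-q^2)(1-q^2t^2)\big)$, and $\preceq$ together with matching totals in each cohomological degree does not determine the individual bigraded coefficients (the paper's Section \ref{sec: intermediatestacks} uses Theorem \ref{thm: main} as an input to close that argument, not the other way around). The paper's actual proof of this statement is entirely different and bypasses the relations: it combines Zagier's explicit basis of $H^*(N_{2,1})$, whose intersection pairing is anti-diagonal and whose elements come with explicit Chern-degree bounds satisfying $\ell_v+\ell_{\check v}=4g-4$, with Theorem \ref{thm: topcherndegree} (vanishing of integrals below the top Chern degree) via Lemma \ref{lem: antidiagonalbasis}; this yields the lower and upper bounds on the filtration simultaneously and reduces the formula to a generating-function count over the blocks of Zagier's basis. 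If you want to salvage your route, you would need to supply the modified Mumford relations and the Chern-degree bound of Proposition \ref{prop: boundCmodifiedmr} before the squeeze can be applied.
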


Note that setting $q=t$ recovers the known Poincaré polynomial for $N_{2,1}$ \cite{newsteadtop, HN, AB}.

\smallskip

This formula will be obtained by using the explicit basis found by Zagier in \cite{Zag_rank2}. Before we recall the construction and properties of this basis, we prove the following general lemma that explains how the refined Poincaré polynomial of $N_{r,d}$ can be calculated provided we have a basis with certain desirable properties. 

\begin{lem}\label{lem: antidiagonalbasis}
Let $N=N_{r,d}$ with $\gcd(r,d)=1$. Suppose that there exists a basis $\CB$ of $H^\ast(N)$ that has the following properties:
\begin{enumerate}
    \item[(i)] The intersection pairing matrix is ``anti-diagonal'', i.e., there is an involution $v\mapsto \check v$ of $\mathcal B$ such that $\int_N v\cdot u=0$ for $u\in \mathcal{B}\setminus \{\check v\}$ and $\int_N v\cdot \check v \neq 0$. 
    
    \item[(ii)] There are non-negative integers $\ell_v, k_v$ for each $v\in \CB$ such that $v\in C_{\ell_v}H^{k_v}(N)$. In particular,
    \[k_v+k_{\check v}=2(r^2-1)(g-1)\,.\]
    \item[(iii)] We have
    \[\ell_v+\ell_{\check v}=(r+2)(r-1)(g-1)\,.\]
\end{enumerate}
Then
\[\dim \mathrm{gr}^C_{\ell}H^k(N)=\#\{v\in \CB\colon k_v=k\,,\,\ell_v=\ell\}\,.\]
In particular, the symmetry \eqref{eq: main} holds. 
\end{lem}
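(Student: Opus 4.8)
The plan is to reduce the statement to the pointwise identity
\[
\dim C_\ell H^k(N)=\#\{v\in\CB:\ k_v=k,\ \ell_v\le\ell\}\qquad\text{for all }k,\ell,
\]
from which everything follows quickly. Indeed, subtracting this identity at filtration levels $\ell$ and $\ell-1$ gives $\dim\mathrm{gr}^C_\ell H^k(N)=\#\{v\in\CB:\ k_v=k,\ \ell_v=\ell\}$, and the symmetry \eqref{eq: main} is then immediate: by (ii) and (iii) the involution $v\mapsto\check v$ restricts to a bijection
\[
\{v\in\CB:\ k_v=k,\ \ell_v=\ell\}\ \xrightarrow{\ \sim\ }\ \{v\in\CB:\ k_v=2(r^2-1)(g-1)-k,\ \ell_v=(r+2)(r-1)(g-1)-\ell\},
\]
which is precisely the symmetry of the bigraded dimensions recorded by $\overline\Omega$.

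To prove the pointwise identity, set $S_{k,\ell}\coloneqq\{v\in\CB:\ k_v=k,\ \ell_v\le\ell\}$. One inequality is essentially free: by (ii) each $v\in S_{k,\ell}$ lies in $C_{\ell_v}H^k(N)\subseteq C_\ell H^k(N)$, and these vectors are linearly independent as a subset of the basis $\CB$, so $\dim C_\ell H^k(N)\ge\#S_{k,\ell}$. The content is the reverse inequality, which I would obtain from Poincaré duality together with Theorem \ref{thm: topcherndegree}. Put $k'\coloneqq 2(r^2-1)(g-1)-k$ and $\ell'\coloneqq(r+2)(r-1)(g-1)-1-\ell$. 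Since the Chern filtration is multiplicative, the cup product carries $C_\ell H^k(N)\otimes C_{\ell'}H^{k'}(N)$ into $C_{\ell+\ell'}H^{\mathrm{top}}(N)$, and $\ell+\ell'=(r+2)(r-1)(g-1)-1$ lies \emph{strictly} below the top Chern degree, so this space is zero by Theorem \ref{thm: topcherndegree}. Hence $C_\ell H^k(N)$ lies in the orthogonal complement of $C_{\ell'}H^{k'}(N)$ with respect to the (perfect) Poincaré pairing, giving
\[
\dim C_\ell H^k(N)\ \le\ \dim H^k(N)-\dim C_{\ell'}H^{k'}(N)\ \le\ \dim H^{k'}(N)-\#S_{k',\ell'},
\]
where the second step uses $\dim H^k(N)=\dim H^{k'}(N)$ and the easy inequality already proved, applied to the pair $(k',\ell')$.

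Finally I would check that $\dim H^{k'}(N)-\#S_{k',\ell'}=\#S_{k,\ell}$, which forces equality throughout. As $\CB$ is a homogeneous basis, $\dim H^{k'}(N)=\#\{v\in\CB:k_v=k'\}$, and deleting $S_{k',\ell'}$ leaves $\{v\in\CB:\ k_v=k',\ \ell_v\ge\ell'+1\}$. Applying the involution and using (ii), (iii) together with $\ell'+1=(r+2)(r-1)(g-1)-\ell$, the condition $k_v=k'$ becomes $k_{\check v}=k$ and $\ell_v\ge\ell'+1$ becomes $\ell_{\check v}\le\ell$, so $v\mapsto\check v$ is a bijection onto $S_{k,\ell}$. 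This yields $\dim C_\ell H^k(N)=\#S_{k,\ell}$ and completes the argument.

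The one step requiring care is the orthogonality argument: one must set up the degree bookkeeping so that $C_\ell H^k(N)\cdot C_{\ell'}H^{k'}(N)$ lands \emph{strictly} below the top Chern degree $(r+2)(r-1)(g-1)$ — this is the role of the $-1$ in the definition of $\ell'$ — and then invoke Theorem \ref{thm: topcherndegree}; this is a one-line computation, and everything else is linear algebra and counting with the involution. Note that the full ``anti-diagonal'' form of hypothesis (i) is stronger than what this argument uses: only the existence of the involution $v\mapsto\check v$ enters, together with the perfectness of the Poincaré pairing, which holds automatically by Poincaré duality.
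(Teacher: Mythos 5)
Your argument is correct, and it differs from the paper's in a way worth recording. The paper proves directly that $C_\ell H^\ast(N)$ is spanned by $\{v\in\CB:\ell_v\le\ell\}$: it expands an arbitrary $w\in C_\ell H^\ast(N)$ in the basis and pairs against the dual elements $\check v$ for each $v$ with $\ell_v>\ell$, using Theorem \ref{thm: topcherndegree} to see that $\int_N w\cdot\check v=0$ and then the \emph{anti-diagonal form of the pairing matrix} to conclude that the coefficient $a_v$ vanishes; a second, symmetric pairing argument gives linear independence in the graded pieces. You instead sandwich $\dim C_\ell H^k(N)$ between $\#S_{k,\ell}$ and $\dim H^{k'}(N)-\#S_{k',\ell'}$ via an orthogonal-complement dimension count, and close the loop with the combinatorial bijection induced by the involution. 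Both routes rest on the same two pillars (Theorem \ref{thm: topcherndegree} forcing products below the top Chern degree to integrate to zero, plus perfectness of the Poincaré pairing), and both yield the same refined conclusion that $C_\ell H^k(N)$ is exactly the span of $S_{k,\ell}$. What your version buys is the observation you make at the end: the anti-diagonality of the intersection matrix is never used, only the existence of a homogeneous basis equipped with an involution of its index set satisfying (ii) and (iii). This is a genuine (if mild) weakening of the hypotheses; in the application to Zagier's basis the point is moot, since the anti-diagonal pairing is precisely what supplies the involution, but it clarifies which part of hypothesis (i) is doing the work. Your bookkeeping at the edge case $\ell'<0$ (where $C_{\ell'}H^{k'}(N)=0$ and $S_{k',\ell'}=\emptyset$) also goes through without modification, so no gap there.
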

\begin{proof}
We begin by proving that $C_\ell H^\ast(N)$ is spanned by $v\in \CB$ with $\ell_v\leq \ell$. Let $w\in C_\ell H^\ast(N)$ and consider its decomposition into the basis $\CB$, 
\[w=\sum_{v\in \CB}a_vv\,,\]
for some $a_v\in \BQ$. If $\ell_v>\ell$ then $\ell_{\check v}+\ell<\ell_{\check v}+\ell_v=(r+2)(r-1)(g-1)$, so by Theorem \ref{thm: topcherndegree} we know that $\int_N w\cdot \check v=0$. But by the anti-diagonal property of the basis it implies that $a_v=0$, so $w$ is a linear combination of $v\in \CB$ with $\ell_v\leq \ell$. 

It follows that the (projections of the) basis vectors $v\in \CB$ with $\ell_v=\ell$ span $\mathrm{gr}^C_{\ell}H^\ast(N)$. To prove the claim of the lemma, it is enough to show that these are linearly independent in $\mathrm{gr}^C_{\ell}H^\ast(N)$. Suppose not, and there exists a linear combination
\[w=\sum_{v\in \CB_\ell}a_v v\in C_{\ell-1}H^\ast(N)\,\]
where the sum is over $\CB_\ell=\{v\in \CB\colon \ell_v=\ell\}$. Let $u\in \CB$ be any element of the basis. If $\check u\notin \CB_\ell$ then $\int_{N}u\cdot w=0$ by the anti-diagonal property. On the other hand, if $\ell_{\check u}=\ell$ then $\ell_u+\ell-1=(r+2)(r-1)(g-1)-1$, so we also conclude that $\int_N u\cdot w=0$ by Theorem \ref{thm: topcherndegree}. By Poincaré duality it follows that $w=0$, so all $a_v$ are equal to 0.

We conclude the symmetry \eqref{eq: main} since the involution defines a bijection between the sets
\[\{v\in \CB\colon k_v=k\,,~\ell_v=\ell\}\]
and
\[\{v\in \CB\colon k_v=2(r^2-1)(g-1)-k\,,~\ell_v=(r+2)(r-1)(g-1)-\ell\}\,.\qedhere\]
\end{proof}

We shall now recall the construction in \cite[Section 4]{Zag_rank2} of a basis with the properties above. Denote
\[
\gamma = -2\sum_{i=1}^g \psi_{i}\psi_{i+g}.
\]
We define the classes 
\[\xi_{r,s} \coloneqq \sum_{l=0}^{\min(r,s)} \binom{r+s-l}{r}\beta^{s-l}\frac{(2\gamma)^l}{l!}\xi_{r-l}\in \BQ[\alpha, \beta, \gamma]\,,\]
where $\xi_r\in \mathbb{Q}[\alpha, \beta, \gamma]$ are certain classes of cohomological degree $2r$ that can be defined recursively; see Theorem 3 in loc. cit. (and note that $\xi_{r,s}=\xi_{r,s,0}$). The classes $\xi_r$ are closely related to the ``Mumford relations'' that we shall introduce in Section \ref{sec: intermediatestacks}, but their precise shapes will not be relevant for now; in the notation of Section \ref{sec: intermediatestacks}, $\xi_r=c_{1,r}$ are the coefficients of \eqref{eq: phid}. Since the Chern degree is bounded by the cohomological degree, we have $\xi_r\in C_{2r}H^{2r}(N)$, and hence it is easy to see that
\[\xi_{r,s}\in C_{2r+2s}H^{2r+4s}(N)\,.\]
Let $[g] \coloneqq \{1,2,\ldots, g\}$. Given subsets $A, B, C\subseteq [g]$, we denote 
\[\psi_A=\prod_{j\in A}\psi_j\,,\;\psi^\ast_B=\prod_{j\in B}\psi_{j+g}\,,\;\gamma_C=\prod_{j\in C}\psi_j\psi_{j+g}\,.\]
Note that the elements
\begin{equation}\label{eq: basiselements}\xi_{r,s}\psi_A\psi^\ast_B\gamma_C\,\end{equation}
with $r,s\geq 0$ and $A, B, C\subseteq [g]$ pairwise disjoint form a basis (cf. \cite[Theorem~6]{Zag_rank2}) 
of the descendent algebra
\[\BD=\BQ[\alpha, \beta]\otimes \Lambda(\psi_1, \ldots, \psi_{2g})\,.\]
Zagier uses these classes to construct a basis of $H^\ast(N)$ with the necessary properties for us. His basis $\CB$ admits a partition into ``blocks'' 
\begin{equation}\label{eq: basispartition}\CB=\bigsqcup \CB_{r,s,A,B, c}\end{equation}
where the union runs over all $r,s,A,B, c$ with $r,s,c\geq 0$, $A,B\subseteq [g]$ disjoint, with the restriction that 
\[c'\coloneqq g-1-r-s-|A|-|B|-c\geq 0\,.\]

Moreover, the number of elements in the block $\CB_{r,s,A,B, c}$ is equal to
\[|\CB_{r,s,A,B, c}|=\binom{g-|A|-|B|}{\min(c,c')}\,.\]

When $c\leq c'$, the elements of $\CB_{r,s,A,B, c}$ are precisely \eqref{eq: basiselements} with $C$ running over all subsets of $[g]\setminus (A\cup B)$ with $c$ elements. When $c>c'$, the elements of $\CB_{r,s,A,B, c}$ are still linear combinations of such classes. In particular, it follows that if $v\in \CB_{r,s,A,B, c}$ then $v\in C_{\ell_v}H^{k_v}(N)$ with
\[\ell_v=2r+2s+2|A|+2|B|+4c\,\textup{ and }\,k_v=2r+4s+3|A|+3|B|+6c\,.\]
This basis has the anti-diagonal property (condition (i) in Lemma \ref{lem: antidiagonalbasis}) with respect to the Poincaré pairing; moreover, if $v\in \CB_{r,s,A,B, c}$ then the unique $\check v\in \CB$ that pairs non-trivially with $v$ is in the block
\[\check v\in \CB_{s,r,B,A,c'}\,.\]
In particular, a straightforward calculation shows that
\[\ell_v+\ell_{\check v}=4g-4\,.\]

\begin{proof}[Proof of Theorem \ref{thm: omegaM21}]
By the discussions above, Zagier's basis $\mathcal{B}$ satisfies the conditions of Lemma~\ref{lem: antidiagonalbasis}, so $\Omega(N_{2,1}, q,t)$ can be calculated via the combinatorics of the basis. Indeed, we obtain
\[\Omega(N_{2,1}, q,t)=\sum \binom{g-|A|-|B|}{\min(c,c')}q^{2r+2s+2|A|+2|B|+4c} \cdot t^{2s+|A|+|B|+2c}\,\]
where the sum is over the same set as \eqref{eq: basispartition}. This sum can be evaluated to
\[\frac{(1+2q^2 t+q^4 t^2)^{2g}-(q^2+2q^2 t+q^2)^{2g}}{(1-q^2)(1-q^2 t^2)}= \frac{(1+q^2t)^{2g}-q^{2g}(1+t)^{2g}}{(1-q^2)(1-q^2t^2)}\]
by following the computations in \cite[p. 550]{Zag_rank2} while keeping track of the Chern degrees. We sketch the key steps and indicate the necessary modifications for readers' convenience. As in loc. cit., we set $p=\min(c, c')$ and $l=|c-c'|$; then we have
\[
\Omega(N_{2,1},q,t) = \sum_{\substack{p,l,r,s,h\geq 0\\2p+l+r+s+h=g-1}}2^h \binom{g}{h}\binom{g-h}{p}q^{2r+2s+2h+4p}t^{2s+h+2p}(1+q^{4l}t^{2l}-\delta_{0,l}).
\]
We have the following refinement of an identity appearing in \cite{Zag_rank2}:
\[
\sum_{\substack{l,r,s\geq 0\\l+r+s=n-1}}q^{2r+2s}t^{2s}(1+q^{4l}t^{2l}-\delta_{0,l})=\frac{(1-q^{2n})(1-q^{2n}t^{2n})}{(1-q^2)(1-q^2t^2)},
\]
and thus we find
\begin{align*}
    \Omega(N_{2,1},q,t) &= \sum_{\substack{p,n,h\geq 0\\2p+n+h=g}} \binom{g}{h}\binom{g-h}{p}(2q^2t)^h \frac{(q^{4}t^{2})^p-(q^{4}t^{2})^p q^{2n}-(q^{4}t^{2})^p (qt)^{2n}+(q^{4}t^{2})^{n+p}}{(1-q^2)(1-q^2t^2)}\\
    &=\sum_{\substack{h,i,j\geq 0\\h+i+j=g}}\frac{g!}{h!\cdot i!\cdot j!} (2q^2t)^h \frac{(q^4t^2)^j-(q^2)^i(q^2t^2)^j}{(1-q^2)(1-q^2t^2)}\\ &=\frac{(1+2q^2t+q^4t^2)^{g}-(q^2+2q^2t+q^2t^2)^{g}}{(1-q^2)(1-q^2t^2)}.
\end{align*}
In the last equality we used the trinomial theorem (as Zagier did). \qedhere

\end{proof}

\section{Chern filtration of intermediate stacks}
\label{sec: intermediatestacks}

The goal of this section is to prove Theorem \ref{thm: intermediatestacks}. Recall that $\FM^{\leq d}_{2,1}$ is the stack of vector bundles on $\Sigma$ of rank 2 and degree 1 that do not have destabilizing line bundles of degree $>d$. In particular, $\FM_{2,1}^{\leq 0}=\FM_{2,1}^{\textup{ss}}$ is the stack of stable vector bundles. Similarly, given a line bundle $\Lambda$ of degree 1, we have a stack $\FN^{\leq d}_{2,1}=\FN^{\leq d}_{2,\Lambda}$ parametrizing vector bundles with fixed determinant $\Lambda$, which is defined as a fiber of the determinant map to the Picard stack:
\begin{center}
    \begin{tikzcd}
\FN_{2,1}^{\leq d}\arrow[r]\arrow[d]
& \FM_{2,1}^{\leq d}\arrow[d, "\det"]\\
\{\Lambda\} \arrow[r]& {\PPic^d.}
    \end{tikzcd}
\end{center}

All the stacks $\FM^{\leq d}_{2,1}$ are finite type open substacks of $\FM_{2,1}$, so we have a chain of inclusions
\[\FM_{2,1}^{\textup{ss}}=\FM_{2,1}^{\leq 0}\subseteq \FM_{2,1}^{\leq 1}\subseteq  \ldots\subseteq  \FM_{2,1}^{\leq d-1}\subseteq  \FM_{2,1}^{\leq d}\subseteq \ldots\subseteq \FM_{2,1}\,.\]
The stacks $\FM_{2,1}^{\leq d}$ exhaust $\FM_{2,1}$ in the sense that their union is $\FM_{2,1}$ and the complements $\FM_{2,1}\setminus \FM_{2,1}^{\leq d}$ have increasingly large codimension when $d\rightarrow \infty$. Hence, 
\[\lim_{d\to \infty} H^\ast(\FM_{2,1}^{\leq d})=  H^\ast(\FM_{2,1})\]
in the sense that, for fixed cohomological degree $k$, the finite dimensional cohomology groups $H^k(\FM_{2,1}^{\leq d})$ stabilize to $H^k(\FM_{2,1})$. The basic strategy used in \cite{HN, AB} to calculate the Betti numbers of $\FM_{2,1}^{\textup{ss}}$ (or more generally $\FM^{\textup{ss}}_{r, d}$) is to use the knowledge of the cohomology of $\FM_{2,1}$, see Proposition \ref{prop: cohomologiesofstacks} (ii), and this stratification. This strategy also gives the Betti numbers of the intermediate stacks $\FM_{2,1}^{\leq d}$. We review this idea here to setup the notation for the proof of Theorem \ref{thm: intermediatestacks}.\footnote{The same strategy can be applied directly to $\FN_{2,1}^{\leq d}$, but it is slightly more natural on $\FM_{2,1}^{\leq d}$, so we opt to explain it in that setting and extract consequences regarding $\FN_{2,1}^{\leq d}$ by using Proposition \ref{prop: cohomologiesofstacks} (ii).}

Let $j\colon  \FM_{2,1}^{\leq d-1}\hookrightarrow  \FM_{2,1}^{\leq d}$ be the open embedding and let $\mathcal Z_d$ be the complement\footnote{In the language used in Appendix \ref{appendix}, $\CZ_d=\FM_{2,1}^{=(1,d)}$.}
\[\mathcal Z_d=\FM_{2,1}^{\leq d}\setminus \FM_{2,1}^{\leq d-1}\,.\]
This is the stack parametrizing vector bundles whose maximal destabilizing line bundle has degree exactly $d$, so any such vector bundle sits in a maximally destabilizing short exact sequence
\[0\to L_d\to V\to L_{1-d}\to 0\]
such that $L_d, L_{1-d}$ are line bundles of degrees $d$ and $1-d$, respectively. Hence, $\CZ_d$ is the the total space of the vector bundle stack over $\FM_{1,d}\times \FM_{1, 1-d}$ whose fiber over $(L_d, L_{1-d})$ is $[\Ext^1(L_{1-d}, L_d)/\Hom(L_{1-d},L_d)]$. The inclusion $\iota\colon \CZ_d\hookrightarrow \FM_{2,1}^{\leq d}$ is a closed embedding of codimension $2d+g-2$.

It is shown in \cite{H} that there is a short exact sequence
\begin{equation}\label{eq: sesrecursion}
0\rightarrow H^{*-2(2d+g-2)}(\CZ_d)\xrightarrow{\iota_\ast} H^\ast(\FM_{2,1}^{\leq d})\xrightarrow{j^\ast} H^\ast(\FM_{2,1}^{\leq d-1})\rightarrow 0\,.
\end{equation}
This implies a relation between the Betti numbers of $\FM_{2,1}^{\leq d}$ and $\FM_{2,1}^{\leq d-1}$, which we can write in terms of the $q=t$ specialization 
\begin{align*}\Omega(\FM_{2,1}^{\leq d}, t,t)&=\Omega(\FM_{2,1}^{\leq d-1}, t,t)+t^{2g+4d-4}\Omega(\FM_{1,d}, t,t)\cdot \Omega(\FM_{1,1-d}, t,t)\\
&=\Omega(\FM_{2,1}^{\leq d-1}, t,t)+t^{2g+4d-4}\frac{(1+t)^{4g}}{(1-t^2)^2}.
\end{align*}
Hence,
\begin{align*}\Omega(\FM_{2,1}^{\leq d}, t,t)&=\Omega(\FM_{2,1}^{\textup{ss}}, t,t)+\sum_{k=1}^d t^{2g+4k-4}\frac{(1+t)^{4g}}{(1-t^2)^2}\\
&=\frac{(1+t)^{2g}}{1-t^2}\Omega(N_{2,1}, t,t)+\sum_{k=1}^d t^{2g+4k-4}\frac{(1+t)^{4g}}{(1-t^2)^2}
\end{align*}
where the last line uses \eqref{eq: omegaMN}. When $d\rightarrow \infty$ the left hand side converges to $\Omega(\FM_{2,1}, t,t)$, which is known by Proposition \ref{prop: cohomologiesofstacks}(ii). We can deduce from the equality above that
\begin{align*}\Omega(\FM_{2,1}^{\leq d}, t,t)&=\frac{(1+t^3)^{2g}-t^{2g+4d}(1+t)^{2g}}{(1-t^2)(1-t^4)}\,.
\end{align*}
Theorem \ref{thm: intermediatestacks} is a refinement of this formula.

\subsection{Strategy of the proof
}\label{subsec: strategyintermediatestacks}

The statement of Theorem \ref{thm: intermediatestacks} is equivalent to the equality
\begin{equation}\label{eq: desireddifference}\Omega(\FN^{\leq d}_{2,1}, q,t)-\Omega(\FN^{\leq d-1}_{2,1}, q,t)=q^{2g+4d-4}\frac{(1+t)^{2g}(1+q^2)}{1-q^2 t^2}\,.
\end{equation}
We denote by $j_{\Lambda}$ the open inclusion $\FN_{2,1}^{\leq d-1}\hookrightarrow \FN_{2,1}^{\leq d}$. We show in Lemma \ref{lem: split Gysin} that $j^\ast_{\Lambda}$ is also surjective.

Our strategy will be to identify a basis of $\ker(j_{\Lambda}^\ast)$ (see Proposition \ref{prop: basismr3}) and bound the Chern degree of the elements of this basis (see Proposition \ref{prop: boundCmodifiedmr}). To show that this bound is tight,  we use our knowledge of $\Omega(\FN_{2,1},q,t)$ (cf. \eqref{eq: omegaNN}) and $\Omega(N_{2,1},q,t)$ (cf. Theorem \ref{thm: main}). The following definition will help us in making this argument precise: 

\begin{defn}
Let $\Omega_1, \Omega_2$ be generating series in $q,t$ with integer coefficients: 
\[\Omega_s(q,t)=\sum_{i,j\geq 0}a_{ij}^s q^i t^j, \quad s=1,2\,.\]
We say that $\Omega_1\preceq \Omega_2$ if for every $0\leq \ell\leq k$ we have
\[\sum_{\substack{i+j=k\\i\leq \ell}}a_{ij}^1\leq \sum_{\substack{i+j=k\\i\leq \ell}}a_{ij}^2\]
with equality when $k=\ell$. 
\end{defn}

Note that $\preceq$ makes $(\BZ\llbracket q,t\rrbracket, +)$ a partially ordered abelian group. This strange looking definition is motivated by the following observation: if $H^\ast$ is a graded vector space admitting two different filtrations $C^1_\bullet H^\ast\subseteq C^2_\bullet H^\ast$, then $\Omega_1\preceq \Omega_2$ where
\[\Omega_s(q,t)=\sum_{i,j\geq 0}\dim \gr^{C^s}_i H^{i+j}q^it^j,\quad s=1,2\,.\]
Suppose now that we are able to show the inequality\footnote{Since the Chern filtrations on $H^\ast(\FN^{\leq d}_{2,1})$ and $H^\ast(\FN^{\leq d-1}_{2,1})$ are defined to be the filtrations induced by the surjections $\BD\twoheadrightarrow H^\ast(\FN^{\leq d}_{2,1})\twoheadrightarrow H^\ast(\FN^{\leq d-1}_{2,1})$, the first equality is a consequence of \cite[Tag 05SP]{stacks}.}
\begin{align}\label{eq: differenceinequality}\Omega(\FN^{\leq d}_{2,1}, q,t)-\Omega(\FN^{\leq d-1}_{2,1}, q,t)&=\sum_{i,j\geq 0}\dim \gr_i^C\big(\!\ker(j^\ast_{\Lambda})\cap H^{i+j}(\FN^{\leq d}_{2,1})\big)q^it^j \\
&\preceq q^{2g+4d-4}\frac{(1+t)^{2g}(1+q^2)}{1-q^2 t^2}\,.\nonumber
\end{align}

Summing these inequalities from $d=1$ to $d=\infty$ we obtain
\begin{align*}
q^{2g}\frac{(1+t)^{2g}}{(1-q^2)(1-q^2 t^2)}&=\Omega(\FN_{2,1}, q,t)-\Omega(N_{2,1}, q,t)\\
&=\Omega(\FN_{2,1}, q,t)-\Omega(\FN_{2,1}^{\leq 0}, q,t)\\
&\preceq \sum_{d=1}^\infty q^{2g+4d-4}\frac{(1+t)^{2g}(1+q^2)}{(1-q^2t^2)}\\
&=q^{2g}\frac{(1+t)^{2g}(1+q^2)}{(1-q^4)(1-q^2t^2)}\,.
\end{align*}

Since the first and last terms are equal, all the inequalities \eqref{eq: differenceinequality} must actually be equalities, which would prove Theorem \ref{thm: intermediatestacks}. The remaining of this section is dedicated to the proof of~\eqref{eq: differenceinequality}.

\subsection{A basis of Mumford relations}

Mumford proposed a set of relations (see \cite[Section~9]{AB}) among the generators of $H^\ast(N_{2,1})$ or, equivalently, among the generators of $H^\ast(\FM_{2,1}^{\textup{ss}})$, which was proven by Kirwan to be complete \cite{Kir} (and in higher ranks by \cite{EK}). We review these relations now and explain their connection to the short exact sequence \eqref{eq: sesrecursion}.

Fix a point $\pt\in \Sigma$. Let $\BL=\BL_{1-d}$ be the Poincaré line bundle on $M_{1, 1-d}\times \Sigma$ normalized at $\pt$, i.e. its restriction to $M_{1,1-d} \times \pt$ being trivial. Recall that $M_{1, 1-d}$ is the Picard variety of degree $1-d$ line bundles; if there is no possibility for confusion, we abbreviate the notation and write $M_{1,1-d}=\Pic$. 
Consider the complex
\[\BK=\BK_{d}=R\CH om_p(\CV, \BL)\]
on $\FM_{2,1}\times \Pic$, where $\CV$ is (the pullback of) the universal bundle on $\FM_{2,1}\times \Sigma$ and \[p\colon \FM_{2,1}\times \Pic\times \Sigma\to \FM_{2,1}\times \Pic\] is the projection. We will also consider the restriction of $\BK$ to the open substacks $\FM_{2,1}^{\leq d'}\times \Pic$ and still denote it in the same way. Let $p_1, p_2$ be the projections of $\FM_{2,1}\times \Pic$ onto the first and second factors, respectively. To avoid confusion with the $\psi$ classes on $\FM_{2,1}$, we will denote by $\epsilon_i=\psi_{1,i}\in H^\ast(\Pic)$. Given a subset $I\subseteq [2g]$ we write $\epsilon_I=\prod_{i\in I}\epsilon_i$. 

\begin{defn}
Given $k\geq 0$ and
\[A\in H^\ast(\Pic)=\Lambda_\BQ(\epsilon_1, \ldots, \epsilon_{2g})=\mathrm{span}\{\epsilon_{I}\colon I\subseteq [2g]\}\,,\] 
we let\footnote{These are technically the {``generalized''} Mumford relations in the sense of \cite{EK}, but they are equivalent to the original Mumford relations (see the proof of Proposition \ref{prop: explicitmr}). 
}
\[\mathsf{MR}_{k, A}^d=(p_1)_\ast\big(c_k(-\BK)\cdot p_2^\ast A\big)\in H^\ast(\FM_{2,1})\,.\]
\end{defn}

We denote in the same way the restrictions of $\mathsf{MR}_{k, A}^d$ to $H^\ast(\FM_{2,1}^{\leq d'})$ for any $d'$. A fundamental observation is that the restriction of $\BK[1]$ to $\FM_{2,1}^{\leq d-1}$ is a vector bundle. Indeed, if $V$ corresponds to a point in the stack $\FM_{2,1}^{\leq d-1}$ and $L$ is a vector bundle of degree $1-d$, then $\Hom(V, L)=0$. So the restriction of $\mathcal{H}^0(\BK)$ to $\FM_{2,1}^{\leq d-1}\times \Pic$ vanishes and $\BK[1]=\mathcal{H}^1(\BK)$ is a vector bundle of rank
\[\dim \Ext^1(V, L)=-\chi(V, L)=2d+2g-3\,.\]
Hence, we find that 
\[\mathsf{MR}^d_{k, A}\in \ker(j^\ast)\subseteq H^\ast(\FM_{2,1}^{\leq d})\]
for $k\geq 2d+2g-2$. We will now use the short exact sequence \eqref{eq: sesrecursion} to identify a basis of $\ker(j^\ast)$ in terms of these relations.

\begin{prop}\label{prop: basismr1}
Let $d\geq 1$. The subspace $\ker(j^\ast)$ has a basis given by 
\[\Big\{\psi_{1, I}\beta_1^\ell \mathsf{MR}_{k+2g+2d-2, \epsilon_J}^d\colon \ell, k\geq 0, ~ I, J\subseteq [2g]\Big\}\,.\]
\end{prop}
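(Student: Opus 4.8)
The plan is to extract a basis of $\ker(j^\ast)$ from the short exact sequence \eqref{eq: sesrecursion} and then match it up with the Mumford relations $\mathsf{MR}^d_{k,A}$.

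\textbf{Step 1: A basis of the source of the Gysin map.} By \eqref{eq: sesrecursion}, $\ker(j^\ast)=\iota_\ast H^{\ast-2(2d+g-2)}(\CZ_d)$, so it suffices to produce a convenient basis of $H^\ast(\CZ_d)$ and push it forward. Recall $\CZ_d$ is a vector bundle stack over $\FM_{1,d}\times \FM_{1,1-d}$; since vector bundle stacks induce isomorphisms on cohomology, $H^\ast(\CZ_d)\simeq H^\ast(\FM_{1,d})\otimes H^\ast(\FM_{1,1-d})$. Each $\FM_{1,e}\simeq B\BG_m\times \Pic^e$ has cohomology $\BQ[c_1(\mathcal L_e)]\otimes \Lambda_\BQ(\text{odd classes})$, so a monomial basis of $H^\ast(\CZ_d)$ is given by products of: a power of the $B\BG_m$-class coming from $\FM_{1,d}$ (which, pulled back along $\det$, will relate to $\beta_1$), a power of the $B\BG_m$-class from $\FM_{1,1-d}=\Pic$ (call it $z$, with exterior monomials $\epsilon_I$), and the odd classes from the degree-$d$ factor ($\psi_{1,I}$). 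So $\ker(j^\ast)$ has a basis $\iota_\ast(\psi_{1,I}\,\beta_1^\ell\, z^k\,\epsilon_J)$ with $\ell,k\ge 0$, $I,J\subseteq[2g]$.

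\textbf{Step 2: Identify $\iota_\ast(z^k\epsilon_J)$ with a Mumford relation.} The key computation is that the Gysin pushforward $\iota_\ast$ along $\CZ_d\hookrightarrow\FM_{2,1}^{\le d}$ is, up to the geometry of the destabilizing sequence, computed by $(p_1)_\ast(c_\bullet(-\BK)\cdot p_2^\ast(-))$. Concretely: on $\CZ_d\times\Sigma$ the universal bundle $\CV$ sits in $0\to L_d\to\CV\to L_{1-d}\to 0$, and the normal bundle of $\iota$ is $(p_1)_\ast$ applied to the relevant $R\hom$; one checks that $\iota_\ast(p_2^\ast A)$ equals $(p_1)_\ast(c_{2d+2g-2}(-\BK)\cdot p_2^\ast A)$ by the standard "excess intersection / Gysin = top Chern of the normal bundle" argument, identifying the normal bundle of $\CZ_d$ with $\BK[1]|_{\FM^{\le d-1}_{2,1}}$-type data. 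Then multiplying by further Chern classes $c_k(-\BK)$ shifts $k$, giving $\iota_\ast(z^k\epsilon_J)\leftrightarrow \mathsf{MR}^d_{k+2g+2d-2,\epsilon_J}$ up to lower-order corrections; combined with the $\beta_1^\ell$ and $\psi_{1,I}$ factors (which are pulled back from $\FM_{2,1}^{\le d}$ and pass through $\iota_\ast$ by the projection formula) this yields exactly the claimed spanning set $\{\psi_{1,I}\beta_1^\ell\mathsf{MR}^d_{k+2g+2d-2,\epsilon_J}\}$.

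\textbf{Step 3: Linear independence / that it is a basis.} Since \eqref{eq: sesrecursion} shows $\dim\ker(j^\ast)$ in each cohomological degree equals the corresponding dimension of $H^{\ast-2(2d+g-2)}(\CZ_d)$, and the index set $\{(\ell,k,I,J)\}$ is precisely a basis of the latter by Step 1, it is enough to check that the proposed elements \emph{span} $\ker(j^\ast)$ (then a dimension count forces linear independence), or alternatively that the change of basis from $\{\iota_\ast(\psi_{1,I}\beta_1^\ell z^k\epsilon_J)\}$ to $\{\psi_{1,I}\beta_1^\ell\mathsf{MR}^d_{k+2g+2d-2,\epsilon_J}\}$ is unitriangular with respect to a suitable filtration (by $k$, say, with the top Chern class term dominating). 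I would argue the latter: expand $c_\bullet(-\BK)$ and observe that the "leading" term in the appropriate grading recovers $\iota_\ast$ of the monomial basis, while corrections involve strictly smaller $k$ (or extra factors already in the span), so the transition matrix is invertible.

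\textbf{Main obstacle.} The crux is Step 2 — pinning down precisely how $\iota_\ast$ interacts with $c_\bullet(-\BK)$, i.e.\ correctly identifying the normal bundle of $\CZ_d\hookrightarrow\FM^{\le d}_{2,1}$ with (the restriction of) $\BK[1]$ and tracking the bookkeeping of which Künneth components of $\BL$ versus the universal classes appear. The Harder--Narasimhan/excess-intersection formalism here is delicate at the level of stacks (the $\Hom$ term in the $\Ext^\bullet$ two-term complex is what makes $\CZ_d$ a vector bundle \emph{stack}, and one must be careful that on $\FM^{\le d-1}_{2,1}$ this $\Hom$ vanishes so that $\BK[1]$ genuinely becomes a bundle there, which is exactly what makes $\mathsf{MR}^d_{k,A}\in\ker(j^\ast)$). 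Getting the shift $2g+2d-2$ and the precise matching of $\epsilon_J$ with $p_2^\ast$-classes right is where the real work lies; everything else is formal manipulation of known cohomology rings.
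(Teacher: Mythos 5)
Your overall strategy is the same as the paper's: use the short exact sequence \eqref{eq: sesrecursion} to reduce to a basis of $H^\ast(\CZ_d)$, push it forward by $\iota_\ast$, and identify the images with the classes $\psi_{1,I}\beta_1^\ell\,\mathsf{MR}^d_{k+2g+2d-2,\epsilon_J}$. Step 1 and the dimension-count part of Step 3 are fine and match the paper.

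The gap is in the mechanism you propose for Step 2. You want to compute $\iota_\ast$ by ``Gysin $=$ top Chern class of the normal bundle,'' identifying the normal bundle of $\iota\colon \CZ_d\hookrightarrow \FM_{2,1}^{\leq d}$ with $\BK[1]$-type data. This cannot work as stated: the normal bundle of $\iota$ is $\Ext^1(L_d, L_{1-d})$, of rank $2d+g-2$ (the codimension of the stratum), whereas $\BK[1]$ has rank $2d+2g-3$; the two differ by $g-1$, which is exactly the dimension of the $\Pic$ factor that has not yet been integrated out. Moreover, the excess-intersection identity only computes $\iota^\ast\iota_\ast$, not $\iota_\ast$ itself. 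The correct mechanism — and the actual content of the paper's proof — is the factorization $\iota=p_1\circ\pi$ with $\pi\colon \CZ_d\simeq \BP(\BK)\to \FM_{2,1}^{\leq d}\times \Pic$ a \emph{virtual} projective bundle (the $\Hom$-term of $\BK$ is what makes $\CZ_d$ a vector bundle stack over $\FM_{1,d}\times\FM_{1,1-d}$, but over $\FM_{2,1}^{\leq d}\times\Pic$ it is a projectivization of the two-term complex $\BK$). The pushforward formula along virtual projective bundles gives $\pi_\ast(z^k)=c_{k-\rk(\BK)+1}(-\BK)$ on the nose, where $z=c_1(\CO(1))$, so that $\iota_\ast\big(\pi^\ast(\psi_{1,I}\beta_1^\ell\otimes\epsilon_J)z^k\big)=\psi_{1,I}\beta_1^\ell\,\mathsf{MR}^d_{k+2g+2d-2,\epsilon_J}$ exactly. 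In particular there are no ``lower-order corrections,'' and the unitriangular change-of-basis argument you keep in reserve in Step 3 is unnecessary (though it would be a legitimate fallback if such corrections did occur in a filtered-triangular form). As written, your Step 2 would not compile into a proof without replacing the normal-bundle heuristic by the virtual projective bundle formalism.
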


\begin{proof}
The inclusion $\iota\colon \CZ_d\hookrightarrow \FM_{2,1}^{\leq d}$ can be factored as 
\begin{center}
\begin{tikzcd}
\CZ_d\arrow[r, "\pi"]
&\FM_{2,1}^{\leq d}\times \Pic\arrow[r, "p_1"]& \FM_{2,1}^{\leq d}\,.
\end{tikzcd}
\end{center}

The map $\pi$ is a virtual projective bundle in the sense of \cite[Definition 4.1]{Hyeonjun Park_virtual pullback}; see also \cite{Qingyuan Jiang_Derived projectivizations of complexes} which uses derived geometry: 
\[\pi\colon \CZ_d=\BP(\BK)\to \FM_{2,1}^{\leq d}\times \Pic.\]
{Denote by $\CO(1)$ the tautological bundle for $\pi$ and set $z \coloneqq c_1(\CO(1))$. }By the definition of $\BK$ and adjunction, over $\BP(\BK)\times \Sigma$ there is a morphism of vector bundles $\pi^\ast \CV\to \pi^\ast \BL(1)$; the universal maximally destabilizing sequence on $\CZ_d\times \Sigma$ is
\[0\to \CL_{d}\to \pi^\ast \CV\to \pi^\ast \BL(1)=\CL_{1-d}\to 0\]
and in particular we have\[\CL_d+\CL_{1-d}=\pi^\ast \CV~\textup{ and }~\CL_{1-d}=\pi^\ast \mathbb{L}(1)\,\]
in the $K$-theory of $\CZ_d\times \Sigma$. The line bundles $\CL_d$ and $\CL_{1-d}$ are the pullbacks of the universal line bundles on $\FM_{1,d}$ and $\FM_{1,1-d}$, respectively. Recall that 
\[H^\ast(\CZ_d)\simeq H^\ast(\FM_{1,d})\otimes H^\ast(\FM_{1,1-d})\simeq H^\ast(\FM_{1,d})\otimes H^\ast(\Pic)[z]\,.\]
So we can conclude easily that 
\[\Big\{\pi^\ast\big(\psi_{1,I}\beta_1^\ell\otimes \epsilon_J\big)z^k\colon \ell, k\geq 0,~ I, J\subseteq [2g] \Big\}\]
is a basis of $H^\ast(\CZ_d)$. 

By the short exact sequence \eqref{eq: sesrecursion}, we obtain a basis of $\ker(j^\ast)$ by applying $\iota_\ast=(p_1)_\ast\circ \pi_\ast$ to the basis above. We have 
\begin{align*}
\iota_\ast\Big(\pi^\ast\big(\psi_{1,I}\beta_1^\ell\otimes \epsilon_J\big)z^k\Big)&=(p_1)_\ast\Big(\big(\psi_{1,I}\beta_1^\ell\otimes \epsilon_J\big)\pi_\ast(z^k)\Big)\\
&=\psi_{1, I}\beta_1^\ell (p_1)_\ast\Big(c_{k-\mathrm{rk}(\BK)+1}(-\BK)p_2^\ast \epsilon_J\Big)\\
&=\psi_{1, I}\beta_1^\ell \mathsf{MR}_{k+2g+2d-2, \epsilon_J}^d\,,
\end{align*}
where the second equality uses the pushforward formula along virtual projective bundles (which can be shown as in the proof of \cite[Proposition 4.2]{Hyeonjun Park_virtual pullback}). This finishes the proof.\qedhere
\end{proof}

\begin{rmk}
    Note that the proposition above reproves (and refines, in the sense that it identifies a precise basis of relations) the completeness of the Mumford relations in rank 2, originally shown in \cite{Kir}.
\end{rmk}

We now turn to the stacks with fixed determinant. Recall Proposition \ref{prop: cohomologiesofstacks} which, in particular, says that the morphism $H^\ast(\FM_{2,1})\to H^\ast(\FN_{2,1})$, induced by the natural map $\FN_{2,1}\to \FM_{2,1}$, is the quotient by $\psi_{1,i}=0$ and $\beta_1=0$. Recall also that $j_{\Lambda}$ denotes the open inclusion $\FN_{2,1}^{\leq d-1}\hookrightarrow \FN_{2,1}^{\leq d}$ and that $j_{\Lambda}^\ast$ is surjective. By Proposition \ref{prop: basismr1} we can conclude the following corollary:

\begin{cor}\label{cor: basismr2}
The subspace $\ker(j^\ast_{\Lambda})$ has a basis given by
\[\Big\{\mathsf{MR}_{k+2g+2d-2, \epsilon_J}^d\colon k\geq 0, ~J\subseteq [2g]\Big\}\,.\]
\end{cor}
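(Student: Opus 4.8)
The plan is to deduce Corollary~\ref{cor: basismr2} directly from Proposition~\ref{prop: basismr1} by tracking what happens under the quotient map $H^\ast(\FM_{2,1}^{\leq d})\to H^\ast(\FN_{2,1}^{\leq d})$, which by Proposition~\ref{prop: cohomologiesofstacks}(ii) (applied to the open substack $\FM_{2,1}^{\leq d}$) is precisely the quotient by the ideal generated by $\beta_1$ and the $\psi_{1,i}$. First I would record the compatibility diagram: the open inclusions $j$ and $j_\Lambda$ fit into a commutative square with the vertical maps $\FN_{2,1}^{\leq d'}\to \FM_{2,1}^{\leq d'}$, and by naturality of the Künneth/tautological classes the class $\mathsf{MR}_{k,\epsilon_J}^d\in H^\ast(\FM_{2,1}^{\leq d})$ restricts to the class denoted the same way in $H^\ast(\FN_{2,1}^{\leq d})$ (the pullback of $\BK$ and of $\BL$ are compatible, so $c_k(-\BK)$ pulls back correctly). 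In particular the elements $\mathsf{MR}_{k+2g+2d-2,\epsilon_J}^d$ do lie in $\ker(j_\Lambda^\ast)$, since they are the images of elements of $\ker(j^\ast)$ and the square commutes.

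Next I would argue that these elements span $\ker(j_\Lambda^\ast)$. Using the tensor product isomorphism of Proposition~\ref{prop: cohomologiesofstacks}(ii) for both $\FM_{2,1}^{\leq d}$ and $\FM_{2,1}^{\leq d-1}$, compatibly with the open restriction $j^\ast$, we get that $j^\ast$ is identified with $j_\Lambda^\ast\otimes \mathrm{id}$ on $H^\ast(\FN^{\leq\bullet}_{2,1})\otimes H^\ast(\PPic^0)$, and hence $\ker(j^\ast)\cong \ker(j_\Lambda^\ast)\otimes H^\ast(\PPic^0)$ as modules over $H^\ast(\PPic^0)=\BQ[\beta_1]\otimes\Lambda_\BQ(\psi_{1,1},\dots,\psi_{1,2g})$. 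Under this identification the basis of Proposition~\ref{prop: basismr1}, namely $\psi_{1,I}\beta_1^\ell\,\mathsf{MR}^d_{k+2g+2d-2,\epsilon_J}$, is exactly $\{\psi_{1,I}\beta_1^\ell\}_{I,\ell}$ tensored with $\{\mathsf{MR}^d_{k+2g+2d-2,\epsilon_J}\}_{k,J}$; since the first family is a basis of $H^\ast(\PPic^0)$, the second family must be a basis of $\ker(j_\Lambda^\ast)$. Alternatively, and perhaps more cleanly for the writeup, one can simply run the proof of Proposition~\ref{prop: basismr1} verbatim with $\FN$ in place of $\FM$: the complement $\CZ_d^\Lambda=\FN_{2,1}^{\leq d}\setminus\FN_{2,1}^{\leq d-1}$ is now $\BP(\BK)$ over $\{[\Lambda_{1-d}]\}\times\Pic$ (the determinant being fixed pins down $L_d$ once $L_{1-d}$ is chosen, up to the gerbe), so $H^\ast(\CZ_d^\Lambda)\cong H^\ast(\Pic)[z]$ with basis $\pi^\ast\epsilon_J\cdot z^k$, and pushing forward via $\iota_\ast=(p_1)_\ast\circ\pi_\ast$ using the virtual projective bundle formula as before turns $z^k$ into $\mathsf{MR}^d_{k+2g+2d-2,\epsilon_J}$.

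I would present the proof in the second style (re-running the argument with fixed determinant) since the short exact sequence~\eqref{eq: sesrecursion} and the virtual projective bundle pushforward formula of \cite[Proposition 4.2]{Hyeonjun Park_virtual pullback} apply equally well to $\FN_{2,1}^{\leq d}$; the only point needing a sentence of justification is that Lemma~\ref{lem: split Gysin} (to be proved later) gives the analogue of~\eqref{eq: sesrecursion} with $\FN$'s, i.e. that $j_\Lambda^\ast$ is surjective and $\iota_\ast$ is injective, so that $\ker(j_\Lambda^\ast)=\mathrm{im}(\iota_\ast)$. The main (and really only) obstacle is bookkeeping: making sure the rank of $\BK$ restricted to $\FN_{2,1}^{\leq d-1}\times\Pic$ is still $2d+2g-3$ (it is, since $\chi(V,L)$ depends only on ranks and degrees), so that $\pi_\ast(z^k)=c_{k-\mathrm{rk}(\BK)+1}(-\BK)$ again produces the Chern class with index $k+2g+2d-2$, and checking that the $\mu_2$-gerbe structure on $N_{2,1}$ versus the stack $\FN$ does not affect rational cohomology (it does not). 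None of this is genuinely difficult; the corollary is essentially a restriction of Proposition~\ref{prop: basismr1}.
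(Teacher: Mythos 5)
Your proposal is correct, and your first route is precisely what the paper does: Corollary \ref{cor: basismr2} is stated immediately after recalling that $H^\ast(\FM_{2,1}^{\leq d})\to H^\ast(\FN_{2,1}^{\leq d})$ is the quotient by $(\beta_1,\psi_{1,i})$ and that $j_\Lambda^\ast$ is surjective, and is deduced from Proposition \ref{prop: basismr1} exactly as you describe (with the convention, made explicit in the paper, that $\mathsf{MR}^d_{k,A}$ in the corollary denotes the image of the Mumford class under this quotient). One small imprecision worth fixing in your write-up of that route: under the K\"unneth isomorphism $H^\ast(\FM_{2,1}^{\leq d})\simeq H^\ast(\FN_{2,1}^{\leq d})\otimes H^\ast(\PPic^0)$ the class $\mathsf{MR}^d_{k,\epsilon_J}$ is generally \emph{not} of the form $x\otimes 1$, so the basis of Proposition \ref{prop: basismr1} is not literally the tensor product of the two families. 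The clean statement is that the quotient map $\ker(j^\ast)\twoheadrightarrow\ker(j_\Lambda^\ast)$ annihilates every basis element with $(I,\ell)\neq(\emptyset,0)$ and sends the remaining ones to the classes in question, so these span; linear independence then follows from the identity $\ker(j^\ast)=\ker(j_\Lambda^\ast)\otimes H^\ast(\PPic^0)$ by comparing Poincar\'e series (or by a triangularity argument). This is routine, but as written your sentence elides it.

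Your second route --- rerunning the proof of Proposition \ref{prop: basismr1} with fixed determinant --- is a genuinely different and equally valid path that the paper does not take for this corollary, although all its ingredients are present: Lemma \ref{lem: split Gysin} in the appendix establishes the split Gysin sequence for the stratification of $\FN_{r,\Lambda}^{\leq t'}$, and the fixed-determinant stratum is indeed a vector bundle stack over a splitting stack isomorphic to a $\BG_m$-gerbe over $\Pic$, so that $H^\ast(\CZ_d^\Lambda)\simeq H^\ast(\Pic)[z]$ and the virtual projective bundle pushforward produces $\mathsf{MR}^d_{k+2g+2d-2,\epsilon_J}$ as before. The trade-off is that this route requires redoing the identification of the stratum and its normal bundle in the fixed-determinant setting (as the appendix does in higher generality), whereas the paper's route gets the corollary for free from Proposition \ref{prop: basismr1} and the K\"unneth splitting of Proposition \ref{prop: cohomologiesofstacks}(ii).
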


Note that here we still denote by $\mr^d_{k, A}$ the pullback of the Mumford relations via the pullback map $H^\ast(\FM_{2,1}^{\leq d})\to H^\ast(\FN_{2,1}^{\leq d})$, which sets $\beta_1=\psi_{1,i}=0$.

\subsection{Modified Mumford relations and a Chern degree bound}

A priori, given a general class $A\in H^\ast(\Pic)$, there is no obvious way to improve the trivial bound on the Chern degree of the Mumford relations 
\[\mathsf{MR}^d_{k+2g+2d-2, A}\in C_{2k+2g+4d-4+\deg(A)}H^{2k+2g+4d-4+\deg(A)}(\FN_{2,1})\,.\]
However, to prove inequality \eqref{eq: differenceinequality} we need to construct relations with lower Chern degrees. It turns out that the bound above can be improved to
\[\mathsf{MR}^d_{k+2g+2d-2, A}\in C_{2k+2g+4d-4}H^{2k+2g+4d-4+\deg(A)}(\FN_{2,1})\,.\]
if $A$ is a primitive class in the sense of \eqref{eq: prim}; however, for $A$ non-primitive, a similar improvement is not possible. Instead, we will take a linear combination of Mumford relations for which we can show a better bound and, in particular, we will exhibit $2^{2g}$ relations with Chern degree $2g+4d-4$. This will require a detailed study of the explicit formula for Mumford relations obtained in \cite{kiem, Zag_rank2}. 

\smallskip

Let 
\[\theta=2\sum_{i=1}^g \epsilon_i\epsilon_{i+g}\in H^2(\Pic)\,.\]
Note that $\theta$ is twice the standard theta divisor of the abelian variety $\Pic$; in particular it is ample. The cohomology of the Picard variety admits a Lefschetz decomposition 
\[H^\ast(\Pic)=\bigoplus_{l=0}^g \bigoplus_{m=0}^{g-l}\theta^m\cdot \mathsf{Prim}_l\]
where 
\begin{equation}
\label{eq: prim}
\mathsf{Prim}_l=\ker\Big(\theta^{g-l+1}\colon H^l(\Pic)\to H^{2g-l+2}(\Pic) \Big)\subseteq H^l(\Pic)\,.
\end{equation}
For instance, $\epsilon_I\in \mathsf{Prim}_l$ if $I\subseteq [2g]$ has $l$ elements and the property that there is no $i$ such that $i,\, i+g\in I$. Given $\tilde \sigma_l\in \mathsf{Prim}_l$ of the form $\sum_{I}a_I \epsilon_I$, we will consider the corresponding class 
\[\sigma_l=\sum_I a_I \psi_I\in H^{3l}(\FN_{2,1})\,.\]

We introduce the generating series
\begin{equation}
    \label{eq: phid}
\Phi_d(t)=\sum_{n=0}^\infty c_{d,n} t^n=(1-\beta t^2)^{d-\frac{3}{2}}e^{-\frac{2\gamma t}{\beta}}\Big(\frac{1+t\sqrt{\beta}}{1-t\sqrt{\beta}}\Big)^{\frac{\alpha}{2\sqrt{\beta}}+\frac{\gamma}{\beta\sqrt{\beta}}}\,.\end{equation}
The coefficients $c_{d,n}$ are elements of $\BD^\Gamma=\BQ[\alpha, \beta, \gamma]/(\gamma^{g+1})$ of cohomological degree $2n$. Note that when $d=1$ this generating series coincides with $\Phi(t)$ from \cite{kiem}; it also appears in \cite{Zag_rank2} where we have $c_{1,r} = \xi_r$ (we use a new notation to avoid confusion with Zagier's basis $\xi_{r,s}$ in Section \ref{sec:refinedpoincareN21}). 

In the next proposition we will write down an explicit formula for the Mumford relations. Recall the falling factorial notation:
\[(x)_m\coloneqq \prod_{j=0}^{m-1}(x-j)\,.\]
We will use two basic properties of the falling factorial repeatedly: the symmetry
\[(-x)_m=(-1)^m(x+m-1)_m\]
and the falling factorial binomial theorem
\[(x+y)_m=\sum_{j=0}^m \binom{m}{j}(x)_j(y)_{m-j}\,.\]

\begin{prop}\label{prop: explicitmr}
Let $k, m\geq 0$, $d\geq 1$, and $\tilde \sigma_l\in \mathsf{Prim}_l$. Then\footnote{We use the notation $[t^n]$ for the operator extracting the $t^n$ coefficient of a generating series in the formal variable $t$.}
\[\mr^d_{k, \theta^m \tilde\sigma_l}=(-1)^l2^{2g-m-k}[t^{k+m-g-l}]\left(\Phi_d(t)\cdot \sum_{j=0}^m \binom{m}{j}(g-l-j)_{m-j}(1-\beta t^2)^{m-j}(-2 \gamma t^3)^{j}\right)\sigma_l.\]
\end{prop}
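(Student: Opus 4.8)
The plan is to compute $\mr^d_{k,\theta^m\tilde\sigma_l}$ directly from the definition $\mr^d_{k,A}=(p_1)_\ast(c_k(-\BK)\cdot p_2^\ast A)$ by making the Chern class $c_k(-\BK)$ explicit and then carrying out the pushforward along $p_1\colon \FN_{2,1}^{\leq d}\times\Pic\to\FN_{2,1}^{\leq d}$. First I would compute the Chern character (equivalently total Chern class) of $-\BK=-R\hom_p(\CV,\BL)$ on $\FN_{2,1}\times\Pic\times\Sigma$ using Grothendieck--Riemann--Roch: we have $\ch(-\BK)=-p_\ast(\ch(\CV)^\vee\cdot\ch(\BL)\cdot\mathrm{td}(\Sigma))$, and after restricting to fixed determinant (so $\psi_{1,i}=\beta_1=0$ for $\CV$) and using the normalization of $\BL$ at $\pt$, this expression should organize into the generating series $\Phi_d(t)$ of \eqref{eq: phid}. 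This is exactly the content that appears in \cite{kiem, Zag_rank2} for $d=1$; the $d$-dependence enters through the twist by $\BL$ of degree $1-d$, which contributes the factor $(1-\beta t^2)^{d-\frac32}$ as opposed to $(1-\beta t^2)^{-\frac12}$. I would set up the Künneth decomposition of $c_1(\BL)$ on $\Pic\times\Sigma$ as $c_1(\BL)=\sum_i\epsilon_i\otimes e_i - \theta\otimes\pt$ (with the sign conventions of the paper), so that powers of $c_1(\BL)$ produce the $\theta$ and $\epsilon_I$ classes on the $\Pic$ factor.

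The second step is the pushforward $(p_1)_\ast$ along the product with $\Pic$. Here the key input is the structure of $H^\ast(\Pic)=\Lambda_\BQ(\epsilon_1,\dots,\epsilon_{2g})$ and the formula $(p_1)_\ast(\theta^a\epsilon_I)=$ (a combinatorial constant) depending only on $a$ and $|I|$, nonzero only when $2a+|I|=2g$. Concretely, for a primitive class $\tilde\sigma_l=\sum_I a_I\epsilon_I\in\mathsf{Prim}_l$, the pairing of $\theta^m\tilde\sigma_l$ against a monomial $\theta^{b}\cdot(\text{the }\Pic\text{-part coming from }c_k(-\BK))$ picks out the top class of $\Pic$, and the falling-factorial factors $(g-l-j)_{m-j}$ are precisely the constants arising from $\int_{\Pic}\theta^{g-l}\cdot(\text{primitive of degree }l)$ computations, combined with the binomial expansion of a power of $c_1(\BL)$. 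The term $(-2\gamma t^3)^j$ arises from pieces of $\ch(-\BK)$ that carry the $\psi$-classes on the $\FN_{2,1}$ side (assembling into $\gamma=-2\sum\psi_i\psi_{i+g}$) paired with the $\epsilon$-classes on the $\Pic$ side; the binomial $\binom{m}{j}$ and the split into $(1-\beta t^2)^{m-j}$ versus $(-2\gamma t^3)^j$ reflects the falling-factorial binomial theorem applied to expanding $(\theta\text{-contribution}+\epsilon\text{-contribution})^m$ inside the generating series. The powers of $2$ (the $2^{2g-m-k}$ prefactor) are bookkeeping from the convention $\theta=2\sum\epsilon_i\epsilon_{i+g}$ and $\gamma=-2\sum\psi_i\psi_{i+g}$, together with the degree of $\BL$; I would track these carefully by testing the formula on small cases ($l=0$, $m=0$, $d=1$) against the known relations in \cite{kiem, Zag_rank2}.

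The main obstacle will be the bookkeeping in the pushforward: correctly matching the Künneth components of $\ch(-\BK)$ that involve $\alpha$-, $\beta$-, $\gamma$-classes on $\FN_{2,1}$ tensored with $\theta$- and $\epsilon$-classes on $\Pic$, and then extracting exactly the coefficient that survives integration over $\Pic$ against $\theta^m\tilde\sigma_l$. The falling factorials $(g-l-j)_{m-j}$ are the delicate part: they encode $\int_{\Pic}\theta^{m-j}\cdot(\text{reduction of }\theta^{g-l-(m-j)}\text{-power against primitive})$, i.e. the Lefschetz structure of $H^\ast(\Pic)$ restricted to $\mathsf{Prim}_l$, so I would need the identity $\int_{\Pic}\theta^{g}=2^g\cdot g!$ (or the appropriate normalization) and the fact that $\theta^a$ acts injectively on $\mathsf{Prim}_l$ for $a\le g-l$. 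A clean way to package this is to first prove the formula for $m=0$ (no extra $\theta$ factor) by a direct GRR computation, then deduce the general $m$ by multiplying by $\theta^m$ inside the pushforward and using the falling-factorial binomial theorem stated just before the proposition to reorganize the product $\Phi_d(t)\cdot(\text{stuff})^m$. Since the paper explicitly flags the two falling-factorial identities right before the statement, I expect the intended proof reduces the general case to the $m=0$ case via exactly that expansion, so the real work is the $m=0$ GRR computation plus careful sign/power-of-$2$ tracking.
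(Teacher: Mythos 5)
Your overall strategy---make $c_k(-\BK)$ explicit, then push forward over $\Pic$ against $\theta^m\tilde\sigma_l$---is the same computation in substance, but you propose to do it from scratch, whereas the paper takes a shortcut at both stages. For the first stage, instead of running GRR on $\BK=R\hom_p(\CV,\BL)$ directly, the paper introduces the map $\phi\colon \FN_{2,1}\times\Pic\to\FM_{2,4g+2d-5}$, $(U,L)\mapsto U\otimes L^\vee\otimes\omega_\Sigma$, and uses Grothendieck--Verdier duality to identify $\BK[1]^\vee$ with $\phi^\ast(Rp_\ast\CV)$; this imports Zagier's closed form $c_{-2t}(Rp_\ast\CV)=\Phi_d(t)G(t)$ wholesale (with only the exponent $d-\tfrac32$ needing modification). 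That factorization is not a routine output of GRR---it is the nontrivial content of Zagier's computation---so if you insist on the direct route you are re-deriving it, including the cross-term $B=2\sum_i(\epsilon_i\psi_{i+g}-\epsilon_{i+g}\psi_i)$ in the exponential of $G(t)$, which your sketch does not mention and which is precisely where the $\gamma$-classes on the $\FN_{2,1}$ side are generated upon integrating over $\Pic$. For the second stage, the paper quotes Kiem's formula for $\int_{\Pic}G(t)\theta^m\tilde\sigma_l$ (valid for all $m$, as a sum over $p$ with coefficients $(g-l-p)_m$) and the new contribution is resumming that series via the falling-factorial binomial theorem.

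The one step in your plan that would not go through as described is the reduction of general $m$ to $m=0$ ``by multiplying by $\theta^m$ inside the pushforward.'' Because $G(t)$ contains $\exp\bigl(\tfrac{2\theta t+2Bt^2-2\gamma t^3}{1-\beta t^2}\bigr)$, inserting $\theta^m$ changes which terms of the exponential survive the top-degree projection on $\Pic$ in a way that couples $\theta$, $B$, and $\gamma$; the relation between the $m$ and $m=0$ answers is exactly the sum $\sum_{p\ge 0}(g-l-p)_m(2\gamma t^3)^p/\bigl(p!(1-\beta t^2)^p\bigr)$ in Kiem's formula, i.e.\ it is the hard combinatorial part, not a formal consequence of the $m=0$ case. (The falling-factorial identities flagged before the statement are used to resum this for arbitrary $m$, not to reduce $m$ to $0$.) Separately, your Künneth expression $c_1(\BL)=\sum_i\epsilon_i\otimes e_i-\theta\otimes\pt$ cannot be right for degree reasons ($\theta\otimes\pt$ has total degree $4$); the $\theta$ enters only through $\ch_2(\BL)=\tfrac12 c_1(\BL)^2=-\tfrac{\theta}{2}\otimes\pt$, and the factor of $\tfrac12$ matters for the powers of $2$ you are trying to track. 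Neither issue is fatal to the direct approach, but both sit exactly where you deferred the ``bookkeeping,'' so the proposal as written leaves the genuinely delicate parts unproved.
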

\begin{proof}

Consider the morphism
\[\phi\colon \FN_{2,1}\times \Pic\to \FM_{2, 4g+2d-5}\]
which sends $(U, L)\mapsto U\otimes L^\vee\otimes \omega_\Sigma$. Let $\mathcal V$ be the universal bundle on $\FM_{2, 4g+2d-5}\times \Sigma$. By the definition of the morphism,
\[(\phi\times \textup{id})^\ast \CV=\CU\otimes \BL^\vee\otimes \omega_\Sigma=R\CH om(\BL, \CU\otimes \omega_\Sigma)\,.\]
Note that we are abusing notation slightly by writing $\BL, \CU, \omega_\Sigma$ for the pullbacks to $\FN_{2,1}\times \Pic\times \Sigma$ via the obvious projections. Let us denote both the projections $\FM_{2, 4g+2d-5}\times \Sigma\to \FM_{2, 4g+2d-5}^{\textup{rig}}$ and $\FN_{2,1}\times \Pic\times \Sigma\to \FN_{2,1}\times \Pic$ by $p$. 
By Grothendieck--Verdier duality, 
\[\phi^\ast\big(Rp_\ast \mathcal V\big)=Rp_\ast(\phi\times \textup{id})^\ast \CV=Rp_\ast R\CH om(\BL, \CU\otimes \omega_\Sigma)=\Big(Rp_\ast R\CH om(\CU, \BL)\Big)^\vee [-1] =\big(\BK[1]\big)^\vee\,.\]
Hence, 
the Chern classes of $\BK$ are pullbacks of the Chern classes of $Rp_\ast \mathcal V$. Concretely,
\[c_{2t}(-\BK)=\phi^\ast c_{-2t}(Rp_\ast\CV)\,.\]
Now the latter has been computed by Zagier \cite[(29)]{Zag_rank2} in the $d=1$ case. By a simple modification of his calculation, we find that 
\begin{equation}\label{eq: c-2t}
c_{-2t}(Rp_\ast\CV)=\Phi_d(t)G(t)
\end{equation}
holds modulo\footnote{The equality modulo $\beta_1$ is enough for our purposes since $\phi^\ast \beta_1=0$.} $\beta_1$, where
\[G(t)=(1-\beta t^2)^g\exp\left(\frac{2\theta t+2B t^2-2\gamma t^3}{1-\beta t^2}\right)\]
and
\[B=2\sum_{i=1}^g (\epsilon_i\psi_{i+g}-\epsilon_{i+g}\psi_{i})\,.\]
Note that
\[(\phi\times \textup{id})^\ast c_1(\CV)=1\otimes (4g+2d-5)\pt-2\sum_{i=1}^{2g}\epsilon_i\otimes e_i\,,\]
so the pullback $\phi^\ast$ sends the classes $\psi_{1,i}\in H^\ast(\FM_{2, 4g+2d-5})$ (denoted by $d_i$ in \cite{kiem, Zag_rank2}) to $-2\epsilon_i$; in particular, it sends the class $A$ in loc. cit. to $2\theta$. 

It is shown in \cite[Page 4]{kiem}\footnote{There is a slight difference in signs and factors of 2 between our formula and the formulas in loc. cit. due to the fact that they are extracting coefficients with respect to $\psi_{1,i}$ classes, while we do so with $\epsilon_i$ classes, which corresponds to integrating over $\Pic$.} that
\[\int_{\Pic} G(t)\theta^m \tilde \sigma_l=2^{2g-m}(-1)^l t^{g+l-m}\sigma_l(1-\beta t^2)^m e^{-\frac{2\gamma t^3}{1-\beta t^2}}\sum_{p\geq 0}(g-l-p)_m\frac{(2\gamma t^3)^p}{p!(1-\beta t^2)^p}\,.\]
To shorten the notation, set $x=g-l$. Kiem simplifies the formula above when $m=0,1,2$, which are equations $(0), (1), (2)$ in \cite{kiem}, respectively. More generally, we can derive a formula for arbitrary $m$. First, note that
\begin{align*}(x-p)_m&=(-1)^m(p-x+m-1)_m=(-1)^m\sum_{j=0}^m \binom{m}{j}(-x+m-1)_{m-j}(p)_j\\
&= \sum_{j=0}^m (-1)^j\binom{m}{j}(x-j)_{m-j}(p)_j\,.\end{align*}
Therefore, 
\begin{align*}\sum_{p\geq 0}(x-p)_m\frac{(2\gamma t^3)^p}{p!(1-\beta t^2)^p}&=\sum_{p\geq 0}\sum_{j=0}^m (-1)^j\binom{m}{j}(x-j)_{m-j}(p)_j\frac{(2\gamma t^3)^p}{p!(1-\beta t^2)^p}\\
&=\sum_{j=0}^m \binom{m}{j}(x-j)_{m-j}\frac{(-2\gamma t^3)^j}{(1-\beta t^2)^j}\sum_{p\geq j}\frac{(2\gamma t^3)^{p-j}}{(p-j)!(1-\beta t^2)^{p-j}}\\
&=e^{\frac{2\gamma t^3}{1-\beta t^2}} \sum_{j=0}^m \binom{m}{j}(x-j)_{m-j}\frac{(-2\gamma t^3)^j}{(1-\beta t^2)^j}\,.
\end{align*}
Hence, we conclude that
\[\int_{\Pic} G(t)\theta^m \widetilde \sigma_l=(-1)^l 2^{2g-m} t^{g+l-m}\sigma_l\sum_{j=0}^m\binom{m}{j}(x-j)_{m-j}(1-\beta t^2)^{m-j} (-2\gamma t^3)^j\,.\]
Together with \eqref{eq: c-2t}, this concludes the proof.\qedhere
\end{proof}

Suppose now that we take $m=0$ in the proposition, which gives
\[\mr_{k, \tilde \sigma_l}^d=(-1)^l 2^{2g-k} c_{d, k-g-l}\sigma_l\,.\]
Since we trivially have
\begin{equation}
    \label{eq: trivialCbound}c_{d,n}\in H^{2n}(\FN_{2,1})=C_{2n}H^{2n}(\FN_{2,1})~\textup{ and }~\sigma_l\in C_{2l}H^{3l}(\FN_{2,1})\end{equation}
we conclude that 
\[\mr_{k, \tilde \sigma_l}^d\in C_{2k-2g}H^{2k-2g+l}(\FN_{2,1})\,.\]
However, when $m>0$ a similar bound is not true. To illustrate this, consider the case $m=1$, in which Proposition \ref{prop: explicitmr} gives
\[\mr_{k, \theta\cdot \tilde \sigma_l}^d=(-1)^l 2^{2g-k-1}\sigma_l\big((g-l)c_{d, k-g-l+1}-(g-l)\beta c_{d, k-g-l-1}-2\gamma c_{d, k-g-l-2}\big)\,.\]
The second and third terms of the right hand side have Chern degree bounded by $2k-2g$, but not the first term. The key observation is that the first term is proportional to the Mumford relation 
$\mr_{k+1, \tilde \sigma_l}^d$; hence, we find that 
\[\widetilde{\mr}_{k, \theta\cdot \tilde\sigma_l}^d\coloneqq -\mr_{k, \theta\cdot \tilde \sigma_l}^d+(g-l)\mr_{k+1, \tilde \sigma_l}^d\]
also has Chern degree bounded by $2k-2g$. More generally, for arbitrary $m$, we define modified Mumford relations as follows:
\begin{defn}
Let $k, m\geq 0$, $d\geq 1$, and $\tilde \sigma_l\in \mathsf{Prim}_l$. We define the modified Mumford relations $\widetilde{\mr}_{k, \theta^m\cdot \tilde\sigma_l}^d$ as
\[\widetilde{\mr}_{k, \theta^m\cdot \tilde \sigma_l}^d=\sum_{s=0}^m (-1)^{s}\binom{m}{s}(g-l-s)_{m-s}\mr^d_{k+m-s, \theta^s\cdot \tilde \sigma_l}\in H^\ast(\FN_{2,1})\,.\]
\end{defn}
Note that Corollary \ref{cor: basismr2} is equivalent to the statement that
\[\widetilde \mr^d_{k+2g+2d-2, A}\]
form a basis of $\ker(j^\ast_{\Lambda})$, when $k$ runs through the non-negative numbers and $A$ runs through a basis of $H^\ast(\Pic)$; this is the case since the matrix relating the original Mumford relations and the modified Mumford relations is triangular with $\pm 1$ entries along the diagonal. The following proposition establishes the desired bound on the Chern degree.

\begin{prop}\label{prop: boundCmodifiedmr}
Let $A=\theta^m\cdot \tilde \sigma_l\in H^\ast(\Pic)$. Then
\begin{equation}
\label{eq: modified MR}
\widetilde{\mr}_{k, A}^{d}=(-1)^l 2^{2g-m-k} \frac{m!}{(g-l-m)!} \sum_{\substack{b+c=m\\a+b+2c=k-g-l}}(g-l-c)!c_{d,a}\frac{\beta^b}{b!}\frac{(2\gamma)^c}{c!}\sigma_l\,.
\end{equation}
In particular, we have
\[\widetilde{\mathsf{MR}}_{k, A}^d\in C_{2k-2g}H^{2k-2g+|A|}(\FN_{2,1})\,.\]
\end{prop}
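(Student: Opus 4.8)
The plan is to feed the explicit formula of Proposition~\ref{prop: explicitmr} into the definition of $\widetilde{\mr}_{k,\theta^m\tilde\sigma_l}^d$, carry out a purely combinatorial collapse of the resulting multiple sum, and then read off the Chern degree bound from the closed form using the trivial estimates in \eqref{eq: trivialCbound}. Throughout, write $x=g-l$. Since $A=\theta^m\tilde\sigma_l$, we may assume $m\le g-l$, as otherwise the Lefschetz decomposition forces $A=0$ and there is nothing to prove; in particular all factorials that appear below make sense.

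The key preliminary observation is that in Proposition~\ref{prop: explicitmr} both the power of $2$ and the index of the coefficient extraction $[t^{\bullet}]$ depend on the pair $(k,m)$ only through $k+m$. Hence, after the substitutions $(k,m)\mapsto(k+m-s,s)$ dictated by the definition of the modified relations, the prefactor $(-1)^l2^{2g-k-m}$ and the extraction $[t^{k+m-g-l}]$ do not depend on the summation variable $s$, so one obtains
\[
\widetilde{\mr}_{k,\theta^m\tilde\sigma_l}^d=(-1)^l2^{2g-k-m}\,[t^{k+m-g-l}]\bigl(\Phi_d(t)\,P(t)\bigr)\,\sigma_l,
\]
where
\[
P(t)=\sum_{s=0}^m(-1)^s\binom{m}{s}(x-s)_{m-s}\sum_{j=0}^s\binom{s}{j}(x-j)_{s-j}(1-\beta t^2)^{s-j}(-2\gamma t^3)^j.
\]
The heart of the argument is to show that
\[
P(t)=\frac{m!}{(x-m)!}\sum_{b+c=m}(x-c)!\,\frac{\beta^b}{b!}\frac{(2\gamma)^c}{c!}\,t^{2b+3c}.
\]
To do this I would interchange the two summations, rewrite $\binom{m}{s}\binom{s}{j}=\binom{m}{j}\binom{m-j}{s-j}$, and substitute $u=s-j$, so that for each fixed $j$ the inner sum becomes $\sum_{u=0}^{n}(-1)^u\binom{n}{u}(y-u)_{n-u}(y)_u(1-\beta t^2)^u$ with $n=m-j$, $y=x-j$. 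Here one uses the factorisation of the falling factorial $(y)_n=(y)_u(y-u)_{n-u}$ to pull $(y)_n$ outside the sum, leaving $\sum_u\binom{n}{u}\bigl(-(1-\beta t^2)\bigr)^u=(\beta t^2)^n$ by the binomial theorem; after collecting signs this gives $P(t)=\sum_{j=0}^m\binom{m}{j}(2\gamma t^3)^j(x-j)_{m-j}(\beta t^2)^{m-j}$, and writing $c=j$, $b=m-j$ and expanding $\binom{m}{j}(x-j)_{m-j}$ into factorials yields the displayed closed form. Finally, extracting the $t^{k+m-g-l}$ coefficient of $\Phi_d(t)P(t)=\bigl(\sum_a c_{d,a}t^a\bigr)P(t)$ selects the index $a$ with $a+2b+3c=k+m-g-l$, i.e.\ $a+b+2c=k-g-l$ using $b+c=m$; this is exactly \eqref{eq: modified MR}.

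For the Chern degree bound, each summand of \eqref{eq: modified MR} is a scalar multiple of $c_{d,a}\,\beta^b\gamma^c\,\sigma_l$. By \eqref{eq: trivialCbound} we have $c_{d,a}\in C_{2a}$ and $\sigma_l\in C_{2l}$, while $\beta$ and $\gamma$ have Chern degrees $2$ and $4$ respectively, so $\beta^b\gamma^c\in C_{2b+4c}$; hence the summand lies in $C_{2a+2b+4c+2l}$. Using the constraint $a+b+2c=k-g-l$ this is $C_{2(k-g-l)+2l}=C_{2k-2g}$. The cohomological degree equals $2k-2g+|A|$ either by the analogous bookkeeping ($2a+4b+6c+3l=2k-2g+2m+l=2k-2g+|A|$, since $b+c=m$ and $|A|=2m+l$) or simply because $\widetilde{\mr}_{k,A}^d$ is a homogeneous class. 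This gives $\widetilde{\mr}_{k,A}^d\in C_{2k-2g}H^{2k-2g+|A|}(\FN_{2,1})$.

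I expect the only delicate point to be the combinatorial collapse of $P(t)$ — specifically, keeping track of the signs through the two reindexings so that the identity $(y)_n=(y)_u(y-u)_{n-u}$ and the binomial theorem apply cleanly. Once $P(t)$ is in closed form, both the explicit formula \eqref{eq: modified MR} and the Chern degree bound are immediate.
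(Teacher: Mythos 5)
Your proposal is correct and follows essentially the same route as the paper: plug the formula of Proposition \ref{prop: explicitmr} into the definition of $\widetilde{\mr}^d_{k,A}$ (noting the prefactor and the $t$-extraction index depend only on $k+m$), collapse the double sum via $\binom{m}{s}\binom{s}{j}=\binom{m}{j}\binom{m-j}{s-j}$, the falling-factorial factorization, and the binomial theorem to get $\sum_j\binom{m}{j}(x-j)_{m-j}(2\gamma t^3)^j(\beta t^2)^{m-j}$, then reindex and read off the Chern bound from \eqref{eq: trivialCbound}. All the reindexings and sign bookkeeping in your argument check out, so no changes are needed.
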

\begin{proof}
    Denote $x=g-l$. By Proposition \ref{prop: explicitmr}, the modified Mumford relation $\widetilde{\mathsf{MR}}_{k, \theta^m \cdot \tilde \sigma_l}^d$ is given by
    \[(-1)^l2^{2g-m-k}[t^{k+m-g-l}]\left(\Phi_d(t)\cdot F(t) \right)\sigma_l\]
where
    \[F(t)=\sum_{s=0}^m (-1)^{s}\binom{m}{s}(x-s)_{m-s}\sum_{j=0}^s \binom{s}{j}(x-j)_{s-j}(1-\beta t^2)^{s-j}(-2\gamma t^3)^{j}\,.\]
    Now observe that
    \[(x-j)_{s-j}(x-s)_{m-s}=(x-j)_{m-j}\]
    and
    \[\binom{m}{s}\binom{s}{j}=\frac{m!}{(m-s)!j!(s-j)!}=\binom{m}{j}\binom{m-j}{s-j}\,.\]
    Hence, we can rewrite $F$ as
    \begin{align*}F(t)&=\sum_{j=0}^m \binom{m}{j}(x-j)_{m-j}(2\gamma t^3)^j\sum_{s=j}^m \binom{m-j}{s-j}(\beta t^2-1)^{s-j}\\
    &=\sum_{j=0}^m \binom{m}{j}(x-j)_{m-j}(2\gamma t^3)^j(\beta t^2)^{m-j}\,.
    \end{align*}
    This shows formula \eqref{eq: modified MR} after setting $a=k-m-k-l-j$, $b=m-j$ and $c=j$. Using the trivial bound \eqref{eq: trivialCbound}, we conclude that if $a,b,c$ are as in the sum  \eqref{eq: modified MR}, the $c_{d,a}\beta^{b}\gamma^c\sigma_l$ has Chern degree bounded by
\[2a+2b+4c+2l=2k-2g\,,\]
which proves the last part of the proposition.\qedhere
\end{proof}


\subsection{Concluding the proof}

The basis from Corollary \ref{cor: basismr2} together with the bound from Proposition \ref{prop: boundCmodifiedmr} are still not enough to obtain the inequality \eqref{eq: differenceinequality}. Although Proposition \ref{prop: boundCmodifiedmr} is an optimal bound for the Chern degree in the formal algebra $H^\ast(\FN_{2,1})\simeq \BD$, it is no longer optimal in $H^\ast(\FN_{2,1}^{\leq d})$. The next lemma provides a mechanism to further improve the Chern degree bound.

\begin{lem}\label{lem: mrdifferentd}
For any $d\in \BZ$, $k\geq 0$ and $A\in H^\ast(\Pic)$, we have the identity
\[\mr_{k, A}^{d+1}=\mr_{k, A}^d-\beta_1\mr_{k-1, A}^d+\beta_2\mr_{k-2, A}^d\]
in $H^\ast(\FM_{2,1})$.
\end{lem}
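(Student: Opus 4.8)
The plan is to compare the complexes $\BK_d$ and $\BK_{d+1}$ on $\FM_{2,1}\times \Pic$ directly, where $\Pic$ here stands for $M_{1,-d}$ on one side and $M_{1,1-d}$ on the other. The key point is that these two Picard varieties are related by tensoring with the line bundle $\CO_\Sigma(\pt)$ for a fixed point $\pt\in\Sigma$: there is an isomorphism $M_{1,-d}\xrightarrow{\sim} M_{1,1-d}$, $L\mapsto L(\pt)$, and under this identification the normalized Poincaré bundles satisfy $\BL_{1-d}\cong \BL_{-d}\otimes q^\ast\CO_\Sigma(\pt)$ up to pulling back a line bundle from the Picard factor (one has to be slightly careful about the normalization at $\pt$, but since $\CO_\Sigma(\pt)|_\pt$ is a fixed $1$-dimensional vector space this only twists by a trivial line bundle and does not affect Chern classes). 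Therefore, writing $D=\CO_\Sigma(\pt)$ viewed as a length-one complex on $\Sigma$, we have
\[
\BK_{d+1}=R\CH om_p(\CV,\BL_{1-d})\cong R\CH om_p(\CV,\BL_{-d}\otimes q^\ast D)
\]
after the identification of the two $\Pic$'s. Using the short exact sequence $0\to \CO_\Sigma(-\pt)\to \CO_\Sigma\to \CO_\pt\to 0$, or equivalently the exact triangle relating $\CO_\Sigma$, $\CO_\Sigma(\pt)$ and $\CO_\pt(\pt)$, one gets an exact triangle on $\FM_{2,1}\times\Pic$
\[
\BK_d \to \BK_{d+1} \to R\CH om_p(\CV, \BL_{-d}\otimes q^\ast\CO_\pt(\pt))\to \BK_d[1],
\]
and the third term is just $\CV^\vee|_{\FM_{2,1}\times\pt}$ twisted by a line bundle pulled back from $\Pic$, hence has rank $2$ and total Chern class expressible in terms of $\beta_1,\beta_2$ (the Künneth components of $c(\CV)$ along $\pt\in\Sigma$). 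Concretely its total Chern class, after the appropriate normalization, is $1+\beta_1+\beta_2$ (using that $c_1(\CV)$ and $c_2(\CV)$ restricted to $\pt$ give $\beta_1$ and $\beta_2$).

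From the triangle one gets the multiplicativity of total Chern classes of the virtual classes: $c(-\BK_{d+1})=c(-\BK_d)\cdot c(-[\CV^\vee|_\pt])=c(-\BK_d)\cdot(1+\beta_1+\beta_2)^{-1}$, or after clearing denominators,
\[
c(-\BK_d)=c(-\BK_{d+1})\cdot(1+\beta_1+\beta_2).
\]
Extracting the degree-$k$ part and pushing forward along $p_1$ against $p_2^\ast A$ — which is exactly what the operation $\mr^{\bullet}_{\bullet,A}$ does, and which is linear and commutes with multiplication by classes $\beta_1,\beta_2$ pulled back from $\FM_{2,1}$ — yields precisely
\[
\mr^d_{k,A}=\mr^{d+1}_{k,A}+\beta_1\,\mr^{d+1}_{k-1,A}+\beta_2\,\mr^{d+1}_{k-2,A}.
\]
Rearranging, $\mr^{d+1}_{k,A}=\mr^{d}_{k,A}-\beta_1\mr^{d+1}_{k-1,A}-\beta_2\mr^{d+1}_{k-2,A}$, and an easy induction on $k$ (replacing the $\mr^{d+1}$ terms on the right by their expansion and collecting, using $(1+\beta_1+\beta_2)^{-1}(1+\beta_1+\beta_2)=1$ at the level of formal power series) converts this to the stated form $\mr^{d+1}_{k,A}=\mr^d_{k,A}-\beta_1\mr^d_{k-1,A}+\beta_2\mr^d_{k-2,A}$; alternatively one checks directly that $c(-\BK_{d+1})=c(-\BK_d)\cdot(1+\beta_1+\beta_2)^{-1}$ and expands $(1+\beta_1+\beta_2)^{-1}=1-\beta_1+(\beta_1^2-\beta_2)+\cdots$, but the truncation forced by the single point $\pt$ means only the first three terms $1-\beta_1+\beta_2$ survive in the relevant range — this is the cleanest route and I would present it this way.

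The step I expect to be the main obstacle is pinning down the precise normalizations: both the identification $M_{1,-d}\cong M_{1,1-d}$ and the resulting comparison of normalized Poincaré bundles introduce a priori a twist by a line bundle pulled back from the Picard factor, and one must check this twist is trivial (or at least does not contribute) so that the clean identity $c(-\BK_d)=c(-\BK_{d+1})(1+\beta_1+\beta_2)$ holds on the nose rather than up to correction terms involving the $\epsilon_i$ classes. The cleanest way to sidestep this is to work with $\CV$ normalized by $\beta_1=0$ on $\FM_{2,1}$ (or to note that the final identity is to be applied after setting $\beta_1=0$ in the fixed-determinant setting anyway, cf. Corollary \ref{cor: basismr2}), and to observe that $\CO_\Sigma(\pt)$ being trivial when restricted to $\pt$ kills any would-be twist. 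Once the normalizations are fixed the rest is a formal manipulation of Chern classes through the proper pushforward $p_{1\ast}$, which commutes with multiplication by $\beta_1,\beta_2$ by the projection formula.
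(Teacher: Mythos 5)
Your overall strategy is exactly the paper's: identify $M_{1,-d}$ with $M_{1,1-d}$ by tensoring with $\CO_\Sigma(\pt)$, compare the two Poincar\'e bundles via the sequence coming from $\CO_\Sigma(-\pt)\hookrightarrow\CO_\Sigma$, identify the third term of the resulting triangle as (a twist of) $\CV^\vee|_{\FM_{2,1}\times\pt}$, and conclude by the Whitney formula. Your worry about the normalization twist is resolved precisely as you suspect: since $\BL_{1-d}$ is normalized at $\pt$, the cokernel of $\BL_{-d}\hookrightarrow\BL_{1-d}$ is exactly $\CO_{\Pic}\boxtimes\CO_\pt$ with no correction from the Picard factor.

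However, the execution has a genuine error: your exact triangle is oriented the wrong way. The sequence $0\to\BL_{-d}\to\BL_{1-d}\to\CO_{\Pic}\boxtimes\CO_\pt\to 0$ gives, after applying the covariant functor $R\CH om_p(\CV,-)$, the triangle $\BK_{d+1}\to\BK_d\to p_1^\ast\bigl(\CV^\vee|_{\FM_{2,1}\times\pt}\bigr)$ (note also that $\BK_d$, not $\BK_{d+1}$, is the one built from $\BL_{1-d}$). Hence in $K$-theory $[-\BK_{d+1}]=[-\BK_d]+[\CV^\vee|_\pt]$, and since $c(\CV^\vee|_\pt)=1-\beta_1+\beta_2$ (you dropped the dual, hence your sign $+\beta_1$), one gets $c(-\BK_{d+1})=c(-\BK_d)\cdot(1-\beta_1+\beta_2)$ directly; extracting the degree-$k$ part and pushing forward yields the lemma with no inversion needed. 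Because you reversed the triangle, you instead arrive at a relation expressing $\mr^d$ in terms of $\mr^{d+1}$ and are forced to invert the total Chern class; your claim that $(1+\beta_1+\beta_2)^{-1}$ ``truncates'' to $1-\beta_1+\beta_2$ because the modification is supported at a single point is false --- the inverse is the full series $1-\beta_1+(\beta_1^2-\beta_2)+\cdots$, and there is no mechanism killing the higher terms (indeed your intermediate identity $\mr^d_{k,A}=\mr^{d+1}_{k,A}+\beta_1\mr^{d+1}_{k-1,A}+\beta_2\mr^{d+1}_{k-2,A}$ is not correct). Reorienting the triangle removes both the inversion and the sign error, and the rest of your argument (projection formula, linearity of $p_{1\ast}(-\cdot p_2^\ast A)$) then goes through verbatim.
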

\begin{proof}
    We can identify the Jacobians $M_{1, -d}$ and $M_{1, 1-d}$, which parametrize line bundles with different degrees, via the isomorphism sending $L\mapsto \CO_\Sigma(\pt)\otimes L$; denote both of these moduli spaces by $\Pic$. Under this identification, the universal line bundle $\BL_{-d}$ is the same as $\BL_{1-d}\otimes \CO_\Sigma(-\pt)$. Therefore we obtain a short exact sequence of sheaves on $\Pic\times \Sigma$
\[0\rightarrow \BL_{1, -d}\rightarrow \BL_{1, 1-d}\rightarrow (\BL_{1,1-d})_{|\Pic\times \pt}=\CO_{\Pic}\otimes \CO_{\pt}\rightarrow 0\,.\]

Applying $R\mathcal{H}om_p(\CV, -)$ to this exact sequence of sheaves and rotating, we obtain an exact triangle
\[p_1^\ast \CV^\vee_{|\FM_{2,1}\times \pt}\to \BK_{d+1}[1]\to \BK_d[1]\,\]
in $\FM_{2,1}\times \Pic$. Note that $\CV^\vee_{|\FM_{2,1}\times \pt}$ is a vector bundle of rank 2 with first and second Chern classes $-\beta_1$ and $\beta_2$, respectively. The result follows from the exact triangle above and the definiton of the Mumford relations.
\end{proof}

\begin{prop}\label{prop: basismr3}
Let $d\geq 1$. The subspace $\ker(j^\ast_{\Lambda})$ has a basis formed by the relations
\[\beta^{\ell} \widetilde{\mathsf{MR}}^d_{k+2g+2d-2, A}\in H^\ast(\FN_{2,1}^{\leq d})\,\]
where $\ell\geq 0$, $k\in \{0,1\}$, and $A$ runs through a basis of $H^\ast(\Pic)$.
\end{prop}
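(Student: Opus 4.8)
The plan is to leverage the basis of $\ker(j^\ast_\Lambda)$ from Corollary \ref{cor: basismr2}, namely $\{\widetilde{\mathsf{MR}}^d_{k+2g+2d-2,\epsilon_J}\colon k\geq 0,\ J\subseteq [2g]\}$ (which, as noted after the definition of the modified relations, is a basis since the change-of-basis matrix from the $\mathsf{MR}$'s to the $\widetilde{\mathsf{MR}}$'s is unitriangular), and to show that it can be replaced by the proposed set using the recursion in Lemma \ref{lem: mrdifferentd}. The key point is that Lemma \ref{lem: mrdifferentd}, after setting $\beta_1 = 0$ (i.e.\ passing to fixed determinant), reads $\mathsf{MR}^{d+1}_{k,A} = \mathsf{MR}^d_{k,A} + \beta_2\mathsf{MR}^d_{k-2,A}$ in $H^\ast(\FN_{2,1})$; recalling $\beta = -4\beta_2$ from Remark \ref{rmk: taut class from End}, this says that for fixed $A$ the span of $\{\mathsf{MR}^{d}_{k,A}\}_{k\geq 0}$ inside $H^\ast(\FN_{2,1})$ equals the span of $\{\beta^\ell\mathsf{MR}^{d-?}_{k,A}\}$ for lower $d$ and $k\in\{0,1\}$, by downward induction on $d$. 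The same statement then transfers to the modified relations $\widetilde{\mathsf{MR}}$, since the modification is an operation that commutes with the recursion (it is a $\beta_1=0$–independent linear combination of the $\mathsf{MR}^d_{\bullet, \theta^s\tilde\sigma_l}$'s with coefficients not involving $d$).

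The steps, in order. First, I would record the fixed-determinant form of Lemma \ref{lem: mrdifferentd} and rewrite it, via the dictionary of Remark \ref{rmk: taut class from End}, as $\mathsf{MR}^{d+1}_{k,A} = \mathsf{MR}^d_{k,A} - \tfrac14\beta\,\mathsf{MR}^d_{k-2,A}$ on $H^\ast(\FN_{2,1})$. Second, I would prove by induction on $d\geq 1$ the following: for each fixed $A$, the $\BQ$-span of $\{\mathsf{MR}^{d}_{k,A}\colon k\geq 0\}$ in $H^\ast(\FN_{2,1})$ equals the span of $\{\beta^\ell\mathsf{MR}^{1}_{k,A}\colon \ell\geq 0,\ k\in\{0,1\}\}$ — wait, more precisely one unwinds the recursion to express each $\mathsf{MR}^{d}_{k,A}$ as a $\beta$-polynomial combination of $\mathsf{MR}^{d'}_{k',A}$ with $d'$ ranging down and $k'$ of the same parity as $k$; iterating to $d'=1$ and then using the $d=1$ analogue of the parity reduction, one lands on the claimed generating set. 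Third, I would pass from $\mathsf{MR}$ to $\widetilde{\mathsf{MR}}$: since $\widetilde{\mathsf{MR}}^d_{k,\theta^m\tilde\sigma_l}$ is the fixed (in $d$) unitriangular combination $\sum_s(-1)^s\binom{m}{s}(g-l-s)_{m-s}\mathsf{MR}^d_{k+m-s,\theta^s\tilde\sigma_l}$, and the recursion of Lemma \ref{lem: mrdifferentd} is uniform in $A$ and shifts only the first index $k$, the two operations commute; hence the span statement holds verbatim with $\widetilde{\mathsf{MR}}$ in place of $\mathsf{MR}$, for $A$ running through the Lefschetz-type basis $\{\theta^m\tilde\sigma_l\}$ of $H^\ast(\Pic)$. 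Fourth, I would count: after reindexing $k\mapsto k+2g+2d-2$, the proposed set is $\{\beta^\ell\widetilde{\mathsf{MR}}^d_{k+2g+2d-2,A}\colon \ell\geq 0,\ k\in\{0,1\},\ A\in\text{basis of }H^\ast(\Pic)\}$; by the span computation it generates $\ker(j^\ast_\Lambda)$, and by a dimension count in each cohomological degree — comparing with the basis of Corollary \ref{cor: basismr2}, which in each degree has the same cardinality since $(\ell,k)\mapsto 2\ell+k$ is a bijection $\BZ_{\geq0}\times\{0,1\}\to\BZ_{\geq0}$ matching the old index $k$ — it is a basis.

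The main obstacle I anticipate is the bookkeeping in Step 2: one must verify that unwinding the three-term recursion $\mathsf{MR}^{d+1}_{k,A} = \mathsf{MR}^d_{k,A} - \tfrac14\beta\mathsf{MR}^d_{k-2,A}$ down to $d=1$, together with the $d=1$ parity reduction (which itself comes from applying the recursion "in reverse" or directly from an analogous exact triangle at $d=0$, relating $\FM^{\leq 0}$ to $\FM^{\leq 1}$, or more simply from the structure of the $c_{d,n}$), genuinely produces \emph{exactly} the span with $k\in\{0,1\}$ and no redundancy — i.e.\ that the resulting set is not just a spanning set but has the right size. I expect this to follow cleanly from the observation that modulo the ideal generated by the already-constructed relations, $\beta$ acts without relations on the relevant graded pieces, but making this precise may require a careful triangularity argument (e.g.\ ordering the generators by the pair $(k \bmod 2,\ \text{total degree})$ and checking the transition matrix to the Corollary \ref{cor: basismr2} basis is invertible degree by degree). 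An alternative, possibly cleaner route for Step 2 is to argue directly that $j^\ast_\Lambda$-kernel is a free module of rank $2$ over $\BQ[\beta]$-like structure coming from the Gysin sequence \eqref{eq: sesrecursion} and $H^\ast(\CZ_d)$, matching the $z^k$ with $k\in\{0,1\}$ decomposition modulo $z^2$; I would present whichever of these makes the rank count most transparent.
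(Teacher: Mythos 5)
There is a genuine gap in your Step 2, precisely at the point you flag as the ``main obstacle''. The identity actually needed is $\mr^d_{k,A}=\tfrac14\beta\,\mr^d_{k-2,A}$ --- with the \emph{same} superscript $d$ on both sides --- holding in $H^\ast(\FN_{2,1}^{\leq d})$ for $k\geq 2g+2d$; this is what collapses the index set from $k\in\BZ_{\geq 0}$ to $k\in\{0,1\}$ times powers of $\beta$. Your route (unwinding Lemma \ref{lem: mrdifferentd} downward in $d$, then invoking a ``$d=1$ parity reduction'') cannot produce it, for three reasons. First, in $H^\ast(\FN_{2,1})\simeq\BD$, where you place the span computation, no such collapse holds: the free algebra sees no relations among the $\mr^d_{k,A}$ beyond Lemma \ref{lem: mrdifferentd} itself, and that lemma only rewrites $\mr^{d+1}_{k,A}$ as a combination of $\mr^{d'}_{k',A}$ with $k'\equiv k\pmod 2$ and $k'\leq k$; it never reduces to $k'\in\{0,1\}$. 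Second, any parity reduction valid at level $d'=1$ is an identity in $H^\ast(\FN_{2,1}^{\leq 1})$, and the restriction maps go from $H^\ast(\FN_{2,1}^{\leq d})$ to $H^\ast(\FN_{2,1}^{\leq d'})$ for $d'<d$, not the other way, so it cannot be imported into $H^\ast(\FN_{2,1}^{\leq d})$. Third, even if it could, you would land on classes $\beta^\ell\mr^{1}_{k_0,A}$, whereas the proposition requires $\beta^\ell\widetilde{\mr}^{d}_{k_0+2g+2d-2,A}$ with superscript $d$.

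The missing ingredient is geometric: the restriction of $\BK_{d+1}[1]$ to $\FM_{2,1}^{\leq d}\times\Pic$ is a vector bundle (of rank $2d+2g-1$), because $\Hom(V,L)=0$ whenever $V\in\FM_{2,1}^{\leq d}$ and $\deg L=-d$; hence $\mr^{d+1}_{k,A}$ vanishes on $\FM_{2,1}^{\leq d}$ for $k\geq 2d+2g$. Feeding this vanishing into Lemma \ref{lem: mrdifferentd} with $\beta_1=0$ gives $0=\mr^d_{k,A}+\beta_2\mr^d_{k-2,A}$, i.e.\ the desired same-$d$ identity in $H^\ast(\FN_{2,1}^{\leq d})$. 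It passes to the modified relations because the modification is a $k$-shifted unitriangular combination uniform in $d$, and then the proposed set is literally the basis of Corollary \ref{cor: basismr2} rescaled via the bijection $(\ell,k)\mapsto 2\ell+k$ --- your counting step is fine once the identity is in place. I note that the alternative you sketch in your last sentence (working with $\iota_\ast H^\ast(\CZ_d)$ and matching multiplication by $\beta$ with multiplication by $z^2$) is in fact viable, since $\iota^\ast\beta_2=(\beta_1-z)z\equiv -z^2$ modulo $\beta_1$ and the projection formula then yields the same identity; but as written it is only a gesture, and the ``free of rank $2$ over $\BQ[\beta]$'' claim is exactly what requires this computation.
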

\begin{proof}
Recall that the restriction of $\mr_{k, A}^{d+1}$ to $\FM_{2,1}^{\leq d}$ vanishes for $k\geq 2d+2g$. Therefore, pulling back the relation of Lemma \ref{lem: mrdifferentd} along $\FN_{2,1}^{\leq d}\to \FM_{2,1}^{\leq d}\to \FM_{2,1}$, we obtain the equality
\[\mr_{k, A}^d=-\beta_2 \mr_{k-2, A}^d=\frac{1}{4}\beta \cdot \mr_{k-2, A}^d\]
in $H^\ast(\FN_{2,1}^{\leq d})$ for all $k\geq 2d+2g$.

From the definition of modified Mumford relations, it is clear that we also have 
\[\widetilde \mr_{k, A}^d=\frac{1}{4}\beta \cdot \widetilde \mr_{k-2, A}^d\,.\]

By Corollary \ref{cor: basismr2} and the fact that the matrix relating $\mr$ and $\widetilde \mr$ is invertible, the relations $\widetilde{\mathsf{MR}}^d_{k+2g+2d-2, A}$ form a basis of $\ker(j^\ast_{\Lambda})$ when $k$ runs through $\BZ_{\geq 0}$ and $A$ runs through a basis of $H^\ast(\Pic)$. Together with the identity above, the conclusion follows. \qedhere
\end{proof}

The basis from the last proposition and the bound on the Chern degree are finally enough to prove \eqref{eq: differenceinequality}. Indeed, we have
\[\beta^{\ell} \widetilde{\mathsf{MR}}^d_{k+2g+2d-2, A}\in C_{2k+2g+4d-4+2\ell} H^{2k+2g+4d-4+4\ell+|A|}(\FN_{2,1}^{\leq d})\,.\]
Therefore, 
\begin{align}\label{eq: inequalitiesomegad}
\Omega(\FN^{\leq d}_{2,1}, q, t)-\Omega(\FN^{\leq d-1}_{2,1}, q, t)&\preceq \sum_{\substack{\ell\geq 0\\
k\in \{0,1\},\, A}}q^{2k+2g+4d-4+2\ell}t^{2\ell+|A|}\\
&=q^{2g+4d-4}\frac{(1+t)^{2g}(1+q^2)}{1-q^2t^2}\nonumber
\end{align}
As explained in Section \ref{subsec: strategyintermediatestacks}, this inequality is enough to conclude the proof of Theorem \ref{thm: intermediatestacks}.

\subsection{A basis for $I^{\gr}_{\leq d}$} \label{subsec: relationsIgr}
We finish this section by providing a very explicit description of the ideals
\[I^{\gr}_{\leq d}\coloneqq \ker\big(\BD\to \gr^C_\bullet H^\ast(\FN_{2,1}^{\leq d})\big)\,.\]
By Corollary \ref{cor: basismr2}, the ideal $I_{\leq d}\coloneqq \ker\big(\BD\to H^\ast(\FN_{2,1}^{\leq d})\big)$ admits a basis formed by 
\[\mr_{k, A}^{d'}\in H^\ast(\FN_{2,1})\simeq \BD\] 
where $d'> d,\; k\geq 2g+2d'-2$, and $A$ runs through a basis of $H^\ast(\Pic)$. By Lemma \ref{lem: mrdifferentd}, we can replace this basis by  
\[\beta^\ell \widetilde{\mr}_{k, A}^{d+1}\in H^\ast(\FN_{2,1})\simeq \BD\] 
where $\ell\geq 0$ and $k\geq 2g+2d$. Moreover, the analysis in this section shows that the projections of $\beta^\ell \widetilde{\mr}_{k, A}^{d+1}$ in $\gr^C_{2\ell+2k-2g} \BD$ form a basis of $I^{\gr}_{\leq d}=\gr^C_\bullet I_{\leq d}$.

\smallskip

Recall the formula from Proposition \ref{prop: boundCmodifiedmr}: 
\[\widetilde{\mr}_{k, \theta^m\cdot \tilde \sigma_l}^{d+1}=(-1)^l 2^{2g-m-k} \frac{m!}{(g-l-m)!} \sum_{\substack{b+c=m\\a+b+2c=k-g-l}}(g-l-c)!c_{d+1,a}\frac{\beta^b}{b!}\frac{(2\gamma)^c}{c!}\sigma_l\,.\]
Note that $c_{d+1,a}$ is equal to $\alpha^a/a!$ modulo terms of Chern degrees $\leq{2a-1}$, since $\Phi_d(t)$ is equal to $e^{t\alpha}$ modulo the ideal generated by $\beta, \gamma$. Thus, the projection of $\widetilde{\mr}_{k, \theta^m\cdot \tilde \sigma_l}^{d+1}$ in $\gr^C_{2k-2g} \BD$ gives, up to constant factors, the relation 
\begin{equation}\label{eq: relationsIgr}
    \sum_{\substack{b+c=m\\a+b+2c=k-g-l}}(g-l-c)!\frac{\alpha^a}{a!}\frac{\beta^b}{b!}\frac{(2\gamma)^c}{c!}\sigma_l\in I^{\gr}_{\leq d}\,
\end{equation}
for any $l,m,k,\sigma_l$ such that $l+m\leq g, \; k\geq 2g+2d$, and $\sigma_l\in \mathsf{Prim}_l$. The preceding analysis shows that $I^{\gr}_{\leq d}$ is freely generated as a $\BQ[\beta]$-module by the relations \eqref{eq: relationsIgr}.

\section{The \texorpdfstring{$\mathfrak{sl}_2$}{sl2}-triples 
}
\label{sec: sl2}

In this final section we prove Theorem \ref{thm: sl2}. This is done by first constructing $\mathfrak{sl}_2$-triples formally over the descendent algebra $\mathbb{D}$, and then showing that they descend via the surjections $\mathbb{D}\to \mathrm{gr}^C_\bullet H^*(\FN_{2,1}^{\leq d})$. We give two proof of the descending property. The first applies only to the case $d=0$ and shows something stronger, namely that the $\mathfrak{sl}_2$ operators are (anti-)self-adjoint with respect to the associated graded pairing \eqref{eq: gradedpairing}. The second proof, presented in Section~\ref{sec: stacks sl2}, works uniformly for every $d\geq 0$ by showing that the $\mathfrak{sl}_2$-triples preserve the modified Mumford relations introduced in the previous section.


In fact, we will construct two commuting $\mathfrak{sl}_2$-triples on $\gr^C_\bullet H^*(\FN_{2,1}^{\leq d})$ for each $d\geq 0$. For $d=0$, this pair of $\mathfrak{sl}_2$-triples gives rise to the $\mathbb{\mathbb{Z}}/2\times \mathbb{Z}/2$ symmetry
\begin{align*}
\overline\Omega(N_{2,1}, q, t)&=\overline \Omega(N_{2,1}, q^{-1}, t^{-1}),\\
\overline\Omega(N_{2,1}, q, t)&=\overline\Omega(N_{2,1}, q, q^{-2}t^{-1}),\\
\overline\Omega(N_{2,1}, q, t)&=\overline\Omega(N_{2,1}, q^{-1}, q^2t)\,.
\end{align*}
The diagonal $\mathfrak{sl}_2$-triple is related to the Chern grading and furthermore implies (see Corollary~\ref{cor: diagonal h is shifted Chern grading}) that the Chern grading specialization $\overline{\Omega}(N_{2,1},q,1)=\sum_{i=-n}^n a_i\, q^i$ is \textit{unimodal}, in the sense that 
\[
a_{-n}\leq \dots\leq a_0\geq\dots\geq a_{n}.
\]

Recall that the descendent algebra $\BD$ is freely generated by the symbols $\alpha, \beta, \psi_i$ for $1\leq i\leq 2g$ introduced in Remark \ref{rmk: taut class from End}. This algebra is double-graded by the cohomological degree
$$\deg(\alpha)=2,\quad \deg(\beta)=4,\quad \deg(\psi_i)=3$$
and the Chern degree
$$\deg^C(\alpha)=\deg^C(\beta)=\deg^C(\psi_i)=2. 
$$
The cohomological degree induces a super-grading $|\cdot|$ valued in $\{0,1\}$. 
All algebraic concepts such as vector spaces, operators, commutators, and (anti-)self-adjointness will be super-graded. 

We make some comments on the natural $\Sp(2g,\BZ)$-equivariant structures which will be used in this section. The group $\Gamma\coloneqq\Sp(2g,\BZ)$ acts on the cohomology ring $H^*(\Sigma,\BZ)$ via either the mapping class group modulo the Torelli group of the underlying manifold of $\Sigma$, or the monodromy group of the moduli stack of smooth algebraic curves. Explicitly, $\Gamma$ is the automorphism group of the symplectic lattice $H^1(\Sigma,\BZ)$ and acts trivially on the even part of $H^*(\Sigma,\BZ)$. 

The group $\Gamma$ also acts on the cohomology rings of the moduli spaces and stacks considered in this paper via monodromy. Since the restriction homomorphisms between such cohomology rings are $\Gamma$-equivariant, it suffices to consider the $\Gamma$-action on the descendent algebra $\BD\simeq H^*(\FN_{2,1})$ which we describe now. If $\sigma\in \Gamma$ is represented by a matrix $A$ satisfying $^\mathsf{T}\kern -2 pt A\cdot J \cdot A = J$, where $J=\begin{pmatrix}0&I_n\\-I_n&0\end{pmatrix}$ is the standard symplectic matrix, then 
\[
\sigma\cdot \alpha = \alpha,\quad \sigma\cdot \beta = \beta,\quad \sigma\cdot \psi_i = \hat{A}\cdot \psi_i=\sum_j \hat{A}_{ji}\,\psi_j
\]
where $\hat{A}\coloneqq{}^\mathsf{T}\kern -2 pt A^{-1}$. The $\Gamma$-invariant subring of the descendent algebra is 
$$\BD^\Gamma = \BQ[\alpha,\beta,\gamma]/(\gamma^{g+1})\subseteq \BD
$$
where $\gamma= - 2\sum_{i=1}^g \psi_i\psi_{i+g}$.


\subsection{Commuting $\mathfrak{sl}_2$-triples on the descendent algebra}

Using the free algebra structure of the descendent algebra $\BD$, we define the differential operators acting on $\BD$: 
\begin{align*}
    \Fe_\alpha&\coloneqq\alpha,\\
    \Fh_\alpha&\coloneqq2\alpha\frac{\partial}{\partial \alpha}+\sum_{i=1}^{2g}\psi_i\frac{\partial}{\partial \psi_i}-(g-1),\\
    \Ff_\alpha&\coloneqq
    -\alpha\frac{\partial^2}{\partial\alpha^2}
    +(g-1)\frac{\partial}{\partial \alpha}
    -\frac{\partial}{\partial \alpha}\sum_{i=1}^{2g}\psi_i\frac{\partial}{\partial \psi_i}
    -\frac{\beta}{4}\sum_{i=1}^{g}\frac{\partial}{\partial \psi_i}\frac{\partial}{\partial \psi_{i+g}}.
\end{align*}
Similarly, we define the other set of operators exchanging the role of $\alpha$ and $\beta$:
\begin{align*}
    \Fe_\beta&\coloneqq\beta,\\
    \Fh_\beta&\coloneqq2\beta\frac{\partial}{\partial \beta}+\sum_{i=1}^{2g}\psi_i\frac{\partial}{\partial \psi_i}-(g-1),\\
    \Ff_\beta&\coloneqq-\beta\frac{\partial^2}{\partial\beta^2}
    +(g-1)\frac{\partial}{\partial \beta}
    -\frac{\partial}{\partial \beta}\sum_{i=1}^{2g}\psi_i\frac{\partial}{\partial \psi_i}
    -\frac{\alpha}{4}\sum_{i=1}^{g}\frac{\partial}{\partial \psi_i}\frac{\partial}{\partial \psi_{i+g}}.
\end{align*} 

\smallskip

Recall that a triple of operators $(\Fe, \Fh, \Ff)$ acting on a vector space forms an $\mathfrak{sl}_2$-triple if 
$$[\Fe,\Ff]=\Fh,\quad [\Fe, \Fh]=2\Fe,\quad [\Ff,\Fh]=-2\Ff.
$$
We say that a pair of $\mathfrak{sl}_2$-triples $(\Fe, \Fh, \Ff)$ and $(\Fe', \Fh', \Ff')$ commutes if any operator in $(\Fe, \Fh, \Ff)$ commutes with any operator in $(\Fe', \Fh', \Ff')$, hence defining a representation of $\mathfrak{sl}_2\times \mathfrak{sl}_2$. 
\begin{prop}\label{prop: sl2 for descendent}
The operators $(\Fe_\alpha, \Fh_\alpha, \Ff_\alpha)$ and $(\Fe_\beta, \Fh_\beta, \Ff_\beta)$ form a commuting pair of $\mathfrak{sl}_2$-triples, hence defining a representation of $\mathfrak{sl}_2\times \mathfrak{sl}_2$ on $\BD$.
\end{prop}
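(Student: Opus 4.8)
The plan is to prove Proposition~\ref{prop: sl2 for descendent} by a direct computation of commutators in the Weyl--Clifford algebra of multiplication and derivation operators on $\BD=\BQ[\alpha,\beta]\otimes\Lambda_\BQ(\psi_1,\dots,\psi_{2g})$. Every operator involved is a differential operator of order $\le 2$, so each identity to be verified is again a differential operator of bounded order and could be checked by evaluating both sides on $1$, on the generators $\alpha,\beta,\psi_i$, and on products of a few generators; it is cleaner, though, to work symbolically, using only the canonical relations $[\partial_\alpha,\alpha]=[\partial_\beta,\beta]=1$, $\{\partial_{\psi_i},\psi_j\}=\delta_{ij}$, together with the observation that $\alpha,\partial_\alpha$ commute with $\beta,\partial_\beta$ and with every $\psi_i,\partial_{\psi_i}$, and likewise for $\beta,\partial_\beta$. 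Finally, the formulas for $(\Fe_\beta,\Fh_\beta,\Ff_\beta)$ are obtained from those for $(\Fe_\alpha,\Fh_\alpha,\Ff_\alpha)$ by the algebra involution of $\BD$ swapping $\alpha\leftrightarrow\beta$ and fixing the $\psi_i$, so it suffices to establish the $\mathfrak{sl}_2$-relations for the $\alpha$-triple and then quote this symmetry.

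First I would introduce the auxiliary operators
\[
\theta_\alpha:=\alpha\,\partial_\alpha,\qquad \theta_\beta:=\beta\,\partial_\beta,\qquad N:=\sum_{i=1}^{2g}\psi_i\,\partial_{\psi_i},\qquad \Delta:=\sum_{i=1}^{g}\partial_{\psi_i}\partial_{\psi_{i+g}},
\]
so that $\Fe_\alpha=\alpha$, $\Fh_\alpha=2\theta_\alpha+N-(g-1)$ and $\Ff_\alpha=-\alpha\partial_\alpha^2+(g-1)\partial_\alpha-\partial_\alpha N-\tfrac14\beta\Delta$. The structural input is minimal: the Euler identity $[\theta_\alpha,\alpha^a\partial_\alpha^b]=(a-b)\alpha^a\partial_\alpha^b$ (and its $\beta$-analogue); the vanishing of all commutators among the families $\{\alpha,\partial_\alpha,\theta_\alpha\}$, $\{\beta,\partial_\beta,\theta_\beta\}$, $\{N\}$ except within a single family; and the relation $[N,\Delta]=-2\Delta$, which holds because $\Delta$ lowers the $\psi$-degree by $2$ (in fact $\Delta$ is, up to a scalar, the contraction dual to multiplication by $\gamma=-2\sum_i\psi_i\psi_{i+g}$, and together with $N$ it generates the Lefschetz $\mathfrak{sl}_2$-action on $\Lambda_\BQ(\psi_1,\dots,\psi_{2g})$). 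Granting this, the relations $[\Fe_\alpha,\Ff_\alpha]=\Fh_\alpha$, $[\Fh_\alpha,\Fe_\alpha]=2\Fe_\alpha$, $[\Fh_\alpha,\Ff_\alpha]=-2\Ff_\alpha$ are short: $[\Fh_\alpha,\Fe_\alpha]=2[\theta_\alpha,\alpha]=2\alpha$; the bracket $[\Fe_\alpha,\Ff_\alpha]$ contributes $2\theta_\alpha$ from the first term of $\Ff_\alpha$, $-(g-1)$ from the second, $N$ from the third and nothing from $-\tfrac14\beta\Delta$, giving exactly $\Fh_\alpha$; and $[\Fh_\alpha,\Ff_\alpha]=-2\Ff_\alpha$ follows term by term from the Euler identity together with $[N,\Delta]=-2\Delta$, the contribution worth recording being $[N,-\tfrac14\beta\Delta]=\tfrac12\beta\Delta$, which matches the $\beta\Delta$-part of $-2\Ff_\alpha$. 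The $\beta$-triple is then immediate from the $\alpha\leftrightarrow\beta$ symmetry.

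It remains to check that the two triples commute. Write $\Ff_\alpha=P_\alpha-\tfrac14\beta\Delta$ with $P_\alpha:=-\alpha\partial_\alpha^2+(g-1)\partial_\alpha-\partial_\alpha N$, and symmetrically $\Ff_\beta=P_\beta-\tfrac14\alpha\Delta$. All brackets between $\{\Fe_\alpha,\Fh_\alpha\}$ and $\{\Fe_\beta,\Fh_\beta\}$, as well as $[\Fe_\alpha,\Ff_\beta]$, $[\Fe_\beta,\Ff_\alpha]$ and $[P_\alpha,P_\beta]$, vanish at once because the $\alpha$-, $\beta$- and $\psi$-generators live in commuting sectors and $N$ commutes with $\alpha,\beta,\partial_\alpha,\partial_\beta$. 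The only cross-commutators not visibly zero are $[\Fh_\alpha,\Ff_\beta]$ (equivalently, by symmetry, $[\Ff_\alpha,\Fh_\beta]$) and $[\Ff_\alpha,\Ff_\beta]$ --- the two cases in which a factor $\Delta$ is bracketed against $N$. For the first, $[\Fh_\alpha,\Ff_\beta]=[\Fh_\alpha,-\tfrac14\alpha\Delta]=-\tfrac14\big([2\theta_\alpha,\alpha]\Delta+\alpha[N,\Delta]\big)=-\tfrac14\big(2\alpha\Delta-2\alpha\Delta\big)=0$. For the second, the crux is the identity
\[
[P_\alpha,\alpha\Delta]=-\big(N-(g-1)\big)\Delta,
\]
obtained by expanding with the product rule and using $[N,\Delta]=-2\Delta$; the $\alpha\leftrightarrow\beta$ symmetry yields $[P_\beta,\beta\Delta]=-\big(N-(g-1)\big)\Delta$ as well, so that
\[
[\Ff_\alpha,\Ff_\beta]=-\tfrac14[P_\alpha,\alpha\Delta]-\tfrac14[\beta\Delta,P_\beta]=\tfrac14\big(N-(g-1)\big)\Delta-\tfrac14\big(N-(g-1)\big)\Delta=0.
\]
I expect this last identity to be the main obstacle: it is the only place where the $\alpha$-bosonic, $\beta$-bosonic, $N$ and $\Delta$ pieces genuinely interact, and the cancellation depends on routing the computation through the operators $P_\alpha$, $P_\beta$, $N$, $\Delta$ and on the $\alpha\leftrightarrow\beta$ symmetry. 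Everything else reduces to routine bookkeeping in the Weyl--Clifford algebra.
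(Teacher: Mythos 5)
Your proposal is correct and follows essentially the same route as the paper: a direct commutator computation in the Weyl--Clifford algebra, with the grading/Euler identities handling all brackets except $[\Ff_\alpha,\Ff_\beta]$, whose vanishing in both arguments ultimately rests on the single nontrivial relation $[N,\Delta]=-2\Delta$ (your packaging via $[P_\alpha,\alpha\Delta]=-(N-(g-1))\Delta$ and the $\alpha\leftrightarrow\beta$ swap is only a cosmetic reorganization of the paper's four-term expansion). The only discrepancy is a sign convention: you verify $[\Fh_\alpha,\Fe_\alpha]=2\Fe_\alpha$ whereas the paper's stated normalization is $[\Fe,\Fh]=2\Fe$, but the paper's own computation agrees with yours, so this is a typo in its definition rather than a gap in your argument.
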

\begin{proof}
We first show that $(\Fe_\alpha, \Fh_\alpha, \Ff_\alpha)$ form an $\mathfrak{sl}_2$-triple. Note that $\Fh_\alpha$ is a grading operator with respect to which the multiplication operators by $\alpha$, $\beta$ and $\psi_i$ are of $\Fh_\alpha$-degree $2$, $0$, and $1$, respectively. From this observation it follows that $\Fe_\alpha$ and $\Ff_\alpha$ have $\Fh_\alpha$-degree $2$ and $-2$, respectively, hence $[\Fe_\alpha, \Fh_\alpha]=2\Fe_\alpha$ and $[\Ff_\alpha,\Fh_\alpha]=-2\Ff_\alpha$. On the other hand, direct computation using the Leibniz rule of the commutator gives
\begin{align*}
    [\Ff_\alpha,\Fe_\alpha]
    &=\left[
    -\alpha\frac{\partial^2}{\partial\alpha^2}
    +(g-1)\frac{\partial}{\partial \alpha}
    -\frac{\partial}{\partial \alpha}\sum_{i=1}^{2g}\psi_i\frac{\partial}{\partial \psi_i}
    -\frac{\beta}{4}\sum_{i=1}^{g}\frac{\partial}{\partial \psi_i}\frac{\partial}{\partial \psi_{i+g}},\alpha\right]\\
    &=-2\alpha\frac{\partial}{\partial \alpha}+(g-1)-\sum_{i=1}^{2g}\psi_i\frac{\partial}{\partial \psi_i}
\end{align*}
which is equal to $-\Fh_\alpha$. The same argument shows that $(\Fe_\beta, \Fh_\beta, \Ff_\beta)$ form an $\mathfrak{sl}_2$-triple.

We now show that the pair $(\Fe_\alpha, \Fh_\alpha, \Ff_\alpha)$ and $(\Fe_\beta, \Fh_\beta, \Ff_\beta)$ commutes. All the commutators vanish for rather trivial reasons except for $[\Ff_\alpha,\Ff_\beta]=0$ which we explain below. By independence of the variables $\alpha, \beta,\psi_i$'s, many terms of $[\Ff_\alpha,\Ff_\beta]$ vanish, leaving us with
\small
\begin{align*}
    [\Ff_\alpha,\Ff_\beta]
    =&\left[-\alpha\frac{\partial^2}{\partial\alpha^2}+(g-1)\frac{\partial}{\partial \alpha}, -\frac{\alpha}{4}\right]\sum_{i=1}^{g}\frac{\partial}{\partial \psi_i}\frac{\partial}{\partial \psi_{i+g}}
    -
    \left[-\beta\frac{\partial^2}{\partial\beta^2}
+(g-1)\frac{\partial}{\partial \beta},-\frac{\beta}{4}\right]\sum_{i=1}^{g}\frac{\partial}{\partial \psi_i}\frac{\partial}{\partial \psi_{i+g}}\\
&-\left[\frac{\partial}{\partial \alpha}\sum_{i=1}^{2g}\psi_i\frac{\partial}{\partial \psi_i},-\frac{\alpha}{4}\sum_{i=1}^{g}\frac{\partial}{\partial \psi_i}\frac{\partial}{\partial \psi_{i+g}}\right]
+
\left[\frac{\partial}{\partial \beta}\sum_{i=1}^{2g}\psi_i\frac{\partial}{\partial \psi_i},-\frac{\beta}{4
}\sum_{i=1}^{g}\frac{\partial}{\partial \psi_i}\frac{\partial}{\partial \psi_{i+g}}\right]\\
=&\,\frac{1}{2}\left(\alpha\frac{\partial}{\partial\alpha}-\beta\frac{\partial}{\partial\beta}\right)\sum_{i=1}^{g}\frac{\partial}{\partial \psi_i}\frac{\partial}{\partial \psi_{i+g}}
+\frac{1}{4}\left(
\alpha\frac{\partial}{\partial\alpha}
-\beta\frac{\partial}{\partial\beta}
\right)\left[\sum_{i=1}^{2g}\psi_i\frac{\partial}{\partial \psi_i},\sum_{i=1}^{g}\frac{\partial}{\partial \psi_i}\frac{\partial}{\partial \psi_{i+g}}\right].
\end{align*}

\normalsize

Since $\sum_{i=1}^{2g}\psi_i\frac{\partial}{\partial \psi_i}$ is the grading operator assigning degree $1$ to each $\psi_i$, the second term cancels the first term.
\end{proof}

One can check that all the operators in the $\mathfrak{sl}_2$-triples are $\Gamma$-equivariant
, and in particular they preserve the $\Gamma$-invariant subring. For later use, we record how the operators simplify when restricted to $\BD^\Gamma$.
\begin{lem}\label{lem: sl2invariantsubring}
    On the $\Gamma$-invariant subring $\BD^\Gamma=\BQ[\alpha,\beta,\gamma]/(\gamma^{g+1})$, we have
$$\sum_{i=1}^{2g}\psi_i\frac{\partial}{\partial \psi_i}=2\gamma\frac{\partial}{\partial\gamma}\quad
\text{and}\quad
\sum_{i=1}^g\frac{\partial}{\partial \psi_i}\frac{\partial}{\partial \psi_{i+g}}=-2\gamma\frac{\partial^2}{\partial\gamma^2}+2g\frac{\partial}{\partial\gamma}.
$$
\end{lem}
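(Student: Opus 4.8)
The plan is to prove the two operator identities separately, in each case exploiting a reduction that makes the verification a short check. The first identity will be handled as an equality of \emph{derivations} of $\BD^\Gamma=\BQ[\alpha,\beta,\gamma]/(\gamma^{g+1})$, so that it suffices to evaluate both sides on the algebra generators $\alpha,\beta,\gamma$. The second will be handled as an equality of $\BQ[\alpha,\beta]$-linear endomorphisms of $\BD^\Gamma\cong\BQ[\alpha,\beta]\otimes\bigl(\BQ[\gamma]/(\gamma^{g+1})\bigr)$, so that it suffices to evaluate both sides on the powers $\gamma^c$ for $0\le c\le g$.

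For the first identity, I would note that $E\coloneqq\sum_{i=1}^{2g}\psi_i\frac{\partial}{\partial\psi_i}$ is an even derivation of $\BD$ (the composite of the odd element $\psi_i$ with the odd super-derivation $\frac{\partial}{\partial\psi_i}$ is a derivation), and it is $\Gamma$-equivariant, hence restricts to a derivation of $\BD^\Gamma$; similarly $2\gamma\frac{\partial}{\partial\gamma}$ is a derivation of $\BD^\Gamma$ because $\gamma$ is central. Two derivations of $\BD^\Gamma$ that agree on the generators agree everywhere, so the identity follows from $E(\alpha)=E(\beta)=0=2\gamma\frac{\partial}{\partial\gamma}(\alpha)=2\gamma\frac{\partial}{\partial\gamma}(\beta)$ together with $E(\gamma)=2\gamma$, which holds because $\gamma$ is homogeneous of degree $2$ in the variables $\psi_i$ (Euler's relation), and $2\gamma\frac{\partial}{\partial\gamma}(\gamma)=2\gamma$ as well.

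For the second identity, write $D\coloneqq\sum_{i=1}^g\frac{\partial}{\partial\psi_i}\frac{\partial}{\partial\psi_{i+g}}$ and set $u_i\coloneqq\psi_i\psi_{i+g}$; each $u_i$ is even and central with $u_i^2=0$, so $\bigl(\sum_i u_i\bigr)^c=c!\,e_c(u_1,\dots,u_g)$ and hence $\gamma^c=(-2)^c c!\,e_c(u_1,\dots,u_g)$, where $e_c$ is the $c$-th elementary symmetric polynomial. A short Koszul-sign computation gives $\frac{\partial}{\partial\psi_j}\frac{\partial}{\partial\psi_{j+g}}(u_j v)=-v$ for any even $v$ free of $\psi_j,\psi_{j+g}$, whence $D\bigl(\prod_{i\in S}u_i\bigr)=-\sum_{j\in S}\prod_{i\in S\setminus\{j\}}u_i$; summing over $c$-subsets $S$ and observing that each $(c-1)$-subset arises from exactly $g-c+1$ of them yields $D(e_c(u))=-(g-c+1)\,e_{c-1}(u)$, i.e. $D(\gamma^c)=2c(g-c+1)\,\gamma^{c-1}$. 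This matches $\bigl(-2\gamma\tfrac{\partial^2}{\partial\gamma^2}+2g\tfrac{\partial}{\partial\gamma}\bigr)(\gamma^c)=\bigl(-2c(c-1)+2gc\bigr)\gamma^{c-1}$, which finishes the argument.

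All of this is elementary; the one step needing genuine care is the sign bookkeeping when applying the second-order operators $\frac{\partial}{\partial\psi_j}\frac{\partial}{\partial\psi_{j+g}}$ to monomials in the $u_i$'s, together with the elementary-symmetric count above. As an alternative to the direct computation of the second identity, one can recognize $\gamma$, $D$, and $E-g$ — up to scalars — as the hard-Lefschetz $\mathfrak{sl}_2$-triple on $\Lambda^\bullet\bigl(\bigoplus_i\BQ\psi_i\bigr)$ attached to the symplectic form, restricted to the $\Sp$-invariant part $\BQ[\gamma]/(\gamma^{g+1})$, which is the $(g+1)$-dimensional irreducible representation; the formulas then just record the standard action of the lowering operator on a string of weight vectors.
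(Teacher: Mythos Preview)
Your proof is correct. Both identities are handled cleanly: the first via the observation that both sides are derivations agreeing on generators, and the second by direct evaluation on the monomials $\gamma^c$ using the elementary-symmetric expansion $\gamma^c=(-2)^c c!\,e_c(u_1,\dots,u_g)$.

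The paper's argument is close but organized slightly differently. There the key reduction is uniform for both identities: since every operator in question annihilates $1$ and commutes with multiplication by $\alpha$ and $\beta$, two such operators agree on $\BD^\Gamma$ as soon as their commutators with multiplication by $\gamma$ agree (an easy induction on the $\gamma$-power). For the first identity this commutator check is exactly your Euler-relation computation $[\sum_i\psi_i\partial_{\psi_i},\gamma]=2\gamma=[2\gamma\partial_\gamma,\gamma]$; for the second the paper simply invokes ``direct computation'' of the commutator. Your approach instead evaluates both sides of the second identity directly on $\gamma^c$, which is an equally short route and has the advantage of making the intermediate numbers $D(\gamma^c)=2c(g-c+1)\gamma^{c-1}$ explicit. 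Your closing remark identifying $(\gamma,D,E-g)$ with the Lefschetz $\mathfrak{sl}_2$ on the exterior algebra is a nice conceptual gloss that the paper does not include.
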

\begin{proof}
The differential operators on the right hand sides of both equalities are well-defined on $\BQ[\alpha,\beta,\gamma]/(\gamma^{g+1})$ because they preserve the ideal $(\gamma^{g+1})$. Since all the operators in the statement annihilate the unit $1$ and commute with $\alpha$ and $\beta$, it suffices to match the commutator of both sides with the multiplication operator $\gamma$. 

Since $\sum_{i=1}^{2g}\psi_i\frac{\partial}{\partial \psi_i}$ is the grading operator assigning degree $1$ to each $\psi_i$ and $\gamma$ is quadratic in the classes $\psi_i$, we have
$$\left[\sum_{i=1}^{2g}\psi_i\frac{\partial}{\partial \psi_i},\gamma
    \right]=2\gamma=\left[2\gamma\frac{\partial}{\partial\gamma},\gamma\right],
$$
proving the first equality. The second equality also follows from direct computation.
\end{proof}

By the above lemma, the operators $(\Fe_\alpha, \Fh_\alpha, \Ff_\alpha)$ become 
\begin{align}\label{eq: invariant f operator}
    \Fe_\alpha&=\alpha,\nonumber\\
    \Fh_\alpha&=2\alpha\frac{\partial}{\partial \alpha}+2\gamma\frac{\partial}{\partial\gamma}-(g-1),\nonumber\\
    \Ff_\alpha&=
    -\alpha\frac{\partial^2}{\partial\alpha^2}
    +(g-1)\frac{\partial}{\partial \alpha}
    -2\gamma\frac{\partial^2}{\partial \alpha\partial\gamma}
    +\frac{1}{2}\beta\gamma\frac{\partial^2}{\partial\gamma^2}-\frac{g}{2}\beta\frac{\partial}{\partial\gamma}
\end{align}
when we restrict to $\BD^\Gamma$. Similar formulas hold for $(\Fe_\beta, \Fh_\beta, \Ff_\beta)$. 

\subsection{(Anti-)self-adjoint property} 

Let $V$ be a super-graded vector space. We say that an element $D\in V$ or an operator $F:V\rightarrow V$ is pure (with respect to the super-grading) if it is either even or odd.

\begin{defn} Let $V$ be a super-graded vector space with a pairing $\langle -,-\rangle:V\otimes V\rightarrow \BQ$. We say that a pure operator $F:V\rightarrow V$ is $(\pm)$-self-adjoint with respect to the given pairing if 
$$\langle F(D),D'\rangle = \pm(-1)^{|F||D|}\langle D,F(D')\rangle$$
for all pure elements $D,D'\in V$. If the sign is $+$ (resp. $-$), we simply say that the operator is self-adjoint (resp. anti-self-adjoint). 
\end{defn}

The main theorem of this section is the $(\pm)$-self-adjoint property of the operators $(\Fe_\alpha, \Fh_\alpha, \Ff_\alpha)$ and $(\Fe_\beta, \Fh_\beta, \Ff_\beta)$. We recall from Section \ref{subsec: associatedgraded} the definition of the symmetric pairing $\langle -,-\rangle^\gr:\BD\otimes \BD\rightarrow\BQ$ on the descendent algebra given by
$$\langle D,D'\rangle^\gr = \int_N^{\gr} D\cdot D'
$$
where $\displaystyle\int_N^\gr$ denotes the integration over $N$ of only the top Chern degree part of $D\cdot D'$.  

\begin{thm}\label{thm: self-adjoint} 
    The operators $\Fe_\alpha, \Fe_\beta, \Ff_\alpha, \Ff_\beta$ (resp. $\Fh_\alpha, \Fh_\beta$) are self-adjoint (resp. anti-self-adjoint) with respect to the pairing $\langle -,-\rangle^\gr$ on $\BD$. 
\end{thm}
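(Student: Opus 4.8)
The plan is the following. Write $\phi=\int_{N}^{\gr}\colon\BD\to\BQ$ for the graded integration functional, so that $\langle D,D'\rangle^\gr=\phi(DD')$; since for rank two the top Chern degree is $4g-4$ (Theorem~\ref{thm: topcherndegree}) and $\dim_\BR N_{2,1}=6g-6$, the functional $\phi$ is supported on the single bigraded piece $\BD_{(4g-4,\,6g-6)}$ of Chern degree $4g-4$ and cohomological degree $6g-6$. Two reductions are immediate: $\Fe_\alpha,\Fe_\beta$ are even multiplication operators and hence automatically self-adjoint, $\langle\Fe_\alpha D,D'\rangle^\gr=\phi(\alpha DD')=\langle D,\Fe_\alpha D'\rangle^\gr$; and by the $\alpha\leftrightarrow\beta$ symmetry of the definitions it suffices to treat the ``$\alpha$'' operators.

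For $\Fh_\alpha$, write $\Fh_\alpha=\widehat{\Fh}_\alpha-(g-1)$ with $\widehat{\Fh}_\alpha=2\alpha\partial_\alpha+\sum_i\psi_i\partial_{\psi_i}$ the grading derivation assigning weights $2,0,1$ to $\alpha,\beta,\psi_i$. The elementary observation is that every monomial $\alpha^a\beta^b\psi_S$ lying in $\BD_{(4g-4,\,6g-6)}$ satisfies $a=b$ and $|S|=2(g-1-a)$, hence has $\widehat{\Fh}_\alpha$-weight $2a+|S|=2(g-1)$; so $\phi$ is supported on the $\widehat{\Fh}_\alpha$-weight-$2(g-1)$ subspace. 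Decomposing $D,D'$ into $\Fh_\alpha$-eigenvectors $D^{(k)},D'^{(l)}$ of weights $k,l$, the product $D^{(k)}D'^{(l)}$ has $\widehat{\Fh}_\alpha$-weight $k+l+2(g-1)$, so $\phi$ kills it unless $l=-k$; a one-line computation then gives $\langle\Fh_\alpha D,D'\rangle^\gr=-\langle D,\Fh_\alpha D'\rangle^\gr$. The argument for $\Fh_\beta$ is identical with weights $0,2,1$.

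The heart of the theorem is the self-adjointness of $\Ff_\alpha$ (hence of $\Ff_\beta$), and here I would first stress that there is no purely formal shortcut: on a nondegenerate, locally finite $\mathfrak{sl}_2$-representation, self-adjointness of $\Fe$ together with anti-self-adjointness of $\Fh$ forces self-adjointness of $\Ff$, but $\langle-,-\rangle^\gr$ on $\BD$ is highly degenerate, $\BD$ is not locally finite, and one cannot pass to the nondegenerate quotient $\gr^C_\bullet H^*(N_{2,1})=\BD/I^\gr$ without already knowing $\Ff_\alpha(I^\gr)\subseteq I^\gr$ — precisely a consequence this theorem is meant to supply. So one computes. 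Both $\Ff_\alpha$ and $\phi$ are $\Gamma=\Sp(2g,\BZ)$-equivariant, with $\phi$ moreover $\Gamma$-invariant, and all irreducible $\Sp(2g)$-representations are self-dual; therefore $\langle-,-\rangle^\gr$ is an orthogonal direct sum over the $\Sp(2g)$-isotypic components of $\BD$, each of which is preserved by $\Ff_\alpha$. Via the Lefschetz decomposition $\Lambda^\bullet_\BQ(\psi_1,\dots,\psi_{2g})=\bigoplus_{l=0}^g\mathsf{Prim}_l\otimes\BQ\{1,\gamma,\dots,\gamma^{g-l}\}$ it then suffices to prove self-adjointness of $\Ff_\alpha$ on each multiplicity space $M_l\cong\BQ[\alpha,\beta,\gamma]/(\gamma^{g-l+1})$ (with its induced pairing), where $\Ff_\alpha$ acts by exactly the operator \eqref{eq: invariant f operator} with $g$ replaced by $g-l$, by the level-$l$ analogue of Lemma~\ref{lem: sl2invariantsubring}. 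Computing $\Ff_\alpha$ on monomials $\alpha^a\beta^b\gamma^c$ and pairing, one finds that $a$ and $b$ drop out and self-adjointness on $M_l$ collapses to the single recursion
\[
A\,\mathcal I^{(l)}_{A-1}+\tfrac{A+1}{2}\,\mathcal I^{(l)}_A=0,\qquad 1\le A\le g-l-1,
\]
among the top-Chern-degree numbers $\mathcal I^{(l)}_a\coloneqq\int_{N_{2,1}}^{\gr}\alpha^a\beta^a\gamma^{g-l-1-a}\,\sigma_l\sigma'_l$ (for a fixed dual pair of primitive classes $\sigma_l,\sigma'_l$; well defined up to scalar by $\Gamma$-invariance and Schur's lemma), equivalently $\mathcal I^{(l)}_a=\tfrac{(-2)^a}{a+1}\mathcal I^{(l)}_0$.

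The remaining input is purely numerical, and this is where the geometry enters: the top-Chern-degree intersection numbers of $N_{2,1}$ are the ``$q=0$'' specialization of Thaddeus's formula \cite[(30)]{Tha1} (recalled in Section~\ref{subsec: associatedgraded}), hence free of Bernoulli numbers — equivalently they are pinned down by the Virasoro proportionality and monodromy invariance — and one checks directly from this explicit shape that they obey the recursion above. I expect the main obstacle to be exactly this combination: recognizing that, after the $\Gamma$-isotypic reduction, everything collapses to such a recursion (which requires the bookkeeping of the level-shifted operators on the $\mathsf{Prim}_l$-components), and then verifying the recursion from the explicit graded intersection numbers. The steps concerning $\Fe$ and $\Fh$ and the $\Gamma$-reduction are formal; all the content sits in the self-adjointness of $\Ff$.
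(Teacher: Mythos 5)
Your proposal is correct, and for the essential step --- self-adjointness of $\Ff_\alpha$ --- it takes a genuinely different route from the paper. The paper's engine is a reduction lemma (Lemma \ref{lem: reducing self-adjoint}): an operator is $(\pm)$-self-adjoint once it behaves correctly against the unit and its commutators with the multiplication operators $\alpha,\beta,\psi_i$ are $(\mp)$-self-adjoint. This splits the problem into two integral identities, $\int_N^{\gr}\Ff_\alpha(D)=0$ and $\int_N^{\gr}[\Ff_\alpha,\psi_j](D)=0$, each proved by restricting to the $\Gamma$-invariant (resp.\ $\widetilde\Gamma$-invariant) subring via the averaging argument of Lemma \ref{lem: Lie group} and then invoking the Virasoro proportionality \eqref{eq: Virasoro} and monodromy invariance \eqref{eq: monodromy invariance}. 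You instead exploit self-duality of $\Sp(2g)$-irreducibles to split $\langle-,-\rangle^{\gr}$ into an orthogonal sum over isotypic blocks $\mathsf{Prim}_l\otimes M_l$ with $M_l\simeq\BQ[\alpha,\beta,\gamma]/(\gamma^{g-l+1})$, identify the induced action of $\Ff_\alpha$ on $M_l$ (this is exactly the content of Lemma \ref{lem: commutatorfsigma}, which the paper only deploys later for the intermediate stacks), and check self-adjointness monomial by monomial; I verified that the discrepancy $\langle\Ff_\alpha P,Q\rangle-\langle P,\Ff_\alpha Q\rangle$ on $M_l$ does collapse to $-(c-c')\bigl((A+1)\mathcal I^{(l)}_A+\tfrac{A+2}{2}\mathcal I^{(l)}_{A+1}\bigr)$, and your recursion is precisely \eqref{eq: Virasoro} after projecting $\sigma_l\sigma_l'$ onto $\lambda_l\gamma^l$ via monodromy invariance. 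The two arguments consume the same geometric input; yours is more transparent about exactly which intersection-number identity carries the proof (a single two-term recursion), while the paper's commutator reduction is a reusable formal device that also organizes the $[\Ff_\alpha,\psi_i]$ computation and avoids having to discuss the factorization of the pairing on isotypic blocks. Your remark that the formal $\mathfrak{sl}_2$ shortcut is unavailable (degeneracy of the pairing on $\BD$, and circularity with $\Ff_\alpha(I^{\gr})\subseteq I^{\gr}$) is also correct and worth keeping.
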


Combining the above $(\pm)$-self-adjoint property of the operators with the perfectness of the pairing on $\gr^C_\bullet H^*(N)$ in Corollary \ref{cor: 1.4}, we obtain an $\mathfrak{sl}_2\times \mathfrak{sl}_2$-representation on $\gr^C_\bullet H^*(N)$.

\begin{cor}\label{cor: descending the operators}
    The $\mathfrak{sl}_2\times \mathfrak{sl}_2$-representation on $\BD$ descends via the surjection $\BD\twoheadrightarrow \gr^C_\bullet H^*(N)$.
\end{cor}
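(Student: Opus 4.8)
The plan is to deduce the statement formally from Theorem \ref{thm: self-adjoint} together with the perfectness of the graded pairing. Recall from Section \ref{subsec: associatedgraded} that, once the numerical Chern symmetry is available --- which in rank two is precisely the content of Theorem \ref{thm: main} --- Corollary \ref{cor: 1.4} tells us that the induced pairing on $\gr^C_\bullet H^*(N)$ is perfect; equivalently, the ideal $I^\gr = \ker\big(\BD\twoheadrightarrow \gr^C_\bullet H^*(N)\big)$ equals the kernel of the bilinear form $\langle -,-\rangle^\gr \colon \BD\otimes\BD\to\BQ$, namely
\[
I^\gr = \{\, D\in\BD \ :\ \langle D, D'\rangle^\gr = 0 \ \text{ for all } D'\in\BD \,\}.
\]
This is the only input that is specific to $N = N_{2,1}$; everything else is a manipulation of operators on $\BD$.

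First I would observe that to descend a pure operator $F$ on $\BD$ to the quotient $\BD/I^\gr \cong \gr^C_\bullet H^*(N)$ it is enough to check that $F(I^\gr)\subseteq I^\gr$. So let $F$ be any of $\Fe_\alpha,\Ff_\alpha,\Fh_\alpha,\Fe_\beta,\Ff_\beta,\Fh_\beta$ and let $D\in I^\gr$. By Theorem \ref{thm: self-adjoint}, $F$ is $(\pm)$-self-adjoint with respect to $\langle -,-\rangle^\gr$, so for every pure $D'\in\BD$ we get
\[
\langle F(D), D'\rangle^\gr \ =\ \pm(-1)^{|F||D|}\,\langle D, F(D')\rangle^\gr \ =\ 0,
\]
the last equality because $F(D')\in\BD$ and $D\in I^\gr$. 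Since such $D'$ span $\BD$, this shows $F(D)\in I^\gr$, so each of the six operators induces a well-defined operator on $\gr^C_\bullet H^*(N)$.

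It then remains only to note that the $\mathfrak{sl}_2$-triple identities and the mutual commutation relations of the two triples, which are identities of operators on $\BD$ established in Proposition \ref{prop: sl2 for descendent}, descend verbatim to the quotient; hence $\gr^C_\bullet H^*(N)$ inherits an action of $\mathfrak{sl}_2\times\mathfrak{sl}_2$. I do not expect any genuine obstacle here: the proof is purely formal once Theorem \ref{thm: self-adjoint} is in place, and the single delicate point is keeping track of the logical dependency, since the identification $I^\gr = \ker\langle -,-\rangle^\gr$ relies on the numerical symmetry being known --- so this particular argument is limited to $d=0$ in rank two. For the intermediate stacks $\FN_{2,1}^{\leq d}$ with $d>0$, where no perfect pairing is available, I would instead prove the descending property directly by showing that the operators preserve the span of the modified Mumford relations from Section \ref{sec: intermediatestacks}; this is the route taken in Section \ref{sec: stacks sl2}.
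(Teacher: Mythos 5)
Your argument is correct and is essentially the paper's own proof: identify $I^\gr$ with the kernel of $\langle-,-\rangle^\gr$ via Theorem \ref{thm: omegaM21} and Corollary \ref{cor: 1.4}, then use the $(\pm)$-self-adjointness from Theorem \ref{thm: self-adjoint} to see that each operator preserves this kernel. Your closing remarks about the logical dependence on the numerical symmetry and the separate treatment of $d>0$ also match the paper's discussion.
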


\begin{proof} Let $F$ be one of the operators in the commuting $\mathfrak{sl}_2$-triples. The statement of the corollary amounts to showing that the operator $F$ preserves the kernel of the surjective homomorphism $\BD\twoheadrightarrow \gr^C_\bullet H^*(N)$. By Theorem \ref{thm: omegaM21} and Corollary \ref{cor: 1.4}, the kernel $I^\gr$ of this surjection is equal to the kernel of the symmetric pairing $\langle -,-\rangle^\gr$ on $\BD$. Thus it suffices to show that $\langle D,-\rangle^\gr=0$ implies $\langle F(D),-\rangle^\gr=0$, which follows from the $(\pm)$-self-adjoint property of $F$. 
\end{proof}

We now explain how to reduce the proof of Theorem \ref{thm: self-adjoint} to showing that $\Ff_\alpha$ is self-adjoint. 

\begin{lem}\label{lem: swapping alpha and beta}
    The pairing $\langle-,-\rangle^\gr$ is preserved under the algebra automorphism of $\BD$ swapping $\alpha$ and $\beta$.
\end{lem}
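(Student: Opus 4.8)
The plan is to reduce the statement to the identity $\int_N^{\gr}\tau(E) = \int_N^{\gr}(E)$ for every $E \in \BD$, where $\tau$ denotes the algebra automorphism of $\BD$ with $\tau(\alpha)=\beta$, $\tau(\beta)=\alpha$ and $\tau(\psi_i)=\psi_i$. Since $\tau$ is a ring homomorphism, $\tau(D)\cdot \tau(D') = \tau(D\cdot D')$, so this identity is exactly the assertion $\langle \tau D, \tau D'\rangle^{\gr} = \langle D, D'\rangle^{\gr}$. Expanding $E$ in the monomial basis $\{\alpha^a \beta^b \psi_S : a,b\geq 0,\ S\subseteq [2g]\}$ and using linearity, I would reduce further to checking $\int_N^{\gr}\tau(M) = \int_N^{\gr}(M)$ for a single monomial $M = \alpha^a\beta^b\psi_S$.

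The key step is a bigrading count. The functional $\int_N^{\gr}$ vanishes on $M$ unless $M$ lies in the top Chern degree $\deg^C M = 4g-4$ (Theorem~\ref{thm: topcherndegree} with $r=2$) \emph{and} in the top cohomological degree $\deg M = 6g-6$ (otherwise $\int_N$ already annihilates it). Recalling from Remark~\ref{rmk: taut class from End} that $\deg^C\alpha = \deg^C\beta = \deg^C\psi_i = 2$ while $\deg\alpha = 2$, $\deg\beta = 4$, $\deg\psi_i = 3$, these two conditions become $a+b+|S| = 2g-2$ and $2a+4b+3|S| = 6g-6$; subtracting twice the first equation from the second gives $2b+|S| = 2g-2$, and comparing with the first forces $a = b$. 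Thus any monomial on which $\int_N^{\gr}$ is nonzero is symmetric in $\alpha$ and $\beta$, hence fixed by $\tau$. Applying the same criterion to $\tau(M) = \alpha^b\beta^a\psi_S$ shows that $\int_N^{\gr}\tau(M)\neq 0$ likewise forces $a = b$, i.e.\ $\tau(M)=M$. Therefore, for every monomial $M$, either $a=b$ and $\tau(M)=M$, or $a\neq b$ and both $\int_N^{\gr}M$ and $\int_N^{\gr}\tau(M)$ vanish; in either case $\int_N^{\gr}\tau(M) = \int_N^{\gr}(M)$, and summing over the monomial expansion of $E$ finishes the argument.

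I do not expect a genuine obstacle here; the only point requiring care is that one must invoke \emph{both} gradings on $\BD$ simultaneously. The automorphism $\tau$ preserves the Chern grading but changes the cohomological grading, so neither degree condition alone pins down the monomials seen by $\int_N^{\gr}$ — it is precisely the interaction of the two constraints that yields $a=b$ and hence the $\tau$-invariance of the relevant monomials. (Alternatively, one could first reduce to the monodromy-invariant subring $\BD^{\Gamma} = \BQ[\alpha,\beta,\gamma]/(\gamma^{g+1})$, using that $\int_N$ is $\Gamma$-invariant and $\tau$ commutes with the $\Gamma$-action, and then note that the top-bidegree monomials of $\BD^{\Gamma}$ are exactly $\alpha^{g-1-p}\beta^{g-1-p}\gamma^p$, which are manifestly $\tau$-symmetric; but the monomial count above avoids discussing the $\Gamma$-action altogether.)
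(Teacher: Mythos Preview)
Your argument is correct and is essentially the same as the paper's proof: both reduce to a monomial $\alpha^a\beta^b\prod\psi_i^{p_i}$ and use the two simultaneous degree constraints $2a+4b+3|S|=6g-6$ and $2a+2b+2|S|=4g-4$ to force $a=b$, so every monomial seen by $\int_N^{\gr}$ is already $\tau$-invariant. Your write-up is just a bit more explicit about the reduction and adds an alternative via $\BD^{\Gamma}$ that the paper does not mention.
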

\begin{proof}
Recall that the pairing $\langle-,-\rangle^\gr$ on $\BD$ uses the top Chern degree integral over $N$. Note that a monomial in the descendent algebra
\[D=\alpha^{n}\beta^{m}\prod_{i=1}^{2g}\psi_{i}^{p_{i}}\in \BD\]
has the top cohomological degree and top Chern degree {only} if 
$$2n+4m+3\sum_{i=1}^{2g}p_i=6g-6,\quad 2n+2m+2\sum_{i=1}^{2g}p_i=4g-4.
$$
The two equalities in particular imply that $n=m$, hence proving the lemma.
\end{proof}

 Since $(\Fe_\alpha, \Fh_\alpha, \Ff_\alpha)$ and $(\Fe_\beta, \Fh_\beta, \Ff_\beta)$ are related by swapping the role of $\alpha$ and $\beta$, the above lemma shows that it suffices to consider $(\Fe_\alpha, \Fh_\alpha, \Ff_\alpha)$. On the other hand, a multiplication operator $\Fe_\alpha$ is clearly self-adjoint. The grading operator $\Fh_\alpha$ is anti-self-adjoint due to degree reason as we explain below.

\begin{prop}\label{prop: h is anti-self-adjoint}
     The operator $\Fh_\alpha$ is anti-self-adjoint with respect to $\langle-,-\rangle^\gr$.
\end{prop}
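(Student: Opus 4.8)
The plan is to exploit the fact that $\Fh_\alpha$ acts diagonally on the monomial basis of $\BD$: since $2\alpha\frac{\partial}{\partial\alpha}$ and $\sum_{i}\psi_i\frac{\partial}{\partial\psi_i}$ are commuting grading operators, $\Fh_\alpha$ scales the monomial $D=\alpha^n\beta^m\prod_{i=1}^{2g}\psi_i^{p_i}$ (with $p_i\in\{0,1\}$) by $2n+\sum_i p_i-(g-1)$. A one-line bookkeeping check using $\deg(\alpha)=2$, $\deg(\beta)=4$, $\deg(\psi_i)=3$ and $\deg^C(\alpha)=\deg^C(\beta)=\deg^C(\psi_i)=2$ shows that $2n+\sum_i p_i = 2\deg^C(D)-\deg(D)$, so the eigenvalue of $\Fh_\alpha$ on $D$ is
\[
\lambda_D \;=\; 2\deg^C(D)-\deg(D)-(g-1).
\]
By bilinearity it suffices to verify the anti-self-adjoint identity $\langle\Fh_\alpha D,D'\rangle^\gr=-\langle D,\Fh_\alpha D'\rangle^\gr$ on pairs of basis monomials; note $D\cdot D'$ is then $\pm$ a monomial, of cohomological degree $\deg(D)+\deg(D')$ and Chern degree $\deg^C(D)+\deg^C(D')$.

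The key input is that $\langle D,D'\rangle^\gr\neq 0$ forces $D\cdot D'$ to be simultaneously of top Chern degree and top cohomological degree: the definition of $\smash{\int_N^\gr}$ annihilates anything not of top Chern degree $(r+2)(r-1)(g-1)=4g-4$, and $\smash{\int_N}$ is supported in degree $2(r^2-1)(g-1)=6g-6$. (This is precisely the pair of numerical constraints already used in the proof of Lemma~\ref{lem: swapping alpha and beta}.) Hence, whenever $\langle D,D'\rangle^\gr\neq 0$ we have $\deg(D)+\deg(D')=6g-6$ and $\deg^C(D)+\deg^C(D')=4g-4$, so
\[
\lambda_D+\lambda_{D'}=2\big(\deg^C(D)+\deg^C(D')\big)-\big(\deg(D)+\deg(D')\big)-2(g-1)=2(4g-4)-(6g-6)-2(g-1)=0.
\]
Therefore $\langle\Fh_\alpha D,D'\rangle^\gr=\lambda_D\langle D,D'\rangle^\gr=-\lambda_{D'}\langle D,D'\rangle^\gr=-\langle D,\Fh_\alpha D'\rangle^\gr$; and when $\langle D,D'\rangle^\gr=0$ the identity is trivial since $\Fh_\alpha D$, $\Fh_\alpha D'$ are scalar multiples of $D$, $D'$. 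Since $\Fh_\alpha$ is even, the Koszul sign $(-1)^{|\Fh_\alpha||D|}$ equals $1$, so this is exactly the anti-self-adjoint property.

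I do not expect a genuine obstacle here: the argument is a degree count on monomials. The only points needing minor care are the conventions — identifying $\Fh_\alpha$ with the shifted operator $2\deg^C-\deg-(g-1)$, and observing that the even parity of $\Fh_\alpha$ makes the super-sign disappear — both of which are routine, so the write-up should be short.
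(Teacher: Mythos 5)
Your proof is correct and is essentially the paper's own argument: both reduce to monomials, use that $\langle D,D'\rangle^\gr\neq 0$ forces $\deg(D)+\deg(D')=6g-6$ and $\deg^C(D)+\deg^C(D')=4g-4$, and observe that the $\Fh_\alpha$-eigenvalues then sum to zero (your identity $\lambda_D=2\deg^C(D)-\deg(D)-(g-1)$ is just a repackaging of the paper's count $2(n+n')+\sum_i(p_i+p_i')-(2g-2)=0$). No gaps.
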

\begin{proof}
Given any two monomials
$$D=\alpha^n\beta^m\prod_{i=1}^{2g}\psi_i^{p_i},\quad D'=\alpha^{n'}\beta^{m'}\prod_{i=1}^{2g}\psi_i^{p'_i},
$$
we need to show that $\langle \Fh_\alpha(D),D'\rangle^\gr=-\langle D,\Fh_\alpha(D')\rangle^\gr$. Equivalently, we need to show
\begin{equation}\label{eq: vanishing with h operator}
    \int_N^\gr \Fh_\alpha(D)\cdot D'+D\cdot \Fh_\alpha(D')=0.
\end{equation}
Since 
$$\Fh_\alpha=2\alpha\frac{\partial}{\partial \alpha}+\sum_{i=1}^{2g}\psi_i\frac{\partial}{\partial \psi_i}-(g-1)
$$
is the operator preserving the cohomological degree and Chern degree, the integral on the left hand side of \eqref{eq: vanishing with h operator} is trivially zero unless 
$$\deg(D)+\deg(D')=6g-6,\quad \deg^C(D)+\deg^C(D')=4g-4.
$$
The above two equalities amount to $n+n'=m+m'$ and $2(n+n')+\sum_{i=1}^{2g}(p_i+p'_i)=2g-2$, which we assume from now. Since $\Fh_\alpha$ is a grading operator with all monomials being eigenvectors, the equality \eqref{eq: vanishing with h operator} follows from
\[
\Fh_\alpha(D)\cdot D'+D\cdot \Fh_\alpha(D')=\left(2(n+n')+\sum_{i=1}^{2g}(p_i+p'_i)-(2g-2)
\right)D\cdot D'=0. \qedhere
\]

\end{proof}

\subsection{Self-adjointness of $\mathfrak{f}_\alpha$} We now focus on proving that $\mathfrak{f}_\alpha$ is self-adjoint, which is the heart of Theorem \ref{thm: self-adjoint}. The following lemma will be repeatedly used.

\begin{lem}\label{lem: reducing self-adjoint}
Let $V$ be a super-graded vector space with a pairing $\langle-,-\rangle$. Suppose that $V$ is spanned by an even vector $\mathbf{1}\in V$ acted upon by a collection of self-adjoint operators $\{G_s\}_{s\in S}$. Then an operator $F:V\rightarrow V$ is $(\pm)$-self-adjoint if and only if the following two conditions are satisfied:
\begin{enumerate}
    \item[(i)] $\langle F(\mathbf{1}),D\rangle=\pm\langle \mathbf{1},F(D)\rangle$ for all $D\in V$ and
    \item[(ii)] $[F,G_s]$ is $(\mp)$-self-adjoint for all $s\in S$.
\end{enumerate}
\end{lem}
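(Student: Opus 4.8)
The plan is to prove the two directions separately: the ``only if'' direction is a short sign computation, and the ``if'' direction is an induction on word length.

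\smallskip

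\emph{The easy direction.} Suppose $F$ is $(\pm)$-self-adjoint. Condition (i) is immediate upon setting $D=\mathbf 1$, since $\mathbf 1$ is even and hence $(-1)^{|F||\mathbf 1|}=1$. For (ii) I would first record a general fact: if $F_1$ is $\epsilon_1$-self-adjoint and $F_2$ is $\epsilon_2$-self-adjoint (with $\epsilon_i\in\{+1,-1\}$), then the super-commutator $[F_1,F_2]=F_1F_2-(-1)^{|F_1||F_2|}F_2F_1$, which has degree $|F_1|+|F_2|$, is $(-\epsilon_1\epsilon_2)$-self-adjoint. This follows by applying the (anti-)self-adjointness identity twice to each of $F_1F_2$ and $F_2F_1$ and observing that both products acquire the same Koszul prefactor, so that forming the commutator produces the sign $-(-1)^{(|F_1|+|F_2|)|D|}$. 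Taking $F_1=F$ and $F_2=G_s$ (so $\epsilon_2=+1$) gives (ii).

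\smallskip

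\emph{The main direction.} Now assume (i) and (ii). Since the desired identity
\[\langle F(D),D'\rangle=\pm(-1)^{|F||D|}\langle D,F(D')\rangle\]
is bilinear in $(D,D')$ at fixed parities, it suffices to verify it when $D$ runs over the spanning set of elements $G_{s_1}\cdots G_{s_k}(\mathbf 1)$ and $D'$ is an arbitrary pure element. I would induct on $k$. The case $k=0$ is exactly condition (i). For the inductive step write $D=G_s(D_0)$ with $D_0$ a word of length $k-1$ (necessarily pure, as the $G_s$ are pure operators), and split
\[F(G_s(D_0))=(-1)^{|F||G_s|}G_s(F(D_0))+[F,G_s](D_0).\]
In the pairing against $D'$, the first term is handled by moving $G_s$ across the pairing using its self-adjointness, applying the inductive hypothesis to the pair $(D_0,G_s(D'))$, and moving $G_s$ back again; the second term is handled by condition (ii), which says $[F,G_s]$ is $(\mp)$-self-adjoint. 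Collecting the signs $(-1)^{|F||G_s|}$, $(-1)^{|G_s||D_0|}$, $(-1)^{|F||D_0|}$, etc., the two contributions reassemble precisely into $\pm(-1)^{|F||D|}\langle D,F(D')\rangle$, completing the induction.

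\smallskip

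The only real obstacle is the Koszul-sign bookkeeping: keeping track that $G_s(D_0)$ has degree $|G_s|+|D_0|$, that $[F,G_s]$ has degree $|F|+|G_s|$, and that the stray factors of $(-1)^{|F||G_s|}$ cancel in pairs. There is no conceptual content beyond this — once the signs are organized, both implications are short. It is worth noting at the outset that the $G_s$ are implicitly assumed to be pure operators (so that their super-commutators with $F$ and the notion of self-adjointness make sense for them), and that the evenness of $\mathbf 1$ is precisely what makes the base case coincide with (i) without an extra sign.
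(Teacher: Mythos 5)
Your proposal is correct and follows essentially the same route as the paper: the "only if" direction by a direct sign computation, and the "if" direction by induction on the length of words $G_{s_1}\cdots G_{s_k}(\mathbf 1)$, splitting $F\circ G_s$ into $[F,G_s]$ plus $(-1)^{|F||G_s|}G_s\circ F$ and invoking (i), (ii), the self-adjointness of the $G_s$, and the inductive hypothesis. The sign bookkeeping you outline matches the computation carried out in the paper.
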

\begin{proof}  
The ``only if" direction is straightforward to check so we only prove the ``if" direction. Assume that a pure operator $F$ satisfies the conditions in (i) and (ii). For each $n\in \BZ_{\geq 0}$, define a subspace 
$$V_n\coloneqq\text{span}\{G_{s_1}\circ\cdots\circ G_{s_n}(\mathbf{1})\,|\,s_1,\dots,s_n\in S\}\subseteq V. 
$$
By assumption, the filtration $V_0\subseteq V_1\subseteq \cdots \subseteq V$ saturates $V$ in the sense that any $D\in V$ lies in $V_n$ for some $n\geq 0$. Therefore it suffices to show that for every $n\geq 0$ we have
\begin{equation}\label{eq: induction argument}
    \langle F(D),D'\rangle = \pm (-1)^{|F||D|}\langle D,F(D')\rangle\quad \text{for all}\quad D\in V_n,\ D'\in V.
\end{equation}
We prove this by induction on $n\geq 0$. Since $V_0=\text{span}\{\mathbf{1}\}$, the $n=0$ case is precisely the statement of (i). Now let $n>0$ and assume \eqref{eq: induction argument} for the $n-1$ case. Given $D=G_{s_1}\circ\cdots\circ G_{s_n}(\mathbf{1})\in V_n$ and $D'\in V$, we can write $D=G_{s_1}(D'')$ where $D''=G_{s_2}\circ\cdots\circ G_{s_n}(\mathbf{1})\in V_{n-1}$. Using the self-adjoint property of $G_{s_1}$ and $(\mp)$-self-adjoint property of $[F,G_{s_1}]$, we have
\begin{align*}
    \langle F(D),D'\rangle 
    &=\langle F\circ G_{s_1}(D''),D'\rangle\\
    &=\langle [F,G_{s_1}](D''),D'\rangle+(-1)^{|F||G_{s_1}|}\langle G_{s_1}\circ F(D''),D'\rangle\\
    &=\mp(-1)^{(|F|+|G_{s_1}|)|D''|}\langle D'',[F,G_{s_1}]D'\rangle+(-1)^{|G_{s_1}||D''|}\langle F(D''),G_{s_1}(D')\rangle. 
\end{align*}
Since $D''\in V_{n-1}$, the induction hypothesis allows us to rewrite the second term to get
\begin{align*}
    \langle F(D),D'\rangle 
    &=\mp(-1)^{(|F|+|G_{s_1}|)|D''|}\langle D'',[F,G_{s_1}]D'\rangle\pm(-1)^{(|F|+|G_{s_1}|)|D''|}\langle D'',F\circ G_{s_1}(D')\rangle\\
    &=\pm (-1)^{(|F|+|G_{s_1}|)|D''|}\left(
    \langle D'', F\circ G_{s_1}(D')- [F, G_{s_1}]D'\rangle
    \right)\\
    &=\pm (-1)^{(|F|+|G_{s_1}|)|D''|} (-1)^{|F||G_{s_1}|}\langle D'', G_{s_1}\circ F(D')\rangle\\
    &=\pm (-1)^{|F|(|D''|+|G_{s_1}|)}\langle G_{s_1}(D''),F(D')\rangle\\
    &=\pm (-1)^{|F||D|}\langle D,F(D')\rangle. \qedhere
\end{align*}

\end{proof}

We apply the above lemma when the super-graded vector space is $\BD$ equipped with the pairing $\langle-,-\rangle^\gr$, the even vector is the unit $1$ and the collection of self-adjoint operators is the set of multiplication operators $\{\alpha,\beta,\psi_1,\dots,\psi_{2g}\}$. According to the lemma, in order to show that $\Ff_\alpha$ is self-adjoint, it suffices to show that 
\begin{enumerate}
    \item[(i)] $\langle \Ff_\alpha(1),D\rangle^\gr=\langle 1,\Ff_\alpha(D)\rangle^\gr$ for all $D\in \BD$,
    \item[(ii)] $[\Ff_\alpha,\alpha]$, $[\Ff_\alpha,\beta]$, $[\Ff_\alpha,\psi_i]$ for $i=1,\dots,2g$ are all anti-self-adjoint.
\end{enumerate}
Since $\Ff_\alpha(1)=0$, part (i) is equivalent to
$$\int_N^\gr \Ff_\alpha(D)=0\quad\text{for all \,} D\in \BD.
$$
On the other hand, since $[\Ff_\alpha,\alpha]=-\Fh_\alpha$ is anti-self-adjoint by Proposition \ref{prop: h is anti-self-adjoint} and $[\Ff_\alpha,\beta]=0$ by Proposition \ref{prop: sl2 for descendent}, the only thing we are left with in part (ii) is to show that $[\Ff_\alpha,\psi_i]$ is anti-self-adjoint for all $1\leq i\leq 2g$. These two statements will be proved in Proposition \ref{prop: vanishing integral of f} and Proposition \ref{prop: anti-self-adjoint of [f,psi]}, respectively.

\smallskip

We briefly explain the main ingredients in the proofs of Proposition~\ref{prop: vanishing integral of f} and Proposition~\ref{prop: anti-self-adjoint of [f,psi]}. Both propositions 
can be checked in principle if we understand all integral values of the top Chern degree monomials $D\in \BD$. It turns out that we only need some proportionalities between such integrals coming from the Virasoro constraints and monodromy invariance with respect to the group $\Gamma=\Sp(2g,\BZ)$. 

The Virasoro constraints of the moduli space $N$, as written in \cite[Example 2.24]{BLM}, give
\begin{equation}\label{eq: Virasoro}
    (g-p)\int_N \alpha^n\beta^m\gamma^p = -2n\int_N \alpha^{n-1}\beta^{m-1}\gamma^{p+1}.
\end{equation}
Note that this is also an immediate consequence of \cite[(30)]{Tha1}. On the other hand, monodromy invariance of the integrals over $N$ implies the following two statements \cite[pp. 143--144]{Tha1}. First, we have
$$\int_N \alpha^n\beta^m\prod_{i=1}^{2g}\psi_i^{p_i}= 0
$$
unless $p_i=p_{i+g}$ for $1\leq i\leq g$. Setting $\gamma_i\coloneqq\psi_i\psi_{i+g}$ for $1\leq i\leq g$, the integral value
$$\int_N \alpha^n\beta^m\gamma_{i_1}\cdots\gamma_{i_p}
$$
depends on $1\leq i_1<\cdots<i_p\leq g$ only through $p$. Since $\gamma=-2\sum_{i=1}^g\gamma_i$, this implies
\begin{equation}\label{eq: monodromy invariance}
    \int_N \alpha^n\beta^m\gamma^p = (-2)^p\cdot g(g-1)\cdots(g-p+1)\int_N \alpha^n\beta^m\gamma_{i_1}\cdots\gamma_{i_p}.
\end{equation}


\begin{lem}\label{lem: Lie group}
    Let $V$ be a complex representation of a complex reductive group $G$. Suppose that $V$ is saturated by finite dimensional $G$-invariant subspaces. If $\phi:V\rightarrow\BC$ is a $G$-invariant linear map such that $\phi\big|_{V^G}=0$, then $\phi=0$.
\end{lem}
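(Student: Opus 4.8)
The plan is to reduce the statement at once to the case of a single finite-dimensional representation, using the saturation hypothesis, and then to apply the standard consequence of linear reductivity that a $G$-invariant linear functional on a finite-dimensional representation is pulled back from the space of invariants. Concretely: let $\phi\colon V\to\BC$ be $G$-invariant with $\phi|_{V^G}=0$, and fix $v\in V$. By the saturation hypothesis there is a finite-dimensional $G$-invariant subspace $W\subseteq V$ with $v\in W$. Since $W^G\subseteq V^G$, the restriction $\phi|_W$ is a $G$-invariant functional on $W$ vanishing on $W^G$, so it suffices to prove that any such functional is zero.

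For the finite-dimensional step I would use that, $G$ being reductive, $W$ decomposes $G$-equivariantly and canonically as $W=W^G\oplus W_0$, where $W_0$ is the sum of the nontrivial isotypic components, equivalently $W_0=\ker R$ for the Reynolds operator $R\colon W\to W^G$. The key observation is that $W_0$, being semisimple with no trivial constituent, has dual $W_0^*$ with the same property, so $(W_0^*)^G=0$; dualizing the decomposition gives $(W^*)^G=(W^G)^*$, i.e. every $G$-invariant functional on $W$ factors through the projection $W\twoheadrightarrow W^G$. In particular $\phi|_W$ vanishes on $W_0$, so $\phi(v)=\phi(R(v))$; but $R(v)\in W^G\subseteq V^G$, whence $\phi(v)=0$. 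As $v\in V$ was arbitrary, $\phi=0$.

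I do not expect a serious obstacle here: the entire content is the elementary invariant-theoretic fact above, and the only point requiring a little care is passing from finite-dimensional representations to the possibly infinite-dimensional $V$. This is handled cleanly by running the argument one vector at a time, so that the (canonical, hence compatible) Reynolds projections on the finite-dimensional pieces never need to be glued into a single operator on $V$. One should perhaps also note that ``representation'' is implicitly understood in a setting where finite-dimensional subrepresentations are completely reducible --- e.g. rational representations of a reductive algebraic group, or arbitrary continuous representations of a compact form --- which is precisely the context in which linear reductivity applies, and which is the one relevant to the $\Gamma$-actions considered here.
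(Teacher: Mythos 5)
Your proof is correct, and its overall shape matches the paper's: both arguments use the saturation hypothesis to reduce to a single finite-dimensional subrepresentation $W$ containing a given vector, and both then show that a $G$-invariant functional on $W$ factors through the projection onto $W^G$. The only real difference is how that finite-dimensional step is executed. The paper passes to a maximal compact subgroup $K\subset G$, builds the projection explicitly as the Haar average $I(x)=\int_K k\cdot x\,d\mu$, and invokes Zariski density of $K$ to identify $V^K=V^G$ (the ``Weyl unitarian trick''); the identity $\phi(I(x))=\phi(x)$ then follows by integrating the constant function $\phi(k\cdot x)=\phi(x)$. You instead work purely algebraically: complete reducibility gives $W=W^G\oplus W_0$ with $W_0$ the sum of nontrivial isotypic pieces, and $(W_0^*)^G=0$ forces $\phi$ to kill $W_0$. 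These are two realizations of the same Reynolds operator, so neither buys much over the other here; your version avoids any analytic input but leans on linear reductivity, which the paper effectively re-derives via the compact form. Your closing caveat --- that ``representation'' must be understood in a category where finite-dimensional subrepresentations are completely reducible (rational representations, say) --- is apt, and the paper's proof carries the analogous implicit hypothesis (continuity of the $K$-action and $V^K=V^G$).
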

\begin{proof}
Since $V$ is saturated by finite dimensional $G$-invariant subspaces, we can reduce the proof to the case where $V$ is finite dimensional. Let $K\subset G$ be a maximal compact subgroup of $G$ and $d\mu$ be the unique Haar measure on $K$. Define the ``averaging map" by
$$I:V\rightarrow V,\quad x\mapsto \int_K k\cdot x\, d\mu. 
$$
Then $I$ is a complex linear map with the property that $I(V)\subseteq V^K=V^G$; the fact that $V^K=V^G$ follows from Zariski density of $K\subset G$, which is sometimes known as ``Weyl Unitarian Trick'' (see \cite[Section 2.2.2]{Springer}). On the other hand, for $x\in V$ we have 
$$\phi(I(x))=\int_K \phi(k\cdot x)\,d\mu=\int_K \phi(x)\,d\mu=\phi(x).
$$
Since $I(x)\in V^G$ and $\phi\big|_{V^G}=0$, we conclude that $\phi(x)=0$. 
\end{proof}

\begin{prop}\label{prop: vanishing integral of f}
    We have $\displaystyle\int_N^\gr \Ff_\alpha(D)=0$ for all $D\in \BD$.
\end{prop}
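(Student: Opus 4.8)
The plan is to use the $\Gamma$-equivariance of $\Ff_\alpha$ to reduce the statement to the invariant subring $\BD^\Gamma$, and then to recognize the resulting one-parameter identity as exactly the Virasoro relation \eqref{eq: Virasoro}.

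First I would observe that the linear functional $D\mapsto \int_N^\gr \Ff_\alpha(D)$ on $\BD$ is $\Gamma$-invariant: the operator $\Ff_\alpha$ is $\Gamma$-equivariant, and $\int_N^\gr$ is $\Gamma$-invariant since it is the monodromy-invariant integral $\int_N$ composed with the projection onto the Chern degree $(4g-4)$ part, which is $\Gamma$-equivariant because $\Gamma$ preserves both gradings. Each bidegree piece of $\BD$ is a finite-dimensional $\Gamma$-module, so the $\Gamma$-action on $\BD\otimes\BC$ extends to an algebraic action of the Zariski closure $G=\Sp(2g,\BC)$ with $\BD^G=\BD^\Gamma\otimes\BC$, and every element lies in a finite-dimensional $G$-submodule. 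By Lemma \ref{lem: Lie group} it then suffices to prove $\int_N^\gr \Ff_\alpha(D)=0$ for $D\in \BD^\Gamma=\BQ[\alpha,\beta,\gamma]/(\gamma^{g+1})$.

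Next I would apply the explicit formula \eqref{eq: invariant f operator} for $\Ff_\alpha$ on $\BD^\Gamma$ to a monomial, obtaining
\[
\Ff_\alpha(\alpha^n\beta^m\gamma^p)=n(g-n-2p)\,\alpha^{n-1}\beta^m\gamma^p-\tfrac{1}{2}\,p(g+1-p)\,\alpha^n\beta^{m+1}\gamma^{p-1}.
\]
A bookkeeping of the cohomological and Chern degrees shows that $\int_N^\gr(\alpha^a\beta^b\gamma^c)$ is nonzero only when $a=b=g-1-c$ (Chern degree $4g-4$ forces $a+b+2c=2g-2$, and nonvanishing of $\int_N$ forces $a+2b+3c=3g-3$, whence $b+c=g-1$ and $a=b=g-1-c$). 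Hence $\int_N^\gr\Ff_\alpha(\alpha^n\beta^m\gamma^p)$ vanishes unless $(n,m)=(g-p,g-1-p)$, in which case, writing $J_c\coloneqq\int_N\alpha^{g-1-c}\beta^{g-1-c}\gamma^c$, it equals
\[
-p(g-p)\,J_p-\tfrac{1}{2}\,p(g+1-p)\,J_{p-1}.
\]
Finally, specializing \eqref{eq: Virasoro} to $n=m=g-1-p$ gives $(g-p)J_p=-2(g-1-p)J_{p+1}$, equivalently $J_p=-\tfrac{g+1-p}{2(g-p)}\,J_{p-1}$, which is precisely the identity that makes the displayed expression zero (the case $p=0$ being trivial). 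Since an arbitrary $D\in\BD^\Gamma$ is a linear combination of such monomials, this completes the reduction.

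I do not expect a genuinely hard step here: the only points requiring care are checking the $\Gamma$-equivariance of $\Ff_\alpha$ and the passage to $G=\Sp(2g,\BC)$ so as to legitimately invoke Lemma \ref{lem: Lie group}, and the degree bookkeeping that isolates the single contributing monomial $\alpha^{g-p}\beta^{g-1-p}\gamma^p$ in each decomposition — it is this last point that collapses the whole proposition to one scalar identity per $p$, matched exactly by the Virasoro recursion.
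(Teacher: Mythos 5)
Your proposal is correct and follows essentially the same route as the paper's proof: reduce to $\BD^\Gamma$ via the averaging argument of Lemma \ref{lem: Lie group} applied to $\Sp(2g,\BC)$, isolate by degree bookkeeping the single monomial type $\alpha^{g-p}\beta^{g-1-p}\gamma^p$ that can contribute, and cancel the two resulting terms using the Virasoro proportionality \eqref{eq: Virasoro}. The only cosmetic difference is that you constrain the output monomials of $\Ff_\alpha$ while the paper constrains the input $D$ by bidegree $(6g-4,\,4g-2)$; these are equivalent since $\Ff_\alpha$ is homogeneous of bidegree $(-2,-2)$.
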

\begin{proof}
    We first prove the statement for $D\in \BD^\Gamma$. Since $\Ff_\alpha$ is of cohomological degree $-2$ and Chern degree $-2$, it suffices to consider monomials $D=\alpha^n\beta^m\gamma^p$ with
    \begin{align*}
        \deg(D)&=2n+4m+6p=(6g-6)+2,\\
        \deg^C(D)&=2n+2m+4p=(4g-4)+2.
    \end{align*}
The two equalities are equivalent to $n=m+1$ and $m+p=g-1$. By applying the formula \eqref{eq: invariant f operator} of $\Ff_\alpha$ to $D=\alpha^{m+1}\beta^m\gamma^p$, we have
\begin{align*}
    \Ff_\alpha(D)
    =&-(m+1)m\alpha^{m}\beta^m\gamma^p+(g-1)(m+1)\alpha^{m}\beta^m\gamma^p-2(m+1)p\alpha^m\beta^m\gamma^{p}\\
    &+\frac{1}{2}p(p-1)\alpha^{m+1}\beta^{m+1}\gamma^{p-1}-\frac{g}{2}p\alpha^{m+1}\beta^{m+1}\gamma^{p-1}\\
    =&\,(m+1)(-m+g-1-2p)\alpha^m\beta^m\gamma^p
    +\frac{p}{2}(p-1-g)\alpha^{m+1}\beta^{m+1}\gamma^{p-1}. 
\end{align*}
On the other hand, by the proportionality \eqref{eq: Virasoro} coming from the Virasoro constraints, the integral $\int_N \Ff_\alpha(D)$ is zero if 
$$(m+1)(-m+g-1-2p)(g-p+1)+\frac{p}{2}(p-1-g)(-2(m+1))=0.
$$
This equality follows from $m+p=g-1$.

So far, we have shown that the $\Gamma$-invariant map 
$$\phi:\BD\rightarrow\BQ,\quad D\mapsto \int_N^\gr \Ff_\alpha(D)
$$
vanishes once we restrict the domain of $\phi$ to $\BD^\Gamma$. Note that we can extend scalars to the complex numbers in the following sense. The complex reductive group $\Gamma_\BC\coloneqq\Sp(2g,\BC)$ acts on $\BD_\BC\coloneqq\BD\otimes_\BQ \BC$ so that $\phi_\BC:\BD_\BC\rightarrow \BC$ is $\Gamma_\BC$-invariant. By Lemma \ref{lem: Lie group}, we conclude that $\phi_\BC=0$ from the fact that $\phi_\BC$ restricts to zero on the smaller domain $\BD_\BC^{\Gamma_\BC}=\BD^\Gamma\otimes_\BQ\BC$. The condition on the saturation by finite dimensional invariant subspaces in the lemma is satisfied for example by the Chern filtration. 
\end{proof}

\begin{prop}\label{prop: anti-self-adjoint of [f,psi]}
    For $1\leq i\leq 2g$, the operator $[\Ff_\alpha,\psi_i]$ is anti-self-adjoint with respect to $\langle-,-\rangle^\gr$.
\end{prop}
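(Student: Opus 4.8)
The plan is to apply Lemma~\ref{lem: reducing self-adjoint} once more, now to the odd operator $F_i\coloneqq[\Ff_\alpha,\psi_i]$, again with $\mathbf 1=1$ and with the self-adjoint generators the multiplication operators $\{\alpha,\beta,\psi_1,\dots,\psi_{2g}\}$. First I would record the explicit form of $F_i$. Since $\psi_i$ commutes with $\alpha$, $\partial/\partial\alpha$ and $\beta$, and $[\,\sum_j\psi_j\,\partial/\partial\psi_j,\ \psi_i\,]=\psi_i$, computing the four terms of $\Ff_\alpha$ gives, for $1\le i\le g$,
\[
F_i\;=\;-\,\psi_i\frac{\partial}{\partial\alpha}\;+\;\frac{\beta}{4}\,\frac{\partial}{\partial\psi_{i+g}}\,,
\]
and the analogous formula with a minus sign and $\psi_{i-g}$ for $g<i\le 2g$. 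In particular $F_i$ is pure of cohomological degree $1$ and of Chern degree $0$, and $F_i(1)=0$.

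I would then verify the hypotheses of Lemma~\ref{lem: reducing self-adjoint} for $F_i$. Condition (ii) — which, since we want $F_i$ to be anti-self-adjoint, asks that the commutators $[F_i,\alpha]$, $[F_i,\beta]$, $[F_i,\psi_j]$ be \emph{self}-adjoint — is immediate from the formula above (or from the Jacobi identity together with Propositions~\ref{prop: sl2 for descendent} and~\ref{prop: h is anti-self-adjoint}): one finds $[F_i,\alpha]=-\psi_i$, $[F_i,\beta]=0$, and $[F_i,\psi_j]=\pm\frac{\beta}{4}\,\delta_{i+g,\,j}$ (for $1\le i\le g$), all of which are multiplication operators, hence self-adjoint for $\langle-,-\rangle^\gr$. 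For condition (i), since $F_i(1)=0$ it suffices to show $\int_N^\gr F_i(D)=0$ for all $D\in\BD$. Writing the super-commutator $F_i(D)=\Ff_\alpha(\psi_i D)-\psi_i\,\Ff_\alpha(D)$ and applying Proposition~\ref{prop: vanishing integral of f} to $\psi_i D$, this is equivalent to
\[
\int_N^\gr \psi_i\,\Ff_\alpha(D)\;=\;0\qquad\text{for all }D\in\BD .
\]

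To prove the displayed identity I would repeat the monodromy argument of Proposition~\ref{prop: vanishing integral of f}. By $\Gamma$-equivariance of $\Ff_\alpha$ and $\Gamma$-invariance of $\int_N^\gr$ we may take $i=1$. The functional $D\mapsto\int_N^\gr\psi_1\,\Ff_\alpha(D)$ is invariant under the stabilizer $\Gamma'\cong\Sp(2g-2,\BZ)\subset\Gamma$ of $\psi_1$ and $\psi_{g+1}$, so after extending scalars to $\BC$, Lemma~\ref{lem: Lie group} reduces the claim to the subring $\BD^{\Gamma'}=\BQ[\alpha,\beta,\gamma']/\big((\gamma')^{g}\big)\otimes\Lambda(\psi_1,\psi_{g+1})$, where $\gamma'=-2\sum_{j=2}^{g}\psi_j\psi_{j+g}$. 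A basis of $\BD^{\Gamma'}$ consists of $\alpha^n\beta^m(\gamma')^p$ times one of $1,\ \psi_1,\ \psi_{g+1},\ \psi_1\psi_{g+1}$; for three of these four families the integral vanishes formally (a square $\psi_1^2=0$ is produced, or every monomial of $\psi_1\Ff_\alpha(D)$ has unbalanced $\psi$-exponents, hence zero integral by monodromy invariance). The remaining family $D=\alpha^n\beta^m(\gamma')^p\psi_{g+1}$ is the one requiring a computation: using Lemma~\ref{lem: sl2invariantsubring} on the $(g-1)$ complementary $\psi$-pairs one writes $\Ff_\alpha(D)$ explicitly, multiplies by $\psi_1$ so the surviving terms are multiples of $\gamma_1(\gamma')^{p}$ and $\gamma_1(\gamma')^{p-1}$ with $\gamma_1=\psi_1\psi_{g+1}$, and evaluates the two resulting intersection numbers via the Virasoro proportionality~\eqref{eq: Virasoro} and the monodromy relation~\eqref{eq: monodromy invariance}.

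The main obstacle is exactly this last evaluation. The algebra involved — expanding $(\gamma')^p$ into sums of products of distinct $\gamma_j$'s, the identity $(g-p)\binom{g-1}{p-1}=p\binom{g-1}{p}$, and rewriting $\int_N\alpha^{n-1}\beta^m\gamma_1(\gamma')^p$ in terms of $\int_N\alpha^n\beta^{m+1}\gamma_1(\gamma')^{p-1}$ through~\eqref{eq: Virasoro} — is routine. The key point is that for a monomial of $\psi_1\Ff_\alpha(D)$ to lie simultaneously in the top cohomological degree $6g-6$ and the top Chern degree $4g-4$ one must have $m+p=g-2$ and $n=g-1-p$, and with these constraints the coefficient of the single surviving intersection number is proportional to $g-n-p-1=0$. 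Once this cancellation is checked, condition~(i) holds, and Lemma~\ref{lem: reducing self-adjoint} yields that $[\Ff_\alpha,\psi_i]$ is anti-self-adjoint, which is the assertion of the proposition.
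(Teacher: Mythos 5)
Your proposal is correct and follows essentially the same route as the paper: the reduction via Lemma~\ref{lem: reducing self-adjoint}, the explicit first-order formula for $[\Ff_\alpha,\psi_i]$, restriction to the $\Sp(2g-2,\BZ)$-invariants via Lemma~\ref{lem: Lie group}, and the final cancellation from the Virasoro proportionality \eqref{eq: Virasoro} combined with monodromy invariance \eqref{eq: monodromy invariance}. The only (harmless) deviation is that you verify condition (i) in the form $\int_N^{\gr}\psi_i\,\Ff_\alpha(D)=0$ by invoking Proposition~\ref{prop: vanishing integral of f} on $\psi_i D$, whereas the paper integrates $[\Ff_\alpha,\psi_i](D)$ directly using the first-order commutator formula; both reduce to the same two intersection numbers and the same vanishing coefficient.
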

\begin{proof}
By Lemma \ref{lem: reducing self-adjoint}, $[\Ff_\alpha,\psi_j]$ is anti-self-adjoint if and only if
\begin{enumerate}
    \item[(i)] $\langle [\Ff_\alpha,\psi_j](1),D\rangle^\gr=-\langle 1,[\Ff_\alpha,\psi_j](D)\rangle^\gr$ for all $D\in \BD$,
    \item[(ii)] $\big[[\Ff_\alpha,\psi_j],\alpha\big]$, $\big[[\Ff_\alpha,\psi_j],\beta\big]$, $\big[[\Ff_\alpha,\psi_j],\psi_i\big]$ for $i=1,\dots,2g$ are all self-adjoint.
\end{enumerate}
Since $\Ff_\alpha$ is a quadratic differential operator, it becomes a multiplication operator after
taking commutator twice with multiplication operators. Therefore, part (ii) follows immediately. So we are left with part (i). 

Now we let $1\leq j\leq g$ and consider part (i) for $[\Ff_\alpha,\psi_j]$. The case of $[\Ff_\alpha,\psi_{j+g}]$ is essentially the same and we leave the details to the reader. We start by computing the commutator 
\begin{align}\label{eq: commutator computation}
    [\Ff_\alpha,\psi_j]
    &=\left[
    -\alpha\frac{\partial^2}{\partial\alpha^2}
    +(g-1)\frac{\partial}{\partial \alpha}
    -\frac{\partial}{\partial \alpha}\sum_{i=1}^{2g}\psi_i\frac{\partial}{\partial \psi_i}
    -\frac{\beta}{4}\sum_{i=1}^{g}\frac{\partial}{\partial \psi_i}\frac{\partial}{\partial \psi_{i+g}},\psi_j
    \right]\nonumber\\
    &=-\frac{\partial}{\partial \alpha}\psi_j
    +\frac{\beta}{4}\frac{\partial}{\partial \psi_{j+g}}. 
\end{align}
Since $[\Ff_\alpha,\psi_j](1)=0$, part (i) is equivalent to 
\begin{equation}\label{eq: second vanishing}
\int_N^\gr[\Ff_\alpha,\psi_j](D)=0\quad\text{for all}\quad D\in \BD.
\end{equation}

In order to show the vanishing \eqref{eq: second vanishing}, we follow a similar strategy as in the proof of Proposition~\ref{prop: vanishing integral of f}. One difference is that the top Chern degree integration map 
$$\BD\rightarrow\BQ,\quad D\mapsto \int_N^\gr[\Ff_\alpha,\psi_j](D)
$$
is no longer $\Gamma=\Sp(2g,\BZ)$-invariant. However, it is invariant with respect to the subgroup $\widetilde{\Gamma}\subset \Gamma$ fixing $\psi_j$ and $\psi_{j+g}$. Abstractly, $\widetilde{\Gamma}$ is isomoprhic to $\Sp(2g-2,\BZ)$, and the $\widetilde{\Gamma}$-invariant subring is generated by $\alpha, \beta, \gamma, \psi_j, \psi_{j+g}$.


By Lemma \ref{lem: Lie group}, it suffices to show the vanishing \eqref{eq: second vanishing} for the monomials 
\[
D=\alpha^n\beta^m\gamma^p\psi_j^{p_j}\psi_{j+g}^{p_{j+g}}.
\] 
Since $[\Ff_\alpha,\psi_j]$ has cohomological degree $1$ and Chern degree $0$, we may assume
\begin{align*}
        \deg(D)&=2n+4m+6p+3(p_j+p_{j+g})=(6g-6)-1,\\
        \deg^C(D)&=2n+2m+4p+2(p_j+p_{j+g})=4g-4.
    \end{align*}
In particular, we have $n=m+1$ and $p_j+p_{j+g}=1$. By the formula of $[\Ff_\alpha,\psi_j]$ in \eqref{eq: commutator computation}, $[\Ff_\alpha,\psi_j](D)=0$ if $p_j=1$ and $p_{j+g}=0$. So we may assume that 
$$D=\alpha^{m+1}\beta^{m}\gamma^p\psi_{j+g}
$$
with $m+p=g-2$. 

Recall the definition of $\gamma=-2\sum_{i=1}^g\gamma_i$ where $\gamma_i=\psi_i\psi_{i+g}$. By computation, we have
\begin{align*}
    [\Ff_\alpha,\psi_j](D)
    &=\left(-\frac{\partial}{\partial \alpha}\psi_j
    +\frac{\beta}{4}\frac{\partial}{\partial \psi_{j+g}}\right)\alpha^{m+1}\beta^m\gamma^p\psi_{j+g}\\
    &=-(m+1)\alpha^m\beta^m\gamma^p\gamma_j+\frac{1}{4}\alpha^{m+1}\beta^{m+1}\frac{\partial}{\partial\psi_{j+g}}(\gamma^p\psi_{j+g})\\
    &=-(m+1)\alpha^m\beta^m\gamma^p\gamma_j+\frac{1}{4}\alpha^{m+1}\beta^{m+1}(-2)^p(\gamma_1+\cdots \widehat{\gamma_j}+\cdots \gamma_g)^p
\end{align*}
where we used $\gamma_j\cdot \psi_{j+g}=0$ in the last step. By the proportionality \eqref{eq: monodromy invariance} coming from the monodromy invariance, we have
$$\int_N\alpha^m\beta^m\gamma^p\gamma_j= -\frac{1}{2g} \int_N \alpha^m\beta^m\gamma^{p+1}
$$
and 
$$\int_N\alpha^{m+1}\beta^{m+1}(-2)^p(\gamma_1+\cdots \widehat{\gamma_j}+\cdots \gamma_g)^p=\frac{g-p}{g} \int_N \alpha^{m+1}\beta^{m+1}\gamma^{p}. 
$$
So the vanishing \eqref{eq: second vanishing} reduces to showing
$$\int_N \Big((m+1)\frac{1}{2g}\alpha^m\beta^m\gamma^{p+1} + \frac{1}{4}\frac{g-p}{g}\alpha^{m+1}\beta^{m+1}\gamma^p
\Big)=0. 
$$
This follows from the proportionality \eqref{eq: Virasoro} coming from the Virasoro constraints because
\[
(m+1)\frac{1}{2g}(g-p)+\frac{1}{4}\frac{g-p}{g}(-2(m+1))=0. \qedhere
\]
\end{proof}

Therefore, we have completed the proof of Theorem \ref{thm: self-adjoint}.

\subsection{Diagonal $\mathfrak{sl}_2$-triple and Chern filtration}
\label{sec: diagonal sl2}

In previous sections, we have constructed a commuting pair of $\mathfrak{sl}_2$-triples $(\Fe_\alpha, \Fh_\alpha, \Ff_\alpha)$ and $(\Fe_\beta, \Fh_\beta, \Ff_\beta)$ acting on $\BD$ which descends via the surjection $\BD\twoheadrightarrow \gr^C_\bullet H^*(N)$. On the other hand, there is a diagonal Lie subalgebra $\mathfrak{sl}_2\subset \mathfrak{sl}_2\times \mathfrak{sl}_2$ corresponding to 
$$\Fe\coloneqq\Fe_\alpha+\Fe_\beta,\quad \Fh\coloneqq\Fh_\alpha+\Fh_\beta,\quad \Ff\coloneqq\Ff_\alpha+\Ff_\beta.
$$
In this section, we discuss applications of this diagonal $\mathfrak{sl}_2$-triple to the Chern filtration and the associated graded ideal $I^\gr\coloneqq\ker(\BD\twoheadrightarrow \gr^C_\bullet H^*(N))$. 

\begin{cor}\label{cor: diagonal h is shifted Chern grading}
    The $\Fh$ operator in the diagonal $\mathfrak{sl}_2$-action on $\gr^C_\bullet H^*(N)$ is the shifted Chern degree operator. In particular, $\overline{\Omega}(N,q,1)$ satisfies the symmetry and unimodality.\footnote{We note, however, that the $\mathfrak{sl}_2$-action does not give an independent proof of the symmetry since Corollary~\ref{cor: descending the operators} relies on the perfectness of the pairing $\langle -, - \rangle^\mathrm{gr}$.} 
\end{cor}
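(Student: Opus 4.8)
The plan is to compute the diagonal operator $\Fh = \Fh_\alpha + \Fh_\beta$ directly on the descendent algebra $\BD$, observe that there it is literally a shift of the Chern grading operator, transport this to $\gr^C_\bullet H^*(N)$ via the descent of Corollary~\ref{cor: descending the operators}, and then read off the symmetry and unimodality of $\overline{\Omega}(N,q,1)$ from the structure theory of finite-dimensional $\mathfrak{sl}_2$-representations. Adding the formulas for $\Fh_\alpha$ and $\Fh_\beta$ from Section~\ref{sec: sl2} gives
\[
\Fh = 2\alpha\frac{\partial}{\partial\alpha} + 2\beta\frac{\partial}{\partial\beta} + 2\sum_{i=1}^{2g}\psi_i\frac{\partial}{\partial\psi_i} - 2(g-1).
\]
Since each of $\alpha$, $\beta$, $\psi_i$ has Chern degree $2$, the first three terms constitute the Chern grading operator $\deg^C$, so $\Fh = \deg^C - 2(g-1)\cdot\mathrm{id}$ as operators on $\BD$; as $2(g-1)$ is half of the top Chern degree $(r+2)(r-1)(g-1)=4(g-1)$ for $r=2$ (Theorem~\ref{thm: topcherndegree}), this is exactly the ``shifted Chern grading operator''.

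Next I pass to the associated graded. The six operators $\Fe_\alpha,\Fh_\alpha,\Ff_\alpha,\Fe_\beta,\Fh_\beta,\Ff_\beta$ are homogeneous for the Chern grading on $\BD$ (of Chern degrees $+2,0,-2,+2,0,-2$ respectively), and by Corollary~\ref{cor: descending the operators} the resulting $\mathfrak{sl}_2\times\mathfrak{sl}_2$-action descends along the Chern-graded surjection $\pi\colon\BD\twoheadrightarrow\gr^C_\bullet H^*(N)$. A descent of Chern-homogeneous operators along $\pi$ is again Chern-homogeneous, so from $\Fh = \deg^C - 2(g-1)\cdot\mathrm{id}$ on $\BD$ it follows that the descended $\Fh$ acts on $\gr^C_\bullet H^*(N)$ as $\deg^C - 2(g-1)\cdot\mathrm{id}$; together with the descended $\Fe = \Fe_\alpha+\Fe_\beta$ and $\Ff = \Ff_\alpha+\Ff_\beta$ it forms the diagonal $\mathfrak{sl}_2$-triple (the diagonal subalgebra of $\mathfrak{sl}_2\times\mathfrak{sl}_2$ is again $\mathfrak{sl}_2$). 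This proves the first assertion.

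For the last assertion, write $\overline{\Omega}(N,q,1) = \sum_i a_i q^i$, so $a_i = \dim V_i$ with $V_i$ the $i$-eigenspace of $\Fh$ on $\gr^C_\bullet H^*(N)$. Every monomial in $\BD$ has even Chern degree, so $\Omega(N,q,1)$ is a polynomial in $q^2$ and the $a_i$ are supported on even $i$ with $-n\le i\le n$ and $n=2(g-1)$. Since $\gr^C_\bullet H^*(N)$ is a finite-dimensional module for the diagonal $\mathfrak{sl}_2=\langle\Fe,\Fh,\Ff\rangle$, the classical facts that $\Ff^k\colon V_k\to V_{-k}$ is an isomorphism for $k\ge 0$ and that $\Ff\colon V_{k+2}\to V_k$ is injective for $k\ge 0$ give, respectively, $a_k = a_{-k}$ (equivalently $\overline{\Omega}(N,q,1)=\overline{\Omega}(N,q^{-1},1)$) and $a_{k+2}\le a_k$ for $k\ge 0$; combined with the parity remark this is the unimodality $a_{-n}\le\dots\le a_0\ge\dots\ge a_n$.

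I do not expect a genuine obstacle here: beyond the short computation of $\Fh$ above, the argument only invokes Corollary~\ref{cor: descending the operators} and textbook $\mathfrak{sl}_2$-representation theory. The single point requiring care is that the descent in Corollary~\ref{cor: descending the operators} respects the Chern grading; this holds because that corollary is proved by exhibiting the $\mathfrak{sl}_2$-operators as preserving the Chern-homogeneous ideal $I^\gr\subseteq\BD$. (As the footnote to the corollary indicates, this does not furnish a proof of the symmetry independent of the perfectness of $\langle-,-\rangle^\gr$, but the unimodality is a new consequence.)
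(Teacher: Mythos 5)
Your proof is correct and follows essentially the same route as the paper: compute $\Fh=\Fh_\alpha+\Fh_\beta$ explicitly, observe it is the Chern grading operator shifted by $2g-2$, which is half the top Chern degree $4g-4$, and invoke finite-dimensional $\mathfrak{sl}_2$-representation theory for the symmetry and unimodality. You spell out the representation-theoretic details (injectivity of $\Ff$ on nonnegative weight spaces and the isomorphisms $\Ff^k\colon V_k\to V_{-k}$) and the compatibility of the descent with the Chern grading more explicitly than the paper does, but the argument is the same.
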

\begin{proof}
The $\Fh$ operator of the diagonal $\mathfrak{sl}_2$-triple is given by the formula
$$\Fh=2\alpha\frac{\partial}{\partial \alpha}+2\beta\frac{\partial}{\partial \beta}+2\sum_{i=1}^{2g}\psi_i\frac{\partial}{\partial \psi_i}-(2g-2).
$$
Therefore, the $\Fh$-degrees of the multiplication operators $\alpha$, $\beta$ and $\psi_i$ are precisely $2$, equal to their Chern degrees. Furthermore, the $2g-2$ in the formula of $\Fh$ is precisely half of the top Chern degree $4g-4$ of the moduli space $N=N_{2,1}$. The second statement follows from general theory of finite dimensional $\mathfrak{sl}_2$-representations. 
\end{proof}

For moduli theory in various settings, describing a complete set of tautological relations is a fundamental problem. In the case of the moduli space $N=N_{2,1}$, the ideal of tautological relations is defined as
$$I=\ker\big(\BD\twoheadrightarrow H^*(N)\big)
$$
and is characterized by the Mumford conjecture, proven by Kirwan \cite{Kir}. In our setting, we can ask the same question regarding the Chern filtration by considering the ideal
$$I^\gr\coloneqq\ker\big(\BD\twoheadrightarrow \gr^C_\bullet H^*(N)\big). 
$$
This amounts to understanding the shape of the highest Chern degree part of each tautological relation. The next corollary characterizes the ideal $I^\gr$ purely in terms of the $\Ff$ operator 
\begin{equation}\label{eq: f operator}
    -\alpha\frac{\partial^2}{\partial\alpha^2}
-\beta\frac{\partial^2}{\partial\beta^2}
+(g-1)\left(\frac{\partial}{\partial \alpha}+\frac{\partial}{\partial \beta}\right)
-\left(\frac{\partial}{\partial \alpha}+\frac{\partial}{\partial \beta}\right)\sum_{i=1}^{2g}\psi_i\frac{\partial}{\partial \psi_i}
-\frac{\alpha+\beta}{4}\sum_{i=1}^{g}\frac{\partial}{\partial \psi_i}\frac{\partial}{\partial \psi_{i+g}}
\end{equation}
in the diagonal $\mathfrak{sl}_2$-triple.
\begin{cor}\label{cor: f determines I gr}
    The associated graded ideal $I^\gr\subset \BD$ is the smallest ideal containing $C_{>4g-4}\BD$ which is closed under the $\Ff$ operator \eqref{eq: f operator}.
\end{cor}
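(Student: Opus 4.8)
The plan is to prove $I^\gr = J$, where $J$ denotes the smallest ideal of $\BD$ that contains $C_{>4g-4}\BD$ and is stable under the operator $\Ff=\Ff_\alpha+\Ff_\beta$ of \eqref{eq: f operator}; such a smallest ideal exists because the collection of ideals with these two properties is closed under intersection and is nonempty — it contains $I^\gr$. Indeed $C_{>4g-4}\BD\subseteq I^\gr$ by Theorem \ref{thm: topcherndegree} (the top Chern degree of $N_{2,1}$ is $4g-4$, so any descendent of strictly larger Chern degree dies in $\gr^C_\bullet H^*(N_{2,1})$), and $\Ff_\alpha,\Ff_\beta$ — hence $\Ff$ — preserve $I^\gr$ by Corollary \ref{cor: descending the operators}, while $I^\gr$ is of course an ideal. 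This already gives $J\subseteq I^\gr$, so the real content is the reverse inclusion.

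To prove $I^\gr\subseteq J$, I would use the explicit presentation of $I^\gr=I^\gr_{\leq 0}$ from Section \ref{subsec: relationsIgr}: it is a free $\BQ[\beta]$-module on the relations \eqref{eq: relationsIgr}, which I abbreviate $R_{k,m,l,\sigma_l}$ (with $l+m\leq g$, $k\geq 2g$, $\sigma_l\in\mathsf{Prim}_l$), and which have Chern degree $2k-2g$. Since $J$ is an ideal, hence a $\BQ[\beta]$-submodule, it suffices to show $R_{k,m,l,\sigma_l}\in J$ for every admissible tuple. When $k\geq 3g-1$ this is automatic, since then $\deg^C(R_{k,m,l,\sigma_l})=2k-2g>4g-4$, so $R_{k,m,l,\sigma_l}\in C_{>4g-4}\BD\subseteq J$. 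For the remaining range $2g\leq k\leq 3g-2$ I would argue by descending induction on $k$, the engine being an identity of the shape
\[
\Ff\bigl(R_{k+1,m,l,\sigma_l}\bigr)=(2g-1-k)\,R_{k,m,l,\sigma_l}+(\text{const})\cdot R_{k,m-1,l,\sigma_l}
\]
(with $R_{k,-1,l,\sigma_l}:=0$). Granting it: $J$ is $\Ff$-stable, so if every $R_{k+1,m',l,\sigma_l}$ already lies in $J$ then so does the left-hand side; since $2g-1-k\neq 0$ throughout $2g\leq k\leq 3g-2$, a secondary induction on $m$ (base $m=0$, where the last term is absent) then pulls out $R_{k,m,l,\sigma_l}\in J$. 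The base case $k=3g-2$ of the outer induction is covered by $R_{3g-1,m,l,\sigma_l}\in C_{>4g-4}\BD\subseteq J$. Running this for all $l$ and $\sigma_l$ establishes that every generator of $I^\gr$ lies in $J$, completing $I^\gr\subseteq J$.

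The step I expect to be the main obstacle is establishing the displayed $\Ff$-identity. The $\Gamma$-equivariance of $\Ff_\alpha,\Ff_\beta$ and the fact that $\Ff$ does not raise the $\psi$-degree force $\Ff(R_{k+1,m,l,\sigma_l})$ into the $\sigma_l$-component of $I^\gr$ in Chern degree $2k-2g$ with $\psi$-degree $\leq l+2m$; when $k\geq g+l+2m$ one can read off the coefficient of $R_{k,m,l,\sigma_l}$ directly from the top $\psi$-degree part (to which only $R_{k,m,l,\sigma_l}$ contributes, with a $\beta$-free coefficient), and it comes out as $2g-1-k$, while in the few edge cases $k<g+l+2m$ the coefficient is the same but a slightly more careful bookkeeping is needed. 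The genuinely delicate point is to show that the remainder is \emph{only} a multiple of $R_{k,m-1,l,\sigma_l}$ — in particular that no term $\beta^{j}R_{k-j,m',l,\sigma_l}$ with $j\geq 1$ occurs; such a term would involve a relation of smaller $k$, not yet available in the induction, and would break the argument. I would verify this by expanding $\Ff_\alpha+\Ff_\beta$ term by term against the explicit shape of $R_{k+1,m,l,\sigma_l}$ — equivalently, by using the generating-function description of these relations from Section \ref{sec: intermediatestacks} (Propositions \ref{prop: explicitmr} and \ref{prop: boundCmodifiedmr}) together with the restriction \eqref{eq: invariant f operator} of $\Ff$ to the relevant monodromy-invariant subalgebra — after first checking the low-genus cases ($g=2$, all admissible $m$, $l$) by hand.
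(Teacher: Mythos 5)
Your proposal is correct, but it proves the corollary by a genuinely different route from the paper. The paper's proof is purely numerical: it considers the surjections $\BD\twoheadrightarrow\BD/J\twoheadrightarrow\BD/I^\gr$, observes that the diagonal $\mathfrak{sl}_2$ descends to both quotients with $\Fh$ equal to the shifted Chern grading (so both Chern--Hilbert series are palindromic about degree $2g-2$), and then uses the explicit formula for $\Omega(N,q,1)$ from Theorem \ref{thm: main} to see that $\BD\to\BD/I^\gr$ is an isomorphism in Chern degrees $\leq 2g-1$; palindromy then forces $\dim\gr^C_i(\BD/J)=\dim\gr^C_i(\BD/I^\gr)$ in all degrees, hence $J=I^\gr$. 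You instead attack the hard inclusion $I^\gr\subseteq J$ head-on, via the free $\BQ[\beta]$-basis $R_{k,m,l}\sigma_l$ ($k\geq 2g$, $l+m\leq g$) of $I^\gr$ from Section \ref{subsec: relationsIgr} and a descending induction on $k$ driven by the identity $\Ff(R_{k+1,m,l}\sigma_l)=(2g-1-k)(R_{k,m,l}\sigma_l+R_{k,m-1,l}\sigma_l)$. The one step you flag as the main obstacle --- that the remainder involves only $R_{k,m-1,l}\sigma_l$ and no terms $\beta^jR_{k-j,m',l}\sigma_l$ with $j\geq 1$ --- is exactly what the paper establishes in the proof of Theorem \ref{thm: sl2intermediatestacks}, where it is shown (for all $d\geq 0$, by the relabelling computation with Lemmas \ref{lem: sl2invariantsubring} and \ref{lem: commutatorfsigma}) that $\Ff_\alpha^d(R_{k,m,l}\sigma_l)=(2g+2d-k)R_{k-1,m,l}\sigma_l$ and $\Ff_\beta^d(R_{k,m,l}\sigma_l)=(2g+2d-k)R_{k-1,m-1,l}\sigma_l$; specializing to $d=0$ gives precisely your identity, and the nonvanishing of $2g-1-k$ for $2g\leq k\leq 3g-2$ together with your inner induction on $m$ closes the argument. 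What each approach buys: the paper's is computation-free but leans on Theorem \ref{thm: main} and the perfectness of the graded pairing; yours is constructive, exhibits explicitly how every tautological relation is generated from $C_{>4g-4}\BD$ by repeated application of $\Ff$, and (as the paper's Section \ref{sec: stacks sl2} shows) the same mechanism extends to the intermediate stacks $\FN_{2,1}^{\leq d}$ where no pairing is available.
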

\begin{proof}
Let $J \subset \mathbb{D}$ be the smallest ideal that contains $C_{>4g-4}\BD$ and is closed under the $\Ff$ operator \eqref{eq: f operator}. Since $I^\gr$ contains $C_{>4g-4}\BD$ by Theorem \ref{thm: topcherndegree} and is closed under the $\Ff$ operator by Corollary~\ref{cor: descending the operators}, we have $J\subseteq I^\gr$. Consider the surjections
\begin{equation}\label{eq: surjections}
\BD\twoheadrightarrow \BD/J\twoheadrightarrow \BD/I^\gr. 
\end{equation}
In order to show that $J=I^\gr$, it suffices to show the numerical statement
\begin{equation}\label{eq: need to show}
    \dim \gr^C_i (\BD/J)=\dim \gr^C_i(\BD/I^\gr)
\end{equation}
for all $i\in \BZ_{\geq 0}$. Since the diagonal $\mathfrak{sl}_2$-action descends to $\BD/J$ and $\BD/I^\gr$ with the $\Fh$ operator being the shifted Chern grading by Corollary \ref{cor: diagonal h is shifted Chern grading}, we have symmetries
$$\dim\gr_i^C(\BD/J)=\dim\gr_{4g-4-i}^C(\BD/J),\quad \dim\gr_i^C(\BD/I^\gr)=\dim\gr_{4g-4-i}^C(\BD/I^\gr).
$$
So it suffices to show the equality \eqref{eq: need to show} for $0\leq i\leq 2g-2$. 

Theorem \ref{thm: main} computes the dimensions of the Chern grading of $\BD/I^\gr\simeq \gr^C_\bullet H^*(N)$ as 
\begin{equation}\label{eq: Chern specialization}
    \sum_{i\geq 0} \dim\gr^C_i (\BD/I^\gr) q^i=\Omega(N,q,1)=\frac{(1+q^2)^{2g}}{(1-q^2)^2}-q^{2g}\cdot \frac{2^{2g}}{(1-q^2)^2}. 
\end{equation}
On the other hand, the descendent algebra $\BD$ is generated by two even generators with $\deg^C(\alpha)=\deg^C(\beta)=2$ and $2g$ odd generators with $\deg^C(\psi_i)=2$, and hence 
$$\sum_{i\geq 0} \dim \gr^C_i \,\BD=\frac{(1+q^2)^{2g}}{(1-q^2)^2}
$$
which is precisely the first term of \eqref{eq: Chern specialization}. Since the two formulas differ by the second term which starts from $q^{2g}$, $\BD\twoheadrightarrow \BD/I^\gr$ is an isomorphism up to Chern degree $2g-1$. Using the successive surjections in \eqref{eq: surjections}, we conclude the desired equality \eqref{eq: need to show} for $0\leq i\leq 2g-1$ (which is slightly larger than $0\leq i\leq 2g-2$). 
\end{proof}

\subsection{Commuting $\mathfrak{sl}_2$-triples on intermediate stacks} 
\label{sec: stacks sl2}

In Section \ref{sec: intermediatestacks} we studied the cohomology of the intermediate stacks $\FN_{2,1}^{\leq d}$ and their Chern filtration. Recall that when $d=0$, the rings $H^\ast(N_{2,1})\simeq H^\ast(\FN_{2,1}^{\leq 0})$ are isomorphic. It turns out that there exist $(\mathfrak{sl}_2\times \mathfrak{sl}_2)$-representations on the associated graded $\gr^C_\bullet H^\ast(\FN_{2,1}^{\leq d})$ of all of these stacks, generalizing the representation discussed before. 

We define the following operators on the descendent algebra $\BD$:
\begin{align*}
    \Fe_\alpha^d&\coloneqq\alpha,\\
    \Fh_\alpha^d&\coloneqq2\alpha\frac{\partial}{\partial \alpha}+\sum_{i=1}^{2g}\psi_i\frac{\partial}{\partial \psi_i}-(g+2d-1),\\
    \Ff_\alpha^d&\coloneqq
    -\alpha\frac{\partial^2}{\partial\alpha^2}
    +(g+2d-1)\frac{\partial}{\partial \alpha}
    -\frac{\partial}{\partial \alpha}\sum_{i=1}^{2g}\psi_i\frac{\partial}{\partial \psi_i}
    -\frac{\beta}{4}\sum_{i=1}^{g}\frac{\partial}{\partial \psi_i}\frac{\partial}{\partial \psi_{i+g}}\,,
\end{align*}
and
\begin{align*}
    \Fe_\beta^d&\coloneqq\beta,\\
    \Fh_\beta^d&\coloneqq2\beta\frac{\partial}{\partial \beta}+\sum_{i=1}^{2g}\psi_i\frac{\partial}{\partial \psi_i}-(g+2d-1),\\
    \Ff_\beta^d&\coloneqq-\beta\frac{\partial^2}{\partial\beta^2}
    +(g+2d-1)\frac{\partial}{\partial \beta}
    -\frac{\partial}{\partial \beta}\sum_{i=1}^{2g}\psi_i\frac{\partial}{\partial \psi_i}
    -\frac{\alpha}{4}\sum_{i=1}^{g}\frac{\partial}{\partial \psi_i}\frac{\partial}{\partial \psi_{i+g}}\,.
\end{align*}
The only difference between these operators and our previous ones (which agree when $d=0$) is the factor $g+2d-1$ replacing $g-1$. The proof of Proposition \ref{prop: sl2 for descendent} immediately generalizes to show that $(\Fe_\alpha^d, \Fh_\alpha^d, \Ff_\alpha^d)$ and $(\Fe_\beta^d, \Fh_\beta^d, \Ff_\beta^d)$ form two commuting $\mathfrak{sl}_2$-triples. Our final result is the following:

\begin{thm}\label{thm: sl2intermediatestacks}
For every $d\geq 1$, the $\mathfrak{sl}_2$-triples $(\Fe_\alpha^d, \Fh_\alpha^d, \Ff_\alpha^d)$ and $(\Fe_\beta^d, \Fh_\beta^d, \Ff_\beta^d)$ descend via the surjections $\BD\to \gr^C_\bullet H^\ast(\FN_{2,1}^{\leq d})$.
\end{thm}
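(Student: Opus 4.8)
The plan is to show that all six operators $\Fe^d_\alpha,\Fe^d_\beta,\Fh^d_\alpha,\Fh^d_\beta,\Ff^d_\alpha,\Ff^d_\beta$ preserve the ideal $I^{\gr}_{\leq d}:=\ker\big(\BD\twoheadrightarrow \gr^C_\bullet H^\ast(\FN_{2,1}^{\leq d})\big)$, which is precisely what it means for the two commuting $\mathfrak{sl}_2$-triples (already constructed on $\BD$ in the paragraph preceding the theorem) to descend to $\gr^C_\bullet H^\ast(\FN_{2,1}^{\leq d})$. The operators $\Fe^d_\alpha=\alpha\cdot$ and $\Fe^d_\beta=\beta\cdot$ preserve $I^\gr_{\leq d}$ for free, it being an ideal. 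For the two $\Fh^d$'s I would use the explicit $\BQ$-basis $\{\beta^\ell\,\mathsf R_{k,\theta^m\tilde\sigma_l}\}$ of $I^{\gr}_{\leq d}$ from Section~\ref{subsec: relationsIgr}, where $\mathsf R_{k,\theta^m\tilde\sigma_l}$ denotes the leading Chern-degree term \eqref{eq: relationsIgr} of the modified Mumford relation $\widetilde{\mr}^{d+1}_{k,\theta^m\tilde\sigma_l}$, with $k\geq 2g+2d$, $l+m\leq g$, and $\tilde\sigma_l$ running over a basis of $\mathsf{Prim}_l$. A direct inspection of \eqref{eq: relationsIgr} shows each $\mathsf R_{k,\theta^m\tilde\sigma_l}$ is a simultaneous eigenvector of $\Fh^d_\alpha$ and $\Fh^d_\beta$ (the eigenvalues are constant along the displayed sum in \eqref{eq: relationsIgr}), and since $[\Fh^d_\alpha,\beta]=0$ and $[\Fh^d_\beta,\beta]=2\beta$ the whole basis consists of eigenvectors; hence both $\Fh^d$'s act diagonally and preserve $I^\gr_{\leq d}$. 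This leaves $\Ff^d_\alpha$ and $\Ff^d_\beta$.

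For $\Ff^d_\alpha$, its only $\beta$-dependent term is $-\tfrac{\beta}{4}\sum_i\partial_{\psi_i}\partial_{\psi_{i+g}}$, so $[\Ff^d_\alpha,\beta]=0$ and it suffices to check $\Ff^d_\alpha(\mathsf R_{k,\theta^m\tilde\sigma_l})\in I^\gr_{\leq d}$ on the $\BQ[\beta]$-module generators. For $\Ff^d_\beta$ I would use $[\Ff^d_\beta,\beta]=-\Fh^d_\beta$, which iterates to an expression of $[\Ff^d_\beta,\beta^\ell]$ as a polynomial in $\beta$ and $\Fh^d_\beta$; applied to a $\Fh^d_\beta$-eigenvector $\mathsf R_{k,\theta^m\tilde\sigma_l}$ this again reduces the claim to showing $\Ff^d_\beta(\mathsf R_{k,\theta^m\tilde\sigma_l})\in I^\gr_{\leq d}$.

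The core of the proof is then the explicit evaluation of $\Ff^d_\alpha$ and $\Ff^d_\beta$ on a single generator. I would use the generating-function packaging of \eqref{eq: relationsIgr}: up to a nonzero scalar, $\mathsf R_{k,\theta^m\tilde\sigma_l}=[t^{k+m-g-l}]\big(e^{t\alpha}P_{m,l}(t;\beta,\gamma)\big)\sigma_l$, where $P_{m,l}(t;\beta,\gamma)$ is the polynomial $\sum_j\binom{m}{j}(g-l-j)_{m-j}(2\gamma t^3)^j(\beta t^2)^{m-j}$ from the proof of Proposition~\ref{prop: boundCmodifiedmr} and $\Phi_{d+1}(t)$ has been replaced by its leading Chern term $e^{t\alpha}$. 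The differential part $-\alpha\partial_\alpha^2+(g+2d-1)\partial_\alpha-\partial_\alpha\sum_i\psi_i\partial_{\psi_i}$ acts only through the $\alpha$- and $\gamma$-dependence (using $\sum_i\psi_i\partial_{\psi_i}(\gamma^c\sigma_l)=(2c+l)\gamma^c\sigma_l$) and returns an expression of the same shape with $k$ shifted; the term $-\tfrac{\beta}{4}\sum_i\partial_{\psi_i}\partial_{\psi_{i+g}}$ is the Lefschetz lowering operator on the $\mathfrak{sl}_2$-string $\{\gamma^c\sigma_l\}_{c\geq 0}$, of which $\tilde\sigma_l\in\mathsf{Prim}_l$ is the lowest-weight vector, so the primitive label $(l,\tilde\sigma_l)$ is untouched and only the $\gamma$-power drops. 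Collecting terms and applying the falling-factorial identities from the proof of Proposition~\ref{prop: explicitmr}, one rewrites $\Ff^d_\alpha(\mathsf R_{k,\theta^m\tilde\sigma_l})$ (and $\Ff^d_\beta$, \emph{mutatis mutandis}, with the roles of $\alpha$ and $\beta$ interchanged where they enter) as an explicit $\BQ$-linear combination of $\{\beta^{j}\,\mathsf R_{k-1-j,\theta^{m+j}\tilde\sigma_l}\}_{j\geq 0}$; this is the only shape possible, given that both $\Ff^d$'s are Chern-homogeneous of degree $-2$ and lower the cohomological degree by $2$.

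The hard part will be verifying that this combination genuinely lies in $I^\gr_{\leq d}$, i.e., that every out-of-range term $\beta^j\mathsf R_{k',\theta^{m'}\tilde\sigma_l}$ with $k'<2g+2d$ (or with $m'+l>g$) occurs with coefficient zero. By Chern-degree homogeneity this reduces to the clean assertions that $\Ff^d_\alpha$ and $\Ff^d_\beta$ annihilate the minimal-Chern-degree generators $\mathsf R_{2g+2d,\theta^m\tilde\sigma_l}$ and that they respect the truncation $k\geq 2g+2d$ cutting $I^\gr_{\leq d}$ out of $I^\gr=I^\gr_{\leq 0}$ — this is exactly where the precise constant $g+2d-1$ in $\Fh^d,\Ff^d$ is used, and the vanishing should drop out of the same falling-factorial identities together with the stabilization relation $\widetilde{\mr}^d_{k,A}=\tfrac14\beta\,\widetilde{\mr}^d_{k-2,A}$ valid for $k\geq 2g+2d$ (from the proof of Proposition~\ref{prop: basismr3}), which makes the truncated family $\{\mathsf R_{k,\theta^m\tilde\sigma_l}\}_{k\geq 2g+2d}$ compatible with the operators. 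Once this boundary check is in place, all of $\Fe^d_\bullet,\Fh^d_\bullet,\Ff^d_\bullet$ preserve $I^\gr_{\leq d}$ and hence descend to $\gr^C_\bullet H^\ast(\FN_{2,1}^{\leq d})$, proving the theorem; for $d=0$ one can alternatively simply invoke Corollary~\ref{cor: descending the operators}.
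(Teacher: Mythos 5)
Your strategy coincides with the paper's: reduce to showing the six operators preserve $I^{\gr}_{\leq d}$, dispose of the $\Fe$'s trivially and the $\Fh$'s by grading (the paper notes $\Fh^d_\alpha=-\deg+2\deg^C-(g+2d-1)$ is an affine combination of the two gradings, which is cleaner than checking eigenvectors but equivalent), and then use $[\Ff^d_\alpha,\beta]=0$ and $[\Ff^d_\beta,\beta]=-\Fh^d_\beta$ to reduce to evaluating $\Ff^d_\alpha$ and $\Ff^d_\beta$ on the $\BQ[\beta]$-module generators $R_{k,m,l}\sigma_l$ of $I^{\gr}_{\leq d}$. All of these reductions are correct and are exactly the paper's.

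The gap is that the step you flag as ``the hard part'' is the entire content of the proof, and you do not carry it out; you only predict its outcome. The paper performs this computation (using the restriction of the operators to $\BD^\Gamma$ and the commutator $[\Ff^d_\alpha,\sigma_l]_{|\BD^\Gamma}=-l\sigma_l\partial_\alpha+\tfrac{l}{2}\beta\sigma_l\partial_\gamma$ from Lemma \ref{lem: commutatorfsigma}) and finds the single-term identities
\[
\Ff^d_\alpha(R_{k,m,l}\sigma_l)=(2g+2d-k)\,R_{k-1,m,l}\,\sigma_l\,,\qquad \Ff^d_\beta(R_{k,m,l}\sigma_l)=(2g+2d-k)\,R_{k-1,m-1,l}\,\sigma_l\,,
\]
after a reindexing of the two sums produced by the differential operator. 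So the feared linear combination over $j\geq 0$ never materializes (indeed your proposed shape $\beta^{j}R_{k-1-j,\theta^{m+j}\tilde\sigma_l}$ is inconsistent with the cohomological bidegree for $j>0$; the degree-consistent family would be $\beta^{j}R_{k-1-j,m-j,l}\sigma_l$), and the only boundary issue is $k=2g+2d$, which is handled not by a cancellation among falling factorials but by the overall scalar $(2g+2d-k)$ vanishing there --- this is precisely where the constant $g+2d-1$ enters, as you anticipated. The stabilization relation $\widetilde{\mr}^d_{k,A}=\tfrac14\beta\,\widetilde{\mr}^d_{k-2,A}$ you invoke is not needed for this formal computation in $\BD$. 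Until the displayed identities (or an equivalent verification) are actually established, the proof is incomplete.
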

The fact that the $\Fe$ operators descend is trivial. The $\Fh$ operators are affine combinations of the cohomological grading operator and the Chern grading operator, so they also descend. The non-trivial part of the theorem concerns the $\Ff$ operators. 

When $d>0$, we no longer have an integration map nor a non-degenerate pairing, so the strategy previously used to show self-adjointness of the $\Ff$ operators is not available. Instead, we use the results from Section \ref{sec: intermediatestacks}, and in particular the explicit basis \eqref{eq: relationsIgr} for the ideal
\[I^\textup{gr}_{\leq d}=\ker\big(\BD\to \gr^C_\bullet H^\ast(\FN_{2,1}^{\leq d})\big)\,.\]
The claim of Theorem \ref{thm: sl2intermediatestacks} is equivalent to $\Ff_\alpha^d,\, \Ff_\beta^d$ preserving the ideal $I^\textup{gr}_{\leq d}$. Note also that the ideals $I^\textup{gr}_{\leq d}$ are no longer symmetric under swapping $\alpha$ and $\beta$ when $d>0$ (cf. Lemma \ref{lem: swapping alpha and beta}), so the statements for
 $\Ff_\alpha^d$ and $\Ff_\beta^d$ have to be shown separately. 

 We first prove the following lemma that will help us calculate the action of the $\Ff$ operators on the relations \eqref{eq: relationsIgr}.  

\begin{lem}\label{lem: commutatorfsigma}
    Let $\sigma_l\in \mathsf{Prim}_l\subseteq \BD$. Then, the restriction of the commutators $[\Ff_\alpha^d, \sigma_l]$ and $[\Ff_\beta^d, \sigma_l]$ to the $\Gamma$-invariant subring $\BD^\Gamma$ are
    \begin{align*}
[\Ff_\alpha^d, \sigma_l]_{|\BD^\Gamma}&=-l\sigma_l\frac{\partial}{\partial \alpha}+\frac{l}{2}\beta\sigma_l\frac{\partial}{\partial \gamma}\\
[\Ff_\beta^d, \sigma_l]_{|\BD^\Gamma}&=-l\sigma_l\frac{\partial}{\partial \beta}+\frac{l}{2}\alpha\sigma_l\frac{\partial}{\partial \gamma}\,.
    \end{align*}
\end{lem}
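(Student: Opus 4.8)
The plan is to prove the identity by computing the commutator $[\Ff_\alpha^d,\sigma_l]$ as an honest operator on the full descendent algebra $\BD$ and then restricting its action to $\BD^\Gamma=\BQ[\alpha,\beta,\gamma]/(\gamma^{g+1})$. Note first that $\Ff_\beta^d$ is obtained from $\Ff_\alpha^d$ by the algebra automorphism $\tau$ of $\BD$ that interchanges $\alpha$ and $\beta$ and fixes every $\psi_i$; since $\tau$ fixes $\gamma$ (hence preserves $\BD^\Gamma$) and fixes $\sigma_l$, conjugating by $\tau$ turns the formula for $[\Ff_\alpha^d,\sigma_l]$ into the one for $[\Ff_\beta^d,\sigma_l]$, so it suffices to treat $\Ff_\alpha^d$.

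Next I would go through the four summands of $\Ff_\alpha^d$. The two terms involving only $\alpha$, namely $-\alpha\,\partial_\alpha^2$ and $(g+2d-1)\partial_\alpha$, commute with multiplication by $\sigma_l\in\Lambda_\BQ(\psi_1,\dots,\psi_{2g})$ because $\alpha$ and the $\psi_i$ are independent generators; similarly $\partial_\alpha$ and multiplication by $\beta$ commute with $\sigma_l$. The term $-\partial_\alpha\sum_i\psi_i\partial_{\psi_i}$ contributes $-\partial_\alpha\,[\sum_i\psi_i\partial_{\psi_i},\sigma_l]=-l\,\sigma_l\,\partial_\alpha$, since $\sum_i\psi_i\partial_{\psi_i}$ is the $\psi$-grading derivation and $\sigma_l$ is homogeneous of $\psi$-degree $l$ (recall $\mathsf{Prim}_l\subseteq H^l(\Pic)$). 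The remaining term $-\tfrac{\beta}{4}D$, with $D\coloneqq\sum_{i=1}^g\partial_{\psi_i}\partial_{\psi_{i+g}}$, contributes $-\tfrac{\beta}{4}[D,\sigma_l]$, and the whole problem reduces to identifying $[D,\sigma_l]$ after restriction to $\BD^\Gamma$.

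This is the heart of the argument. Expanding $[D,\sigma_l]$ by the super-Leibniz rule gives $[D,\sigma_l]=D(\sigma_l)+(-1)^{l-1}\sum_i(\partial_{\psi_{i+g}}\sigma_l)\,\partial_{\psi_i}+(-1)^l\sum_i(\partial_{\psi_i}\sigma_l)\,\partial_{\psi_{i+g}}$ as an operator on $\BD$. The zeroth-order term vanishes: $D$ is, up to a nonzero scalar, the Lefschetz lowering operator on $H^\ast(\Pic)$, regarded as the exterior algebra on $H^1(\Pic)$, adjoint to multiplication by $\theta$, so a primitive class $\tilde\sigma_l$ of degree $l\le g$ is annihilated by it — this is exactly where primitivity enters, and I would match the sign and normalization conventions against the definition \eqref{eq: prim} of $\mathsf{Prim}_l$. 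For the first-order part I would restrict to $\BD^\Gamma$ using $\partial_{\psi_i}|_{\BD^\Gamma}=-2\psi_{i+g}\,\partial_\gamma$ and $\partial_{\psi_{i+g}}|_{\BD^\Gamma}=2\psi_i\,\partial_\gamma$ for $1\le i\le g$ (immediate from $\gamma=-2\sum_i\psi_i\psi_{i+g}$); the two sums then combine into $2(-1)^l\bigl(\sum_{j=1}^{2g}(\partial_{\psi_j}\sigma_l)\psi_j\bigr)\partial_\gamma$, and since $\sum_j(\partial_{\psi_j}\sigma_l)\psi_j=(-1)^{l-1}l\,\sigma_l$ (again the $\psi$-grading), this equals $-2l\,\sigma_l\,\partial_\gamma$. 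Assembling everything yields $[\Ff_\alpha^d,\sigma_l]|_{\BD^\Gamma}=-l\,\sigma_l\,\partial_\alpha-\tfrac{\beta}{4}(-2l\,\sigma_l\,\partial_\gamma)=-l\,\sigma_l\,\partial_\alpha+\tfrac{l}{2}\beta\,\sigma_l\,\partial_\gamma$, as claimed; the formula for $\Ff_\beta^d$ follows by the $\tau$-symmetry above. As an independent sanity check I would verify the identity directly on the monomial basis $\alpha^n\beta^m\gamma^p$ of $\BD^\Gamma$, using the restricted form \eqref{eq: invariant f operator} of $\Ff_\alpha^d$ together with $D(\gamma^p)=2p(g-p+1)\gamma^{p-1}$ from Lemma \ref{lem: sl2invariantsubring}.

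The main obstacle I anticipate is essentially bookkeeping: tracking the super-signs and combinatorial constants through the Leibniz expansion of $D(\sigma_l\cdot f)$ — in particular the interaction of the odd operators $\partial_{\psi_j}$ with $\sigma_l$ of parity $l$ — and, relatedly, pinning down the precise scalar identifying $D$ with the Lefschetz operator so that the vanishing $D(\sigma_l)=0$ is rigorously justified rather than merely morally true. Everything else is mechanical.
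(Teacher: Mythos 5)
Your argument is correct, and it follows the same overall skeleton as the paper's proof: decompose $\Ff_\alpha^d$ into its four summands, observe that only $-\partial_\alpha\sum_i\psi_i\partial_{\psi_i}$ and $-\tfrac{\beta}{4}D$ fail to commute with $\sigma_l$, compute $[\sum_i\psi_i\partial_{\psi_i},\sigma_l]=l\sigma_l$ from the $\psi$-grading, and deduce the $\Ff_\beta^d$ case from the $\alpha\leftrightarrow\beta$ symmetry. The one place where you genuinely diverge is the treatment of $[D,\sigma_l]$. The paper first uses $\Gamma$-equivariance of the operators together with the irreducibility of $\mathsf{Prim}_l$ to reduce to the single monomial $\sigma_l=\psi_1\cdots\psi_l$ (for which $D(\sigma_l)=0$ is visible by inspection), and then notes that $[D,\sigma_l]$ is a derivation and evaluates it on the generators $\alpha,\beta,\gamma$ of $\BD^\Gamma$. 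You instead keep $\sigma_l$ general, kill the zeroth-order term $D(\sigma_l)$ by identifying $D$ (up to a nonzero scalar, which does not affect the kernel) with the Lefschetz lowering operator dual to $\theta$, so that $\mathsf{Prim}_l=\ker(\theta^{g-l+1})=\ker D$ in degree $l\le g$ by $\mathfrak{sl}_2$ theory, and then restrict the first-order part via $\partial_{\psi_i}|_{\BD^\Gamma}=-2\psi_{i+g}\partial_\gamma$, $\partial_{\psi_{i+g}}|_{\BD^\Gamma}=2\psi_i\partial_\gamma$ and the super-Euler identity $\sum_j(\partial_{\psi_j}\sigma_l)\psi_j=(-1)^{l-1}l\sigma_l$; I checked the signs and they come out to $-2l\sigma_l\partial_\gamma$, matching the paper. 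Your route trades the representation-theoretic reduction (transitivity on $\mathsf{Prim}_l$) for the standard primitive-equals-lowest-weight fact; both inputs are routine, and your version has the mild advantage of making explicit where primitivity of $\sigma_l$ is actually used. One small caveat: the final ``sanity check'' on monomials $\alpha^n\beta^m\gamma^p$ cannot be run purely inside $\BD^\Gamma$ with the restricted form of $\Ff_\alpha^d$, since $\sigma_l\cdot\alpha^n\beta^m\gamma^p$ leaves the invariant subring; but this is only an aside and does not affect the proof.
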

\begin{proof}
    Since $\Ff_\alpha^d,\, \Ff_\beta^d$ are $\Gamma$-equivariant and $\Gamma$ acts transitively on $\mathsf{Prim}_l$, we can assume without loss of generality that $\sigma_l=\psi_1\ldots \psi_l$. We have
    \[[\Ff_\alpha^d, \psi_1\ldots\psi_l]=-\frac{\partial}{\partial \alpha}\left[\sum_{i=1}^{2g}\psi_i\frac{\partial}{\partial \psi_{i}}, \psi_1\ldots\psi_l\right]-\frac{\beta}{4}\left[\sum_{i=1}^{g}\frac{\partial}{\partial \psi_i}\frac{\partial}{\partial \psi_{i+g}}, \psi_1\ldots \psi_l\right]\]
The first commutator on the right hand side is clearly $l\sigma_l$. The second commutator is
\[\left[\sum_{i=1}^{g}\frac{\partial}{\partial \psi_i}\frac{\partial}{\partial \psi_{i+g}}, \psi_1\ldots \psi_l\right]=-\sum_{i=1}^l \psi_1\ldots\psi_{i-1}\frac{\partial}{\partial \psi_{i+g}}\psi_{i+1}\ldots \psi_l\,.\]
This is a derivation on $\BD$, so it is enough to determine the images of $\alpha, \beta,\gamma$ under it to calculate its restriction to $\BD^\Gamma$. Clearly, $\alpha$ and $\beta$ are annihilated, so it remains to find 
\[\left[\sum_{i=1}^{g}\frac{\partial}{\partial \psi_i}\frac{\partial}{\partial \psi_{i+g}}, \psi_1\ldots \psi_l\right]\gamma=2\sum_{i=1}^l \psi_1\ldots \psi_{i-1}\frac{\partial}{\partial \psi_{i+g}}\psi_{i+1}\ldots \psi_{l}\cdot \psi_{i}\cdot \psi_{i+g}=-2l\sigma_l\,.\]
The proof of the formula for $[\Ff_\alpha^d, \sigma_l]_{|\BD^\Gamma}$ follows. The exact same calculation applies to $[\Ff_\beta^d, \sigma_l]_{|\BD^\Gamma}$ by switching $\alpha$ and $\beta$. 
\end{proof}

\begin{proof}[Proof of Theorem \ref{thm: sl2intermediatestacks}]
We denote
\[
R_{k,m,l}\coloneqq\sum_{\substack{b+c=m\\a+b+2c=k-g-l}}(g-l-c)!\frac{\alpha^a}{a!}\frac{\beta^b}{b!}\frac{(2\gamma)^c}{c!}\in \BD^\Gamma.
\]
As explained in Section \ref{subsec: relationsIgr}, the ideal $I^{\gr}_{\leq d}$ is freely generated as a $\BQ[\beta]$-module by the relations $R_{k,m,l}\sigma_l$ for $k,m,l$ such that $k\geq 2g+2d$ and $l+m\leq g$, with $\sigma_l$ ranging in a basis of $\mathsf{Prim}_l$. Since $[\Ff_\alpha^d, \beta]=0$ and $[\Ff_\beta^d, \beta]=-\Fh_\beta^d$ both preserve the ideal $I^{\gr}_{\leq d}$, it is enough to show that 
\[\Ff_\alpha^d(R_{k,m,l}\sigma_l),\: \Ff_\beta^d(R_{k,m,l}\sigma_l)\in I^{\textup{gr}}_{\leq d}\,.\] 
We start with $\Ff_\alpha^d$ and calculate using Lemmas \ref{lem: sl2invariantsubring} and \ref{lem: commutatorfsigma}:
\begin{align*}\Ff_\alpha^d(R_{k, m, l}\sigma_l)&=\big(\sigma_l\cdot  \Ff_\alpha^d+[\Ff_\alpha^d, \sigma_l]\big)R_{k, m, l}\\
&=\sigma_l\cdot \left( -\alpha\frac{\partial^2}{\partial\alpha^2}
    +(g+2d-1-l)\frac{\partial}{\partial \alpha}
    -2\gamma\frac{\partial^2}{\partial \alpha\partial\gamma}
    +\frac{1}{2}\beta\gamma\frac{\partial^2}{\partial\gamma^2}-\frac{g-l}{2}\beta\frac{\partial}{\partial\gamma}\right)R_{k, m, l}\\
    &=\left(\sum_{\substack{b+c=m\\a+b+2c=k-g-l}}(g-l-c)!\underbrace{(-a+1+g+2d-1-l-2c)}_{=2g+2d-k+b}\frac{\alpha^{a-1}}{(a-1)!}\frac{\beta^b}{b!}\frac{(2\gamma)^c}{c!}\right)\cdot \sigma_l\\
    &\quad+\left(\sum_{\substack{b+c=m\\a+b+2c=k-g-l}}\underbrace{(g-l-c)!(b+1)(c-1-g+l)}_{-(g-l-c+1)!(b+1)}\frac{\alpha^{a}}{a!}\frac{\beta^{b+1}}{(b+1)!}\frac{(2\gamma)^{c-1}}{(c-1)!}\right)\cdot \sigma_l\,.
\end{align*}
Renaming $(a-1, b, c)$ as $(a,b,c)$ in the first sum and $(a, b+1, c-1)$ as $(a,b,c)$ in the second sum, we find that
\[\Ff_\alpha^d(R_{k, m, l}\sigma_l)=\sum_{\substack{b+c=m\\a+b+2c=k-g-l-1}}(g-l-c)!(2g+2d-k)\frac{\alpha^a}{a!}\frac{\beta^b}{b!}\frac{(2\gamma)^c}{c!}\sigma_l=(2g+2d-k)R_{k-1, m, l}\sigma_l\,.\]
Now the conclusion that $\Ff_\alpha^d(R_{k, m, l}\sigma_l)\in I^{\textup{gr}}_{\leq d}$ for every $k\geq 2g+2d$ follows: if $k=2g+2d$ then the right hand side vanishes, and if $k>2g+2d$ then $k-1\geq 2g+2d$ as well, so $R_{k-1, m, l}\sigma_l\in I^{\textup{gr}}_{\leq d}$. 

The analysis for $\Ff_\beta^d$ is essentially parallel, and we obtain
\[\Ff_\beta^d(R_{k, m, l}\sigma_l)=\sum_{\substack{b+c=m-1\\a+b+2c=k-g-l-1}}(g-l-c)!(2g+2d-k)\frac{\alpha^a}{a!}\frac{\beta^b}{b!}\frac{(2\gamma)^c}{c!}\sigma_l=(2g+2d-k)R_{k-1, m-1, l}\sigma_l\,.\]
Hence, we conclude in the same way that $\Ff_\beta^d$ preserves the ideal $I^{\textup{gr}}_{\leq d}$. 
\end{proof}

\appendix

\section{Cohomology of the moduli stack with fixed determinant}

\label{appendix}

Fix $r\in\BZ_{\geq 1}$, $d\in \BZ$ and $\Lambda\in \Pic^d(\Sigma)$ throughout the appendix. All results in this appendix should be known to experts, although we could not find in the literature the exact form of the statements we need.

Recall that $\FN_{r,\Lambda}$ denotes the stack of rank $r$ vector bundles $U$ on $\Sigma$ together with an isomorphism $\phi:\det U\xrightarrow{\sim}\Lambda$. We first prove Proposition \ref{prop: cohomologiesofstacks} (i). The descendent algebra in the lemma below is the rank $r$ version of Definition \ref{def: descendents}.

\begin{lem}\label{lem: free generation for twisted}
    The realization map $\BD\to H^\ast(\FN_{r,\Lambda})$ is a ring isomorphism.
\end{lem}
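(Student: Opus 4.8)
The plan is to identify $H^*(\FN_{r,\Lambda})$ with a graded piece of $H^*(\FM_{r,d})$ via the fibration $\det\colon \FM_{r,d}\to\PPic^d$, and then invoke the known description of $H^*(\FM_{r,d})$ together with the known cohomology of $\PPic^d$. First I would recall that $\FM_{r,d}$ is the mapping stack $\mathrm{Map}(\Sigma, B\mathrm{GL}_r)$, whose rational cohomology is a free graded-commutative algebra on the K\"unneth components of the universal Chern classes; this is the statement that the realization map from the (full, rank $r$, \emph{without} the $\mathrm{SL}_r$ normalization) descendent algebra $\widetilde{\BD}=\BQ[\{\alpha_k,\beta_k\}_{1\le k\le r}]\otimes\Lambda_\BQ(\{\psi_{k,i}\})$ to $H^*(\FM_{r,d})$ is an isomorphism. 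One clean way to get this is to use the evaluation/universal bundle and the fact that $B\mathrm{GL}_r$ has free polynomial cohomology, so that $\mathrm{Map}(\Sigma,B\mathrm{GL}_r)$ has cohomology computed by the (collapsing, for degree parity reasons) Eilenberg--Moore or Atiyah--Bott style argument — this is exactly Proposition \ref{prop: cohomologiesofstacks}(ii) modulo the fixed-determinant splitting, so in the write-up I would either cite \cite{AB} or give the short mapping-stack argument in the appendix.

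Next I would analyze the determinant morphism. The square defining $\FN_{r,\Lambda}$ exhibits it as the fiber of $\det\colon\FM_{r,d}\to\PPic^d=\FM_{1,d}$ over the point $\{\Lambda\}\colon \Spec\BC\to\PPic^d$. The key structural fact is that this determinant map is (after the tensoring trick) a trivial fibration: the tensor product map $\PPic^0\times\FN_{r,\Lambda}\to\FM_{r,d}$, $(L,U)\mapsto L\otimes U$, is an equivalence of stacks, with inverse $V\mapsto(\det V\otimes\Lambda^{-1}\text{'s }r\text{-th root}, \ldots)$ — more precisely one uses that $\PPic^0$ acts simply transitively on the fibers of $\det$ up to the $r$-torsion gerbe, which is invisible rationally. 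Granting Proposition \ref{prop: cohomologiesofstacks}(ii), which gives $H^*(\FM_{r,d})\simeq H^*(\FN_{r,\Lambda})\otimes H^*(\PPic^0)$ as rings with $H^*(\PPic^0)=\BQ[\beta_1]\otimes\Lambda_\BQ(\psi_{1,1},\dots,\psi_{1,2g})$, restriction to the fiber $\{\Lambda\}\times\FN_{r,\Lambda}$ kills exactly the generators $\beta_1,\psi_{1,i}$ coming from the $\PPic^0$ factor and is an isomorphism onto the remaining tensor factor. Therefore $H^*(\FN_{r,\Lambda})\simeq\widetilde{\BD}/(\beta_1,\psi_{1,i})=\BD$, which is the claim. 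I should check that the realization map as defined in Section \ref{subsec: descendent algebra} — sending the formal $\alpha_k,\beta_k,\psi_{k,i}$ for $2\le k\le r$ to the K\"unneth components of $c_k(\CU)$ — is literally this restriction map; this is immediate since $\CU$ is the pullback of $\CV$ and $c_1(\CU)=d(1\otimes\pt)$ forces $\beta_1,\psi_{1,i}\mapsto 0$ there.

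The main obstacle is really Proposition \ref{prop: cohomologiesofstacks}(ii), i.e. establishing that $H^*(\FM_{r,d})$ is freely generated with the determinant map splitting off a $\PPic^0$ factor; once that is in hand the fixed-determinant statement is a formal consequence of taking the quotient by the ideal generated by the degree-one tautological classes. In the appendix I would therefore first prove (ii) — using the mapping-stack description, the splitting $\PPic^0\simeq B\BG_m\times\Pic^0$, and freeness of $H^*(B\mathrm{GL}_r)$ — and then deduce (i) as above. A subtlety to flag, but not a real obstruction, is the $\mu_r$-gerbe appearing when one passes between the honest fiber product and the "tensor a root of the determinant" equivalence: rationally $H^*(B\mu_r)=\BQ$, so this does not affect the cohomology, but it should be mentioned so the reader sees why the argument is clean over $\BQ$ even though it would fail integrally. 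I do not expect any convergence or degeneration issues in the spectral sequence, since all the relevant cohomology is concentrated in the "expected" parity and the Leray--Hirsch type argument applies verbatim.
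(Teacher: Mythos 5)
Your argument has a genuine circularity problem. You propose to deduce the lemma from Proposition \ref{prop: cohomologiesofstacks}(ii), but in the paper the logical order is the reverse: part (ii) is proved in the appendix \emph{using} this lemma. The issue is concentrated exactly at the point you wave away. The tensor map $\PPic^0\times\FN_{r,\Lambda}\to\FM_{r,d}$ is not an equivalence of stacks: it factors as a $\mu_r$-gerbe followed by a quotient by $\Pic[r]\simeq(\BZ/r)^{2g}$ (the fiber is $\mathrm{Map}(\Sigma,\mu_r)\simeq B\mu_r\times\Pic[r]$, and there is no functorial choice of $r$-th root of a line bundle). The $B\mu_r$ part is indeed invisible rationally, but the $\Pic[r]$-quotient gives $H^*(\FM_{r,d})\simeq\big(H^*(\PPic^0)\otimes H^*(\FN_{r,\Lambda})\big)^{\Pic[r]}$, and to remove the invariants you must show $\Pic[r]$ acts trivially on $H^*(\FN_{r,\Lambda})$. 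The only argument available for that triviality is that $H^*(\FN_{r,\Lambda})$ is generated by tautological classes (on which $\Pic[r]$ manifestly acts trivially) --- which is essentially the statement you are trying to prove. So your "(ii) $\Rightarrow$ (i)" plan cannot be completed without an independent proof of (ii), and you do not supply one. Even granting surjectivity of $\widetilde\BD\to H^*(\FM_{r,d})$ and the K\"unneth splitting of the invariant part, you would only control $H^*(\FN_{r,\Lambda})^{\Pic[r]}$, not the full cohomology.

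The paper's proof goes directly at (i). For $\Lambda=\CO_\Sigma$ the statement is the classical Atiyah--Bott/Heinloth--Schmitt computation for $B\mathrm{SL}_r$-bundles. For general $\Lambda$, injectivity of $\BD\to H^*(\FN_{r,\Lambda})$ follows from the argument of \cite[Proposition 3.1.1]{HS} in the twisted setting, and surjectivity is reduced to matching Poincar\'e series: the Hecke stack of elementary modifications at a point $x$ is simultaneously a $\BP^{r-1}$-bundle over $\FN_{r,\Lambda}$ and over $\FN_{r,\Lambda'}$ when $\Lambda\simeq\Lambda'\otimes\CO_\Sigma(x)$, so the projective bundle formula gives $P_t(\FN_{r,\Lambda})=P_t(\FN_{r,\Lambda'})$, reducing to the trivial-determinant case. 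If you want to keep your strategy, you would need to replace the appeal to (ii) by an independent computation of $P_t(\FN_{r,\Lambda})$ (or of the $\Pic[r]$-action); the Hecke correspondence is precisely the tool that does this.
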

\begin{proof}
    Note that $\FN_{r,\CO_\Sigma}$ is the stack of principal $\textnormal{SL}_r$-bundles on $\Sigma$. For such a stack, the classical result of \cite{AB} proves that the cohomology ring is freely generated by tautological generators which is precisely the statement of the lemma; see also \cite{HS} for an algebro-geometric proof. 

    Now we consider the general case. The proof of \cite[Proposition 3.1.1]{HS} applies to the ``$\Lambda$-twisted" case and shows that the realization map $\BD\to H^\ast(\FN_{r,\Lambda})$ is injective. To show that this is in fact an isomorphism, it suffices to show that the Poincaré series of $\FN_{r,\Lambda}$ and $\FN_{r,\CO_\Sigma}$ are equal. For this, it suffices to equate the Poincaré series of $\FN_{r,\Lambda}$ and $\FN_{r,\Lambda'}$ when $\Lambda\simeq \Lambda'\otimes \CO_\Sigma(x)$ for some $x\in \Sigma$. Consider the Hecke stack $\mathfrak{H}=\mathfrak{H}_{r,\Lambda,x}$ parametrizing non-split short exact sequences 
    $$0\rightarrow U'\rightarrow U\rightarrow \CO_x\rightarrow 0
$$
together with an isomorphism $\phi:\det(U)\xrightarrow{\sim} \Lambda$. Since there is a natural isomorphism $\det(U)\simeq \det(U')\otimes \CO_\Sigma(x)$, fixing the isomorphism $\phi:\det(U)\xrightarrow{\sim} \Lambda$ is equivalent to fixing the isomorphism $\phi':\det(U')\xrightarrow{\sim} \Lambda'$. The Hecke stack admits two forgetful morphisms
\begin{center}
    \begin{tikzcd}
  & \mathfrak{H} \arrow[ld, "\pi_1"'] \arrow[rd, "\pi_2"] &   \\
\FN_{r,\Lambda} &                                    & \FN_{r,\Lambda'}.
\end{tikzcd}
\end{center}
The fiber of $\pi_1$ at $(U,\phi)$ is naturally identified with the $(r-1)$-dimensional projective space $\BP(\Hom(U,\CO_x))$. The fiber of $\pi_2$ at $(U,\phi)$ is also given by the $(r-1)$-dimensional projective space $\BP(\Ext^1(\CO_x,U'))$. In fact, the Hecke stack is a projective bundle in two different ways
$$\BP_{\FN_{r,\Lambda}}(p_*\mathcal{H}om(\CU,\CO_x))\simeq\mathfrak{H}\simeq \BP_{\FN_{r,\Lambda'}}(R^1p_*\mathcal{H}om(\CO_x,\CU')). 
$$
By the projective bundle formula, the Poincaré series of $\FN_{r,\Lambda}$ and $\FN_{r,\Lambda'}$ are equal. 
\end{proof}

For a vector bundle $V$ of rank $r$ and degree $d$, denote by $t(V)=(r',d')$ the pair of rank and degree of the maximal destabilizing subbundle of $V$. The set of all possible values of $t(V)$ is
$$\mathsf{MD}_{r,d}\coloneqq \big\{(r,d)\}\sqcup \{(r',d')\,|\,0<r'<r,\ d'/r'>d/r\big\}.
$$ 
Define a total order $\leq$ on $\mathsf{MD}_{r,d}$ as follows; we say that $(r',d')<(r'',d'')$ if and only if
$$\big[\,d'/r'<d''/r''\,\big]\quad\textnormal{or}\quad\big[\,d'/r'=d''/r''\textnormal{ and } r'<r''\,\big].
$$
The order $\leq$ satisfies the property that there are only finitely many elements bounded above. In particular, it makes $\mathsf{MD}_{r,d}$ well-ordered, so we can apply the induction principle.\footnote{\label{footnote: maximal destabilizing and type}For each fixed $\mu\in \BQ$, let $t'=(r',d')\in \mathsf{MD}_{r,d}$ be the maximal element such that $d'/r'\leq \mu$. Then $t(V)\leq t'$ if and only if the slope of the maximal destabilizing subbundle of $V$ is $\leq \mu$, i.e., $\FM^{\leq \mu}_{r,d}=\FM^{\leq t'}_{r,d}$.
}
 
For each $t'=(r',d')\in \mathsf{MD}_{r,d}$, denote by $\FM_{r,d}^{\leq t'}$ (resp. $\FM_{r,d}^{< t'}$ and $\FM_{r,d}^{= t'}$) the stack of all vector bundles $V$ with $t(V)\leq t'$ (resp. $t(V)< t'$ and $t(V)= t'$). Since the maximal destabilizing type of a vector bundle only gets bigger under a specialization, $\FM_{r,d}^{\leq t'}\subseteq \FM_{r,d}$ is an open substack. Therefore, the full stack $\FM_{r,d}$ is stratified by the locally closed substacks
$$\FM_{r,d}=\bigsqcup_{t'\in \mathsf{MD}_{r,d}}\FM_{r,d}^{=t'}. 
$$
Let $j$ (resp. $\iota$) be the open (resp. closed) embedding induced from this stratification
\begin{equation}\label{eq: stratification}
    \FM^{<t'}_{r,d}\xlongrightarrow{j} \FM^{\leq t'}_{r,d} \xlongleftarrow{\iota} \FM^{=t'}_{r,d}.
\end{equation}
The same stratification exists also for the fixed determinant stack $\FN_{r,\Lambda}$. 

\begin{lem}\label{lem: split Gysin}
    The Gysin sequence associated to \eqref{eq: stratification} splits, i.e., we have a short exact sequence
    $$0\rightarrow H^{*-2\cdot\textnormal{codim}(i)}(\FM^{=t'}_{r,d})\xlongrightarrow{\iota_*} H^*(\FM^{\leq t'}_{r,d})\xlongrightarrow{j^*}H^*(\FM^{<t'}_{r,d})\rightarrow 0. 
    $$
    In particular, $H^*(\FM_{r,d}^{\leq t'})$ is generated by tautological classes. The same holds for $\FN_{r,\Lambda}^{\leq t'}$. 
\end{lem}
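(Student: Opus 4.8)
The plan is to argue by induction on the rank $r$, building everything on the Harder--Narasimhan stratification. The minimal element of $\mathsf{MD}_{r,d}$ is $t'=(r,d)$, and there $\FM^{<(r,d)}_{r,d}=\varnothing$ while $\FM^{\le(r,d)}_{r,d}=\FM^{=(r,d)}_{r,d}=\FM^{\textup{ss}}_{r,d}$, so the Gysin sequence is trivially the asserted short exact sequence (and likewise for $\FN$). So fix $t'=(r',d')\ne(r,d)$ and set $r''=r-r'$, $d''=d-d'$.

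First I would describe the stratum. By uniqueness of the Harder--Narasimhan filtration, every $V$ in $\FM^{=t'}_{r,d}$ fits into a canonical extension $0\to V'\to V\to V''\to 0$ with $V'$ semistable of slope $d'/r'$ and $\mu_{\max}(V'')<d'/r'$. A nonzero map $V'\to V''$ would have image at once a quotient of the semistable $V'$ (slope $\ge d'/r'$) and a subsheaf of $V''$ (slope $\le\mu_{\max}(V'')<d'/r'$), so $\Hom(V',V'')=0$. Consequently $V\mapsto(V',V'')$ exhibits $\FM^{=t'}_{r,d}$ as a vector bundle stack over $\FM^{\textup{ss}}_{r',d'}\times\FM^{<d'/r'}_{r'',d''}$ (with fibre $[\Ext^1(V'',V')/\Hom(V'',V')]$ over $(V',V'')$; here $\FM^{<d'/r'}_{r'',d''}$ is the open locus of bundles with $\mu_{\max}<d'/r'$, which is an $\FM^{\le t''}_{r'',d''}$ in the sense of the footnote), and the normal bundle of $\iota$ is the honest vector bundle $\mathcal{N}_\iota=$ universal $\Ext^1(V',V'')$, of rank $c=\mathrm{codim}(\iota)=r'r''(g-1)+(r''d'-r'd'')>0$. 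Hence $H^*(\FM^{=t'}_{r,d})\cong H^*(\FM^{\textup{ss}}_{r',d'})\otimes H^*(\FM^{<d'/r'}_{r'',d''})$; both factors have rank $<r$, so the outer induction gives that they, and thus $\FM^{=t'}_{r,d}$, have tautologically generated cohomology of Tate type, and the Chern classes of $\mathcal{N}_\iota$ lie in the tautological subring.

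The heart of the matter is that the Gysin long exact sequence of \eqref{eq: stratification} degenerates into the short exact sequences of the statement — equivalently $\iota_*$ is injective, equivalently $j^*$ is surjective — i.e. the equivariant perfection of the Harder--Narasimhan stratification. One route is to verify, following Atiyah--Bott \cite{AB}, that the Euler class $e(\mathcal{N}_\iota)=\iota^*\iota_*(1)\in H^{2c}(\FM^{=t'}_{r,d})$ is a non-zero-divisor, using the Tate-type structure obtained above; the route I find cleanest is to invoke the classical Harder--Narasimhan--Atiyah--Bott recursion for Poincaré series, $P_t(\FM^{\le t'}_{r,d})=P_t(\FM^{<t'}_{r,d})+t^{2c}P_t(\FM^{=t'}_{r,d})$, since then $\dim H^k(\FM^{\le t'}_{r,d})=\dim H^k(\FM^{<t'}_{r,d})+\dim H^{k-2c}(\FM^{=t'}_{r,d})$ for every $k$, which is exactly the splitting. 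I expect this step — rigorously pinning down the perfection, equivalently the purity/Tate type of these infinite-type stacks — to be the only real obstacle; the rest is formal.

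Granting the short exact sequences, tautological generation follows readily. The cohomology of the ambient stack $\FM_{r,d}$ (resp. $\FN_{r,\Lambda}$) is tautologically generated by \cite{AB} (resp. by Lemma \ref{lem: free generation for twisted}). In a fixed cohomological degree $k$ only the finitely many strata $\FM^{=t''}_{r,d}$ of codimension $\le k/2$ meet it, so removing these from the top one at a time and using surjectivity of $j^*$ at each step shows that restriction $H^k(\FM_{r,d})\to H^k(\FM^{\le t'}_{r,d})$ is surjective; since restriction preserves tautological classes, $H^*(\FM^{\le t'}_{r,d})$ is tautologically generated. Finally, the whole argument applies verbatim to $\FN^{\le t'}_{r,\Lambda}$: its Harder--Narasimhan strata are again vector bundle stacks over products of smaller-rank fixed-determinant semistable and bounded loci, the constraint $\det V'\otimes\det V''\cong\Lambda$ merely identifying the two Picard factors and being invisible to the perfection input, and $H^*(\FN_{r,\Lambda})$ is tautologically generated by Lemma \ref{lem: free generation for twisted}.
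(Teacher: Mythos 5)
Your setup is correct and matches the paper's: the stratum $\FM^{=t'}_{r,d}$ is a vector bundle stack over $\FM^{\textup{ss}}_{r',d'}\times\FM^{\leq t''}_{r'',d''}$, the vanishing $\Hom(V',V'')=0$ makes $N_\iota=R^1p_*\CH om(\CV',\CV'')$ an honest bundle, and the codimension formula is right. But the crux of the lemma --- that $j^*$ is surjective, equivalently that $e(N_\iota)=\iota^*\iota_*(1)$ is a non-zero-divisor --- is exactly the step you leave open, and neither of your two routes closes it. Route B is essentially circular: the stratum-by-stratum identity $P_t(\FM^{\leq t'})=P_t(\FM^{<t'})+t^{2c}P_t(\FM^{=t'})$ is, given the Gysin long exact sequence, \emph{equivalent} to the splitting you are trying to prove, so invoking it as ``the classical recursion'' assumes the conclusion (the Atiyah--Bott proof of that recursion \emph{is} a proof of perfection; the Harder--Narasimhan point-count proof lives over finite fields and would need a separate comparison, and in any case you would still need the fixed-determinant version). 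Route A is the right idea and is what the paper does, but ``Tate type'' does not make a class a non-zero-divisor; the actual mechanism is that $\FM^{\textup{ss}}_{r',d'}\times\FM^{\leq t''}_{r'',d''}$ is a $\BG_m$-gerbe over its rigidification, $N_\iota$ has weight $1$ for this $\BG_m$, so $e(N_\iota)$ has invertible leading term in the polynomial variable of $H^*(B\BG_m)$ and multiplication by it is injective on the free $\BQ[z]$-module $H^*(\textup{rig})[z]$ (this is \cite[Lemma 5.13]{H}, cf.\ \cite[Proposition 3.5]{EK}). That one-line weight argument is the missing idea.

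Relatedly, the fixed-determinant case is not ``verbatim'': the splitting stack over which $\FN^{=t'}_{r,\Lambda}$ fibers has generic stabilizer the torus $T=\{(\lambda_1,\lambda_2):\lambda_1^{r'}\lambda_2^{r''}=1\}$ rather than $\BG_m\times\BG_m$, so one must exhibit a one-parameter subgroup of $T$ (the paper takes $\lambda\mapsto(\lambda^{-\bar r''},\lambda^{\bar r'})$) for which $N_{\iota_\Lambda}$ has nonzero weight $\bar r'+\bar r''$ before the same non-zero-divisor argument applies. Your final paragraph deducing tautological generation from surjectivity of $j^*$ and the stabilization $H^k(\FM_{r,d})\xrightarrow{\sim}H^k(\FM^{\leq t'}_{r,d})$ for $t'$ large is fine and agrees with the paper, but it is downstream of the unproven splitting.
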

\begin{proof}
    The statements about $\FM_{r,d}^{\leq t'}$ are essentially in \cite[Proposition 3.5]{EK}, although loc. cit. is written in gauge-theoretic terms; see also \cite[Corollary 5.14]{H} for an algebro-geometric approach to a similar statement with respect to a different stratification. We explain the proof for $\FM_{r,d}^{\leq t'}$ for the reader's convenience and point out the necessary modifications for $\FN_{r,\Lambda}^{\leq t'}$. 

    Let $t'=(r',d')\neq (r,d)$. For each $V\in \FM_{r,d}^{=t'}$, we have a maximal destabilizing sequence
    $$0\rightarrow V'\rightarrow V\rightarrow V''\rightarrow 0
    $$
    where $V'$ is semistable of rank $r'$ and degree $d'$ and the quotient bundle $V''$ is of rank $r''\coloneqq r-r'$ and $d''\coloneqq d-d'$. It follows that the maximal destabilizing subbundle of $V''$ has a slope smaller than $\mu'\coloneqq d'/r'$. Let $t''\in\mathsf{MD}_{r'',d''}$ be the maximum element whose slope is smaller than $\mu'$. Such $t''$ exists because there are only finitely many elements in $\mathsf{MD}_{r'',d''}$ of slope bounded above. By the choice of $t''$, we have $V''\in\FM_{r'',d''}^{\leq t''}$. Conversely, given any $(V',V'')\in \FM_{r',d'}^{\textup{ss}}\times \FM_{r'',d''}^{\leq t''}$ and an extension $0\rightarrow V'\rightarrow V\rightarrow V''\rightarrow 0$, the extension bundle $V$ satisfies $t(V)=t'$. 

    The above discussion yields a diagram
    \begin{equation}\label{eq: diagram}
    \begin{tikzcd}
  & \FM_{r,d}^{=t'} \arrow[ld, "\pi"'] \arrow[rd, "i"] &   \\
\FM_{r',d'}^{\textup{ss}}\times \FM_{r'',d''}^{\leq t''} &                                    & \FM_{r,d}^{\leq t'}
\end{tikzcd}
    \end{equation}
where $\pi$ is the total space of a vector bundle stack such that
$$\pi^{-1}(V',V'')=[\Ext^1(V'',V')/\Hom(V'',V')].$$
On the other hand, the normal bundle of the closed  embedding $\iota$ is given by 
$$N_\iota=R^1p_*\big(\CH om(\CV', \CV'')\big)
$$
where $0\rightarrow \CV'\rightarrow \CV\rightarrow \CV''\rightarrow 0$ is the universal exact sequence over $\FM_{r,d}^{=t'}\times \Sigma$. This is in fact a bundle because $\Hom(V',V'')=0$ for any $(V',V'')\in \FM_{r',d'}^{\textup{ss}}\times \FM_{r'',d''}^{\leq t''}$. 

In order to show that the Gysin sequence associated to \eqref{eq: stratification} splits, it suffices to show that $\iota^*\iota_*=e(N_\iota)$ is injective. Since $N_\iota$ is naturally pulled back from from $\pi$, this reduces to showing that $e(N_\iota)\in H^*(\FM_{r',d'}^{\textup{ss}}\times \FM_{r'',d''}^{\leq t''})\simeq H^*(\FM_{r,d}^{=t'})$ is not a zero divisor. This follows from \cite[Lemma 5.13]{H} because $\FM_{r',d'}^{\textup{ss}}\times \FM_{r'',d''}^{\leq t''}$ is a $\BG_m$-gerbe over the rigidified stack $\FM_{r',d'}^{\textup{ss}}\times \Big(\FM_{r'',d''}^{\leq t''}\Big)^{\textup{rig}}$ \cite{ACV} with respect to which $N_\iota$ has weight $1$. Furthermore, tautological generation of $H^*(\FM_{r,d}^{\leq t'})$ follows from surjectivity of $j^*$, the tautological generation of the full stack $H^*(\FM_{r,d})$, and the fact that for any fixed $k$ the map $H^k(\FM_{r,d})\to H^k(\FM_{r,d}^{\leq t'})$ is an isomorphism for sufficiently large $t'$ (which is a consequence of the stacks $\FM_{r,d}^{=t'}$ having increasingly large codimension).

Now we explain the necessary modification for the stack $\FN^{\leq t'}$. We have the following diagram similar to \eqref{eq: diagram}:
    \begin{equation*}
    \begin{tikzcd}
  & \FN_{r,\Lambda}^{=t'} \arrow[ld, "\pi_\Lambda"'] \arrow[rd, "\iota_\Lambda"] &   \\
\FN_{r,\Lambda}^{\textnormal{split-}t'} &                                    & \FM_{r,\Lambda}^{\leq t'}
\end{tikzcd}
    \end{equation*}
where the splitting stack $\FN_{r,\Lambda}^{\textup{split-}t'}$ is defined by the Cartesian diagram 
\begin{center}
    \begin{tikzcd}
\FN^{\textnormal{split-}t'}_{r,\Lambda} \arrow[r] \arrow[d] & \displaystyle \FM_{r',d'}^{\textup{ss}}\times \FM_{r'',d''}^{\leq t''}\arrow[d] \arrow[r, phantom]& (V',V'') \arrow[d, phantom, "{\rotatebox[origin=c]{270}{$\mapsto$}}"] \\
\{\Lambda\} \arrow[r]           & \PPic^d  \arrow[r, phantom]& \displaystyle \det(V')\otimes \det(V'').      
    \end{tikzcd}
\end{center}
As in the varying determinant case, the morphism $\pi_\Lambda$ is a vector bundle stack defining an isomorphism on the cohomology rings. So we are left to prove that $e(N_{\iota_\Lambda})\in H^*(\FN_{r,\Lambda}^{\textnormal{split-}t'})$ is not a zero divisor. By definition of the splitting stack $\FN_{r,\Lambda}^{\textnormal{split-}t'}$, it has a generic stabilizer group 
$$T=\{(\lambda_1,\lambda_2)\in (\BC^*)^2\,|\,\lambda_1^{r'}\cdot \lambda_2^{r''}=1\}. 
$$
Set $\bar{r}'\coloneqq r'/\gcd(r',r'')$ and $\bar{r}''\coloneqq r''/\gcd(r',r'')$. Then elements inside $T$ of the form $(\lambda^{-\bar{r}''}, \lambda^{\bar{r}'})$ for $\lambda\in \BC^*$ forms a subgroup isomorphic to $\BC^*$. This realizes $\FN_{r,\Lambda}^{\textnormal{split-}t'}$ as a $\BG_m$-gerbe over the rigidified stack with respect to which the normal bundle $N_{\iota_\Lambda}$ has weight $\bar{r}'+\bar{r}''>0$. Therefore, the Gysin sequence of $\iota_\Lambda$ splits. The same argument shows that $H^*(\FN_{r,\Lambda}^{\leq t'})$ is tautologically generated starting from the statement about the full stack as in Lemma \ref{lem: free generation for twisted}. 
\end{proof}

Consider the tensoring map 
\begin{equation}\label{eq: tensoring map}
    \PPic^0\times \FN_{r,\Lambda}^{\leq t'}\rightarrow \FM_{r,d}^{\leq t'},\quad (L,U)\mapsto L\otimes U.
\end{equation}
The next lemma proves Proposition \ref{prop: cohomologiesofstacks} (ii). 

\begin{lem}
    The pullback under the tensoring map \eqref{eq: tensoring map} induces an isomorphism on the cohomology rings. 
\end{lem}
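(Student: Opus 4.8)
The plan is to realise the tensoring map \eqref{eq: tensoring map} as a base change of a morphism of Picard stacks that is invertible on rational cohomology, and then to compare Leray spectral sequences. First I would check that the square with horizontal arrows the tensoring map \eqref{eq: tensoring map} and the power map $\rho\colon\PPic^0\to\PPic^d$, $L\mapsto L^{\otimes r}\otimes\Lambda$, and with vertical arrows $\mathrm{pr}_1$ and $\det$, is $2$-Cartesian: an object $(L,V,\psi\colon L^{\otimes r}\otimes\Lambda\xrightarrow{\sim}\det V)$ of the fibre product corresponds to $\bigl(L,\,V\otimes L^{-1}\bigr)$ together with the trivialisation of $\det(V\otimes L^{-1})=\det V\otimes L^{-r}$ induced by $\psi$, and $V\otimes L^{-1}$ lies in $\FN_{r,\Lambda}^{\leq t'}$ because tensoring a rank $r$ bundle by a line bundle of degree $0$ preserves all Harder--Narasimhan slopes, hence the maximal destabilising type. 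Under this identification the tensoring map becomes the projection to $\FM_{r,d}^{\leq t'}$, and over the point $[\CO_\Sigma]\in\PPic^0$, which $\rho$ maps to $[\Lambda]$, it restricts to the identity of the fibre $\FN_{r,\Lambda}^{\leq t'}=\det^{-1}([\Lambda])$.

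Next I would verify that $\rho^\ast\colon H^\ast(\PPic^d)\to H^\ast(\PPic^0)$ is an isomorphism. Writing $\PPic^e\simeq B\BG_m\times\Pic^e$, the map $\rho$ is induced on the $B\BG_m$-factor by the $r$-th power map of $\BG_m$, so it multiplies the generator of $H^2(B\BG_m)$ by $r$, and on the abelian variety it is the multiplication-by-$r$ isogeny followed by a translation isomorphism $\Pic^0\xrightarrow{\sim}\Pic^d$, so it multiplies $H^1(\Pic^0)$ by $r$. Over $\BQ$ both are isomorphisms, hence $\rho^\ast$ is an isomorphism.

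Finally I would run a Leray--Hirsch argument. By Lemma \ref{lem: split Gysin} the rings $H^\ast(\FM_{r,d}^{\leq t'})$ and $H^\ast(\FN_{r,\Lambda}^{\leq t'})$ are tautologically generated, and since $\CU$ is the restriction of $\CV$, the tautological classes $\alpha_k,\beta_k,\psi_{k,i}$ restrict from $\FM_{r,d}^{\leq t'}$ to the fibre $\FN_{r,\Lambda}^{\leq t'}$ to the tautological classes of the same name; hence the fibre restriction $H^\ast(\FM_{r,d}^{\leq t'})\to H^\ast(\FN_{r,\Lambda}^{\leq t'})$ is surjective. Therefore $R^q\det_\ast\BQ$ is the constant local system $\underline{H^q(\FN_{r,\Lambda}^{\leq t'})}$, the Leray spectral sequence of $\det$ degenerates, and $H^\ast(\FM_{r,d}^{\leq t'})\cong H^\ast(\PPic^d)\otimes H^\ast(\FN_{r,\Lambda}^{\leq t'})$ as $H^\ast(\PPic^d)$-modules, the analogous statement for $\mathrm{pr}_1$ being the Künneth formula. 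The tensoring map induces a morphism of these two Leray spectral sequences which on $E_2$-pages is $\rho^\ast$ tensored with the map on fibre cohomology; by the first paragraph the latter is the identity and by the second it is an isomorphism, so the tensoring map is an isomorphism on $E_2=E_\infty$, hence on cohomology. The case of the full stack $\FN_{r,d}$ (equivalently $\FM_{r,d}$) follows by letting $t'\to\infty$, since the complements $\FM_{r,d}\setminus\FM_{r,d}^{\leq t'}$ have arbitrarily large codimension.

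The main obstacle is the Leray--Hirsch input in the last step: one needs the determinant morphism $\det\colon\FM_{r,d}^{\leq t'}\to\PPic^d$ to behave like a fibration for the Leray spectral sequence and its degeneration, which requires some care for these (non-separated) algebraic stacks. An alternative that sidesteps this is to compute the Poincaré series of $\FM_{r,d}^{\leq t'}$ and of $\PPic^0\times\FN_{r,\Lambda}^{\leq t'}$ directly from the split Gysin sequences of Lemma \ref{lem: split Gysin} by induction on the rank — the Harder--Narasimhan strata are vector-bundle stacks over products of lower-rank stacks, and the relevant splitting stacks again fibre over $\PPic^d$ via power maps — and then to combine the resulting equality of Poincaré series with surjectivity of the pullback, which one checks by expanding $c_k(\CP\otimes\CU)$ and observing that the tautological generators of the two tensor factors appear triangularly.
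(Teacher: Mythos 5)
Your argument is correct in substance but takes a genuinely different route from the paper's. You realise the tensoring map \eqref{eq: tensoring map} as the base change of the power map $\rho\colon\PPic^0\to\PPic^d$, $L\mapsto L^{\otimes r}\otimes\Lambda$, along $\det$ (your Cartesian square checks out, and the observation that $\rho^\ast$ is a rational isomorphism because it multiplies the generators of $H^2(B\BG_m)$ and $H^1(\Pic)$ by $r$ is the right key point), and then compare Leray spectral sequences. The paper instead factors the tensoring map itself as a $\mu_r$-gerbe followed by the quotient by the finite group $\Pic[r]$ acting on $\PPic^0\times \FN_{r,\Lambda}^{\leq t'}$, the two pieces coming from $\textup{Map}(\Sigma,\mu_r)\simeq B\mu_r\times \Pic[r]$, the ``kernel'' of $B\BG_m\times B\textup{SL}_r\to B\textup{GL}_r$ applied to the mapping stack; with rational coefficients the gerbe is invisible, and the quotient map identifies $H^\ast(\FM_{r,d}^{\leq t'})$ with the $\Pic[r]$-invariants, which is everything because $\Pic[r]$ fixes the tautological classes and these generate by Lemma \ref{lem: split Gysin}. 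What the paper's route buys is that it needs no fibration or spectral-sequence input whatsoever about $\det$. Your route isolates exactly that input, and this is also the one place your write-up is too quick: surjectivity of restriction onto the cohomology of a single fibre does not by itself make $R^q\det_\ast\BQ$ constant. You need the Cartesian square again: after pullback along the finite, proper, gerby cover $\rho$ the map $\det$ becomes a product projection, so these sheaves are local systems with finite monodromy, and they are then constant because your global tautological classes give flat sections spanning one stalk over a connected base. With that patch (or with your fallback via Poincar\'e series plus surjectivity of the pullback, which is close in spirit to how the paper argues in Lemma \ref{lem: free generation for twisted} and Proposition \ref{cor: comparing the Chern filtration}), the proof is complete.
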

\begin{proof}

The tensoring map \eqref{eq: tensoring map} is an open restriction of the tensoring map
$$\PPic^0\times \FN_{r,\Lambda}\rightarrow \FM_{r,d}
\,.$$
When $\Lambda=\CO_\Sigma$, this map is obtained by applying the mapping stack $\textup{Map}(\Sigma,-)$ to the morphism $B\BC^*\times B\textup{SL}_r\twoheadrightarrow B\textup{GL}_r$. Since the kernel of $\BC^*\times \textup{SL}_r\twoheadrightarrow \textup{GL}_r$ is $\mu_r$, the fibers of the above tensoring map are isomorphic to
$$\textup{Map}(\Sigma,\mu_r)\simeq B\mu_r\times \Pic[r]
$$
where $\Pic[r]\subseteq \Pic$ denotes the $r$-torsion subgroup. 
More generally, we can factorize the tensoring map into 
$$\PPic^0\times \FN_{r,\Lambda}\xrightarrow{\ f\ } \PPic^0\times \FN_{r,\Lambda} \kern -5 pt \fatslash \kern -1.5 pt B\mu_r \xrightarrow{\ g\ } \FM_{r,d}
$$
where $f$ is a $\mu_r$-gerbe and $g$ is a quotient map by the $\Pic[r]$-action. This factorization follows from the fact that we have a short exact sequence
$$1\rightarrow \mu_r\rightarrow \Aut(L)\times \Aut(U,\phi)\rightarrow \Aut(L\otimes U)\rightarrow 1
$$
and any $V\in \FM_{r,d}$ can be written as $V=U\otimes L$ for some $L\in \PPic^0$ and $(U,\phi)\in \FN_{r,\Lambda}$ up to the ambiguity of $\Pic[r]$-action. Therefore, the pullback map also  factorizes into
$$H^*(\FM_{r,d})\xrightarrow{\ g^*\ }H^*(\PPic^0\times \FN_{r,\Lambda} \kern -5pt \fatslash \kern -1.5 pt B\mu_r)\xrightarrow{\ f^*\ }H^*(\PPic^0\times \FN_{r,\Lambda}). 
$$
Since $f$ is a gerbe with respect to a finite group $\mu_r$ and we use rational coefficients, $f^*$ is an isomorphism. On the other hand, $g^*$ identifies $H^*(\FM_{r,d})$ with the $\Pic[r]$-invariant part of the cohomology ring 
$$H^*(\PPic^0\times \FN_{r,\Lambda} \kern -5pt \fatslash \kern -1.5 pt B\mu_r)\simeq H^*(\PPic^0\times \FN_{r,\Lambda}).
$$
Since $\Pic[r]$ acts trivially on tautological classes and $\PPic^0\times \FN_{r,\Lambda}$ is tautologically generated, $g^*$ is an isomoprhism. The same factorization also exists for \eqref{eq: tensoring map}. Therefore the lemma follows from tautological generation of $\FN_{r,\Lambda}^{\leq t'}$ proven in Lemma \ref{lem: split Gysin}. 
\end{proof}

\end{document}